\definecolor{my-linkcolor}{rgb}{0.75,0,0}
\definecolor{my-citecolor}{rgb}{0.1,0.57,0}
\definecolor{my-urlcolor}{rgb}{0,0,0.75}
\title[Big module categories for $W$-(super)algebras]{Rigid tensor structure on big module categories\\ for some $W$-(super)algebras in type $A$}
 \author{Thomas Creutzig, Robert McRae and Jinwei Yang}
\date{}
\address{(T. C.) Department of Mathematical and Statistical Sciences, University of Alberta, Edmonton, Alberta T6G 2G1, Canada}
 \email{creutzig@ualberta.ca}
 \address{(R. M.) Yau Mathematical Sciences Center, Tsinghua University, Beijing 100084, China}
  \email{rhmcrae@tsinghua.edu.cn}
  \address{(J. Y.) School of Mathematical Sciences, Shanghai Jiaotong University, Shanghai 200240, China}
  \email{jinwei2@sjtu.edu.cn}
 \subjclass{Primary 17B69, 18M15, 81R10, 81T40}
\newtheorem{thm}{Theorem}[section]
\newtheorem{cor}[thm]{Corollary}
\newtheorem{lem}[thm]{Lemma}
\newtheorem{prop}[thm]{Proposition}
\newtheorem{conj}[thm]{Conjecture}
\theoremstyle{definition}\newtheorem{defi}[thm]{Definition}
\theoremstyle{definition}\newtheorem{rem}[thm]{Remark}
\theoremstyle{definition}
\theoremstyle{definition}
\newcommand{\cB}{\mathcal{B}}
\newcommand{\cD}{\mathcal{D}}
\newcommand{\cE}{\mathcal{E}}
\newcommand{\cH}{\mathcal{H}}
\newcommand{\cY}{\mathcal{Y}}
\newcommand{\cV}{\mathcal{V}}
\newcommand{\cA}{\mathcal{A}}
\newcommand{\cR}{\mathcal{R}}
\newcommand{\cM}{\mathcal{M}}
\newcommand{\cS}{\mathcal{S}}
\newcommand{\cX}{\mathcal{X}}
\newcommand{\cF}{\mathcal{F}}
\newcommand{\cI}{\mathcal{I}}
\newcommand{\cJ}{\mathcal{J}}
\newcommand{\cL}{\mathcal{L}}
\newcommand{\cO}{\mathcal{O}}
\newcommand{\cP}{\mathcal{P}}
\newcommand{\cC}{\mathcal{C}}
\newcommand{\cG}{\mathcal{G}}
\newcommand{\cW}{\mathcal{W}}
\newcommand{\cZ}{\mathcal{Z}}
\newcommand{\cQ}{\mathcal{Q}}
\newcommand{\til}{\widetilde}
\newcommand{\CC}{\mathbb{C}}
\newcommand{\ZZ}{\mathbb{Z}}
\newcommand{\NN}{\mathbb{N}}
\newcommand{\QQ}{\mathbb{Q}}
\newcommand{\Id}{\mathrm{Id}}
\newcommand{\tens}{\boxtimes}
\newcommand{\vac}{\mathbf{1}}
\newcommand{\ind}{\mathrm{Ind}}
\newcommand{\even}{\overline{0}}
\newcommand{\odd}{\overline{1}}
\newcommand{\Oloc}{\cO^{\mathrm{loc}}}
\newcommand{\Otw}{\cO^{\mathrm{tw}}}
 \DeclareMathOperator{\im}{Im}
 \DeclareMathOperator{\rep}{Rep}
 \let\ker\relax
 \let\hom\relax
 \DeclareMathOperator{\ker}{Ker}
 \DeclareMathOperator{\hom}{Hom}
\newcommand{\repA}{\rep A}
\begin{document}
\bibliographystyle{alpha}

\numberwithin{equation}{section}

 \begin{abstract}
We establish rigid tensor category structure on finitely-generated weight modules for the subregular $W$-algebras of 
 $\mathfrak{sl}_n$ at levels $ - n + \frac{n}{n+1}$ (the $\mathcal B_{n+1}$-algebras of  Creutzig-Ridout-Wood) and at levels $- n + \frac{n+1}{n}$ (the finite cyclic orbifolds of the $\beta\gamma$-vertex algebra), as well as for their Feigin-Semikhatov dual principal $W$-superalgebras of $\mathfrak{sl}_{n|1}$. 
 These categories are neither finite nor semisimple, and in the $W$-algebra case they contain modules with infinite-dimensional conformal weight spaces and no lower bound on conformal weights. 
 We give complete lists of indecomposable projective modules in these tensor categories and fusion rules for simple modules. All these vertex operator (super)algebras are simple current extensions of singlet algebras tensored with a rank-one Heisenberg algebra, so we more generally study
 simple current extensions in direct limit completions of vertex algebraic tensor categories. Then our results for $W$-(super)algebras follow from the known ribbon category structure on modules for the singlet algebras. Our results include and generalize those of Allen-Wood on the $\beta\gamma$-vertex algebra, as well as our own on the affine vertex superalgebra of $\mathfrak{gl}_{1|1}$. Our results also include the first examples of ribbon category structure on all finitely-generated weight modules for an affine vertex algebra at a non-integral admissible level, namely for affine $\mathfrak{sl}_2$ at levels $-\frac{4}{3}$ and $-\frac{1}{2}$.
\end{abstract}

\maketitle

\tableofcontents

\section{Introduction}

Vertex operator algebras (VOAs) first appeared in the 1980s as a mathematically rigorous formulation of the symmetry algebra of a two-dimensional rational conformal field theory. 
Moore and Seiberg axiomatized 
such rational  theories \cite{MS}, and formalizing their work led to the notion of modular tensor category \cite{Tu}. 
The corresponding VOA tensor category theory was subsequently developed by Huang and Lepowsky, leading to Huang's proof that the representation category of a rational and $C_2$-cofinite vertex operator algebra is indeed a  semisimple modular tensor category \cite{Hu-rig-mod}. 

Although most VOAs are neither rational nor $C_2$-cofinite, it is still expected that nice tensor categories of modules exist for many non-rational VOAs. Such categories are of increasing interest in higher-dimensional quantum field theory. For example, 
they appear in physics in the context of new invariants of $3$-manifolds  \cite{Cheng} and as categories that underlie non-semisimple topological field theories \cite{CGP, CDGG}. However, establishing in general that suitable module categories for a VOA admit braided tensor category structure is an as yet largely unresolved problem. Further, even when such tensor categories exist, it is still difficult to show that they have nice properties, especially rigidity (existence of duals in a strong sense).

\subsection{General results}

The singlet algebras $\cM(p)$, $p \in \mathbb Z_{\geq 2}$, are good examples of VOAs with module categories that are neither finite nor semisimple \cite{Ad}. We recently established ribbon category structure and determined the fusion rules for these algebras \cite{CMY-singlet, CMY-singlet-typical}, and in the present work we use these results to construct tensor categories of modules for several series of subregular $W$-algebras  and principal $W$-superalgebras in type $A$. These (super)algebras are all simple current extensions of a singlet algebra tensored with a rank-one Heisenberg VOA $\cH$. (A simple current is an invertible object in a tensor category, and a simple current extension of a VOA $V$ is a simple vertex operator (super)algebra $A$ that is a possibly infinite direct sum of simple current $V$-modules that includes $V$ itself.)

In this work, we construct and describe a large rigid tensor (super)category of modules, with uncountably many simple objects, for any simple current extension $A$ of $\cM(p) \otimes \cH$. We first consider the category $\Oloc_A$ of finitely-generated strongly $\cH$-weight-graded $A$-modules: these modules are doubly-graded by Heisenberg and conformal weights such that all doubly-homogeneous subspaces are finite dimensional, and there is a lower bound on the conformal weights of any fixed Heisenberg weight space. We also consider the category $\Otw_A$ of finitely-generated strongly $\cH$-weight-graded twisted $A$-modules associated to an involution $\theta$; in cases where $A$ is a vertex operator superalgebra or a $\frac{1}{2}\ZZ$-graded vertex operator algebra, these are the modules of the Ramond sector in physics terminology.

 Setting $\cO_A = \Oloc_A \oplus \Otw_A$, our first main result, Theorem \ref{thm:main_thm}, states that $\Oloc_A$ is a rigid braided tensor (super)category and that $\cO_A$ is a rigid braided $\ZZ/2\ZZ$-crossed tensor (super)category. 
If $A$ is $\ZZ$-graded by conformal weights, then $e^{2\pi i L(0)}$ defines a ribbon twist on $\Oloc_A$, so that $\Oloc_A$ is a ribbon tensor (super)category. We then determine concrete properties of the categories $\Oloc_A$ and $\cO_A$; specifically, we obtain:
 \begin{itemize}
\item The classification of simple modules (Theorem \ref{thm:simple_A-module_classification}),

\item The existence of projective covers for all simple modules, together with their Loewy diagrams (Theorem \ref{thm:gen_proj_covers}), and

\item Formulas for the tensor product of any pair of simple modules (Theorem \ref{thm:general_fusion_rules}).

\end{itemize}
In cases where $\cO_A$ contains modules with infinite-dimensional conformal weight spaces, we also prove finiteness and semisimplicity of the categories of highest-weight modules in $\cO_A$, of grading-restricted generalized $A$-modules, and of $C_1$-cofinite $A$-modules (Theorems \ref{thm:hw_cat_ss}, \ref{thm:grad_rest_ss}, and \ref{thm:C_1-cofin}). In cases where all objects of $\cO_A$ have finite-dimensional weight spaces, we show that the category of $C_1$-cofinite $A$-modules is a rigid braided tensor category that contains $\Oloc_A$.

To prove these results, we combine the theory of vertex operator (super)algebra extensions developed in \cite{HKL, CKL, CKM-exts, CMY-completions} with the detailed structure of the ribbon category of $C_1$-cofinite $\cM(p)$-modules derived in \cite{CMY-singlet, CMY-singlet-typical}. One subtle point is that we need to show that $\cO_A$ is the same as the category of finite-length strongly $\cH$-weight-graded (twisted) $A$-modules; here the proof uses some results from \cite{CKLR} on the representation theory of VOAs having a Heisenberg subalgebra. For our semisimplicity results, we need to prove the absence of non-trivial self-extensions of certain simple $\cM(p)$-modules (see the proof of Theorem \ref{thm:grad_rest_ss} for details), and the classification of $C_1$-cofinite $A$-modules uses tensor product formulas to show that most simple $A$-modules are not $C_1$-cofinite in cases where $\cO_A$ contains modules with infinite-dimensional conformal weight spaces.

Examples of VOAs covered by our theorems include two series of subregular $W$-algebras associated to $\mathfrak{sl}_n$ at certain admissible levels, namely the $\cB_p$-algebras introduced in \cite{CRW} and the finite cyclic orbifolds of the $\beta\gamma$ (or symplectic boson) VOA. These two series of VOAs include the $\beta\gamma$ VOA itself, for which the existence of rigid tensor category structure has already been recently proved by Allen and Wood \cite{AW}, as well as the simple affine VOAs associated to $\mathfrak{sl}_2$ at levels $-\frac{4}{3}$ and $-\frac{1}{2}$, and Bershadsky-Polyakov algebras at levels $-\frac{9}{4}$ and $-\frac{5}{3}$; see Section \ref{subsec:special_cases} for a detailed comparison of our results with previous results and conjectures on these algebras. Our results also cover two series of principal $W$-superalgebras associated to $\mathfrak{sl}_{n\vert 1}$ at certain admissible levels; see Section \ref{sec:super}.

\subsection{Big module categories for \texorpdfstring{$W$}{W}-(super)algebras}

Let us put our results into context. Vertex operator algebras associated to affine Lie algebras, as well as their quantum Hamiltonian reductions, called $W$-algebras, are some of the richest families of examples of VOAs. There are some $W$-algebras that have modular tensor categories of representations: especially, for any simple Lie algebra $\mathfrak{g}$ and level $k\in\QQ$ which is admissible in the sense of Kac and Wakimoto \cite{KW}, there is an ``exceptional'' simple $W$-algebra associated to $\mathfrak{g}$ at level $k$ which is rational and $C_2$-cofinite \cite{Ar-C2, Ar-rat1, Ar-rat2, McR-rat}. These exceptional $W$-algebras include the simple affine VOAs themselves at positive integer levels. In general, we expect non-exceptional $W$-algebras and affine VOAs at admissible levels to also admit nice tensor categories of modules. Indeed, the category of ordinary (that is, grading-restricted) modules for an affine VOA at admissible level is a semisimple braided tensor category \cite{CHY}, and in many cases this category is also known to be rigid \cite{Cr2}. 

The problem is that for non-integral admissible levels, very few modules of the simple affine VOA  are ordinary, and one thus wants to understand the ``big'' category that includes all finitely-generated weight modules. This category admits modules that lack finiteness conditions such as finite-dimensional conformal weight spaces and lower bounds on conformal weights. As a result, standard techniques of VOA theory do not apply well to such categories. Developing new ideas and techniques for understanding categories of big modules for VOAs is a major challenge, illustrated well by the $\beta\gamma$ VOA. Though this VOA has been known and studied for decades, the complete description in \cite{AW} of the rigid tensor structure on its category of weight modules is quite recent. Our results in this paper show that VOA extension theory is one viable approach, at least as long as the affine VOA of interest contains a well-enough-understood subalgebra.

Recently, there has been good progress in understanding the big category of weight modules for affine VOAs at admissible levels \cite{Ad_sl2_-4/3, Ad-sl2_osp1|2, ACG, CR1, CR2, CRR, KR1, KR2, KRW, Ri1, Ri2, RW}, but complete understanding of its abelian category structure (such as classification of projective modules) remains open, as well as existence of tensor category structure and properties like fusion rules and rigidity. Our work here is the first to successfully address these points in the examples of the affine VOA of $\mathfrak{sl}_2$ at the admissible levels $-\frac{1}{2}$ and $-\frac{4}{3}$. (We note that a classification of indecomposable modules for affine VOAs of $\mathfrak{sl}_2$ at all admissible levels, as well as the existence of braided tensor category structure, is work in progress that will appear soon.)

Among $W$-algebras, the best understood class is principal $W$-algebras, but these VOAs lack a Heisenberg subalgebra and thus do not have interesting weight modules with infinite-dimensional conformal weight spaces. The next most studied series of $W$-algebras are subregular $W$-algebras in type $A$. The subregular $W$-algebra of $\mathfrak{sl}_2$ is just the affine VOA, and the subregular $W$-algebra of $\mathfrak{sl}_3$ is the Bershadsky-Polyakov algebra \cite{Be, Po}. At admissible levels with denominator $n-1$, the simple subregular $W$-algebra of $\mathfrak{sl}_n$ is rational, and these examples are fairly well understood \cite{Ar-BP, ACL, Ar-rat2, CL-sub, CL-trialities}. At general admissible levels the abelian structure of the ``big'' category of weight modules has been studied \cite{AK1, AK2, AKR, Fe, FR, FKR}, but no general classification of indecomposable or projective modules is yet achieved, let alone any results on tensor category structure.  Thus our work here provides the first examples of subregular $W$-algebras for which these problems are solved.

We specifically construct rigid tensor categories of weight modules for the simple subregular $W$-algebra of $\mathfrak{sl}_n$ at levels $-n+\frac{n}{n+1}$ (the $\cB_{p}$-algebra of \cite{CRW, ACKR, ACGY} at $p=n+1$) and $-n+\frac{n+1}{n}$ (the $\ZZ/n\ZZ$-orbifold of the $\beta\gamma$ VOA).  We in particular classify the simple strongly $\cH$-weight-graded (twisted) modules for these subregular $W$-algebras (Theorems \ref{thm:Bp_mod_class} and \ref{thm:B2m_simple_objects}), describe their projective covers (Theorems \ref{thm:Bp_proj_modules} and \ref{thm:B2m_proj_modules}), and compute their tensor products (Theorems \ref{thm:Bp_tensor_products} and \ref{thm:B2m_tens_prods}). We also prove semisimplicity of all highest-weight (twisted) modules and all grading-restricted generalized (twisted) modules for these subregular $W$-algebras (this can be compared to Arakawa's semisimplicity of category $\cO$ for affine VOAs at admissible levels \cite{Ar-cat-O}, and the semisimplicity of highest-weight modules for Bershadsky-Polyakov algebras at admissible levels \cite[Theorem 4.10]{FKR}). The rigid tensor structure on the categories of weight modules for the $\cB_{p}$-algebras had previously been predicted in \cite{ACKR} using a conjectural correspondence between the singlet VOA and the unrolled restricted quantum group of $\mathfrak{sl}_2$; thus now that we have a comprehensive understanding of the singlet algebras \cite{CMY-singlet, CMY-singlet-typical}, the tensor category results in \cite{ACKR} are now rigorous theorems about $\cB_p$-module categories.

Compared with affine VOAs and their $W$-algebras, even less is known about the representation theory of affine vertex operator superalgebras and their $W$-superalgebras. Some of the best-understood examples are the affine superalgebras associated to $\mathfrak{osp}_{1\vert 2n}$, since $\mathfrak{osp}_{1|2n}$ behaves like a simple Lie algebra in many ways. At many rational levels, the category of ordinary modules of the affine $\mathfrak{osp}_{1\vert 2n}$ VOA is a semisimple rigid braided tensor category \cite{CGL}. The case of affine $\mathfrak{osp}_{1|2}$ is quite similar to affine $\mathfrak{sl}_2$, and a fair amount is known about its representation theory at admissible levels \cite{KR1, CKLR2, RSW, Ad-sl2_osp1|2}.

 Recently, we established ribbon supercategory structure on $C_1$-cofinite modules for the affine vertex operator superalgebra of $\mathfrak{gl}_{1|1}$ \cite{CMY3}. This affine vertex operator superalgebra is closely related to the $\beta\gamma$ VOA; specifically, it is a $\mathbb C^\times$-orbifold of the $\beta\gamma$ VOA tensored with a pair of free fermions. That is, this tensor product vertex operator superalgebra is a certain simple current extension of the affine $\mathfrak{gl}_{1\vert 1}$ vertex operator superalgebra. This relation is the first instance of Feigin-Semikhatov duality \cite{FS, CGN}, which itself is one of the simplest cases of triality of $W$-algebras  \cite{CL-trialities, CL-ortho} (triality of vertex algebras at the corner in physics \cite{GR}). Feigin-Semikhatov duality in type $A$ says that a certain Heisenberg coset of the subregular $W$-algebra of $\mathfrak{sl}_n$ at level $k$ tensored with a pair of free fermions is the principal $W$-superalgebra of $\mathfrak{sl}_{n|1}$ at level $\ell$, where the levels satisfy $(k+n)(\ell+n-1) =1$. By a similar construction, the subregular $W$-algebra is a Heisenberg coset of the $W$-superalgebra tensored with the lattice vertex operator superalgebra $V_{\sqrt{-1}\ZZ}$. 
 
 The representation categories of the two dual (super)algebras are closely related \cite{CGNS}, and in Section \ref{sec:super} of the present work, we discuss the Feigin-Semikhatov duals of the $\cB_p$-algebras and the cyclic orbifolds of the $\beta\gamma$ VOA. These are the principal $W$-superalgebras of $\mathfrak{sl}_{n|1}$ at levels $- (n -1) + \frac{n+1}{n}$ and at levels $- (n -1) + \frac{n}{n+1}$. The tensor categories for these principal $W$-superalgebras look very similar (though not quite identical) to those for the subregular $W$-algebras, but the conformal weight gradings of the modules are completely different. In particular, we show that every finitely-generated weight module for these $W$-superalgebras is $C_1$-cofinite, and in particular has finite-dimensional conformal weight spaces and a lower bound on conformal weights. Moreover, the category of $C_1$-cofinite modules for any of these $W$-superalgebras is a rigid braided tensor supercategory which contains weight modules as a tensor subcategory; this is similar to what we found for affine $\mathfrak{gl}_{1\vert 1}$ in \cite{CMY3}.

\subsection{Quasi-lisse vertex operator algebras}

The subregular $W$-algebras of $\mathfrak{sl}_n$ studied in this paper are examples of quasi-lisse vertex algebras in the sense of Arakawa and Kawasetsu \cite{AK}. Quasi-lisse vertex algebras are vertex algebras whose associated varieties have only finitely many symplectic leaves; they generalize the class of $C_2$-cofinite (also called lisse \cite{Ar-assoc-var}) vertex algebras, whose associated varieties are just points. Like $C_2$-cofinite vertex algebras, a quasi-lisse vertex algebra has only finitely many simple ordinary modules (where ordinary means grading-restricted in the sense of having finite-dimensional conformal weight spaces and a lower bound on conformal weights). Beyond $C_2$-cofinite vertex algebras, quasi-lisse vertex algebras include affine vertex algebras at admissible levels and $W$-algebras at non-degenerate admissible levels \cite{AK}.

One might hope that the category of grading-restricted generalized modules for a quasi-lisse VOA would admit the braided tensor category structure of \cite{HLZ1}-\cite{HLZ8}, since this is true for $C_2$-cofinite VOAs \cite{Hu-C2} and affine VOAs at admissible levels \cite{CHY}.  This was formulated as Conjecture 1 in \cite{ACF}. However, in this paper we show that this conjecture is not true in general: Theorem \ref{thm:Bp_grad-rest} enumerates the simple ordinary modules of the $\mathcal B_p$-algebra, and Theorem \ref{thm:Bp_tensor_products} then shows that they do not close under tensor products.

Let us use this opportunity to revise \cite[Conjecture 1]{ACF}. Since a quasi-lisse vertex algebra $V$ has only finitely many simple ordinary modules, one can easily show that all grading-restricted generalized $V$-modules have finite length. In particular, all $C_1$-cofinite $V$-modules have finite length. Miyamoto proved that $C_1$-cofinite modules do close under tensor products \cite{Miy}, and then it was shown in \cite{CJORY, CY} (see also \cite[Theorem 2.3]{McR-cosets}) that the category of $C_1$-cofinite $V$-modules is a tensor category if it is closed under contragredient duals.

\begin{conj}
If $V$ is a simple quasi-lisse vertex operator algebra of CFT type that is its own contragredient dual, then the category of $C_1$-cofinite $V$-modules is closed under contragredient duals and thus admits the vertex algebraic braided tensor category structure of \cite{HLZ1}-\cite{HLZ8}.
\end{conj}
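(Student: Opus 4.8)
The plan is to reduce the conjecture to the single assertion that the category $\mathcal{C}$ of $C_1$-cofinite $V$-modules is closed under the contragredient functor $M \mapsto M'$. All other ingredients are already in place: Miyamoto's theorem \cite{Miy} gives closure of $\mathcal{C}$ under $P(1)$-tensor products, and \cite{CJORY, CY} (see also \cite[Theorem 2.3]{McR-cosets}) then endow $\mathcal{C}$ with the braided tensor category structure of \cite{HLZ1}-\cite{HLZ8} as soon as $\mathcal{C}$ is closed under contragredients, with $V$ (self-dual by hypothesis) serving as the unit object. Since $V$ is of CFT type and, as noted above, every $C_1$-cofinite module is a grading-restricted generalized module of finite length, for any $M \in \mathcal{C}$ the contragredient $M'$ is again a grading-restricted generalized module of finite length, whose composition factors are the contragredients of those of $M$; the one remaining point is that $M'$ is again $C_1$-cofinite.

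First I would reduce this to a statement about the finitely many simple ordinary modules. Because $M \mapsto M/C_1(M)$ is right exact, a short exact sequence $0 \to M_1 \to M \to M_2 \to 0$ yields an exact sequence $M_1/C_1(M_1) \to M/C_1(M) \to M_2/C_1(M_2) \to 0$ of finite-dimensional spaces, so $\mathcal{C}$ is closed under extensions. As $V$ is quasi-lisse it has only finitely many simple ordinary modules $L_1, \dots, L_r$, and every composition factor of every $M \in \mathcal{C}$ lies among them; moreover each $L_i'$ is again simple and ordinary, hence again one of the $L_j$. Thus, if one knew that each of $L_1, \dots, L_r$ is $C_1$-cofinite, then closure under extensions would build $C_1$-cofiniteness of $M'$ from that of its composition factors, and the conjecture would follow. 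Equivalently, this amounts to showing that $C_1$-cofinite modules are exactly the grading-restricted generalized modules.

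The hard part will be precisely this last step: proving that every simple ordinary module of a quasi-lisse VOA is $C_1$-cofinite. The difficulty is structural, since $C_1$-cofiniteness has no evident contragredient symmetry: the contragredient action rewrites the defining modes $u_{-1}$ (for $u \in V_+$) in terms of modes of $e^{xL(1)}(-x^{-2})^{L(0)} u$, which introduces lower-weight and even vacuum contributions and so sets up no direct pairing between $C_1(M)$ and $C_1(M')$. I would therefore attack the simple modules directly, trying to leverage the geometry behind quasi-lisseness: the associated variety has only finitely many symplectic leaves, and I would seek to combine a PBW-type spanning argument over Zhu's $C_2$-algebra $R_V = V/C_2(V)$ with the Arakawa--Kawasetsu fact \cite{AK} that ordinary characters satisfy a modular linear differential equation, in order to bound $\dim L_i/C_1(L_i)$. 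The genuine obstruction is that quasi-lisse is strictly weaker than $C_2$-cofinite, so the usual spanning arguments that deduce $C_1$-cofiniteness from finite-dimensionality of $R_V$ are unavailable; extracting $C_1$-cofiniteness of ordinary modules from the mere finiteness of the symplectic leaves is the essential new input the conjecture demands.
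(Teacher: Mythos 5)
The statement you are addressing is presented in the paper as an open conjecture---the paper contains no proof of it---so the only question is whether your argument actually establishes it, and it does not. Your preliminary steps are sound: closure of $C_1$-cofiniteness under extensions via right exactness of $M \mapsto M/C_1(M)$ is correct, and it is true that the conjecture \emph{would} follow if every simple ordinary (grading-restricted) module of a quasi-lisse VOA were $C_1$-cofinite. But that sufficient condition---which you then assert is ``equivalent'' to the conjecture, i.e.\ that $C_1$-cofinite modules are exactly the grading-restricted generalized modules---is false, and the paper itself proves it is false for precisely the examples motivating the conjecture. Theorem \ref{thm:Bp_grad-rest} shows that $\cB_p$ (a simple, self-contragredient, quasi-lisse VOA of CFT type; e.g.\ $\cB_3 \cong L_{-4/3}(\mathfrak{sl}_2)$) has $(p-1)^2$ simple grading-restricted modules but exactly \emph{one} simple $C_1$-cofinite module, namely $\cB_p$ itself. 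Indeed, the entire reason the authors replaced \cite[Conjecture 1]{ACF} by this conjecture is that for quasi-lisse algebras the $C_1$-cofinite category can be drastically smaller than the grading-restricted category; your reduction erases exactly this distinction and substitutes a strictly stronger, provably false statement. Consequently your proposed attack on that statement (modular linear differential equations from \cite{AK}, spanning arguments over $R_V$) cannot succeed no matter how it is carried out, and by your own admission in the final paragraph the key step is missing in any case.

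There is also a subtler flaw in the reduction itself: composition factors of a $C_1$-cofinite module need not be $C_1$-cofinite, since $C_1$-cofiniteness passes to quotients but not to submodules, so without the (false) global statement you have no control over the factors of $M'$. What actually makes the conjecture hold in the paper's examples is the opposite phenomenon: the $C_1$-cofinite category is small and semisimple, with simple objects a set of simple currents closed under duals (Theorem \ref{thm:C_1-cofin}, Corollary \ref{cor:C1_cofin}), so closure under contragredients holds essentially trivially. A viable general strategy must work with the $C_1$-cofinite category on its own terms---for instance via the characterization, valid for CFT-type $V$, of $C_1$-cofinite modules as the finitely strongly generated ones \cite[Lemma 3.1.6]{Ar-assoc-var}, a condition independent of the conformal vector---rather than by attempting to identify it with the category of ordinary modules.
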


In retrospect, it is perhaps not natural to expect too much of the category of grading-restricted generalized $V$-modules in general, since a VOA may admit many different conformal vectors (as is the case for subregular $W$-algebras like $\cB_p$), in which case the category of grading-restricted generalized modules may depend heavily on the choice of conformal vector. In contrast, if $V$ has CFT type, then a $V$-module is $C_1$-cofinite if and only if it is finitely strongly generated over $V$ \cite[Lemma 3.1.6]{Ar-assoc-var}, and this latter condition does not depend on the choice of conformal vector. Thus it is much more natural to consider the $C_1$-cofinite category; however, the $C_1$-cofinite category may be much smaller than the category of grading-restricted generalized modules. For example, Theorem \ref{thm:Bp_grad-rest} below shows that, with respect to the conformal vector we have chosen, $\cB_p$ has on the order of $\frac{1}{2} p^2$ simple grading-restricted modules but just one simple $C_1$-cofinite module (namely $\cB_p$ itself).

Another interesting feature of the quasi-lisse $W$-algebras studied in this paper is the semisimplicity of their categories of grading-restricted generalized modules. This result parallels Arakawa's semisimplicity of category $\cO$ for (quasi-lisse) simple affine VOAs at admissible levels \cite{Ar-cat-O} and the rationality of ``exceptional'' $C_2$-cofinite $W$-algebras at admissible levels \cite{Ar-rat1, Ar-rat2, McR-rat}. Thus one might be tempted to conjecture that the category of grading-restricted generalized modules is semisimple for any quasi-lisse simple affine $W$-algebra at an admissible level, though proving such a general conjecture would surely be a deep problem. There are also examples of quasi-lisse affine VOAs and $W$-algebras at non-admissible levels, some of which are even $C_2$-cofinite \cite{Ka, AM1, AM2}, but it seems difficult to predict when such quasi-lisse algebras will have semisimple grading-restricted representation theory. An interesting case is the simple affine VOA of $\mathfrak{so}_8$ at level $-2$, for which the grading-restricted module category is semisimple with one simple object (namely, the VOA itself), but the larger highest-weight category $\cO$ is not semisimple \cite{KR2}.
\vspace{3mm}
 
 \noindent{\bf Acknowledgments.} 
 We thank Dra\v{z}en Adamovi\'{c}, Tomoyuki Arakawa, Shashank Kanade, Kazuya Kawasetsu, and Shigenori Nakatsuka for comments and discussions on related ideas. 
 TC is supported by  NSERC Grant Number RES0048511. RM is supported by a startup grant from Tsinghua University. JY is supported by a startup grant from Shanghai Jiao Tong University.

\section{Preliminaries}

\subsection{Vertex operator superalgebras and tensor categories}\label{sec:prelim}

We use the definition of vertex operator superalgebra discussed in \cite[Section 3.1]{CKM-exts}. In particular, a vertex operator superalgebra $V$ has a parity $\ZZ/2\ZZ$-grading $V=V^{\even}\oplus V^{\odd}$ and a conformal weight $\frac{1}{2}\ZZ$-grading $V=\bigoplus_{n\in\frac{1}{2}\ZZ} V_{(n)}$ such that $V=\bigoplus_{i\in\ZZ/2\ZZ, n\in\frac{1}{2}\ZZ} V^i\cap V_{(n)}$. We do not require that $V^{\overline{i}}=\bigoplus_{n\in\frac{i}{2}+\ZZ} V_{(n)}$ for $i=0,1$ in general. Thus a vertex operator superalgebra may be $\ZZ$-graded by conformal weights, and a vertex operator algebra (which is a vertex operator superalgebra such that $V^{\odd}=0$) may be $\frac{1}{2}\ZZ$-graded by conformal weights. We use
\begin{align*}
Y: V\otimes V & \rightarrow V((x))\nonumber\\
 u\otimes v & \mapsto Y(u,x)v=\sum_{n\in\ZZ} u_n v\,x^{-n-1}
\end{align*}
to denote the vertex operator of $V$ (an even linear map), and we use $\vac\in V^{\even}\cap V_{(0)}$ to denote the vacuum vector and $\omega\in V^{\even}\cap V_{(2)}$ to denote the conformal vector. For a parity-homogeneous vector $v\in V$, we use $\vert v\vert\in\ZZ/2\ZZ$ to denote its parity.

In this paper, we will consider vertex operator superalgebras which are \textit{strongly $\CC$-graded} in the sense of \cite[Definition 2.23]{HLZ1}. This means there is an additional $\CC$-grading $V=\bigoplus_{\lambda\in\CC} V^{(\lambda)}$ satisfying the following conditions:
\begin{enumerate}
\item All gradings of $V$ are compatible: $V=\bigoplus_{i\in\ZZ/2\ZZ, n\in\frac{1}{2}\ZZ,\lambda\in\CC} V^i\cap V_{(n)}\cap V^{(\lambda)}$.

\item For all $n\in\frac{1}{2}\ZZ$ and $\lambda\in\CC$, $V^{(\lambda)}_{(n)}:=V^{(\lambda)}\cap V_{(n)}$ is finite dimensional, and for all $\lambda\in\CC$, $V^{(\lambda)}_{(n)}=0$ for all sufficiently negative $n\in\frac{1}{2}\ZZ$.

\item $\vac,\omega\in V^{(0)}$.

\item For all $\lambda,\mu\in\CC$, if $u\in V^{(\lambda)}$ and $v\in V^{(\mu)}$, then  $Y(u,x)v\subseteq V^{(\lambda+\mu)}((x))$.
\end{enumerate}
In our examples, the $\CC$-grading  will be the eigenvalue grading for the operator $h(0)=\mathrm{Res}_x\,Y(h,x)$ for some vector $h\in V^{\even}\cap V_{(1)}$; such a grading satisfies the fourth condition above in particular due to the commutator formula
\begin{equation*}
[h(0), Y(v,x)] =Y(h(0)v,x)
\end{equation*}
for $v\in V$.

Given a strongly $\CC$-graded vertex operator superalgebra $V$, we define the notion of strongly $\CC$-graded $V$-module following \cite[Definition 2.25]{HLZ1}:
\begin{defi}\label{def:strongly_graded}
A \textit{strongly $\CC$-graded $V$-module} is a $\ZZ/2\ZZ\times\CC\times\CC$-graded vector space $W=W^{\even}\oplus W^{\odd} =\bigoplus_{h\in\CC} W_{[h]}=\bigoplus_{\lambda\in\CC} W^{(\lambda)}$ equipped with a vertex operator map
\begin{align*}
Y_W: V\otimes W & \rightarrow W((x))\nonumber\\
 v\otimes w & \mapsto Y_W(v,x)w=\sum_{n\in\ZZ} v_n w\,x^{-n-1}
\end{align*}
satisfying the following properties:
\begin{enumerate}

\item The \textit{grading restriction conditions}: For any $h,\lambda\in\CC$, $W^{(\lambda)}_{[h]}:= W^{(\lambda)}\cap W_{[h]}$ is finite dimensional, and for any $h,\lambda\in\CC$, $W^{(\lambda)}_{[h-n]}=0$ for all $n\in\NN$ sufficiently positive.

\item Homogeneity of the vertex operator: For $v\in V^{\overline{i}}\cap V^{(\lambda)}$ and $w\in W^{\overline{j}}\cap W^{(\mu)}$ where $i,j\in\lbrace 0,1\rbrace$ and $\lambda,\mu\in\CC$, 
\begin{equation*}
Y_W(v,x)w\in (W^{\overline{i+j}}\cap W^{(\lambda+\mu)})((x)).
\end{equation*}

\item The \textit{vacuum property}: $Y_W(\vac,x) =\Id_{W}$.

\item The \textit{Jacobi identity}: For parity-homogeneous $u,v\in V$,
\begin{align*}
x_0^{-1}\delta\left(\frac{x_1-x_2}{x_0}\right) Y_W(u,x_1)Y_W(v,x_2) & - (-1)^{\vert u\vert\vert v\vert} x_0^{-1}\delta\left(\frac{x_2-x_1}{-x_0}\right) Y_W(v,x_2)Y_W(u,x_1)\nonumber\\
& = x_2^{-1}\delta\left(\frac{x_1-x_0}{x_2}\right) Y_W(Y(u,x_0)v,x_2).
\end{align*}

\item Virasoro algebra properties: Setting $Y_W(\omega,x)=\sum_{n\in\ZZ} L(n)\,x^{-n-2}$, 
\begin{equation*}
[L(m),L(n)] =(m-n)L(m+n)+\frac{m^3-m}{12}\delta_{m+n,0}c
\end{equation*}
for $m,n\in\ZZ$, where $c$ is the central charge of $V$, and for any $h\in\CC$, $W_{[h]}$ is the generalized $L(0)$-eigenspace with eigenvalue $h$.

\item The \textit{$L(-1)$-derivative property}: $\frac{d}{dx}Y_W(v,x)=Y_W(L(-1)v,x)$ for any $v\in V$.

\end{enumerate}
\end{defi}

Again in our examples, the second $\CC$-grading $W=\bigoplus_{\lambda\in\CC} W^{(\lambda)}$ of a strongly $\CC$-graded $V$-module $W$ will be given by eigenvalues of $h(0)=\mathrm{Res}_x\,Y_W(h,x)$ for some $h\in V^{\overline{0}}\cap V_{(1)}$. Again, the vertex operator $Y_W$ is homogeneous with respect to such a grading due to the commutator formula
\begin{equation*}
[h(0), Y_W(v,x)] = Y_W(h(0)v,x),
\end{equation*}
which follows by taking $u=h$ in the Jacobi identity and extracting the coefficient of $x_0^{-1} x_1^{-1}$.

\begin{rem}
A \textit{grading-restricted generalized $V$-module} is a strongly $\CC$-graded $V$-module for which the second $\CC$-grading is trivial, that is, $W=W^{(0)}$. If we drop the second $\CC$-grading $W=\bigoplus_{\lambda\in\CC} W^{(\lambda)}$ and the grading restriction conditions from the definition of strongly $\CC$-graded $V$-module, then we get the notion of \textit{generalized $V$-module}, which is ``generalized'' in the sense that it is the direct sum of generalized $L(0)$-eigenspaces.
\end{rem}

As in \cite[Definition 2.32]{HLZ1} the graded dual of a strongly $\CC$-graded $V$-module $W$ is
\begin{equation*}
W'=\bigoplus_{h,\lambda\in\CC} (W')_{[h]}^{(\lambda)},\qquad\text{where}\quad (W')_{[h]}^{(\lambda)}=(W^{(-\lambda)}_{[h]})^*.
\end{equation*}
Because $V$ is $\frac{1}{2}\ZZ$-graded by conformal weights in general, there are two natural strongly $\CC$-graded $V$-module structures on $W'$ (see \cite[Section 3.1]{CKM-exts}):
\begin{equation}\label{eqn:contra_action}
\left\langle Y^{\pm}_{W'}(v,x)w',w\right\rangle =(-1)^{\vert v\vert\vert w'\vert}\left\langle w', Y_W(e^{xL(1)}e^{\pm\pi i L(0)} x^{-2L(0)} v, x^{-1})w\right\rangle
\end{equation}
for $w\in W$ and parity-homogeneous $v\in V$, $w'\in W'$. Since the $L(0)$-conjugation formula
\begin{equation*}
e^{2\pi i L(0)} Y_W(v,x)w=Y_W(e^{2\pi i L(0)} v,x)e^{2\pi i L(0)} w
\end{equation*}
implies that 
\begin{equation*}
e^{2\pi i L(0)}: (W', Y_{W'}^+) \longrightarrow (W', Y_{W'}^-)
\end{equation*}
is a $V$-module isomorphism, we call either $V$-module structure $W'_+ := (W',Y_{W'}^+)$ or $W'_- := (W',Y_{W'}^-)$ the \textit{contragredient} of $W$. Moreover, as in \cite[Proposition 5.3.1]{FHL} and \cite[Theorem 2.34]{HLZ1}, the natural linear isomorphism $W\rightarrow W''$ is an isomorphism of strongly $\CC$-graded $V$-modules; more specifically, it gives isomorphisms $W\rightarrow (W'_+)'_-$ and $W\rightarrow (W'_-)'_+$.

Now we recall the definition of intertwining operator among strongly $\CC$-graded $V$-modules from \cite[Definition 3.10 and 3.14]{HLZ2} and \cite[Definition 3.7]{CKM-exts}:
\begin{defi}
Given strongly $\CC$-graded $V$-modules $W_1$, $W_2$, $W_3$, a \textit{parity-homogeneous intertwining operator} of type $\binom{W_3}{W_1\,W_2}$ is a parity-homogeneous linear map
\begin{align*}
\cY: W_1\otimes W_2 & \rightarrow W_3[\log x]\lbrace x\rbrace\nonumber\\
w_1\otimes w_2 & \mapsto \cY(w_1,x)w_2=\sum_{h\in\CC}\sum_{k\in\NN} (w_1)_{h;k} w_2\,x^{-h-1}(\log x)^k
\end{align*}
satisfying the following properties:
\begin{enumerate}
\item \textit{Lower truncation}: For $w_1\in W_1$, $w_2\in W_2$, and $h\in\CC$, $(w_1)_{h-n,k} w_2 = 0$
for all $n\in\NN$ sufficiently large, independently of $k$.

\item The \textit{Jacobi identity}: For parity-homogeneous $v\in V$ and $w_1\in W_1$,
\begin{align*}
(-1)^{\vert\cY\vert\vert v\vert} & x_0^{-1}  \delta\left(\frac{x_1-x_2}{x_0}\right) Y_{W_3}(v,x_1)\cY(w_1,x_2)\nonumber\\
&\hspace{3em} - (-1)^{\vert v\vert\vert w_1\vert} x_0^{-1}\delta\left(\frac{x_2-x_1}{-x_0}\right) \cY(w_1,x_2)Y_{W_2}(v,x_1)\nonumber\\
& = x_2^{-1}\delta\left(\frac{x_1-x_0}{x_2}\right) \cY(Y_{W_1}(v,x_0)w_1,x_2).
\end{align*}

\item The \textit{$L(-1)$-derivative property}: $\frac{d}{dx}\cY(w_1,x)=\cY(L(-1)w,x)$ for any $w_1\in W_1$.
\end{enumerate}
An \textit{intertwining operator} of type $\binom{W_3}{W_1\,W_2}$ is any sum of an even and an odd intertwining operator of type $\binom{W_3}{W_1\,W_2}$. An intertwining operator of type $\binom{W_3}{W_1\,W_2}$ is \textit{grading-compatible} if $(w_1)_{h,k}w_2\in W_3^{(\lambda+\mu)}$ for any $w_1\in W_1^{(\lambda)}$, $w_2\in W_2^{(\mu)}$, $h\in\CC$, and $k\in\NN$.
\end{defi}

\begin{rem}
When the second $\CC$-gradings on $W_1$, $W_2$, and $W_3$ are given by eigenvalues of the zero-mode $h(0)$ for some $h\in V^{\even}\cap V_{(1)}$, all intertwining operators of type $\binom{W_3}{W_1\,W_2}$ are grading-compatible thanks to the commutator formula
\begin{equation}\label{eqn:Y_compat_with_h(0)}
[h(0),\cY(w_1,x)] =\cY(h(0)w_1,x)
\end{equation}
for all $w_1\in W_1$, which follows easily from the Jacobi identity and the evenness of $h$.
\end{rem}

For strongly $\CC$-graded $V$-modules $W_1$, $W_2$, and $W_3$, we use $\cI\binom{W_3}{W_1\,W_2}$ to denote the vector superspace of grading-compatible intertwining operators of type $\binom{W_3}{W_1\,W_2}$. As in \cite[Proposition 3.44]{HLZ2}, there is for any $r\in\ZZ$ an even linear isomorphism
\begin{equation*}
\Omega_r: \cI\binom{W_3}{W_1\,W_2}\longrightarrow\cI\binom{W_3}{W_2\,W_1}
\end{equation*}
defined by
\begin{equation*}
\Omega_r(\cY)(w_2,x)w_1= (-1)^{\vert w_1\vert\vert w_2\vert}e^{xL(-1)}\cY(w_1,e^{(2r+1)\pi i}x)w_2
\end{equation*}
for parity-homogeneous $w_1\in W_1$, $w_2\in W_2$. Also, adapting the proof of \cite[Proposition 3.46]{HLZ2} to the case that $V$ is $\frac{1}{2}\ZZ$-graded by conformal weights, there is for any $r\in \ZZ$ an even linear map
\begin{equation*}
A_r:\cI\binom{W_3}{W_1\,W_2} \longrightarrow\cI\binom{(W_2)'_\pm}{W_1\,(W_3)'_\pm},
\end{equation*}
where the signs are both $+$ for $r$ even and both $-$ for $r$ odd. The map $A_r$ is defined by
\begin{equation*}
\langle A_r(\cY)(w_1,x)w_3', w_2\rangle =(-1)^{(\vert\cY\vert+\vert w_1\vert)\vert w_3'\vert}\big\langle w_3',\cY(e^{xL(1)}(e^{(2r+1)\pi i}x^{-2})^{L(0)} w_1,x^{-1})w_2\big\rangle
\end{equation*}
for parity-homogeneous $\cY$, $w_1\in W_1$, $w_3'\in W_3'$, and $w_2\in W_2$. Each $A_r$ is an isomorphism because $A_{-r-1}(A_r(\cY))$ is identified with $\cY$ under the $V$-module isomorphisms $((W_2)'_{\pm})'_{\mp}\cong W_2$ and $((W_3)'_\pm)'_\mp\cong W_3$.

We now recall elements of the vertex algebraic tensor category theory of \cite{HLZ1}-\cite{HLZ8}; see also the discussion of the superalgebra generality in \cite[Section 3]{CKM-exts}. Let $\cC$ be a category of strongly $\CC$-graded $V$-modules. We assume that all morphisms in the category preserve both $\CC$-gradings of $V$-modules; this is automatic for the conformal weight grading  because $V$-module homomorphisms commute with $L(0)$, and  this will also be automatic for the second weight grading if it is given by $h(0)$-eigenvalues for some $h\in V^{\even}\cap V_{(1)}$. Actually, $\cC$ is a supercategory in the sense that morphisms form vector superspaces, since every homomorphism between $V$-modules is uniquely the sum of an even and an odd homomorphism.

\begin{defi}
Given two strongly-graded $V$-modules $W_1$ and $W_2$ which are objects of $\cC$, a \textit{tensor product} of $W_1$ and $W_2$ in $\cC$ is an object $W_1\tens W_2$ of $\cC$ equipped with an even grading-compatible intertwining operator $\cY_\tens$ of type $\binom{W_1\tens W_2}{W_1\,W_2}$ such that the following universal property holds: for any object $W_3$ of $\cC$ equipped with a grading-compatible intertwining operator $\cY$ of type $\binom{W_3}{W_1\,W_2}$, there is a unique $V$-module homomorphism $f_\cY: W_1\tens W_2\rightarrow W_3$ such that $f_\cY\circ\cY_\tens =\cY$.
\end{defi}

If a tensor product of any pair of objects in $\cC$ exists, and if certain further conditions (which we will not need to specify here) hold, then $\cC$ has the structure of a braided tensor category with unit object $V$. See \cite{HLZ8} or the exposition in \cite[Section 3.3]{CKM-exts} for descriptions of the unit isomorphisms $l_W: V\tens W\rightarrow W$ and $r_W: W\tens V\rightarrow W$, the associativity isomorphisms $\cA_{W_1,W_2,W_3}: W_1\tens(W_2\tens W_3)\rightarrow (W_1\tens W_2)\tens W_3$, and the braiding isomorphisms $\cR_{W_1,W_2}: W_1\tens W_2\rightarrow W_2\tens W_1$. In particular, the right unit and braiding isomorphisms come from the intertwining operator isomorphisms $\Omega_r$, but we will not need these details of the vertex algebraic braided tensor category structure in this paper.

Assuming that our (super)category $\cC$ of strongly $\CC$-graded $V$-modules is closed under tensor products and contragredient modules, and that $V$ is isomorphic to its contragredient, the intertwining operator isomorphisms $A_r$ combined with the universal property of tensor products yield a sequence of even natural linear isomorphisms
\begin{align*}
\hom_V(X\tens W,V) & \cong\cI\binom{V}{X\,W}\cong\binom{W'}{X\,V'}\nonumber\\
&\cong\cI\binom{W'}{X\,V}\cong\hom_V(X\tens V,W')\cong\hom_V(X,W').
\end{align*}
From this, it is easy to see that contragredients enjoy the following universal property: there is an even evaluation morphism $e_W: W'\tens W\rightarrow V$ in $\cC$ such that for any morphism $f: W\tens X\rightarrow V$ in $\cC$, there is a unique morphism $\varphi: X\rightarrow W'$ such that the diagram
\begin{equation*}
\xymatrixcolsep{4pc}
\xymatrix{
X\tens W \ar[d]_{\varphi\tens\Id_W} \ar[rd]^{f} & \\
W'\tens W \ar[r]_(.55){e_W} & V \\
}
\end{equation*}
commutes. 

Now suppose that $W$ is a rigid object of $\cC$, so that there is a dual object $W^*$ together with an even evaluation $e_W: W^*\tens W\rightarrow V$ and an even coevaluation $i_W: V\rightarrow W\tens W^*$ such that both compositions
\begin{equation*}
W \xrightarrow{\cong} V\tens W\xrightarrow{i_W\tens\Id_W} (W\tens W^*)\tens W\xrightarrow{\cong} W\tens (W^*\tens W)\xrightarrow{\Id_W\tens e_W} W\tens V\xrightarrow{\cong} W
\end{equation*}
and
\begin{equation*}
W^* \xrightarrow{\cong} W^*\tens V\xrightarrow{\Id_{W^*}\tens i_W} W^*\tens (W\tens W^*)\xrightarrow{\cong} (W^*\tens W)\tens W^*\xrightarrow{e_W\tens\Id_{W^*}} V\tens W^*\xrightarrow{\cong} W^*
\end{equation*}
are identities. Then it is straightforward to show that $(W^*, e_W)$ also satisfies the universal property of contragredient modules in $\cC$, and thus $W^*\cong W'$. In other words, if $V$ is self-contragredient and if $\cC$ is a rigid tensor supercategory of strongly $\CC$-graded $V$-modules that is closed under contragredients, then duals in $\cC$ are given by contragredients.

\subsection{The Heisenberg and singlet vertex operator algebras}

Let $\cH$ be the rank-one Heisenberg vertex operator algebra associated to the  abelian Lie algebra $\CC h$ with symmetric bilinear form such that $\langle h,h\rangle =1$. Writing $Y(h,x)=\sum_{n\in\ZZ} h(n)\,x^{-n-1}$, the standard conformal vector of $\cH$ is $\omega_{Std}=\frac{1}{2} h(-1)^2\vac$, which gives $\cH$ central charge $1$. The irreducible $\cH$-modules are the Heisenberg Fock modules $\cF_\lambda$ for $\lambda\in\CC$; the one-dimensional lowest conformal weight space $\CC v_\lambda$ of $\cF_\lambda$ satisfies
\begin{equation*}
h(n)v_\lambda =\delta_{n,0}\lambda v_\lambda.
\end{equation*}
for $n\geq 0$. Moreover, the lowest conformal weight of $\cF_\lambda$ with respect to the standard conformal vector is $\frac{1}{2}\lambda^2$, and the $\cH$-module contragredient of $\cF_\lambda$ is $\cF_{-\lambda}$.

For any $p\in\ZZ_{\geq 2}$, we can give $\cH$ a new conformal vector
\begin{equation*}
\omega =\frac{1}{2}h(-1)^2\vac+\frac{p-1}{\sqrt{2p}}h(-2)\vac =\omega_{Std}+\frac{\alpha_0}{2}h(-2)\vac,
\end{equation*}
where
\begin{equation*}
\alpha_0=\alpha_++\alpha_-,\qquad\alpha_+=\sqrt{2p},\qquad\alpha_-=-\sqrt{2/p}.
\end{equation*}
With respect to the new conformal vector $\omega$, $\cH$ has central charge $c_{p,1}=13-6p-6p^{-1}$, the minimal conformal weight of $\cF_\lambda$ is 
\begin{equation*}
h_\lambda =\frac{1}{2}\lambda(\lambda-\alpha_0),
\end{equation*}
and the new contragredient of $\cF_\lambda$ is $\cF_{\alpha_0-\lambda}$. The Heisenberg algebra $\cH$ is not semisimple as a module for the Virasoro algebra of central charge $c_{p,1}$: it is shown in \cite{Ad} that the Virasoro socle of $\cH$ is a simple vertex operator subalgebra (containing $\omega$) called the singlet algebra $\cM(p)$.

Most Heisenberg Fock modules remain simple as $\cM(p)$-modules. To list those which do not, we introduce the rank-one lattice $L=\mathbb{Z}\alpha_+$ and its dual $L^\circ=\ZZ\frac{\alpha_-}{2}$. Any $\lambda\in L^\circ$ equals
\begin{equation*}
\alpha_{r,s}:=\frac{1-r}{2}\alpha_++\frac{1-s}{2}\alpha_-
\end{equation*}
for a unique $r\in\ZZ$ and $s\in\lbrace 1,2,\ldots, p\rbrace$. For every $r\in\ZZ$ and $1\leq s\leq p$, set $\cM_{r,s}$ to be the $\cM(p)$-socle of $\cF_{\alpha_{r,s}}$. Then we get the following classification of irreducible $\cM(p)$-modules from \cite{Ad}, using the parameterization of \cite[Section 2]{CRW}:
\begin{itemize}
\item Every irreducible $\cM(p)$-module is isomorphic to either $\cF_\lambda$ for some $\lambda\in\CC\setminus L^\circ$ or $\cM_{r,s}$ for some $r\in\ZZ$ and $1\leq s\leq p$.

\item For $r\in\ZZ$ and $1\leq s\leq p$, $\cM_{r,s}=\cF_{\alpha_{r,s}}$ if and only if $s=p$.
\end{itemize}
Moreover, for $r\in\ZZ$ and $1\leq s\leq p$, the lowest conformal weight of $\cM_{r,s}$ is
\begin{equation*}
h_{r,s}:=\frac{r^2-1}{4}p-\frac{rs-1}{2}+\frac{s^2-1}{4p}
\end{equation*}
if $r\geq 1$, and $h_{2-r,s}$ if $r\leq 1$. For $\lambda\in\CC$, the $\cM(p)$-module contragredient of $\cF_\lambda$ is still $\cF_{\alpha_0-\lambda}$, and for $r\in\ZZ$ and $1\leq s\leq p$, $\cM_{r,s}'\cong\cM_{2-r,s}$.

Let $\cO_{\cM(p)}$ be the category of finite-length grading-restricted generalized $\cM(p)$-modules. We showed in \cite{CMY-singlet-typical} that $\cO_{\cM(p)}$ equals the category of $C_1$-cofinite grading-restricted generalized $V$-modules, and that $\cO_{\cM(p)}$ admits the vertex algebraic braided tensor category structure of \cite{HLZ1}-\cite{HLZ8}. Moreover, $\cO_{\cM(p)}$ is rigid and ribbon with duals given by contragredient modules and natural ribbon twist $e^{2\pi i L(0)}$. Note that $\cO_{\cM(p)}$ is much smaller than the category of all grading-restricted generalized (or ``ordinary'') $\cM(p)$-modules, since infinite direct sums of simple ordinary $\cM(p)$-modules can be grading restricted (take for example the triplet vertex algebra $\cW(p)\cong\bigoplus_{n\in\ZZ} \cM_{2n+1,1}$ considered as an $\cM(p)$-module). However, one corollary of our results in \cite{CMY-singlet-typical} is that all finitely-generated grading-restricted generalized $\cM(p)$-modules have finite length and are thus objects of $\cO_{\cM(p)}$.

As an abelian category, $\cO_{\cM(p)}$ does not have non-zero projective objects. To fix this problem, in \cite{CMY-singlet-typical} we defined $\cO_{\cM(p)}^T$ to be the full subcategory of $\cO_{\cM(p)}$ consisting of objects $M$ such that the monodromy, or double braiding, isomorphism
\begin{equation*}
\cR^2_{\cM_{2,1},M}:= \cR_{M,\cM_{2,1}}\circ\cR_{\cM_{2,1},M}
\end{equation*}
is diagonalizable. 
The $T$ in the notation $\cO_{\cM(p)}^T$ is the algebraic torus $\CC/2L^\circ$, which acts by automorphisms on the doublet abelian intertwining algebra $\cA(p)=\bigoplus_{r\in\ZZ}\cM_{r,1}$ of \cite{AM-doub}. We expect that there is a notion of twisted module for abelian intertwining algebras and an induction functor from $\cO_{\cM(p)}$ to a suitable category of some kind of representations for $\cA(p)$ such that  $\cO_{\cM(p)}^T$ consists of all $\cM(p)$-modules in $\cO_{\cM(p)}$ which induce to (direct sums of) twisted $\cA(p)$-modules associated to automorphisms in $T$. However, since the theory of twisted modules for abelian intertwining algebras is not well developed, it is more straightforward to define $\cO_{\cM(p)}^T$ using the monodromy isomorphisms in $\cO_{\cM(p)}$. In any case, we showed in \cite{CMY-singlet, CMY-singlet-typical} that $\cO_{\cM(p)}^T$ has enough projectives:
\begin{thm}
The following are the indecomposable projective objects in $\cO_{\cM(p)}^T$:
\begin{enumerate} 
\item  For $\lambda\in(\CC\setminus L^\circ)\cup\lbrace\alpha_{r,p}\,\vert\,r\in\ZZ\rbrace$, the simple $\cM(p)$-module $\cF_\lambda$ is its own projective cover in $\cO_{\cM(p)}^T$. 

\item For $r\in\ZZ$ and $1\leq s\leq p-1$, the simple $\cM(p)$-module $\cM_{r,s}$ has a length-$4$ projective cover $\cP_{r,s}$ in $\cO_{\cM(p)}^T$ with Loewy diagram
 \begin{equation}\label{eqn:Prs_Loewy_diag}
 \begin{matrix}
  \begin{tikzpicture}[->,>=latex,scale=1.5]
\node (b1) at (1,0) {$\cM_{r, s}$};
\node (c1) at (-1, 1){$\cP_{r, s}$:};
   \node (a1) at (0,1) {$\cM_{r-1, p-s}$};
   \node (b2) at (2,1) {$\cM_{r+1, p-s}$};
    \node (a2) at (1,2) {$\cM_{r,s}$};
\draw[] (b1) -- node[left] {} (a1);
   \draw[] (b1) -- node[left] {} (b2);
    \draw[] (a1) -- node[left] {} (a2);
    \draw[] (b2) -- node[left] {} (a2);
\end{tikzpicture}
\end{matrix} .
 \end{equation}
 \end{enumerate}
\end{thm}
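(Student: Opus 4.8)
The plan is to exploit the rigid ribbon tensor structure on $\cO_{\cM(p)}$, whose duals are given by contragredients, to manufacture projectives by fusion. The starting point is the general categorical principle that in a rigid tensor category every fusion functor $-\tens X$ is exact, and that if $P$ is projective then so is $P\tens X$, because $\hom_{\cM(p)}(P\tens X,-)\cong\hom_{\cM(p)}(P,-\tens X^*)$ realizes $\hom_{\cM(p)}(P\tens X,-)$ as the composite of the exact functor $-\tens X^*$ with the exact functor $\hom_{\cM(p)}(P,-)$. Since every object of $\cO_{\cM(p)}$ is rigid, it therefore suffices to exhibit one projective object and enough rigid objects to reach every simple module by tensoring.

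First I would establish that the typical simple modules are their own projective covers, i.e.\ part (1). For $\lambda\in\CC\setminus L^\circ$, and likewise for the boundary modules $\cM_{r,p}=\cF_{\alpha_{r,p}}$, I would show $\mathrm{Ext}^1_{\cO_{\cM(p)}^T}(\cF_\lambda,S)=0$ for every simple $S$. Since a nonsplit extension of simple modules requires their lowest conformal weights to be congruent modulo $\ZZ$, and since the contragredient isomorphism $\cF_\lambda'\cong\cF_{\alpha_0-\lambda}$ lets me control extensions having $\cF_\lambda$ as a submodule as well, the computation of $h_\lambda=\tfrac12\lambda(\lambda-\alpha_0)$ together with the diagonalizability of the monodromy $\cR^2_{\cM_{2,1},-}$ defining $\cO_{\cM(p)}^T$ should force any candidate extension partner into $L^\circ$, a contradiction. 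Hence $\cF_\lambda$ admits no nonsplit extensions and is projective (indeed projective-injective); being simple, it is its own projective cover.

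For the atypical simples $\cM_{r,s}$ with $1\le s\le p-1$ I would build the covers by fusion and induct on $s$. The modules $\{\cM_{r,1}\}_{r\in\ZZ}$ are invertible, with $\cM_{r,1}\tens\cM_{r',1}\cong\cM_{r+r'-1,1}$, $\cM_{r,1}'\cong\cM_{2-r,1}$, and $\cM_{r,1}\tens\cM_{1,s}\cong\cM_{r,s}$; being rigid, fusing with them permutes simples and preserves both projectivity and Loewy length. This reduces the task to constructing $\cP_{1,s}$, after which $\cP_{r,s}:=\cM_{r,1}\tens\cP_{1,s}$ carries the asserted diagram with $r-1$ and $r+1$ in place of $0$ and $2$. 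To obtain $\cP_{1,s}$ I would fuse the projective boundary module $\cM_{1,p}$ with the fundamental module $\cM_{1,2}$ and descend in $s$ by repeatedly fusing with $\cM_{1,2}$: generically this obeys an $\mathfrak{sl}_2$-type rule $\cM_{1,2}\tens\cM_{1,s}\simeq\cM_{1,s-1}\oplus\cM_{1,s+1}$, but the reflection at the boundary $s=p$, where $\cM_{1,p+1}$ does not exist, forces an indecomposable summand; since it is a summand of a fusion product of a projective with a rigid object, this summand is itself projective.

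It then remains to read off the Loewy structure. I would compute the composition factors of $\cP_{1,s}$ from the fusion rules of \cite{CMY-singlet, CMY-singlet-typical}, expecting $\cM_{1,s}$ with multiplicity two and $\cM_{0,p-s}$, $\cM_{2,p-s}$ each once. Self-duality then pins down the shape: since $\cM_{1,s}'\cong\cM_{1,s}$ while $\cM_{0,p-s}'\cong\cM_{2,p-s}$, a projective cover is self-contragredient, so its socle and head coincide and equal the simple module $\cM_{1,s}$, with the two middle factors forming the second socle layer. Nonvanishing of the relevant extension groups, guaranteed by the conformal-weight congruence $h_{2,p-s}-h_{1,s}=s\in\ZZ$, ensures that the gluing is nonsplit, so the diagram is the stated diamond and $\cM_{1,s}$ is a simple top, confirming $\cP_{1,s}$ as a projective cover. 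The main obstacle is precisely this last step: controlling the fusion product at the atypical boundary sharply enough to show that the indecomposable summand has length exactly four with the stated factors and a \emph{simple} head, rather than merely the right composition multiplicities. This is where the detailed singlet fusion rules and the explicit Feigin--Fuchs structure of the Fock modules $\cF_\lambda$ become indispensable.
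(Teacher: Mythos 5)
First, a point of orientation: the paper you were given does \emph{not} prove this theorem. It is recalled from \cite{CMY-singlet, CMY-singlet-typical} (``we showed in [CMY2, CMY4] that $\cO_{\cM(p)}^T$ has enough projectives''), so the only meaningful comparison is with the strategy of those works. Your overall architecture does match it: bootstrap projectivity through rigidity (direct summands of $P\tens X$ with $P$ projective and $X$ rigid are projective), translate in $r$ via the simple currents $\cM_{r,1}$, and seed the atypical projectives with the boundary fusion $\cM_{1,2}\tens\cM_{1,p}\cong\cP_{1,p-1}$ (which is exactly what Theorem \ref{thm:singlet_fus_rules}(1) gives for $s=2$, $s'=p$), descending in $s$ by further fusion with $\cM_{1,2}$.

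The genuine gap is in your part (1), and it is precisely the step the whole bootstrap hangs on: without it you have no projective object to start fusing with. Your proposed contradiction --- that weight congruence mod $\ZZ$ together with diagonalizability of $\cR^2_{\cM_{2,1},-}$ ``forces any candidate extension partner into $L^\circ$'' --- is false. Since $\alpha_0\in L^\circ$, the module $\cF_{\alpha_0-\lambda}$ is typical whenever $\cF_\lambda$ is, and it has \emph{identical} conformal weights ($h_{\alpha_0-\lambda}=h_\lambda$); more generally, for each $n\in\ZZ$ the equation $h_\mu=h_\lambda+n$ has two solutions $\mu$, generically typical, so there are infinitely many typical candidate partners. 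Nor can the monodromy condition rescue this on its own: the monodromy scalars of $\cM_{2,1}$ on $\cF_\lambda$ and $\cF_{\alpha_0-\lambda}$ are $e^{\mp\pi i\lambda\alpha_+}$, which are \emph{distinct} for $\lambda\notin L^\circ$, and a length-two module whose two composition factors carry distinct monodromy scalars automatically has diagonalizable monodromy --- so membership in $\cO_{\cM(p)}^T$ excludes nothing for such pairs. In the equal-scalar (self-extension) case, a non-split extension can still carry scalar, hence diagonalizable, monodromy unless one \emph{proves} that non-splitness forces a Jordan block, which requires analyzing logarithmic intertwining operators and the nilpotent part of $L(0)$; that analysis is the real content of the projectivity of typicals in \cite{CMY-singlet-typical}. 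The present paper's own proof of Theorem \ref{thm:grad_rest_ss} --- which needs the Zhu algebra of $\cM(p)$, Virasoro embedding diagrams, the self-extension results of \cite{GK}, and logarithmic Heisenberg intertwining operators just to rule out self-extensions of the $\cW_{r,s}^{(\ell)}$ --- shows that nothing as soft as your two criteria can suffice. A second, lesser, gap is at the end: you infer non-splitness of the diamond from ``nonvanishing of the relevant extension groups, guaranteed by the conformal-weight congruence.'' Congruence of weights is necessary for $\mathrm{Ext}^1\neq 0$, never sufficient; the indecomposability and simple head of $\cM_{1,2}\tens\cM_{1,p}$ must instead be extracted from rigidity (for instance, from the fact that the evaluation and coevaluation of $\cM_{1,2}$ cannot factor through a semisimple fusion product) or from the explicit logarithmic structure of that fusion product.
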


In \cite{CMY-singlet, CMY-singlet-typical}, we showed that $\cO_{\cM(p)}^T$ is a ribbon tensor subcategory of $\cO_{\cM(p)}$; in particular, $\cO_{\cM(p)}^T$ is closed under tensor products and contragredients. We also determined tensor products of simple modules in $\cO_{\cM(p)}$, equivalently in $\cO_{\cM(p)}^T$:

\begin{thm}\label{thm:singlet_fus_rules}
 The following tensor product formulas hold in $\cO_{\cM(p)}$:
 \begin{enumerate}

\item For $r,r'\in\ZZ$ and $1\leq s, s'\leq p$,
  \begin{equation*}
   \cM_{r,s}\boxtimes \cM_{r',s'}   \cong\bigg(\bigoplus_{\substack{\ell = |s-s'|+1 \\ \ell+s+s' \equiv 1\; (\mathrm{mod}\; 2)}}^{{\rm min}\{s+s'-1, 2p-1-s-s'\}}\cM_{r+r'-1, \ell}\bigg) \oplus \bigg(\bigoplus_{\substack{\ell = 2p+1-s-s'\\ \ell+s+s' \equiv 1\; (\mathrm{mod}\; 2)}}^{p}\cP_{r+r'-1, \ell}\bigg)
  \end{equation*}
with sums taken to be empty if the lower bound exceeds the upper bound. 
 
  \item 
  For $r\in\ZZ$, $1\leq s\leq p$, and $\lambda\in\CC\setminus L^\circ$, 
 \begin{equation*}
  \cM_{r,s}\tens\cF_\lambda\cong\bigoplus_{\ell=0}^{s-1} \cF_{\lambda+\alpha_{r,s}+\ell\alpha_-}.
 \end{equation*}

  \item
  For $\lambda,\mu\in\CC\setminus L^\circ$ such that $\lambda+\mu =\alpha_++\alpha_-+\alpha_{r,s}\in L^\circ$ for some $r\in\ZZ$, $1\leq s\leq p$,
  \begin{equation*}
  \cF_\lambda\tens\cF_{\mu}\cong\bigoplus_{\substack{s'= s\\ s'\equiv s\,\,(\mathrm{mod}\,2)\\}}^p \cP_{r,s'}\oplus\bigoplus_{\substack{s'=p+2-s\\s'\equiv p-s\,\,(\mathrm{mod}\,2)\\}}^p \cP_{r-1,s'}.
 \end{equation*}

  \item
  For $\lambda,\mu\in\CC\setminus L^\circ$ such that $\lambda+\mu\notin L^\circ$,
\begin{equation*}
\cF_{\lambda}\tens\cF_\mu \cong \bigoplus_{\ell=0}^{p-1} \cF_{\lambda + \mu + \ell \alpha_-}.
\end{equation*}
 \end{enumerate}
 \end{thm}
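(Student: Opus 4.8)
The plan is to reduce the four formulas to a handful of base cases via the simple-current structure, and then to exploit rigidity of $\cO_{\cM(p)}$ together with the classification of indecomposable projectives recalled above. First I would observe that the modules $\cM_{r,1}$ ($r\in\ZZ$) are simple currents: they are invertible under $\boxtimes$ with $\cM_{r,1}\boxtimes\cM_{r',1}\cong\cM_{r+r'-1,1}$ and inverse $\cM_{2-r,1}$ (matching $\cM_{r,1}'\cong\cM_{2-r,1}$). Since tensoring with an invertible object is an auto-equivalence, it suffices to compute $\cM_{r,1}\boxtimes\cF_\lambda\cong\cF_{\lambda+\frac{1-r}{2}\alpha_+}$ and $\cM_{r,1}\boxtimes\cM_{1,s}\cong\cM_{r,s}$ (for $1\le s\le p-1$), after which every product in the theorem is obtained from the cases with first index equal to $1$ by applying an appropriate simple current. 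These simple-current fusions I would verify directly from the Heisenberg/Fock realization, where $\cM_{r,1}$ is generated by the image of the lattice vertex operator $e^{\frac{1-r}{2}\alpha_+}$.

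Second, I would treat the generic case (part (4)) by explicitly constructing intertwining operators among Fock modules. Because $\cM(p)$ does not contain the Heisenberg field $h$, an $\cM(p)$-intertwining operator of type $\binom{\cF_\nu}{\cF_\lambda\,\cF_\mu}$ need not conserve Heisenberg weight: beyond the $\cH$-vertex operator realizing $\nu=\lambda+\mu$, one obtains operators shifting $\nu$ by $\ell\alpha_-$ by dressing with $\ell$ contour integrals of the short screening $e^{\alpha_-}$. These remain singlet-intertwiners because they commute with the screening charges cutting out $\cM(p)$ inside $\cH$, and the family truncates at $\ell=p-1$. I would show $\dim\cI\binom{\cF_{\lambda+\mu+\ell\alpha_-}}{\cF_\lambda\,\cF_\mu}=1$ for $0\le\ell\le p-1$ and $0$ otherwise; combined with the fact that each $\cF_\lambda$ with $\lambda\notin L^\circ$ is simple and projective in $\cO_{\cM(p)}^T$, the universal property then forces the semisimple decomposition in part (4).

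Third, I would obtain parts (1)--(3) by an induction on $s$ driven by the ``fundamental'' object $\cM_{1,2}$. The base product $\cM_{1,2}\boxtimes\cF_\lambda\cong\cF_{\lambda+\alpha_{1,2}}\oplus\cF_{\lambda+\alpha_{1,2}+\alpha_-}$ follows from the two-step Fock structure of $\cM_{1,2}$ (the socle of $\cF_{\alpha_{1,2}}$), and Clebsch--Gordan-type recursions $\cM_{1,2}\boxtimes\cM_{1,s}\cong\cM_{1,s-1}\oplus\cM_{1,s+1}$ (away from the boundary $s=p$) let me bootstrap all $\cM_{1,s}\boxtimes\cF_\lambda$ and $\cM_{1,s}\boxtimes\cM_{1,s'}$ using associativity and the already-known generic rule. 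The composition factors appearing in each product I would fix by a Grothendieck-ring computation, reducing the count to products of Fock-module classes; a Felder-type free-field resolution $0\to\cdots\to\cF_\bullet\to\cM_{r,s}\to0$ together with exactness of $\cF_\lambda\boxtimes-$ (valid since generic $\cF_\lambda$ is projective, so tensoring by it preserves exact sequences) gives these factors cleanly for the mixed product (2).

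The hard part, and the place where the logarithmic nature of the category surfaces, is the appearance of the length-$4$ projective covers $\cP_{r,s}$ in parts (1) and (3). Here I would use rigidity decisively: in the rigid category $\cO_{\cM(p)}^T$ the functor $-\boxtimes W$ admits an exact biadjoint $-\boxtimes W^*$, so it preserves projectives. Since the generic Fock modules and the modules $\cM_{r,p}=\cF_{\alpha_{r,p}}$ are projective, the relevant products are projective, and by the classification of indecomposable projectives recalled above the only projectives with the predicted composition factors are the stated sums of $\cP_{r,s}$'s. The obstacle is to prove that these constituents genuinely assemble into the \emph{indecomposable} projectives with the Loewy diagram \eqref{eqn:Prs_Loewy_diag}, rather than into a different indecomposable or a non-split extension of the wrong shape; settling this requires matching socles and tops via the contragredient symmetry $\cM_{r,s}'\cong\cM_{2-r,s}$, the self-duality forced by rigidity, and a careful length count ruling out additional self-extensions.
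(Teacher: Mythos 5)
This theorem is not actually proved in the paper: it is background, recalled verbatim from the authors' earlier works \cite{CMY-singlet} and \cite{CMY-singlet-typical}, so the only meaningful comparison is with the proofs in those papers. Your architecture — simple-current reduction via the $\cM_{r,1}$, induction driven by $\cM_{1,2}$, projectivity of tensor products via rigidity, and screening-operator constructions for the typical Fock modules — is essentially the architecture of those works, and several of your reductions are sound: the $\cM_{r,1}$ are indeed simple currents; $-\boxtimes W$ preserves projectives because $-\boxtimes W^*$ is an exact biadjoint; and once a tensor product is known to be projective, Krull--Schmidt plus the recalled classification of indecomposable projectives forces the decomposition from the numbers $\dim\hom_{\cM(p)}(W_1\boxtimes W_2,S)=\dim\cI\binom{S}{W_1\,W_2}$ over simple $S$, since each projective cover has simple top. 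In particular the worry in your last paragraph — that the constituents might assemble into "a different indecomposable or a non-split extension of the wrong shape" — is a non-issue and is not where the difficulty lives.

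The genuine gaps are the two inputs you assert without any indication of proof, and they are exactly the hard core of the cited papers. First, the Clebsch--Gordan recursion $\cM_{1,2}\boxtimes\cM_{1,s}\cong\cM_{1,s-1}\oplus\cM_{1,s+1}$, its boundary cases at $s=p-1,p$, the base product $\cM_{1,2}\boxtimes\cM_{1,2}$, and the rigidity of $\cM_{1,2}$ on which your exactness and projectivity arguments depend: in \cite{CMY-singlet} these occupy most of the paper and require explicit analysis of compositions of intertwining operators (BPZ-type differential equations and hypergeometric asymptotics of four-point functions), together with the Virasoro tensor-category results of \cite{MY}; "the two-step Fock structure of $\cM_{1,2}$" does not by itself determine even the composition factors of $\cM_{1,2}\boxtimes\cF_\lambda$, since identifying composition factors of a tensor product already requires controlling spaces of intertwining operators. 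Second, for part (4), the screening construction yields only the lower bounds $\dim\cI\binom{\cF_{\lambda+\mu+\ell\alpha_-}}{\cF_\lambda\,\cF_\mu}\geq 1$; the needed upper bounds and the vanishing for all other simple targets (including the atypical modules $\cM_{r,s}$) are the substance of \cite{CMY-singlet-typical}, obtained there by restricting singlet intertwining operators to the Virasoro subalgebra and invoking the known $c_{p,1}$ Virasoro fusion rules — your proposal offers no mechanism for this direction. Finally, a small but real error: exactness of $\cF_\lambda\boxtimes-$ does not follow from projectivity of $\cF_\lambda$ (projectivity controls $\hom$, not $\boxtimes$); it follows from rigidity, which holds for every object of $\cO_{\cM(p)}$ and which you invoke correctly elsewhere.
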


\section{Results for general simple current extensions}\label{sec:main_results}

In this section, we derive tensor-categorical results for general simple current extensions of $\cH\otimes\cM(p)$.

\subsection{Simple current extensions in direct limit completions}

Let $V$ be a simple vertex operator algebra and $\cC$ a category of grading-restricted generalized $V$-modules that is closed under submodules and quotients and such that every module in $\cC$ has finite length. We assume that $V$ itself is an object of $\cC$ and that $\cC$ admits the vertex algebraic braided tensor category structure of \cite{HLZ1}-\cite{HLZ8}. Suppose $J$ is a simple current $V$-module in $\cC$, that is, there is a simple $V$-module $J^{-1}$ in $\cC$ such that $J\tens J^{-1}\cong V$. For any $n\in\ZZ$, we use $J^n$ to denote the $n$th tensor power of $J$ in $\cC$ (if $n>0$), the $\vert n\vert$th tensor power of $J^{-1}$ (if $n<0$), or $V$ (if $n=0$). In our examples, we will have $J^m\cong J^n$ if and only if $m,n\in\ZZ$ are equal.

By \cite[Theorem 3.12]{CKL}, under suitable assumptions on the conformal weight gradings of the simple currents $J^n$, the $V$-module
\begin{equation*}
A=\bigoplus_{n\in\ZZ} J^n
\end{equation*}
has a natural (in general $\frac{1}{2}\ZZ$-graded) simple vertex operator (super)algebra structure. If $A$ is a superalgebra, with $A^{\odd}\neq 0$, then
\begin{equation}\label{eqn:A_parity}
A^{\overline{i}} =\bigoplus_{n\in i+2\ZZ} J^n
\end{equation}
for $i =0,1$. This is because $A^{\even}$ contains $V$ (since $V$ is a vertex operator algebra), and therefore $A^{\even}$ and $A^{\odd}$ are semisimple $V$-modules. If $J\subseteq A^{\even}$, then $A=A^{\even}$ since $J$ generates $A$ as a vertex operator (super)algebra. Thus $J\subseteq A^{\odd}$ if $A^{\odd}\neq 0$, and it then follows from simple current fusion rules that $J^n\subseteq A^{\overline{i}}$ when $n-i\in2\ZZ$. Similarly, if $A$ has strict half-integral conformal weights, then
\begin{equation}\label{eqn:A_conf_wts}
\bigoplus_{n\in\frac{i}{2}+\ZZ} A_{(n)} = \bigoplus_{n\in i+2\ZZ} J^n
\end{equation}
for $i=0,1$. We would like to use the tensor category $\cC$ to understand tensor structure on categories of $A$-modules, but note that $A$ is not an object of the locally-finite category $\cC$. Thus we need the direct limit completion of $\cC$ as studied in \cite{CMY-completions}.

 The direct limit completion, or $\mathrm{Ind}$-category, of $\cC$ is the category $\ind(\cC)$ of generalized $V$-modules (typically with infinite-dimensional conformal weight spaces) whose objects are the unions of their $\cC$-submodules. Equivalently, a generalized $V$-module is an object of $\ind(\cC)$ if and only if all of its singly-generated $V$-submodules are objects of $\cC$. The main theorem of \cite{CMY-completions} states that $\ind(\cC)$ admits the vertex algebraic braided tensor category structure of \cite{HLZ1}-\cite{HLZ8} under the following condition: For any intertwining operator $\cY$ of type $\binom{X}{W_1\,W_2}$ where $W_1$, $W_2$ are objects of $\cC$ and $X$ is an object of $\ind(\cC)$, the image $\im\cY\subseteq X$ is an object of $\cC$. (Recall that the image of an intertwining operator $\cY$ of type $\binom{X}{W_1\,W_2}$ is the submodule of $X$ spanned by coefficients of powers of $x$ and $\log x$ in $\cY(w_1,x)w_2$ for $w_1\in W_1$, $w_2\in W_2$.) This condition is satisfied if $\cC$ is the category of $C_1$-cofinite grading-restricted generalized $V$-modules (see \cite[Theorem 7.1]{CMY-completions}).
 
Now, the simple current extension $A=\bigoplus_{n\in\ZZ} J^n$ is an object of $\ind(\cC)$. Thus when $A$ is a vertex operator algebra, it is a commutative algebra in the braided tensor category $\ind(\cC)$, with multiplication morphism $\mu_A: A\tens A\rightarrow A$ induced by the vertex operator $Y_A$ \cite{HKL, CMY-completions}. When $A$ is a superalgebra, it is a commutative algebra with even multiplication in an auxiliary tensor supercategory $\cS\ind(\cC)$ \cite{CKL, CKM-exts}, which is just the Deligne product of $\ind(\cC)$ with the supercategory of finite-dimensional vector superspaces. Equivalently, $\cS\ind(\cC)$ is the tensor supercategory of $V$-modules in $\ind(\cC)$ where $V$ is treated as a vertex operator superalgebra with trivial odd component.

We then have the tensor (super)category $\repA$ of (possibly non-local) $A$-modules in $(\cS)\ind(\cC)$ \cite{KO, CKM-exts}: Objects of $\repA$ are pairs $(X,\mu_X)$ where $X$ is an object of $(\cS)\ind(\cC)$ and $\mu_X: A\tens X\rightarrow X$ is an (even) unital associative action of the algebra $A$ on $X$. Equivalently by \cite[Proposition 3.46]{CKM-exts}, an object of $\repA$ is a generalized $V$-module $X$ in $\ind(\cC)$ (with a $\ZZ/2\ZZ$-grading if $A$ is superalgebra) equipped with an (even) intertwining operator
\begin{align*}
Y_X: A\otimes X & \rightarrow X[\log x]\lbrace x\rbrace\nonumber\\
a\otimes b & \mapsto Y_X(a,x)b=\sum_{h\in\CC}\sum_{k\in\NN} a_{h;k}b\,x^{-h-1}(\log x)^k
\end{align*}
satisfying the vacuum property $Y_X(\vac,x)=\Id_X$ and the convergence and associativity relation
\begin{equation}\label{eqn:RepA_assoc}
\langle b', Y_X(a_1,e^{\ln r_1})Y_X(a_2,e^{\ln r_2})b\rangle =\langle b',Y_X(Y(a_1,e^{\ln(r_1-r_2)})a_2,e^{\ln r_2})b\rangle
\end{equation}
for all real numbers $r_1>r_2>r_1-r_2$, all $a_1,a_2\in A$, all $b\in X$, and all $b'\in X'=\bigoplus_{h\in\CC} X_{[h]}^*$. Since the grading restriction conditions need not hold for generalized modules $X$ in $\ind(\cC)$, the graded dual vector space $X'$ may not be a $V$-module in $\ind(\cC)$. Nevertheless, products and iterates of intertwining operators as in \eqref{eqn:RepA_assoc} still converge absolutely (see the proof of \cite[Theorem 6.3(3)]{CMY-completions}).

By \cite[Theorem~3.65]{CKM-exts} and \cite[Theorem~7.7]{CMY-completions}, we also have the vertex algebraic braided tensor (super)category $\rep^0A$ of local $A$-modules in $(\cS)\ind(\cC)$. Its objects are pairs $(X,\mu_X)$ of $\repA$ such that
\begin{equation*}
\mu_X\circ\cR_{A,X}^2=\mu_X.
\end{equation*}
Here $\cR_{A,X}^2:=\cR_{X,A}\circ\cR_{A,X}$ is the monodromy (or double braiding) isomorphism in the braided tensor (super)category $(\cS)\ind(\cC)$. Equivalently, an object $X$ in $\repA$ is an object of $\rep^0A$ if and only if
\begin{equation*}
Y_X(a,x)b\in X((x))
\end{equation*}
for all $a\in A$ and $b\in X$. By \cite[Theorem 3.4]{HKL}, $\rep^0A$ consists precisely of the generalized $A$-modules (with $A$ considered as a vertex operator (super)algebra) that restrict to $V$-modules in $\ind(\cC)$.

The vertex operator (super)algebra $A$ has an involution $\theta\in\mathrm{Aut}(A)$ defined by 
\begin{equation*}
\theta\vert_{J^n} = (-1)^n\Id_{J^n}
\end{equation*}
for $n\in\ZZ$; by \eqref{eqn:A_parity} and \eqref{eqn:A_conf_wts}, $\theta$ is the parity automorphism of $A$ when $A^{\odd}\neq 0$, and $\theta$ is the ribbon twist $e^{2\pi i L(0)}$ when $\bigoplus_{n\in\frac{1}{2}+\ZZ} A_{(n)}\neq 0$. Then we can define the full subcategory $\rep^1A$ of $\repA$ to consist of those objects $(X,\mu_X)$ such that
\begin{equation*}
\mu_X\circ\cR_{A,X}^2\circ(\theta\tens\Id_X)=\mu_X,
\end{equation*}
or equivalently, those objects $(X,Y_X)$ such that
\begin{equation*}
Y_X(\theta(a),e^{2\pi i} x)=Y_X(a,x)
\end{equation*}
for all $a\in A$. By \cite[Theorem 4.14]{McR-orb2}, $\rep^1A$ consists precisely of the generalized $\theta$-twisted $A$-modules that restrict to $V$-modules in $\ind(\cC)$. Moreover, by \cite[Theorem 4.15(2)]{McR-orb2}, the direct sum category $\rep^0A\oplus\rep^1A$ (whose objects are direct sums of local and $\theta$-twisted $A$-modules in $\repA$) is a braided $\ZZ/2\ZZ$-crossed tensor (super)category.

We will need the following monodromy lemma (which is a minor generalization of \cite[Proposition 6.4]{CMY-singlet-typical}) in the next subsection:
\begin{lem}\label{lem:monodromy}
 If $X$ is a generalized $A$-module in $\rep^kA$ for $k=0$ or $k=1$ and $W\subseteq X$ is a $V$-submodule, then monodromies satisfy $\cR^2_{J,W}=(-1)^k\Id_{J\tens W}$.
\end{lem}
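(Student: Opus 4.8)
The plan is to prove the stronger statement that $\cR^2_{J,X}=(-1)^k\Id_{J\tens X}$ as an endomorphism of $J\tens X$ in $(\cS)\ind(\cC)$, and then to deduce the claim for $W$ by naturality. First note that $W$, being a $V$-submodule of $X\in\ind(\cC)$, is itself an object of $\ind(\cC)$: every singly-generated $V$-submodule of $W$ is a singly-generated submodule of $X$ and hence lies in $\cC$. Thus $J\tens W$ and $\cR^2_{J,W}$ are defined, and the whole argument takes place inside the braided tensor (super)category $(\cS)\ind(\cC)$.

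The first step is to extract a categorical identity from the hypothesis $X\in\rep^kA$. Writing $\iota_J\colon J\hookrightarrow A$ for the inclusion of the degree-one summand and recalling $\theta\circ\iota_J=-\iota_J$, I would precompose the defining relation $\mu_X\circ\cR^2_{A,X}\circ(\theta^k\tens\Id_X)=\mu_X$ (with the convention $\theta^0=\Id_A$) by $\iota_J\tens\Id_X$. Naturality of the monodromy in its first argument gives $\cR^2_{A,X}\circ(\iota_J\tens\Id_X)=(\iota_J\tens\Id_X)\circ\cR^2_{J,X}$, while $(\theta^k\tens\Id_X)\circ(\iota_J\tens\Id_X)=(-1)^k(\iota_J\tens\Id_X)$; setting $\mu_X^J:=\mu_X\circ(\iota_J\tens\Id_X)$, these combine to
\begin{equation*}
\mu_X^J\circ\cR^2_{J,X}=(-1)^k\,\mu_X^J .
\end{equation*}

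The crux is to show that $\mu_X^J$ is a monomorphism, after which it can be cancelled on the left to yield $\cR^2_{J,X}=(-1)^k\Id_{J\tens X}$. This is the one place where invertibility of $J$ is genuinely used. Let $m\colon J^{-1}\tens J\to V$ be the component of the multiplication $\mu_A$ landing in the summand $J^0=V$ (it lands there because $J^{-1}\tens J\cong V$ is simple and the $J^n$ are pairwise non-isomorphic); since $A$ is simple, $m$ is an isomorphism. Associativity of the $A$-action together with the unit axiom $\mu_X\circ(\iota_V\tens\Id_X)=l_X$ then identifies $\mu_X^{J^{-1}}\circ(\Id_{J^{-1}}\tens\mu_X^J)$ (where $\mu_X^{J^{-1}}$ is defined analogously), up to the associativity isomorphism, with $l_X\circ(m\tens\Id_X)$, which is an isomorphism $J^{-1}\tens J\tens X\xrightarrow{\sim}X$. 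Hence $\Id_{J^{-1}}\tens\mu_X^J$ is split monic, and since $J^{-1}\tens(-)$ is an equivalence it reflects monomorphisms, so $\mu_X^J$ is monic. Cancelling $\mu_X^J$ gives $\cR^2_{J,X}=(-1)^k\Id_{J\tens X}$. Finally, the monomorphism $\Id_J\tens\iota_W\colon J\tens W\to J\tens X$ (exactness of $J\tens(-)$) and naturality of the monodromy give $(\Id_J\tens\iota_W)\circ\cR^2_{J,W}=\cR^2_{J,X}\circ(\Id_J\tens\iota_W)=(-1)^k(\Id_J\tens\iota_W)$, and cancelling this monomorphism yields $\cR^2_{J,W}=(-1)^k\Id_{J\tens W}$, as desired.

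I expect the monomorphism claim for $\mu_X^J$ to be the main obstacle — not because it is technically hard, but because it is the only step requiring real input (invertibility of $J$ together with the algebra axioms) rather than formal manipulation, and because one must check it within the direct-limit completion $(\cS)\ind(\cC)$, where $X$ and $W$ may violate the grading-restriction conditions of $\cC$. The braided tensor structure on $\ind(\cC)$ furnished by \cite{CMY-completions} is precisely what makes naturality of $\cR^2$, exactness of $J\tens(-)$, and the associativity isomorphisms available in this generality; everything else is a formal consequence of $\theta\vert_{J^n}=(-1)^n\Id_{J^n}$.
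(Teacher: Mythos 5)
Your proof is correct and follows essentially the same route as the paper's: both reduce the claim, via naturality of the monodromy and the defining relation of $\rep^kA$, to the injectivity of the restricted multiplication $\mu_X\vert_{J\tens X}=\mu_X\circ(\iota_J\tens\Id_X)$, which both establish using associativity of the $A$-action and invertibility of the simple current $J$. The only differences are organizational — the paper proves the statement directly for $W$ and exhibits an explicit two-sided inverse of $\mu_X\vert_{J\tens X}$, whereas you first prove the case $W=X$ and certify injectivity by showing $\Id_{J^{-1}}\tens\mu_X^J$ is split monic and that $J^{-1}\tens(-)$ reflects monomorphisms — a cosmetic repackaging of the same argument.
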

\begin{proof}
 Let $i: W\hookrightarrow X$ and $j: J\hookrightarrow A$ denote the respective inclusions. By naturality of the monodromy isomorphisms in $(\cS)\ind(\cC)$, the diagram
 \begin{equation*}
 \xymatrixcolsep{4pc}
  \xymatrix{
  J\tens W \ar[d]^{\cR_{J,W}^2} \ar[r]^{\Id_J\tens i} & J\tens X \ar[d]^{\cR_{J,X}^2} \ar[r]^{j\tens\Id_X} & A\tens X \ar[d]^{\cR^2_{A,X}}\\
  J\tens W \ar[r]^{\Id_J\tens i} \ar[rd]_{\mu_X\vert_{J\tens W}} & J\tens X \ar[d]^(.4){\mu_X\vert_{J\tens X}} \ar[r]^{j\tens\Id_X} & A\tens X \ar[ld]^{\mu_X}\\
  & X & \\
  }
 \end{equation*}
commutes. Here $\Id_J\tens i$ is injective because $J\tens\bullet$ is exact (since the simple current $J$ is rigid with dual $J^{-1}$), and $j\tens\Id_X$ injective because $J$ is a direct summand of $A$. Now since $\mu_X\circ\cR_{A,X}^2=\mu_X\circ(\theta^{-k}\tens\Id_X)$ by definition of $\rep^kA$ and since $\theta^{-k}\circ j=(-1)^k j$,
\begin{equation*}
 \mu_X\vert_{J\tens X}\circ(\Id_J\tens i)\circ\cR_{J,W}^2=(-1)^k\mu_X\circ(j\tens i)=(-1)^k\mu_X\vert_{J\tens X}\circ(\Id_J\tens i).
\end{equation*}
Because $\Id_J\tens i$ is injective, it is enough to show $\mu_X\vert_{J\tens X}$ is injective as well. In fact, one can check that $\mu_X\vert_{J\tens X}$ is an isomorphism with inverse
\begin{align*}
 X\xrightarrow{l_X^{-1}} & V\tens X\xrightarrow{(\mu_A\vert_{J\tens J^{-1}})^{-1}\tens\Id_X} (J\tens J^{-1})\tens X\nonumber\\
 &\xrightarrow{\cA_{J,J^{-1},X}^{-1}} J\tens(J^{-1}\tens X)\xrightarrow{\Id_J\tens\mu_X\vert_{J^{-1}\tens X}} J\tens X,
\end{align*}
using associativity of the multiplication $\mu_X$ and the fact that $J$ is a simple current.
\end{proof} 

There is a tensor functor of induction
\begin{equation*}
 \cF: \cC\longrightarrow\repA
\end{equation*}
defined by $\cF(W)=A\tens W$ on objects and $\cF(f)=\Id_A\tens f$ on morphisms (in the case that $A$ is a superalgebra, we set $\cF(W)^{\overline{i}}=A^{\overline{i}}\tens W$ for $i=0,1$). Since the simple current $J$ is a rigid object of $\cC$ with dual $J^{-1}$, the induction functor $\cF$ is exact, as in the proof of \cite[Proposition 3.2.4]{CMY-singlet} (see also \cite[Remark 3.21]{CKL}). Induction also satisfies Frobenius reciprocity in the sense that there is a natural isomorphism
\begin{equation*}
\mathrm{Hom}_{\repA}(\cF(W),X)\cong\mathrm{Hom}_{\ind(\cC)}(W,X)
\end{equation*}
for any $V$-module $W$ in $\cC$ and any object $X$ in $\repA$. Moreover, if $W$ is a simple $V$-module in $\cC$ such that $J^n\tens W\cong W$ only for $n=0$, then $\cF(W)$ is a simple object of $\repA$ by \cite[Proposition 4.4]{CKM-exts}. Similar to \cite[Proposition 5.0.1]{CMY3}, we can use induction to classify simple modules in $\rep^0A$ and $\rep^1A$. For a simple $V$-module $W$ in $\cC$, let $h_W\in\CC$ be its minimal conformal weight (or \textit{conformal dimension}).
\begin{prop}\label{prop:general_simple_module_classification}
Assume that for all simple $V$-modules $W$ in $\cC$, $J^n\tens W\cong W$ only if $n=0$. Then an object of $\rep^kA$ for $k=0$ or $k=1$ is simple if and only if it is isomorphic to $\cF(W)$ for some simple $V$-module $W$ in $\cC$ such that
\begin{equation*}
h_{J\tens W}-h_J-h_W\in\frac{k}{2}+\ZZ.
\end{equation*}
Moreover, if $W$ and $\til{W}$ are simple $V$-modules in $\cC$, then $\cF(W)\cong\cF(\til{W})$ if and only if $\til{W}\cong J^n\tens W$ for some $n\in\ZZ$.
\end{prop}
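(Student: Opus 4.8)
The plan is to reduce the whole statement to the module category $\cC$ via the induction functor $\cF$ and Frobenius reciprocity, using Lemma~\ref{lem:monodromy} to detect membership in $\rep^kA$ through a single monodromy scalar. The backbone is the equivalence, for a simple $V$-module $W$ in $\cC$,
\begin{equation*}
\cF(W)\in\rep^kA \;\Longleftrightarrow\; \cR^2_{J,W}=(-1)^k\Id_{J\tens W} \;\Longleftrightarrow\; h_{J\tens W}-h_J-h_W\in\tfrac{k}{2}+\ZZ ,
\end{equation*}
which I would establish first. Because $J$ is a simple current, $J\tens-$ is an autoequivalence of $\cC$, so $J\tens W$ is again simple; hence $\cR^2_{J,W}$, being an endomorphism of a simple module, is a scalar by Schur's lemma. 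The balancing axiom $\theta_{J\tens W}=(\theta_J\tens\theta_W)\circ\cR^2_{J,W}$, together with the fact that $\theta=e^{2\pi iL(0)}$ has unique eigenvalue $e^{2\pi ih_M}$ on any simple module $M$, identifies this scalar as $e^{2\pi i(h_{J\tens W}-h_J-h_W)}$, which equals $(-1)^k$ precisely when the conformal-weight condition holds; this is the second equivalence.

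For the first equivalence, the forward implication is Lemma~\ref{lem:monodromy} applied to $W\subseteq\cF(W)$. For the converse, I would observe that $\cF(W)=\bigoplus_n J^n\tens W$ is a direct sum of simple $V$-modules, so the defining relation $\mu_{\cF(W)}\circ\cR^2_{A,\cF(W)}=\mu_{\cF(W)}\circ(\theta^{-k}\tens\Id)$ of $\rep^kA$ restricts, on the summand $J^m\tens(J^n\tens W)$ of $A\tens\cF(W)$, to the scalar identity $\cR^2_{J^m,J^n\tens W}=(-1)^{km}$ (using $\theta^{-k}|_{J^m}=(-1)^{km}$ and that $\mu_{\cF(W)}$ is an isomorphism onto $J^{m+n}\tens W$ there). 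By multiplicativity of the monodromy scalar in each tensor variable (a hexagon consequence, valid since all modules in sight are simple) together with the vanishing $\cR^2_{J,J}=\Id$, all of these identities collapse to the single requirement $\cR^2_{J,W}=(-1)^k\Id$. The needed fact $\cR^2_{J,J}=\Id$ is itself Lemma~\ref{lem:monodromy} for the regular module $A\in\rep^0A$ with the submodule $J\subseteq A$.

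With the equivalence in hand the classification is short. For the ``if'' direction, given $W$ simple with $h_{J\tens W}-h_J-h_W\in\frac{k}{2}+\ZZ$, the hypothesis that $J^n\tens W\cong W$ forces $n=0$ and \cite[Proposition~4.4]{CKM-exts} make $\cF(W)$ simple in $\repA$, and the equivalence places it in $\rep^kA$, where it is therefore simple. For the ``only if'' direction, a simple $X$ in $\rep^kA$ is a union of its $\cC$-submodules, each of finite length, so it contains a simple $V$-submodule $W$ in $\cC$; Frobenius reciprocity turns the inclusion $W\hookrightarrow X$ into a nonzero morphism $\cF(W)\to X$ in $\repA$. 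Lemma~\ref{lem:monodromy} gives $\cR^2_{J,W}=(-1)^k\Id$, so $\cF(W)\in\rep^kA$ and the map is a morphism there; as $\cF(W)$ and $X$ are both simple, it is an isomorphism, and the conformal-weight condition on $W$ follows from the equivalence. Finally, for the ``moreover'' statement, $\cF(J^n\tens W)=A\tens(J^n\tens W)\cong(A\tens J^n)\tens W\cong A\tens W=\cF(W)$ as $A$-modules, since tensoring with $J^n$ merely permutes the summands of $A=\bigoplus_m J^m$; conversely an isomorphism $\cF(W)\cong\cF(\til{W})$ restricts to a $V$-module isomorphism $\bigoplus_n J^n\tens W\cong\bigoplus_n J^n\tens\til{W}$, whence the simple summand $\til{W}$ matches some $J^n\tens W$ by Krull--Schmidt uniqueness (equivalently, apply Frobenius reciprocity to the isomorphism).

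I expect the main obstacle to be the converse of the first equivalence, i.e.\ verifying that the single condition $\cR^2_{J,W}=(-1)^k\Id$ genuinely forces all of $\cF(W)$ into $\rep^kA$. This is where the multiplicativity of monodromy scalars and the triviality $\cR^2_{J,J}=\Id$ (which encodes that the summands of $A$ are mutually local, so that $A$ is an honest vertex operator superalgebra rather than merely an abelian intertwining algebra) must be combined, and where the superalgebra bookkeeping of parities and the conventions relating $\theta$, $e^{2\pi iL(0)}$, and the Koszul signs in $\cS\ind(\cC)$ need to be tracked carefully.
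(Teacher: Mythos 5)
Your proposal is correct and follows essentially the same route as the paper: the balancing-equation identification of $\cR^2_{J,W}$ with $e^{2\pi i(h_{J\tens W}-h_J-h_W)}\Id$, simplicity of $\cF(W)$ via \cite[Proposition 4.4]{CKM-exts}, and Frobenius reciprocity applied to a simple $V$-submodule for the converse and the ``moreover'' clause. The only difference is that where the paper cites \cite[Proposition 2.65]{CKM-exts} and \cite[Theorem 2.11(2)]{CKL} to reduce membership in $\rep^kA$ to the single condition $\cR^2_{J,W}=(-1)^k\Id_{J\tens W}$, you re-derive that reduction directly from Lemma \ref{lem:monodromy} and multiplicativity of scalar monodromies, which is a valid unpacking of the cited results rather than a different argument.
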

\begin{proof}
If $W$ is a simple $V$-module in $\cC$, then $\cF(W)$ is a simple object of $\repA$ by \cite[Proposition 4.4]{CKM-exts}. Moreover, \cite[Proposition 2.65]{CKM-exts} (and its easy generalization to $\theta$-twisted modules) shows that $\cF(W)$ is an object of $\rep^kA$ if and only if $\cR_{A,W}^2=\theta^{-k}\tens\Id_W$, equivalently $\cR_{J^n,W}^2=(-1)^{kn}\Id_{J^n\tens W}$ for all $n\in\ZZ$. Since $J$ generates the group of simple currents $\lbrace J^n\rbrace_{n\in\ZZ}$, it then follows from \cite[Theorem 2.11(2)]{CKL} that $\cF(W)$ is an object of $\rep^kA$ if and only if $\cR^2_{J,W}=(-1)^k\Id_{J\tens W}$. Then since
\begin{align}\label{eqn:lifting_criterion}
\cR_{J,W}^2=e^{2\pi i L_{J\tens W}(0)}\circ(e^{-2\pi i L_J(0)}\tens e^{-2\pi i L_W(0)}) =e^{2\pi i(h_{J\tens W}-h_J-h_W)}\Id_{J\tens W}.
\end{align}
by the balancing equation for monodromy, $\cF(W)$ is a simple object of $\rep^kA$ if $W$ is a simple $V$-module in $\cC$ such that $h_{J\tens W}-h_J-h_W\in\frac{k}{2}+\ZZ$.

Conversely, suppose $X$ is a simple object of $\rep^kA$. Because the parity-homogeneous parts of $X$ are objects of $\ind(\cC)$, they are the unions of their $\cC$-submodules. Then because every module in $\cC$ has finite length, one of the parity-homogeneous parts of $X$ contains a simple $V$-submodule $W$ which is an object of $\cC$. The $\ind(\cC)$-inclusion $W\hookrightarrow X$ induces by Frobenius reciprocity a non-zero parity-homogeneous $\repA$-morphism $\cF(W)\rightarrow X$. Since $\cF(W)$ and $X$ are both simple, it follows that $\cF(W)\cong X$ (possibly by an odd isomorphism), and then since $X$ is an object of $\rep^kA$, \eqref{eqn:lifting_criterion} implies that $h_{J\tens W}-h_J-h_W\in\frac{k}{2}+\ZZ$.

Now suppose $W$ and $\til{W}$ are simple $V$-modules in $\cC$. Since $\cF(W)$ and $\cF(\til{W})$ are both simple objects of $\repA$, $\cF(W)\cong\cF(\til{W})$ if and only if $\mathrm{Hom}_{\repA}(\cF(\til{W}),\cF(W))\neq 0$. By Frobenius reciprocity, this holds if and only if $\mathrm{Hom}_{\ind(\cC)}(\til{W},\cF(W))\neq 0$. As a $V$-module, $\cF(W)\cong\bigoplus_{n\in\ZZ} J^n\tens W$ is a direct sum of simple $V$-modules, so there is a non-zero $V$-module homomorphism $\til{W}\rightarrow X$ if and only if $\til{W}\cong J^n\tens W$ for some $n\in\ZZ$.
\end{proof}

\subsection{General simple current extensions of Heisenberg times singlet}

We now apply the results and discussion of the previous subsection to the case $V=\cH\otimes\cM(p)$ for $p\in\ZZ_{\geq 2}$.  Since we can consider Heisenberg Fock modules as either $\cH$-modules or $\cM(p)$-modules, we will from now on use $\cF^\cH_\lambda$ for $\lambda\in\CC$ to denote a Fock module considered as an $\cH$-module, while $\cF_\lambda$ will denote the same Fock module considered as an $\cM(p)$-module.

 By \cite[Proposition 6.1]{CMY-singlet-typical}, both direct limit completions $\ind(\cO_{\cM(p)})$ and $\ind(\cO_{\cM(p)}^T)$ satisfy the assumptions of \cite[Theorem 1.1]{CMY-completions}, and thus both admit the vertex algebraic braided tensor category structure of \cite{HLZ8}. We also have the following result, which is \cite[Proposition 6.2]{CMY-singlet-typical}:
\begin{prop}\label{prop:gen_Mp_mod_in_DLC}
 Any grading-restricted generalized $\cM(p)$-module is an object of the direct limit completion $\ind(\cO_{\cM(p)})$.
\end{prop}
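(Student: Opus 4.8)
The plan is to unwind the definition of the direct limit completion recalled just above this proposition: a generalized $\cM(p)$-module $M$ is an object of $\ind(\cO_{\cM(p)})$ if and only if every singly-generated $\cM(p)$-submodule of $M$ is an object of $\cO_{\cM(p)}$. So I would fix a grading-restricted generalized $\cM(p)$-module $M$, choose an arbitrary $w\in M$, and set $W=\langle w\rangle$ to be the submodule it generates; the entire task reduces to showing $W\in\cO_{\cM(p)}$.

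The first step is to observe that $W$ inherits grading-restrictedness from $M$. Since $W$ is stable under $L(0)$, it is graded by generalized $L(0)$-eigenspaces with $W_{[h]}=W\cap M_{[h]}$. These spaces are finite dimensional because each $M_{[h]}$ is, and they vanish whenever $M_{[h-n]}=0$, which happens for all sufficiently positive $n\in\NN$ since the conformal weights of $M$ are bounded below in each coset modulo $\ZZ$. Hence $W$ is a (singly-, in particular finitely-) generated grading-restricted generalized $\cM(p)$-module.

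The second step is to invoke the corollary of the results of \cite{CMY-singlet-typical} recalled in the discussion of $\cO_{\cM(p)}$ above, namely that every finitely-generated grading-restricted generalized $\cM(p)$-module has finite length and is therefore an object of $\cO_{\cM(p)}$. Applying this to $W$ gives $W\in\cO_{\cM(p)}$. As $w$ was arbitrary, every singly-generated submodule of $M$ lies in $\cO_{\cM(p)}$, and the characterization of $\ind(\cO_{\cM(p)})$ then yields $M\in\ind(\cO_{\cM(p)})$.

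I expect the only substantive input to be the finite-length statement for finitely-generated grading-restricted modules, which is exactly the cited consequence of \cite{CMY-singlet-typical}; granting it, the remainder is a routine application of the definition of the Ind-category. The one point I would check carefully is the inheritance of grading-restrictedness under passing to a submodule, but both defining conditions (finite-dimensional conformal weight spaces and a coset-wise lower bound on conformal weights) pass immediately to graded subspaces, so this presents no real obstacle.
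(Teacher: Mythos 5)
Your proof is correct, and since the paper states this proposition without proof (it is quoted directly as \cite[Proposition 6.2]{CMY-singlet-typical}), your argument reconstructs exactly the intended derivation from the facts the paper recalls: by the characterization of $\ind(\cO_{\cM(p)})$ via singly-generated submodules, it suffices to note that any singly-generated submodule of a grading-restricted generalized $\cM(p)$-module inherits both grading-restriction conditions (submodules are graded by generalized $L(0)$-eigenspaces, these are subspaces of the finite-dimensional weight spaces of the ambient module, and lower truncation passes to subspaces), and is finitely generated, hence of finite length by the corollary of \cite{CMY-singlet-typical} recalled in Section 2.2, hence an object of $\cO_{\cM(p)}$. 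The one point you flagged — inheritance of grading-restrictedness — is indeed routine, and your invocation of the finite-length result is not circular, since in \cite{CMY-singlet-typical} that result is established prior to (and is the substance of) their Proposition 6.2.
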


\begin{rem}
Although not every grading-restricted generalized (or ``ordinary'') $\cM(p)$-module has finite length (recall the triplet vertex algebra $\cW(p)\cong\bigoplus_{n\in\ZZ} \cM_{2n+1,1}$ considered as an $\cM(p)$-module), the above proposition shows that every grading-restricted generalized $\cM(p)$-module is the union of its finite-length submodules.
\end{rem}

 Let $\mathcal{O}_{\mathcal{H}}$ be the category of finite-length grading-restricted generalized $\cH$-modules on which $h(0)$ acts semisimply. From \cite[Theorem 1.7.3]{FLM} (see also \cite[Lemma 2.1]{M}), $\cO_{\cH}$ is semisimple with simple objects the Fock modules $\cF_{\lambda}^{\cH}$ for $\lambda \in \mathbb{C}$. Now let $\cC$ be the category of $\cH\otimes\cM(p)$-modules whose objects are (isomorphic to) finite direct sums of modules $\cF^\cH_\lambda\otimes M$ with $M$ an object of $\cO^T_{\cM(p)}$. By \cite[Theorems 5.2 and 5.5]{CKM2}, $\cC$ admits the vertex algebraic braided tensor category structure of \cite{HLZ8}, and moreover $\cC$ is braided tensor equivalent to the Deligne product $\cO_{\cH} \boxtimes \cO_{\cM(p)}^T$. So in particular $\cC$ is rigid, since both $\cO_\cH$ and $\cO_{\cM(p)}^T$ are rigid. Note that $\cC$ is a relatively small subcategory of the category of all grading-restricted generalized $\cH\otimes\cM(p)$-modules, since for example it does not contain any module which has a non-split self-extension of a Heisenberg Fock module (on which $h(0)$ acts non-semisimply) as an $\cH$-submodule, nor does it contain any grading-restricted infinite direct sums of simple $\cH\otimes\cM(p)$-modules.

The direct limit completion $\ind(\mathcal{C})$ consists of all generalized $\cH \otimes \cM(p)$-modules which are isomorphic to arbitrary direct sums
\begin{equation*}
 X=\bigoplus_{\lambda\in\CC} \cF_\lambda^\cH\otimes M_\lambda
\end{equation*}
such that $M_\lambda$ is a generalized $\cM(p)$-module in $\ind(\cO_{\cM(p)}^T)$ (see the proof of \cite[Proposition 3.6]{McR-cosets}). Moreover, since $\cO_{\cM(p)}^T$ satisfies the assumptions in \cite[Theorem 1.1]{CMY-completions}, so does $\cC$ by \cite[Proposition 3.6]{McR-cosets}. Thus $\ind(\cC)$ admits the vertex algebraic braided tensor category structure of \cite{HLZ8}. We also have:
\begin{lem}
For $\lambda\in\CC$ and $P$ a projective object of $\cO_{\cM(p)}^T$, $\cF^\cH_\lambda\otimes P$ is projective in $\ind(\cC)$.
\end{lem}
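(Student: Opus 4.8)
The plan is to reduce projectivity of $\cF^\cH_\lambda\otimes P$ in $\ind(\cC)$ to projectivity of $P$ in $\cO_{\cM(p)}^T$ (equivalently in $\ind(\cO_{\cM(p)}^T)$), exploiting the explicit description of objects of $\ind(\cC)$ as direct sums $X=\bigoplus_{\mu\in\CC}\cF_\mu^\cH\otimes M_\mu$ with $M_\mu$ in $\ind(\cO_{\cM(p)}^T)$. First I would fix a surjection $\pi: X\twoheadrightarrow Y$ in $\ind(\cC)$ and a morphism $f:\cF^\cH_\lambda\otimes P\to Y$, and observe that since every morphism in $\cC$ (and hence in $\ind(\cC)$) preserves the $h(0)$-grading, both $\pi$ and $f$ respect the Heisenberg decomposition. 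Concretely, writing $X=\bigoplus_\mu\cF_\mu^\cH\otimes M_\mu$ and $Y=\bigoplus_\mu\cF_\mu^\cH\otimes N_\mu$, the morphism $\pi$ is a direct sum $\bigoplus_\mu\Id_{\cF_\mu^\cH}\otimes\pi_\mu$ with each $\pi_\mu: M_\mu\to N_\mu$ a surjection of $\cM(p)$-modules, and $f$ is concentrated in the Heisenberg weight $\lambda$, i.e. $f=\Id_{\cF_\lambda^\cH}\otimes g$ for a single $\cM(p)$-module map $g:P\to N_\lambda$.

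Having isolated the Heisenberg weight $\lambda$, the problem becomes purely a statement about the factor $\cM(p)$: I need a lift $\til g:P\to M_\lambda$ of $g$ along the surjection $\pi_\lambda: M_\lambda\to N_\lambda$ in $\ind(\cO_{\cM(p)}^T)$. Since $P$ is projective in $\cO_{\cM(p)}^T$, such a lift exists provided $P$ remains projective in the larger category $\ind(\cO_{\cM(p)}^T)$ against surjections with possibly infinite-length target. The key point is that $P$ is a finite-length — indeed singly-generated — object, so any morphism $g:P\to N_\lambda$ lands in a finite-length $\cC$-submodule of $N_\lambda$; by the characterization of $\ind(\cO_{\cM(p)}^T)$ as the category whose objects are unions of their $\cO_{\cM(p)}^T$-submodules, one can restrict attention to finite-length submodules of $M_\lambda$ and $N_\lambda$ and pull back the problem into $\cO_{\cM(p)}^T$, where projectivity of $P$ applies directly. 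Then $\til f:=\Id_{\cF_\lambda^\cH}\otimes\til g$ is the desired lift of $f$, giving projectivity of $\cF^\cH_\lambda\otimes P$ in $\ind(\cC)$.

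The main obstacle I anticipate is the second step: promoting projectivity of $P$ from $\cO_{\cM(p)}^T$ to the direct limit completion $\ind(\cO_{\cM(p)}^T)$, where surjections need not have finite-length kernels or targets. The essential tool is the finiteness of $P$ (it is finitely generated, hence lands inside a finite-length submodule of any target), together with the standard fact that a projective object of a locally finite abelian category remains projective in its Ind-completion when it is itself a compact (finitely-generated) object. I would make this precise by checking that for the surjection $\pi_\lambda$ one can choose a finite-length $\cO_{\cM(p)}^T$-submodule $M'\subseteq M_\lambda$ mapping onto the finite-length submodule $N'\subseteq N_\lambda$ containing $\im g$, which is possible because $\pi_\lambda$ is surjective and $M_\lambda$ is the union of its finite-length submodules; then projectivity of $P$ in $\cO_{\cM(p)}^T$ supplies the lift $P\to M'\hookrightarrow M_\lambda$. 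Once this compactness-plus-exhaustion argument is in place, the Heisenberg factor contributes nothing beyond the grading bookkeeping, and the result follows.
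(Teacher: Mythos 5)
Your proposal is correct and takes essentially the same route as the paper: both arguments use preservation of the $h(0)$-grading to reduce the lifting problem to the $\cM(p)$-tensor-factor at Heisenberg weight $\lambda$, and both use that $P$ is finitely generated (compact) to promote projectivity of $P$ from $\cO_{\cM(p)}^T$ to $\ind(\cO_{\cM(p)}^T)$ --- the exhaustion argument you spell out is exactly what the paper imports by citing the proof of \cite[Lemma 5.0.3]{CMY3}. The one place where your write-up is slightly less careful is the claim that $f=\Id_{\cF^\cH_\lambda}\otimes g$ and $\pi=\bigoplus_\mu \Id_{\cF^\cH_\mu}\otimes\pi_\mu$: this factorization is true, but it does not follow from $h(0)$-grading preservation alone (which only gives block-diagonality); it also needs Schur's lemma for the simple Fock modules, and this is precisely why the paper instead restricts the diagram to the lowest Heisenberg conformal weight spaces $v_\lambda\otimes(\,\cdot\,)$ to extract $\cM(p)$-module maps, and then verifies the lift $F$ using that $v_\lambda\otimes P$ generates $\cF^\cH_\lambda\otimes P$ as an $\cH$-module.
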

\begin{proof}
Consider the following diagram in $\ind(\cC)$ where $p$ is surjective:
\begin{equation*}
\xymatrixcolsep{4pc}
\xymatrix{
& \cF^\cH_\lambda\otimes P \ar[d]^{q} \\
\bigoplus_{\mu\in\CC} \cF^\cH_\mu\otimes M_\mu \ar[r]^{p} & \bigoplus_{\mu\in\CC} \cF^\cH_\mu\otimes N_\mu \\
}
\end{equation*}
Since $p$ and $q$ preserve $h(0)$-eigenspaces, $p=\bigoplus_{\mu\in\CC} p_\mu$ where each $p_\mu: \cF^\cH_\mu\otimes M_\mu\rightarrow \cF^\cH_\mu\otimes N_\mu$ is surjective, and $\im q\subseteq\cF^\cH_\lambda\otimes N_\lambda$. Moreover, $q$ and each $p_\mu$ preserve conformal weight spaces for the Heisenberg conformal vector $\omega_{Std}\otimes\vac_{\cM(p)}$, so the above diagram restricts to
\begin{equation*}
\xymatrixcolsep{4pc}
\xymatrix{
&v_\lambda\otimes P \ar[d]^{q\vert_{v_\lambda\otimes P}}\\
v_\lambda\otimes M_\lambda \ar[r]^{p_\lambda\vert_{v_\lambda\otimes M_\lambda}} & v_\lambda\otimes N_\lambda\\
}
\end{equation*}
where $v_\lambda\in\cF_\lambda^\cH$ is a non-zero vector of minimal conformal weight and $p_\lambda\vert_{v_\lambda\otimes M_\lambda}$ is surjective. This is now a diagram in $\ind(\cO_{\cM(p)}^T)$. Thus because $P$ is a (finitely-generated) projective object of $\cO_{\cM(p)}^T$, it follows as in the proof of \cite[Lemma 5.0.3]{CMY3} that there is an $\cM(p)$-module homomorphism $f: P\rightarrow M_\lambda$ such that 
\[
p_\lambda\vert_{v_\lambda\otimes M_\lambda}\circ f= q\vert_{v_\lambda\otimes P}.
\]
We then get the $\ind(\cC)$-morphism 
\begin{equation*}
F: \cF^\cH_\lambda\otimes P \xrightarrow{\Id_{\cF^\cH_\lambda}\otimes f} \cF^\cH_\lambda\otimes M_\lambda\hookrightarrow\bigoplus_{\mu\in\CC} \cF^\cH_\mu\otimes M_\mu,
\end{equation*}
which satisfies $p\circ F=q$ because $v_\lambda\otimes P$ generates $\cF^\cH_\lambda\otimes P$ as an $\cH$-module. Thus $\cF^\cH_\lambda\otimes P$ is projective in $\ind(\cC)$.
\end{proof}

\begin{cor}\label{cor:proj_cover_in_C}
For $\lambda\in\CC$ and $\mu\in\CC\setminus L^\circ\cup\lbrace\alpha_{r,p}\,\vert\,r,\in\ZZ\rbrace$, the simple $\cH\otimes\cM(p)$-module $\cF^\cH_\lambda\otimes\cF_\mu$ is its own projective cover in $\cC$, while for $\lambda\in\CC$, $r\in\ZZ$, and $1\leq s\leq p-1$, $\cF^\cH_\lambda\otimes\cP_{r,s}$ is a projective cover of $\cF^\cH_\lambda\otimes\cM_{r,s}$ in $\cC$.
\end{cor}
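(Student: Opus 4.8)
The plan is to first establish that the listed modules are projective in $\cC$ by invoking the preceding lemma, and then to identify them as projective covers by transporting the corresponding statements in $\cO_{\cM(p)}^T$ across the braided tensor equivalence $\cC\simeq\cO_\cH\boxtimes\cO_{\cM(p)}^T$, using crucially that $\cO_\cH$ is semisimple. First I would observe that, by the recalled classification of indecomposable projectives in $\cO_{\cM(p)}^T$, the modules $\cF_\mu$ for $\mu\in(\CC\setminus L^\circ)\cup\{\alpha_{r,p}\mid r\in\ZZ\}$ and $\cP_{r,s}$ for $1\leq s\leq p-1$ are exactly the indecomposable projective objects of $\cO_{\cM(p)}^T$. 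Applying the preceding lemma with $P=\cF_\mu$ and $P=\cP_{r,s}$ then shows that $\cF^\cH_\lambda\otimes\cF_\mu$ and $\cF^\cH_\lambda\otimes\cP_{r,s}$ are projective in $\ind(\cC)$. Since $\cC$ is a full abelian subcategory of $\ind(\cC)$ and these modules are objects of $\cC$, projectivity descends: a surjection in $\cC$ is a surjection in $\ind(\cC)$, and the lift produced by projectivity in $\ind(\cC)$ is automatically a $\cC$-morphism because both its source and target lie in $\cC$.

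For the covering statements I would pass to the Deligne product, recalling from \cite{CKM2} that the equivalence identifies the $\cH\otimes\cM(p)$-module $\cF^\cH_\lambda\otimes M$ with the external product $\cF^\cH_\lambda\boxtimes M$. For the first assertion, $\cF_\mu$ is a simple $\cM(p)$-module for the stated $\mu$ (by the classification of simple $\cM(p)$-modules: $\cF_\mu$ is simple for $\mu\notin L^\circ$, and $\cF_{\alpha_{r,p}}=\cM_{r,p}$ is simple), so $\cF^\cH_\lambda\boxtimes\cF_\mu$ is a simple object of the Deligne product and hence $\cF^\cH_\lambda\otimes\cF_\mu$ is simple in $\cC$; a simple projective object is its own projective cover. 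For the second assertion I would use semisimplicity of $\cO_\cH$ to compute
\[
\Endo_\cC(\cF^\cH_\lambda\otimes\cP_{r,s})\cong\Endo_{\cO_\cH}(\cF^\cH_\lambda)\otimes\Endo_{\cO_{\cM(p)}^T}(\cP_{r,s})\cong\Endo_{\cO_{\cM(p)}^T}(\cP_{r,s}),
\]
which is local since $\cP_{r,s}$ is indecomposable; thus $\cF^\cH_\lambda\otimes\cP_{r,s}$ is indecomposable. From the Loewy diagram of $\cP_{r,s}$ its top is $\cM_{r,s}$, so the top of $\cF^\cH_\lambda\boxtimes\cP_{r,s}$ is $\cF^\cH_\lambda\boxtimes\cM_{r,s}$, a simple object. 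The surjection $\cP_{r,s}\twoheadrightarrow\cM_{r,s}$ tensored with $\Id_{\cF^\cH_\lambda}$ then realizes the indecomposable projective $\cF^\cH_\lambda\otimes\cP_{r,s}$ as a projective cover of $\cF^\cH_\lambda\otimes\cM_{r,s}$.

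I expect the only genuine subtleties to be the descent of projectivity from $\ind(\cC)$ to $\cC$ and the transport of indecomposability and top structure through the Deligne product equivalence; both are formal once one verifies that $\cC$ is a full abelian subcategory of $\ind(\cC)$ and that the equivalence is exact and sends $\cF^\cH_\lambda\otimes M$ to $\cF^\cH_\lambda\boxtimes M$. Everything else — simplicity of $\cF_\mu$, indecomposability of $\cP_{r,s}$, and the Loewy structure giving its top — is quoted directly from the recalled results on $\cO_{\cM(p)}^T$, so no new computation is needed.
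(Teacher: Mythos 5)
Your proposal is correct and follows the route the paper intends: the paper states this corollary without proof, treating it as an immediate consequence of the preceding lemma together with the Deligne-product description $\cC\simeq\cO_\cH\boxtimes\cO_{\cM(p)}^T$ and the recalled classification of indecomposable projectives in $\cO_{\cM(p)}^T$. The routine details you supply — descent of projectivity from $\ind(\cC)$ to $\cC$, locality of $\Endo_\cC(\cF^\cH_\lambda\otimes\cP_{r,s})$, and simple top implying projective cover — are all accurate.
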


We now consider a simple current extension $A=\bigoplus_{n\in\ZZ} J^n$ of $\cH\otimes\cM(p)$ as in the previous subsection, where $J=\cF^\cH_{\lambda_J}\otimes\cM_{r_J+1,1}$ for some $\lambda_J\in\CC$ and $r_J\in\ZZ$. Replacing $J$ with $J^{-1}$ if necessary, we assume $r_J\geq 0$. We also assume $\lambda_J\neq 0$ to guarantee that for any simple $\cH\otimes\cM(p)$-module $W$ in $\cC$, $J^n\tens W\cong W$ only if $n=0$.  From fusion rules for $\cH$- and $\cM(p)$-modules, we have
\begin{equation*}
J^n\cong\cF^\cH_{n\lambda_J}\otimes\cM_{n r_J+1,1}
\end{equation*}
for $n\in\ZZ$, so $J^n$ is the $n\lambda_J$-eigenspace for the Heisenberg zero-mode $h(0)$ acting on $A$. Moreover, the lowest conformal weight of $J^n$ is
\begin{equation}\label{eqn:lowest_wt_of_Jn}
\frac{1}{2}(n\lambda_J)^2+h_{\vert n\vert r_J+1,1} =\frac{n^2}{2}\left(\lambda_J^2+r_J^2\frac{p}{2}\right) +\frac{\vert n\vert}{2}r_J(p-1).
\end{equation}
Thus to ensure that $J$ is $\ZZ$- or $(\frac{1}{2}+\ZZ)$-graded by conformal weights, and that $A$ is $\frac{1}{2}\ZZ_{\geq 0}$-graded with finite-dimensional weight spaces, we further assume that $\lambda_J^2+r_J^2\frac{p}{2}\in\ZZ_{\geq 0}$.

Considering the vertex operator (super)algebra $A$ as a commutative algebra in $(\cS)\ind(\cC)$, we have the tensor (super)category $\repA$ and its subcategories $\rep^0A$ and $\rep^1A$ of local and $\theta$-twisted generalized $A$-modules in $\ind(\cC)$. We also have the exact induction functor $\cF: \cC\rightarrow\repA$. We now generalize Proposition \ref{prop:general_simple_module_classification} beyond simple modules:
 \begin{prop}\label{prop:fin_gen_Rep_B_p}
The following three full subcategories of $\repA$ are equal:
\begin{enumerate}
\item The image of the induction functor $\cF:\cC\rightarrow\repA$.
\item The category of finite-length modules in $\repA$.
\item The category of finitely-generated modules in $\repA$.
\end{enumerate} 
\end{prop}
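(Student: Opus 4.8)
The plan is to prove the three-way equality by showing the chain of inclusions
\[
(1)\subseteq(2)\subseteq(3)\subseteq(1),
\]
where $(1)$, $(2)$, $(3)$ denote the three subcategories in the order listed. The inclusion $(2)\subseteq(3)$ is immediate, since any finite-length module is in particular finitely generated. The inclusion $(1)\subseteq(2)$ should also be straightforward: for any object $W$ of $\cC$, the induction functor $\cF$ is exact, and $W$ has finite length in $\cC$ (every object of $\cC$ is a finite direct sum of modules $\cF^\cH_\lambda\otimes M$ with $M$ of finite length in $\cO^T_{\cM(p)}$). Applying the exact functor $\cF$ to a composition series of $W$ produces a filtration of $\cF(W)$ whose successive quotients are inductions of \emph{simple} $\cC$-modules; by Proposition \ref{prop:general_simple_module_classification} each such induction $\cF(W_i)$ is a simple object of $\repA$, so $\cF(W)$ has finite length in $\repA$.

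The substantive inclusion is $(3)\subseteq(1)$: I must show every finitely-generated object $X$ of $\repA$ lies in the image of induction. First I would use Frobenius reciprocity, which gives a natural isomorphism $\hom_{\repA}(\cF(W),X)\cong\hom_{\ind(\cC)}(W,X)$ for $W$ in $\cC$. The idea is that finitely many generators of $X$ over $A$, together with the fact that each generator lies in a finitely-generated (hence $\cC$-) $V$-submodule of $X$ by the definition of $\ind(\cC)$, produce a $V$-module $W$ in $\cC$ and an $\ind(\cC)$-map $W\hookrightarrow X$ that generates $X$ as an $A$-module. By Frobenius reciprocity this lifts to a $\repA$-morphism $\cF(W)\to X$, and because the image of $W$ generates $X$ over $A$ and the $A$-action factors through $\cF(W)=A\tens W$, this morphism is \emph{surjective}. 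Thus $X$ is a quotient of an induced module $\cF(W)$ with $W$ in $\cC$.

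It then remains to argue that a quotient of $\cF(W)$ is again induced from $\cC$, i.e.\ that the image of induction is closed under quotients in $\repA$. Here I would use that $\cC$ has enough projectives with the explicit projective covers of Corollary \ref{cor:proj_cover_in_C}, together with exactness of $\cF$. Given a surjection $\cF(W)\twoheadrightarrow X$, choose a projective cover in $\cC$ mapping onto $W$, so after replacing $W$ by a projective object we may assume $\cF(W)$ is the induction of a projective $\cC$-module; then lift the identity on $X$ along the surjection $\cF(W)\to X$ using projectivity (inductions of projectives are projective in $\repA$, since $\cF$ has the exact right adjoint given by restriction via Frobenius reciprocity) to split off a complementary summand, realizing $X\cong\cF(W')$ for a direct summand $W'$ of $W$ in $\cC$.

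\emph{Main obstacle.} The delicate point is the final splitting/quotient-closure step: one must be careful that projectivity in $\repA$ of inductions of $\cC$-projectives genuinely holds in the $\ind(\cC)$ setting (where ambient modules can have infinite-dimensional weight spaces), and that the resulting complement $W'$ is an object of $\cC$ rather than merely of $\ind(\cC)$. I expect this to rest on exactness of $\cF$, on Frobenius reciprocity exhibiting restriction as right adjoint to the exact functor $\cF$ (hence $\cF$ preserves projectives), and on the finiteness built into finite generation to keep everything inside $\cC$.
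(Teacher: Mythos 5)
Your inclusions $(1)\subseteq(2)\subseteq(3)$ match the paper, and your construction of a surjection $\cF(W)\twoheadrightarrow X$ from Frobenius reciprocity is sound. But the final step --- closing the image of induction under quotients by splitting the surjection --- has a genuine error: projectivity of $\cF(W)$ (or of $\cF(P)$ for $P$ projective in $\cC$) lets you lift maps \emph{out of} $\cF(W)$ along surjections; it does not split a surjection \emph{from} $\cF(W)$ onto $X$. To lift $\Id_X$ along $\cF(W)\to X$ you would need $X$ itself to be projective in $\repA$, which is exactly what you cannot assume. The failure is concrete in this very setting: for $1\leq s\leq p-1$ the simple module $\cW_{r,s}^{(\ell)}$ is a quotient of the induced projective $\cQ_{r,s}^{(\ell)}=\cF(\cF^\cH_\lambda\otimes\cP_{r,s})$, but $\cQ_{r,s}^{(\ell)}$ is its indecomposable length-$4$ projective cover, so the surjection certainly does not split and $\cW_{r,s}^{(\ell)}$ is not a direct summand of it --- yet it \emph{is} induced (from the simple $\cF^\cH_\lambda\otimes\cM_{r,s}$). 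So quotient-closure of the image of induction is true, but cannot be proved by your splitting argument.

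The paper's route avoids this entirely by choosing $W$ more carefully and proving the Frobenius-reciprocity map is \emph{injective} rather than trying to split it. One first reduces (using finite generation and the fact that the $A$-action shifts $h(0)$-eigenvalues by $\ZZ\lambda_J$) to the case $X=\bigoplus_{n\in\ZZ}\cF^\cH_{\lambda+n\lambda_J}\otimes M_{\lambda+n\lambda_J}$, and takes $W=\cF^\cH_\lambda\otimes M_\lambda$ to be a single $h(0)$-eigenspace; an argument following \cite[Proposition 3.7]{CKLR} shows $W$ generates $X$, is finitely generated over $\cH\otimes\cM(p)$, hence lies in $\cC$, giving a surjection $F:\cF(W)\to X$. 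Then the vertex operator $Y_X$ induces surjections $J^n\tens W\to \cF^\cH_{\lambda+n\lambda_J}\otimes M_{\lambda+n\lambda_J}$ which, by the simple-current (invertibility) property of $J$, are isomorphisms; hence $X$ and $\cF(W)$ have identical finite-dimensional doubly-graded pieces, so the surjection $F$ is also injective. Your proposal is missing this key idea --- exploiting the simple-current structure to match the two modules eigenspace-by-eigenspace --- and the step you substituted for it would fail.
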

\begin{proof}
First suppose an object $X$ of $\repA$ is isomorphic to $\cF(W)$ for some $\cH\otimes\cM(p)$-module $W$ in $\cC$. Then because $W$ has finite length, and because induction is exact and sends simple $\cH\otimes\cM(p)$-modules to simple non-local $A$-modules by \cite[Proposition 4.4]{CKM-exts}, $X$ has finite length. Thus the image of $\cF$ is contained in the category of finite-length modules in $\repA$. It is clear that the category of finite-length modules in $\repA$ is contained in the category of finitely-generated modules. So it remains to show that any finitely-generated module in $\repA$ is isomorphic to the induction of a module in $\cC$.

Any object $X$ of $\repA$ has a decomposition $X=\bigoplus_{\lambda\in\CC} \cF^\cH_\lambda\otimes M_\lambda$ as an $\cH\otimes\cM(p)$-module, where each $M_\lambda$ is a generalized $\cM(p)$-module in $\ind(\cO^T_{\cM(p)})$. If $X$ is finitely generated, its finitely many generators are contained in the direct sum of finitely many $\cF^\cH_{\lambda_k}\otimes M_{\lambda_k}$ for $k=1,\ldots,K$. So because the vertex operator action of $A$ on $X$ changes $h(0)$-eigenvalues by elements of $\ZZ\lambda_J$, $X=\sum_{k=1}^K X_k$ where
\begin{equation*}
X_k=\bigoplus_{n\in\ZZ} \cF^\cH_{\lambda_k+n\lambda_J}\otimes M_{\lambda_k+n\lambda_J}.
\end{equation*}
Since induction is an additive functor, it is enough to show that each $X_k$ is isomorphic to an induced module. Moreover, each $X_k$ is generated by the projections of the generators of $X$ to $X_k$. That is, we are reduced to the case that $X=\bigoplus_{n\in\ZZ} \cF^\cH_{\lambda+n\lambda_J}\otimes M_{\lambda+n\lambda_J}$ for some fixed $\lambda\in\CC$.

We may assume that $\cF^\cH_\lambda\otimes M_\lambda\neq 0$. This subspace generates $X$ by the same argument as in the beginning of the proof of \cite[Proposition 3.7]{CKLR}  (except that the reference there to \cite[Corollary 4.5.15]{LL}, which applies only when $X$ is an object of $\rep^0A$, should be replaced by its non-local generalization \cite[Lemma 3.73]{CKM-exts}). We can now show that $\cF^\cH_\lambda\otimes M_\lambda$ is an object of $\cC$; it is enough to show that $\cF^\cH_\lambda\otimes M_\lambda$ is a finitely-generated $\cH\otimes\cM(p)$-module. Since $X$ is a finitely-generated non-local $A$-module which is also generated by $\cF^\cH_\lambda\otimes M_\lambda$ as an $A$-module, each of the finitely many generators for $X$ is contained in the $A$-submodule generated by finitely many vectors in $\cF^\cH_\lambda\otimes M_\lambda$. Thus as an $A$-module, $X$ has a finite generating set $\lbrace b_i\rbrace_{i=1}^I$ contained in $\cF^\cH_\lambda\otimes M_\lambda$, so that
\begin{equation*}
X =\mathrm{span}\lbrace a_{h;m} b_i\,\,\vert\,\,a\in A, h\in\CC, m\in\NN, 1\leq i\leq I\rbrace
\end{equation*}
by \cite[Lemma 3.74]{CKM-exts}. Since $a_{h;m} b_i\in\cF^\cH_{\lambda+n\lambda_J}\otimes M_{\lambda+n\lambda_J}$ for $a\in J^n\subseteq A$, it follows that $\lbrace b_i\rbrace_{i=1}^I$ is a finite generating set for $\cF^\cH_\lambda\otimes M_\lambda$ as an $\cH\otimes\cM(p)$-module, as desired.

We have now shown that $\cF^\cH_\lambda\otimes M_\lambda$ is a (grading-restricted generalized) $\cH\otimes\cM(p)$-module in $\cC$. By Frobenius reciprocity, the $\cH\otimes\cM(p)$-module inclusion $\cF^\cH_\lambda\otimes M_\lambda\hookrightarrow X$ induces a $\repA$-morphism
\begin{equation*}
F: \cF(\cF^\cH_\lambda\otimes M_\lambda)\longrightarrow X
\end{equation*}
which contains $\cF^\cH_\lambda\otimes M_\lambda$ in its image and thus is surjective since $\cF^\cH_\lambda\otimes M_\lambda$ generates $X$. (Note in the case that $A$ is a superalgebra that $F$ need not be parity-homogeneous.) We will show that $F$ is an isomorphism, so that $X$ is isomorphic to an induced object.

 As in the proof of \cite[Proposition 3.7]{CKLR} (with the reference to \cite[Proposition 4.5.6]{LL} replaced by \cite[Lemma 3.74]{CKM-exts}), the vertex operator $Y_X$ induces an $\cH\otimes\cM(p)$-module surjection
 \begin{equation}\label{eqn:like_simple_current}
 J^n\tens(\cF^\cH_\lambda\otimes M_\lambda)\longrightarrow\cF^\cH_{\lambda+n\lambda_J}\otimes M_{\lambda+n\lambda_J}
 \end{equation}
 for each $n\in\ZZ$. This shows that each $\cF^\cH_{\lambda+n\lambda_J}\otimes M_{\lambda+n\lambda_J}$ is a grading-restricted generalized $\cH\otimes\cM(p)$-module, and then the concluding argument in the proof of \cite[Proposition 3.7]{CKLR} shows that each surjection \eqref{eqn:like_simple_current} is actually an isomorphism. Thus we have the following isomorphisms in $\ind(\cC)$:
\begin{align*}
X \cong\bigoplus_{n\in\ZZ} \cF^\cH_{\lambda+n\lambda_J}\otimes M_{\lambda+n\lambda_J}\cong\bigoplus_{n\in\ZZ} J^n\tens(\cF^\cH_\lambda\otimes M_\lambda)\cong\cF(\cF^\cH_\lambda\otimes M_\lambda).
\end{align*}
This shows that the simultaneous (generalized) $L(0)$- and $h(0)$-eigenspaces of the domain and codomain of $F$ have the same (finite) dimension. Thus $F$ is injective as well as surjective, and therefore a $\repA$-isomorphism.
\end{proof}

Proposition \ref{prop:general_simple_module_classification} shows that all simple objects in $\repA$ are inductions of simple objects in $\cC$. The same holds for indecomposable projective objects:  
\begin{prop}\label{prop:general_proj_covers}
Let $P$ be a projective object in $\cC$. Then:
\begin{enumerate}
\item $\cF(P)$ is projective in $\repA$.
\item If $P$ is the projective cover of a simple object $W$ such that $\cF(W)$ is an object in $\rep^k A$ for $k=0$ or $k=1$, then $\cF(P)$ is a projective cover of $\cF(W)$ in $\rep^k A$.
\item If $P$ is a projective cover of a simple object $W$, then the socle series and the corresponding Loewy diagram of $\cF(P)$ are obtained by replacing each composition factor of $P$ by its image under the induction functor. 
\end{enumerate}
\end{prop}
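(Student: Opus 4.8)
The plan is to leverage the exactness of the induction functor $\cF$ together with Frobenius reciprocity, which are already available in the excerpt. For part (1), to show $\cF(P)$ is projective in $\repA$, I would verify the lifting property against surjections. Given a surjection $p\colon X\twoheadrightarrow Y$ in $\repA$ and a morphism $g\colon\cF(P)\to Y$, Frobenius reciprocity translates these into the $\ind(\cC)$-level data: a surjection of $\cH\otimes\cM(p)$-modules (since $\cF$ is exact, surjectivity is preserved, and restriction to $V$-modules reflects surjectivity because $\cF(P)\cong\bigoplus_{n}J^n\tens P$ contains $P$ as a direct summand) and a map $P\to X$. Since $P$ is projective in $\cC$, and because every finitely-generated object of $\repA$ is an induced module by Proposition \ref{prop:fin_gen_Rep_B_p}, the lift exists at the level of $\cC$ and then induces back up via the natural isomorphism $\hom_{\repA}(\cF(P),X)\cong\hom_{\ind(\cC)}(P,X)$. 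The one subtlety is that projectivity of $P$ is stated in $\cC$, a locally finite category, so I must ensure the target $X$ decomposes compatibly; this follows because, after projecting to the relevant $h(0)$-eigenspace, the lifting problem reduces to one entirely inside $\cO^T_{\cM(p)}$, exactly as in the proof of the preceding Lemma and \cite[Lemma 5.0.3]{CMY3}.

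For part (2), once $\cF(P)$ is known to be projective in $\repA$, I would restrict attention to $\rep^k A$. Since $\cF(W)$ lies in $\rep^kA$ and $\rep^kA$ is a full subcategory of $\repA$, projectivity of $\cF(P)$ in $\repA$ gives projectivity in $\rep^kA$ provided $\cF(P)$ itself lies in $\rep^kA$; I would verify this using the monodromy criterion of Proposition \ref{prop:general_simple_module_classification} (equivalently Lemma \ref{lem:monodromy}), noting that $P$ and $W$ have the same monodromy data because $P$ is a finite-length extension of copies of composition factors all lying in a single block determined by $h_{J\tens W}-h_J-h_W\bmod\ZZ$. To see that $\cF(P)$ is the projective \emph{cover}, I would use that $\cF$ sends the surjection $P\twoheadrightarrow W$ (with superfluous kernel, i.e.\ kernel contained in the radical) to a surjection $\cF(P)\twoheadrightarrow\cF(W)$, and that exactness of $\cF$ together with its reflecting simplicity (Proposition \ref{prop:general_simple_module_classification}) guarantees the kernel of $\cF(P)\to\cF(W)$ is again superfluous; equivalently, $\cF(P)$ has simple top $\cF(W)$ because $\cF$ preserves the head.

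For part (3), the statement about the socle series and Loewy diagram follows formally from exactness of $\cF$. I would argue that because $\cF$ is exact and sends simple modules to simple modules (again Proposition \ref{prop:general_simple_module_classification}, under the hypothesis that $J^n\tens W\cong W$ only for $n=0$), it sends any composition series of $P$ to a composition series of $\cF(P)$ with composition factors the inductions of those of $P$. To get the \emph{socle} filtration specifically, I would note that induction, being exact and fully faithful on the relevant simple objects, preserves the property of a submodule being semisimple and maximal-semisimple; hence $\cF$ carries $\mathrm{soc}_i(P)$ to $\mathrm{soc}_i(\cF(P))$, and the arrows in the Loewy diagram are preserved because nonsplit extensions between simples $M_1,M_2$ in $\cC$ induce nonsplit extensions between $\cF(M_1),\cF(M_2)$ (exactness plus Frobenius reciprocity computing $\mathrm{Ext}^1$).

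The main obstacle I anticipate is part (2), specifically confirming that $\cF(P)$ lands in the correct sector $\rep^k A$ rather than spilling across both $\rep^0A$ and $\rep^1A$. This requires checking that every composition factor of $P$, not merely its top $W$, satisfies the same half-integrality condition $h_{J\tens(-)}-h_J-h_{(-)}\in\frac{k}{2}+\ZZ$; I would handle this by invoking the explicit Loewy structure \eqref{eqn:Prs_Loewy_diag} of the projective covers $\cP_{r,s}$ in $\cO^T_{\cM(p)}$ and verifying via the balancing equation \eqref{eqn:lifting_criterion} that the conformal-weight shifts of all four composition factors differ by integers, so that the monodromy $\cR^2_{J,P}$ is a well-defined scalar $(-1)^k$ on all of $P$ and Lemma \ref{lem:monodromy} applies uniformly.
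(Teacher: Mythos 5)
Your parts (1) and (3) are essentially aligned with the paper's strategy (part (1) is proved there exactly as in \cite[Lemma 5.0.3]{CMY3}, which you reconstruct correctly), but part (2) contains a genuine gap at its central step, namely showing that $\cF(P)$ actually lies in $\rep^kA$. You argue that because every composition factor of $P$ satisfies the same half-integrality condition, the monodromy $\cR^2_{J,P}$ "is a well-defined scalar $(-1)^k$ on all of $P$." This inference is invalid: equality of the monodromy eigenvalues on the composition factors of a non-semisimple module does not force the monodromy on the module itself to be semisimple -- it could be $(-1)^k$ times a nontrivial unipotent operator. Your appeal to the balancing equation \eqref{eqn:lifting_criterion} fails for the same reason: $L(0)$ acts non-semisimply on the logarithmic module $\cP_{r,s}$, so $e^{2\pi i L_P(0)}$ is not a scalar, and balancing only yields (scalar)$\,\times\,$(unipotent). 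Indeed, this is precisely the phenomenon that forces the definition of $\cO_{\cM(p)}^T$ as the subcategory where monodromy with $\cM_{2,1}$ is \emph{diagonalizable}; diagonalizability is extra information not carried by composition factors. Note also that Lemma \ref{lem:monodromy} points in the wrong direction for your purposes: it extracts monodromy data from membership in $\rep^kA$, rather than establishing membership. The paper closes this gap with a different tool: \cite[Theorem 1.4(1)]{CKL} (and its twisted analogue) says that the conformal-weight criterion of Proposition \ref{prop:general_simple_module_classification}, valid for the simple module $W$, also applies to its projective cover $P$ \emph{provided $P$ is a subquotient of a tensor product of simple objects of $\cC$}; that hypothesis is then checked from the singlet fusion rules (cf.\ Theorem \ref{thm:singlet_fus_rules}(3), where $\cP_{r,s}$ occurs as a direct summand of $\cF_\lambda\tens\cF_\mu$). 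The reason tensor products of simples behave well is that their monodromy with $J$ is assembled via the hexagon axioms from the scalar monodromies of the simple factors, hence is itself scalar, and scalar monodromy then passes to all subquotients by naturality.

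A secondary weakness: in part (2) you assert that "$\cF$ preserves the head," and in part (3) that induction "preserves the property of a submodule being \ldots maximal-semisimple." These are exactly the statements requiring proof, not facts one can invoke. The paper establishes them by explicit Frobenius reciprocity computations: $\hom_{\repA}(\cF(S),\cF(P))\cong\hom_{\cC}\big(S,\bigoplus_{n\in\ZZ}J^n\tens P\big)$ together with the fusion rules shows $\mathrm{Soc}(\cF(P))\cong\cF(W)$ with multiplicity one (giving indecomposability), and the nonsplitness of the induced sequence for $\cF(P)/\cF(W)$ is verified by computing that $\hom_{\repA}\big(\cF(P)/\cF(W),\,\cF(\cF^\cH_\lambda\otimes\cM_{r-1,p-s})\oplus\cF(\cF^\cH_\lambda\otimes\cM_{r+1,p-s})\big)=0$. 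Your toolkit (exactness, Frobenius reciprocity, nonsplit extensions inducing to nonsplit extensions) is the right one, but these multiplicity computations are where the actual content of parts (2) and (3) lies, and your proposal leaves them as assertions.
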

\begin{proof}
The proof of $(1)$ is exactly the same as that of \cite[Lemma~5.0.3]{CMY3}. For $(2)$, it is enough, as in the proof of \cite[Proposition 5.0.4]{CMY3}, to show $\cF(P)$ is indecomposable and an object of $\rep^k A$. Since $W=\cF_{\lambda}^{\cH} \otimes M$ for some $\lambda \in \CC$ and an irreducible $\cM(p)$-module $M$, Corollary \ref{cor:proj_cover_in_C} shows that $P = \cF_{\lambda}^{\cH} \otimes P_M$ where $P_M$ is a projective cover of $M$ in $\cO_{\cM(p)}^T$. If $M$ is a Fock module, then $P_M = M$ and therefore $\cF(P) = \cF(W)$ is a projective cover of $\cF(W)$ in $\rep^k A$. 

It remains to show that $\cF(P)$ is an indecomposable object of $\rep^k A$ when $M=\cM_{r,s}$ for $r\in\ZZ$, $1\leq s\leq p-1$. For the indecomposability, it is enough to prove that the socle of $\cF(P)$ is $\cF(W)$. In fact, for any simple object $S$ in $\cC$, Frobenius reciprocity and the fusion rules in \cite[Theorem~5.2.1]{CMY-singlet} show that $\cF(S)$ appears as a submodule of $\cF(P)$, with multiplicity one, if and only if $S \cong J^n \tens W$ for some $n \in \ZZ$. Moreover, $\cF(J^n\tens W)\cong\cF(W)$ by Proposition \ref{prop:general_simple_module_classification}. Consequently, $\mathrm{Soc}(\cF(P)) \cong \cF(W)$, showing $\cF(P)$ is indecomposable. To show that $\cF(P)$ is an object of $\rep^k A$, note that \cite[Theorem 1.4(1)]{CKL} in the case $k=0$, and its easy generalization to the case $k=1$, imply that the same conformal weight criterion in Proposition \ref{prop:general_simple_module_classification} for $W$ to induce to an object of $\rep^k A$ also applies to its projective cover $P$, provided that $P$ is a subquotient of a tensor product of simple objects of $\cC$. But this follows easily from the fusion rules in \cite[Theorems 3.2.8 and 5.1.4]{CMY-singlet}.

The conclusion of $(3)$ is obvious if $M$ is a Fock module, so we assume $M \cong \cM_{r,s}$ for some $r \in \ZZ$ and $1 \leq s \leq p-1$. From \eqref{eqn:Prs_Loewy_diag}, the Loewy diagram of $P$ is
\begin{equation*}
 \begin{matrix}
  \begin{tikzpicture}[->,>=latex,scale=1.5]
\node (b1) at (1,0) {$\cF_{\lambda}^{\cH} \otimes \cM_{r,s}$};
\node (c1) at (-1, 1){};
   \node (a1) at (0,1) {$\cF_{\lambda}^{\cH} \otimes \cM_{r-1,p-s}$};
   \node (b2) at (2,1) {$\cF_{\lambda}^{\cH} \otimes \cM_{r+1,p-s}$};
    \node (a2) at (1,2) {$\cF_{\lambda}^{\cH} \otimes \cM_{r,s}$};
\draw[] (b1) -- node[left] {} (a1);
   \draw[] (b1) -- node[left] {} (b2);
    \draw[] (a1) -- node[left] {} (a2);
    \draw[] (b2) -- node[left] {} (a2);
\end{tikzpicture}
\end{matrix} .
 \end{equation*}
 In the proof of $(2)$, we have shown that $\mathrm{Soc}(\cF(P)) = \cF(W)$. Next, we apply the induction functor $\cF$ to the exact sequence
 \[
 0 \longrightarrow (\cF_{\lambda}^{\cH} \otimes \cM_{r-1,p-s}) \oplus (\cF_{\lambda}^{\cH} \otimes \cM_{r+1,p-s} ) \longrightarrow P/W \longrightarrow W \longrightarrow 0
 \]
 and get the exact sequence
 \[
 0 \longrightarrow \cF(\cF_{\lambda}^{\cH} \otimes \cM_{r-1,p-s}) \oplus \cF(\cF_{\lambda}^{\cH} \otimes \cM_{r+1,p-s} ) \longrightarrow \cF(P)/\cF(W) \longrightarrow \cF(W) \longrightarrow 0.
 \]
 This sequence does not split because by exactness of induction and Frobenius reciprocity,
 \begin{align*}
 &\hom_{\repA}(\cF(P)/\cF(W), \cF(\cF_{\lambda}^{\cH} \otimes \cM_{r-1,p-s}) \oplus \cF(\cF_{\lambda}^{\cH} \otimes \cM_{r+1,p-s} ) ) \\
 &\qquad \cong \hom_{\cC}(P/W, \bigoplus_{n \in \ZZ}((\cF_{\lambda + n\lambda_J}^{\cH} \otimes \cM_{r+nr_J-1,p-s}) \oplus (\cF_{\lambda + n\lambda_J}^{\cH} \otimes \cM_{r+nr_J+1,p-s}))) = 0,
 \end{align*}
so $\mathrm{Soc}(\cF(P)/\cF(W)) = \cF(\cF_{\lambda}^{\cH} \otimes \cM_{r-1,p-s}) \oplus \cF(\cF_{\lambda}^{\cH} \otimes \cM_{r+1,p-s} )$. The socle series and Loewy diagram of $\cF(P)$ then follow easily.
\end{proof}

\begin{rem}
Replacing $\cO_{\cM(p)}^T$ with the larger category $\cO_{\cM(p)}$, we can also consider $A$ as a commutative algebra in the braided tensor category $\ind(\cO_\cH\tens\cO_{\cM(p)})$ of generalized $\cH\otimes\cM(p)$-modules. All objects of this direct limit completion have the form $\bigoplus_{\lambda\in\CC} \cF^\cH_\lambda\otimes M_\lambda$ where each $M_\lambda$ is a generalized $\cM(p)$-module in $\ind(\cO_{\cM(p)})$. However, this larger direct limit completion will not contain non-zero projective objects.
\end{rem}

We now define the main categories of $A$-modules that are of interest to us:
\begin{defi}
 A \textit{strongly $\cH$-weight-graded ($\theta$-twisted) $A$-module} is a generalized ($\theta$-twisted) $A$-module $X$ such that:
 \begin{enumerate}
  \item The Heisenberg zero-mode $h(0)$ acts semisimply on $X$, that is, $X=\bigoplus_{\lambda,h\in\CC} X^{(\lambda)}_{[h]}$ where $X^{(\lambda)}=\bigoplus_{h\in\CC} X^{(\lambda)}_{[h]}$ is the $h(0)$-eigenspace with eigenvalue $\lambda$ and $X_{[h]}=\bigoplus_{{\lambda}\in\CC} X^{(\lambda)}_{[h]}$ the generalized $L(0)$-eigenspace with eigenvalue $h$.
  
  \item For any $\lambda,h\in\CC$, $\dim X^{(\lambda)}_{[h]}<\infty$.
  
  \item For any $\lambda, h\in\CC$, $X^{(\lambda)}_{[h-n]}=0$ for all $n\in\ZZ$ sufficiently positive.
 \end{enumerate}
 We define $\Oloc_A$ to be the (super)category of finitely-generated strongly $\cH$-weight-graded $A$-modules, $\Otw_A$ to be the (super)category of finitely-generated strongly $\cH$-weight-graded $\theta$-twisted $A$-modules, and $\cO_A$ to be the direct sum (super)category $\Oloc_A\oplus\Otw_A$.
\end{defi}

\begin{rem}\label{rem:strongly_H_or_C_graded}
 The vertex operator (super)algebra $A$ itself is a strongly $\cH$-weight-graded $A$-module, and any strongly $\cH$-weight graded $A$-module is a strongly $\CC$-graded $A$-module in the sense of Definition \ref{def:strongly_graded}. Moreover, if $X$ is a strongly $\cH$-weight-graded $A$-module with strongly $\CC$-graded contragredient $X'=\bigoplus_{\lambda,h\in\CC} (X_{[h]}^{(-\lambda)})^*$, then $X'$ is also strongly $\cH$-weight graded because \eqref{eqn:contra_action} and the fact that $h$ has conformal weight $1$ imply
 \begin{equation*}
 \langle h(0)b',b\rangle =-\langle b',h(0)b\rangle
 \end{equation*}
 for any $b\in X$, $b'\in X'$.
\end{rem}

If $X$ is a strongly $\cH$-weight-graded ($\theta$-twisted) $A$-module, then the grading restrictions and \cite[Theorem 1.7.3]{FLM} imply that the $h(0)$-eigenspace $X^{(\lambda)}$ is isomorphic to $\cF_\lambda^\cH\otimes M_\lambda$ as a weak $\cH$-module, where
\begin{equation*}
 M_\lambda=\lbrace b\in X\,\,\vert\,\,h(n)b=\delta_{n,0}\lambda b\,\,\text{for all}\,\,n\in\ZZ_{\geq 0}\rbrace.
\end{equation*}
By the grading restrictions, $M_\lambda$ is a grading-restricted generalized $\cM(p)$-module (see for example \cite[Theorem 2.6]{McR-cosets}). We now present some basic properties of strongly $\cH$-weight-graded $A$-modules:

\begin{lem}\label{lem:Bp_mod_in_ind_C}
 Every strongly $\cH$-weight-graded $A$-module (respectively, $\theta$-twisted $A$-module) restricts to a generalized $\cH\otimes\cM(p)$-module in $\ind(\cC)$ and thus is an object of $\rep^0A$ (respectively, of $\rep^1A$).
\end{lem}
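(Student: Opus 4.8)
The plan is to reduce the statement to a claim purely about the singlet factors of $X$, and then to prove that claim by applying Lemma \ref{lem:monodromy} inside a slightly larger completion. By \cite[Theorem 3.4]{HKL} in the untwisted case and \cite[Theorem 4.14]{McR-orb2} in the $\theta$-twisted case, $\rep^0 A$ (respectively $\rep^1 A$) consists \emph{exactly} of the generalized (respectively $\theta$-twisted) $A$-modules that restrict to $V$-modules in $\ind(\cC)$, where $V=\cH\otimes\cM(p)$. Hence the clause ``and thus is an object of $\rep^0 A$ (respectively $\rep^1 A$)'' is automatic once the restriction is known to lie in $\ind(\cC)$, and the whole content of the lemma is this membership. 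By the discussion preceding the lemma, $X\cong\bigoplus_{\lambda\in\CC}\cF^\cH_\lambda\otimes M_\lambda$ as a $V$-module, with each $M_\lambda$ a grading-restricted generalized $\cM(p)$-module; so by the description of $\ind(\cC)$, it suffices to prove that every $M_\lambda$ lies in $\ind(\cO^T_{\cM(p)})$.

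First I would place $X$ in a category where monodromy is available. By Proposition \ref{prop:gen_Mp_mod_in_DLC} each $M_\lambda$ lies in $\ind(\cO_{\cM(p)})$, so $X$ restricts to a $V$-module in the larger completion $\ind(\cO_\cH\boxtimes\cO_{\cM(p)})$; as recorded after Proposition \ref{prop:general_proj_covers}, this is a braided tensor category in which $A$ is a commutative algebra, so the braiding and monodromy isomorphisms are defined for $X$. Crucially, the genuine (respectively $\theta$-twisted) $A$-module structure that $X$ carries by hypothesis makes $X$, via \cite[Theorem 3.4]{HKL} (respectively \cite[Theorem 4.14]{McR-orb2}) applied to this larger completion, a local (respectively $\theta$-twisted) $A$-module there, that is $\mu_X\circ\cR^2_{A,X}=\mu_X\circ(\theta^{-k}\otimes\Id_X)$ with $k=0$ (respectively $k=1$). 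This step is not circular: it uses only that $X$ restricts into the larger completion, which Proposition \ref{prop:gen_Mp_mod_in_DLC} guarantees, and not the membership in $\ind(\cC)$ that we are trying to prove.

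With $X$ exhibited as an object of $\rep^k A$ inside the larger completion, I would apply Lemma \ref{lem:monodromy}, whose proof is formal and carries over verbatim (it uses only rigidity of the simple current $J$, its being a direct summand of $A$, and associativity of $\mu_X$). This yields $\cR^2_{J,W}=(-1)^k\Id_{J\tens W}$ for every $V$-submodule $W\subseteq X$. Now fix a finite-length $\cM(p)$-submodule $N\subseteq M_\lambda$ and apply this to $W=\cF^\cH_\lambda\otimes N$. Factoring the monodromy over the Deligne product $\cO_\cH\boxtimes\cO_{\cM(p)}$ and using that the rank-one Heisenberg monodromy $\cR^2_{\cF^\cH_{\lambda_J},\cF^\cH_\lambda}$ is a nonzero scalar (namely $e^{2\pi i\lambda_J\lambda}$), I conclude that $\cR^2_{\cM_{r_J+1,1},N}$ is a nonzero scalar. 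Since $\cM_{r_J+1,1}\cong\cM_{2,1}^{\boxtimes r_J}$ and $\cM_{2,1}$ is invertible, the braiding compatibilities of \cite[Theorem 2.11(2)]{CKL} identify this scalar with $(\cR^2_{\cM_{2,1},N})^{r_J}$; hence the minimal polynomial of $\cR^2_{\cM_{2,1},N}$ divides $t^{r_J}-c$ for some $c\neq0$, which has distinct roots, so $\cR^2_{\cM_{2,1},N}$ is diagonalizable. Thus $N\in\cO^T_{\cM(p)}$, and since $N$ was an arbitrary finite-length submodule, $M_\lambda\in\ind(\cO^T_{\cM(p)})$, as required.

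I expect the main obstacle to be this last transfer: upgrading ``$M_\lambda\in\ind(\cO_{\cM(p)})$'' to the finer ``$M_\lambda\in\ind(\cO^T_{\cM(p)})$'', that is, extracting the \emph{defining} diagonalizability of $\cR^2_{\cM_{2,1},-}$ from the scalar monodromy of the simple current $\cM_{r_J+1,1}$ that actually occurs inside $A$. Making the identification $\cR^2_{\cM_{r_J+1,1},N}=(\cR^2_{\cM_{2,1},N})^{r_J}$ precise as an equality of operators on $\cM_{r_J+1,1}\tens N$ (not merely on simple composition factors, where it is the usual additivity of monodromy charge) is the technical heart. This argument also requires $r_J\geq1$, so that the singlet content $\cM_{r_J+1,1}$ of $J$ is nontrivial; the degenerate case $r_J=0$, where $A\cong V_{\ZZ\lambda_J}\otimes\cM(p)$ and the monodromy relation gives no information about the factors $M_\lambda$, would have to be excluded or treated separately.
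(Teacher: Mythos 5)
Your proposal is correct and follows essentially the same route as the paper's proof: reduce to showing each $M_\lambda$ lies in $\ind(\cO^T_{\cM(p)})$, realize $X$ as a local (respectively $\theta$-twisted) $A$-module in the larger completion $\ind(\cO_\cH\boxtimes\cO_{\cM(p)})$ via Proposition \ref{prop:gen_Mp_mod_in_DLC}, apply Lemma \ref{lem:monodromy} there, and factor the monodromy over the Deligne product to conclude that $\cR^2_{\cM_{r_J+1,1},M}$ is a nonzero scalar on each finite-length submodule. The only divergence is the final step, where the paper invokes the open-Hopf-link argument from the proof of \cite[Proposition 6.4]{CMY-singlet-typical} (showing $\cR^2_{\cM_{2,1},M}$ is a scalar $r_J$th root on indecomposables) instead of your minimal-polynomial argument---both rest on the same multiplicativity of simple-current monodromy that you rightly flag as the technical heart---and the $r_J=0$ caveat you raise applies equally to the paper's proof, which likewise yields no information in that degenerate case.
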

\begin{proof}
 Let $X=\bigoplus_{\lambda\in\CC} \cF^\cH_\lambda\otimes M_\lambda$ be a strongly $\cH$-weight-graded ($\theta$-twisted) $A$-module, where each $M_\lambda$ is a grading-restricted generalized $\cM(p)$-module. By Proposition \ref{prop:gen_Mp_mod_in_DLC}, each $M_\lambda$ is an object of $\ind(\cO_{\cM(p)})$, so that $X$ restricts to an $\cH\otimes\cM(p)$-module in the braided tensor category $\ind(\cO_\cH\tens\cO_{\cM(p)})$. Thus $X$ is a local (or $\theta$-twisted) module for $A$ considered as a commutative algebra in $\ind(\cO_\cH\tens\cO_{\cM(p)})$.

 We need each $M_\lambda$ to be an object of $\ind(\cO_{\cM(p)}^T)$. For any finite-length submodule $M\subseteq M_\lambda$, $\cF^\cH_\lambda\otimes M$ is a finite-length $\cH\otimes\cM(p)$-submodule of $X$. Thus by Lemma \ref{lem:monodromy},
 \begin{align*}
  \pm\Id_{J\tens(\cF^\cH_\lambda\otimes M)} &=\cR^2_{J,\cF^\cH_\lambda\otimes M}=\cR^2_{\cF^\cH_{\lambda_J}, \cF^\cH_\lambda}\otimes\cR^2_{\cM_{r_J+1,1},M} = e^{2\pi i\lambda_J\lambda}(\Id_{\cF^\cH_{\lambda_J+\lambda}}\otimes\cR^2_{\cM_{r_J+1,1},M}).
 \end{align*}
It follows that
\begin{equation*}
 \cR^2_{\cM_{r_J+1,1},M}=\pm e^{-2\pi i\lambda_J\lambda}\Id_{\cM_{r_J+1,1}\tens M}.
\end{equation*}
Now exactly as in the conclusion of the proof of \cite[Proposition 6.4]{CMY-singlet-typical}, consideration of open Hopf link endomorphisms of $M$ shows that, if $M$ is indecomposable (as we may assume), then $\cR_{\cM_{2,1},M}^2$ is an $r_J$th root of $\pm e^{-2\pi i\lambda_J\lambda}$. Since $\cO_{\cM(p)}^T$ consists precisely of the finite-length grading-restricted generalized $\cM(p)$-modules $M$ such that $\cR_{\cM_{2,1},M}^2$ is diagonalizable, this shows that every finite-length submodule of $M_\lambda$ is an object of $\cO_{\cM(p)}^T$. Since $M_\lambda$ is an object of $\cO_{\cM(p)}$, it is the union of its finite-length submodules, and thus it is the union of its $\cO_{\cM(p)}^T$-submodules. This means $M_\lambda$ is an object of $\ind(\cO_{\cM(p)}^T)$ for all $\lambda\in\CC$, and thus $X$ (considered as an $\cH\otimes\cM(p)$-module) is an object of $\ind(\cC)$.
\end{proof}

\begin{prop}\label{prop:OlocA_OtwA_characterization}
 The category $\Oloc_A$, respectively $\Otw_A$, of finitely-generated strongly $\cH$-weight-graded $A$-modules, respectively $\theta$-twisted $A$-modules, is equal to the subcategory of finitely-generated, equivalently finite-length, modules in $\rep^0A$, respectively $\rep^1A$.
\end{prop}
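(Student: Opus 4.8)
The plan is to assemble the statement from Lemma~\ref{lem:Bp_mod_in_ind_C} and Proposition~\ref{prop:fin_gen_Rep_B_p}. I treat the local case, the $\theta$-twisted case being verbatim with $\rep^0A$ replaced by $\rep^1A$ and the twisted half of Lemma~\ref{lem:Bp_mod_in_ind_C} used in place of the local half. First I would record that $\rep^0A$ is a full subcategory of $\repA$ that is closed under submodules and quotients, since the locality criterion $Y_X(a,x)b\in X((x))$ is inherited by any $\repA$-subquotient of a local module. It follows that an object of $\rep^0A$ is finitely generated (respectively, of finite length) in $\rep^0A$ precisely when it is so in $\repA$: finite generation is intrinsic to the $A$-action, while the composition factors of a local module are again local. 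Hence the equivalence of ``finitely generated'' and ``finite length'' within $\rep^0A$ is immediate from Proposition~\ref{prop:fin_gen_Rep_B_p}, and it remains only to identify $\Oloc_A$ with the finitely-generated objects of $\rep^0A$.

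For the inclusion $\Oloc_A\subseteq\rep^0A$, every object of $\Oloc_A$ lies in $\rep^0A$ by Lemma~\ref{lem:Bp_mod_in_ind_C} and is finitely generated by definition. For the reverse inclusion I would take a finitely-generated $X$ in $\rep^0A$ and use Proposition~\ref{prop:fin_gen_Rep_B_p} to write $X\cong\cF(W)=A\tens W$ for some $W$ in $\cC$. Since $h$ and $\omega$ lie in $V=\cH\otimes\cM(p)\subseteq A$, the $h(0)$- and $L(0)$-gradings of $X$ are unchanged upon restriction to $V$, so the three defining conditions of a strongly $\cH$-weight-graded $A$-module may be checked on the $V$-module $X$. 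Distributing $\tens$ over $A=\bigoplus_{n\in\ZZ}J^n$ gives $X\cong\bigoplus_{n\in\ZZ}J^n\tens W$ as a $V$-module, and each summand $J^n\tens W$ is an object of $\cC$ (a tensor product of objects of $\cC$), hence a grading-restricted generalized $V$-module on which $h(0)$ acts semisimply. This yields the semisimplicity of $h(0)$ on all of $X$, the first condition.

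For the two grading-restriction conditions I would fix a Heisenberg weight $\lambda\in\CC$. Writing $W\cong\bigoplus_{k=1}^K\cF^\cH_{\mu_k}\otimes M_k$ with each $M_k$ in $\cO^T_{\cM(p)}$ and using $J^n\cong\cF^\cH_{n\lambda_J}\otimes\cM_{nr_J+1,1}$, the $h(0)$-eigenvalues occurring in $J^n\tens W$ all equal $n\lambda_J+\mu_k$. Since $\lambda_J\neq 0$, for each $k$ there is at most one $n\in\ZZ$ with $n\lambda_J+\mu_k=\lambda$, so the weight space $X^{(\lambda)}$ is a direct sum of at most $K$ grading-restricted pieces. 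Therefore each $X^{(\lambda)}_{[h]}$ is finite dimensional and the conformal weights of $X^{(\lambda)}$ are bounded below, which are the remaining two conditions. Thus $X\in\Oloc_A$.

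The only substantive point is the handling of the infinite direct sum $\bigoplus_{n\in\ZZ}J^n\tens W$: although $X$ has infinite length as a $V$-module, the hypothesis $\lambda_J\neq 0$ forces each fixed $h(0)$-eigenvalue to occur in only finitely many summands, so the grading restrictions enjoyed by the finite-length objects of $\cC$ propagate to $X$ one Heisenberg-weight slice at a time. Everything else is bookkeeping assembling Lemma~\ref{lem:Bp_mod_in_ind_C}, Proposition~\ref{prop:fin_gen_Rep_B_p}, and the closure of $\rep^0A$ under subquotients.
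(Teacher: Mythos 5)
Your proposal is correct, and it follows the paper's skeleton: both directions rest on Lemma \ref{lem:Bp_mod_in_ind_C} and Proposition \ref{prop:fin_gen_Rep_B_p}, and the forward inclusion is handled identically. The only divergence is in the converse. The paper decomposes $X=\bigoplus_{\lambda\in\CC}\cF^\cH_\lambda\otimes M_\lambda$ and then re-enters the \emph{proof} of Proposition \ref{prop:fin_gen_Rep_B_p} (its second and third paragraphs) to conclude that each slice $\cF^\cH_\lambda\otimes M_\lambda$ is a finitely generated $\cH\otimes\cM(p)$-module, hence an object of $\cC$ and in particular grading restricted. You instead invoke only the \emph{statement} of Proposition \ref{prop:fin_gen_Rep_B_p} to write $X\cong\cF(W)\cong\bigoplus_{n\in\ZZ}J^n\tens W$ as a $V$-module, and then verify the strong $\cH$-weight-grading conditions directly: writing $W\cong\bigoplus_{k=1}^K\cF^\cH_{\mu_k}\otimes M_k$, the hypothesis $\lambda_J\neq 0$ ensures each $h(0)$-eigenvalue $\lambda$ receives contributions from at most $K$ of the summands $J^n\tens(\cF^\cH_{\mu_k}\otimes M_k)$, each of which lies in $\cC$, so every slice $X^{(\lambda)}$ inherits the grading restrictions. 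This is a legitimate alternative: it buys self-containedness (no appeal to another proof's internals) at the cost of redoing the weight bookkeeping. Your preliminary observation that $\rep^0A$ and $\rep^1A$ are closed under subquotients in $\repA$, so that finite generation and finite length mean the same thing in either category, is a point the paper leaves implicit but which is genuinely needed for the statement as phrased, so it is good that you made it explicit.
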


\begin{proof}
Any object of $\Oloc_A$, respectively $\Otw_A$, is a finitely-generated, equivalently finite-length, module in $\rep^0A$, respectively $\rep^1A$, by Lemma \ref{lem:Bp_mod_in_ind_C} and Proposition \ref{prop:fin_gen_Rep_B_p}.
Conversely, if $X$ is a finitely-generated module in $\rep^0A$ or $\rep^1A$, then as an $\cH\otimes\cM(p)$-module, $X=\bigoplus_{\lambda\in\CC}\cF_\lambda^\cH\otimes M_\lambda$ where each $M_\lambda$ is a generalized $\cM(p)$-module in $\ind(\cO_{\cM(p)}^T)$. Thus it is sufficient to show that each $M_\lambda$ is a grading-restricted generalized $\cM(p)$-module. This follows from the second and third paragraphs of the proof of Proposition \ref{prop:fin_gen_Rep_B_p}, which shows that each $\cF^\cH_\lambda\otimes M_\lambda$ is finitely generated and thus an object of $\cC$.
\end{proof}

We can now establish the main result of this section:
\begin{thm}\label{thm:main_thm}
The category $\Oloc_A$ of finitely-generated strongly $\cH$-weight-graded $A$-modules is a rigid braided tensor (super)category, with the vertex algebraic braided tensor category structure of \cite{HLZ8} and duals given by contragredient modules as defined in Section \ref{sec:prelim}. Moreover, $\cO_A=\Oloc_A\oplus\Otw_A$ is a rigid braided $\ZZ/2\ZZ$-crossed tensor (super)category. If $A$ is $\ZZ$-graded by conformal weights, then $e^{2\pi i L(0)}$ defines a ribbon twist on $\Oloc_A$, so that $\Oloc_A$ is a ribbon tensor (super)category.
\end{thm}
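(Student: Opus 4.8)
The plan is to exhibit $\Oloc_A$ as a full subcategory of the braided tensor (super)category $\rep^0 A$ and $\cO_A$ as a full subcategory of the braided $\ZZ/2\ZZ$-crossed tensor (super)category $\rep^0 A\oplus\rep^1 A$, and then to check that these subcategories inherit every piece of structure in the statement. The ambient structures are already available: by \cite[Theorem~3.65]{CKM-exts} and \cite[Theorem~7.7]{CMY-completions} the category $\rep^0 A$ is a braided tensor (super)category whose braided tensor structure is the vertex algebraic one (its objects being exactly the generalized $A$-modules restricting to $\ind(\cC)$), and $\rep^0 A\oplus\rep^1 A$ is braided $\ZZ/2\ZZ$-crossed by \cite[Theorem 4.15(2)]{McR-orb2}. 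The decisive input is the combination of Propositions \ref{prop:fin_gen_Rep_B_p} and \ref{prop:OlocA_OtwA_characterization}: $\Oloc_A$ (respectively $\Otw_A$) is precisely the subcategory of finite-length objects of $\rep^0 A$ (respectively $\rep^1 A$), and every such object is isomorphic to $\cF(W)$ for some $W$ in $\cC$.

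I would first show that $\Oloc_A$ is closed under the tensor product of $\rep^0 A$. Because the induction functor $\cF:\cC\to\repA$ is a tensor functor, $\cF(W_1)\tens\cF(W_2)\cong\cF(W_1\tens W_2)$, and since $\cC$ is closed under tensor products (being braided tensor equivalent to the Deligne product $\cO_\cH\boxtimes\cO_{\cM(p)}^T$), the right-hand side is again an induced object, hence lies in $\Oloc_A$. Closure under contragredients follows from Remark \ref{rem:strongly_H_or_C_graded}. Thus $\Oloc_A$ is a braided tensor (super)subcategory of $\rep^0 A$ carrying the vertex algebraic structure.

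For rigidity I would use that a strong monoidal functor sends rigid objects to rigid objects, together with the preservation of the evaluation, coevaluation, and zigzag identities. Since $\cC\simeq\cO_\cH\boxtimes\cO_{\cM(p)}^T$ is rigid, each $W$ in $\cC$ has a dual $W^*\cong W'$, so $\cF(W)$ is rigid with dual $\cF(W^*)$; as $\cF(W^*)$ and all tensor products in the zigzag identities remain in $\Oloc_A$, every object of $\Oloc_A$ is rigid. To identify duals with contragredients I would invoke the general principle of Section \ref{sec:prelim}, which requires only that $A$ be self-contragredient and that $\Oloc_A$ be closed under contragredients. The second holds by Remark \ref{rem:strongly_H_or_C_graded}; for the first, the $\cH$- and $\cM(p)$-contragredient formulas give $(J^n)'\cong\cF^\cH_{-n\lambda_J}\otimes\cM_{1-nr_J,1}\cong J^{-n}$, so $A'=\bigoplus_{n\in\ZZ}(J^n)'\cong\bigoplus_{n\in\ZZ}J^{-n}=A$. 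Hence duals in $\Oloc_A$ are contragredient modules. The same reasoning, run inside $\rep^0 A\oplus\rep^1 A$, shows that $\cO_A$ is a rigid braided $\ZZ/2\ZZ$-crossed tensor (super)category: every object is an induced module, the crossed tensor products stay finite-length, the induced objects are rigid, and duals coincide with contragredients since contragredience preserves the twisted sector $\rep^1 A$ and $A$ is self-contragredient. For the ribbon statement, $\cO_\cH$ and $\cO_{\cM(p)}^T$ are both ribbon with twist $e^{2\pi i L(0)}$, so $\cC$, and hence $\ind(\cC)$, is ribbon with this twist; when $A$ is $\ZZ$-graded by conformal weights, $e^{2\pi i L(0)}$ acts as the identity on $A$, so the $L(0)$-conjugation formula makes $e^{2\pi i L(0)}$ an $A$-module map on every object of $\rep^0 A$, whence it is a morphism there and the balancing and ribbon axioms descend from $\ind(\cC)$; restricting to $\Oloc_A$ gives the desired ribbon twist.

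The main obstacle is conceptual rather than computational. Rigidity of $\rep^0 A$ is in no way automatic from rigidity of the base category $\cC$, and the argument depends essentially on the fact---supplied by Propositions \ref{prop:fin_gen_Rep_B_p} and \ref{prop:OlocA_OtwA_characterization}---that every finite-length object of $\rep^0 A$ and $\rep^1 A$ is the induction of a (necessarily rigid) object of $\cC$. This single structural input simultaneously forces closure under tensor products and rigidity, and it is what allows the tensor-categorical properties of $\cC$ to transfer to $\Oloc_A$ and $\cO_A$.
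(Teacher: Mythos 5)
Your overall strategy is exactly the paper's: identify $\Oloc_A$ and $\cO_A$ with the finite-length (equivalently, induced) objects of $\rep^0A$ and $\rep^0A\oplus\rep^1A$ via Propositions \ref{prop:fin_gen_Rep_B_p} and \ref{prop:OlocA_OtwA_characterization}, deduce closure under tensor products and rigidity from monoidality of induction together with rigidity of $\cC$, and identify duals with contragredients through the universal-property argument of Section \ref{sec:prelim}. The ribbon step differs only in packaging: you run the categorical descent argument (a twist on the base category with $\theta_A=\Id_A$ induces a twist on local modules), while the paper invokes the vertex-algebraic argument of \cite[Theorem 4.1]{Hu-rig-mod}; both are standard and produce the same twist $e^{2\pi iL(0)}$.

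There is, however, one genuine (though easily repaired) gap: your verification that $A$ is self-contragredient. The computation $(J^n)'\cong J^{-n}$ only yields an isomorphism $A'\cong A$ of $\cH\otimes\cM(p)$-modules, that is, of the restrictions; but the argument of Section \ref{sec:prelim} requires $A\cong A'$ as $A$-modules, and a $V$-module isomorphism between two $A$-modules need not be, or even extend to, an $A$-module map. You can close the gap in any of the following ways: (i) as the paper does, following \cite{Li}, observe that the functional $f\in (A')^{(0)}_{(0)}$ with $f(\vac)=1$ is a vacuum-like vector (using $L(1)(\cH\otimes\cM(p))_{(1)}=0$), so $a\mapsto a_{-1}f$ is a nonzero $A$-module homomorphism $A\rightarrow A'$, which is an isomorphism because $A$ is simple; (ii) note that $A'$ is a simple object of $\Oloc_A$ whose restriction contains $V=J^0$, so Frobenius reciprocity gives a nonzero morphism $A=\cF(V)\rightarrow A'$ in $\repA$, again an isomorphism by simplicity; or (iii) use that contragredients of induced modules are inductions of contragredients, so that $A'=\cF(V)'\cong\cF(V')\cong\cF(V)=A$. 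With any one of these insertions, your proof is complete and coincides in substance with the paper's.
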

\begin{proof}
Recall that $\rep^0A$ is a braided tensor subcategory of $\repA$ and $\rep^0 A\oplus\rep^1A$ is a braided $\ZZ/2\ZZ$-crossed tensor subcategory of $\repA$. Moreover, Propositions \ref{prop:fin_gen_Rep_B_p} and \ref{prop:OlocA_OtwA_characterization} show that  $\Oloc_A$, respectively $\cO_A$, is the intersection of $\rep^0A$, respectively $\rep^0A\oplus\rep^1A$, with the image of the induction functor $\cF:\cC\rightarrow\repA$. Thus because induction is monoidal, $\Oloc_A$, respectively $\cO_A$, is closed under the tensor product on $\repA$. This shows $\Oloc_A$ is a braided tensor subcategory of $\rep^0A$, and $\cO_A$ is a braided $\ZZ/2\ZZ$-crossed subcategory of $\rep^0A\oplus\rep^1A$.
Moreover, the braided tensor category structure on $\Oloc_A$ is the vertex algebraic one of \cite{HLZ8} because this is the braided tensor structure on $\rep^0A$ by \cite[Theorem 3.65]{CKM-exts} and \cite[Theorem 7.7]{CMY-completions}. 

The tensor categories $\Oloc_A$ and $\cO_A$ are rigid because induction is monoidal and every object in these categories is isomorphic to the induction of a (rigid) object of $\cC$ by Propositions \ref{prop:fin_gen_Rep_B_p} and \ref{prop:OlocA_OtwA_characterization}. By the discussion concluding Section \ref{sec:prelim}, duals in $\Oloc_A$ will be given by contragredients provided that $A$ is self-contragredient and that $\Oloc_A$ is closed under contragredients. The second of these conditions is addressed in Remark \ref{rem:strongly_H_or_C_graded}, and $A$ is self-contragredient by the same proof as in Theorem 3.1 and Corollary 3.2 of \cite{Li} for the $\ZZ$-graded vertex operator algebra case. That is, the linear functional $f\in (A')^{(0)}_{(0)} =(\CC\vac)^*$ defined by $f(\vac)=1$ is a vacuum-like vector satisfying $L(-1)f=0$ since $L(1)(\cH\otimes\cM(p))_{(1)}=0$. Then $a\mapsto a_{-1} f$ is a non-zero $A$-module homomorphism from $A$ to $A'$ which must be an isomorphism since $A$ is simple.

Finally, $e^{2\pi i L(0)}$ defines a ribbon twist on $\Oloc_A$ in the case that $A$ is $\ZZ$-graded exactly as in the proof of \cite[Theorem 4.1]{Hu-rig-mod}.
\end{proof}

\begin{rem}
When $A$ is $\frac{1}{2}\ZZ$-graded, $e^{2\pi i L(0)}$ fails to define a twist on $\Oloc_A$ because it does not act as the identity on $A$, and in fact it does not define $A$-module endomorphisms of $A$-modules in general. When $A$ is a $\frac{1}{2}\ZZ$-graded superalgebra, we can define a new twist by $P e^{2\pi i L(0)}$, where $P$ is the parity involution of any $A$-module. This isomorphism acts as the identity on $A$ and satisfies the properties of a ribbon twist, except that it is not quite a natural transformation in the usual sense, since
\begin{equation*}
P e^{2\pi i L(0)}\circ f =(-1)^{\vert f\vert}f\circ P e^{2\pi i L(0)}
\end{equation*}
for parity-homogeneous morphisms $f$ in $\Oloc_A$. Thus strictly speaking, $Pe^{2\pi i L(0)}$ only gives the underlying category $\underline{\Oloc_A}$, which has the same objects as $\Oloc_A$ but retains only the even morphisms, the structure of a ribbon tensor category (not supercategory!).
\end{rem}

\section{Detailed structure of \texorpdfstring{$\cO_A$}{OA}}\label{sec:detailed_structure}

We continue to fix a simple vertex operator (super)algebra extension $A=\bigoplus_{n\in\ZZ} J^n$ of $\cH\otimes\cM(p)$, where $J=\cF^\cH_{\lambda_J}\otimes\cM_{r_J+1,1}$ for some $\lambda_J\in\CC^\times$ and $r_J\geq 0$ such that $\lambda_J^2+\frac{p}{2}r_J^2\in\ZZ_{\geq 0}$. Thus $J^n\cong \cF^\cH_{n\lambda_J}\otimes\cM_{nr_J+1,1}$ for any $n\in\ZZ$. In this section, we describe the braided tensor (super)category $\Oloc_A$ and its $\ZZ/2\ZZ$-graded extension $\cO_A=\Oloc_A\oplus\Otw_A$  in detail, beginning with the classification of their simple objects.

\subsection{Simple modules, projective modules, and tensor product formulas}

 In this subsection, we classify the simple objects of $\cO_A$ using Proposition \ref{prop:general_simple_module_classification} and the projective objects using Proposition \ref{prop:general_proj_covers}. We also compute the tensor products of simple objects in $\cO_A$. For the following theorem, recall the notation $\alpha_+=\sqrt{2p}$, $\alpha_-=-\sqrt{2/p}$, $\alpha_0=\alpha_++\alpha_-$, $L=\ZZ\alpha_+$, and $L^\circ=\ZZ\frac{\alpha_-}{2}$:
\begin{thm}\label{thm:simple_A-module_classification}
Simple objects in $\cO_A$ and $\Oloc_A$ are as follows:
\begin{enumerate}
\item Every simple object of $\cO_A$ is isomorphic to at least one of the following induced modules:
\begin{equation*}
\cW^{(\ell)}_{r,s} := \cF\left(\cF^\cH_{(r_J(1-r)p/2+\ell)/\lambda_J}\otimes\cM_{r,s}\right)
\end{equation*}
for $\ell\in\frac{1}{2}\ZZ$, $r\in\ZZ$, and $1\leq s\leq p$; or
\begin{equation*}
\cE^{(\ell)}_\lambda := \cF\left(\cF^\cH_{(r_J(\alpha_+\lambda-p+1)/2+\ell)/\lambda_J}\otimes\cF_\lambda\right)
\end{equation*}
for $\ell\in\frac{1}{2}\ZZ$ and $\lambda\in\CC\setminus L^\circ$.

\item The following are all isomorphisms between simple modules from part (1):
\begin{align*}
\cW_{r,s}^{(\ell)} & \cong \cW_{r+nr_J,s}^{(\ell+n(\lambda_J^2+r_J^2p/2))},\qquad\quad r\in\ZZ, \,\,\, 1\leq s\leq p,\,\,\, \ell\in\frac{1}{2}\ZZ,\,\,\, n\in\ZZ,\\
\cE^{(\ell)}_\lambda & \cong \cE_{\lambda-nr_J\alpha_+/2}^{(\ell+n(\lambda_J^2+r_J^2 p/2))}, \qquad\quad \lambda\in\CC\setminus L^\circ,\,\,\, \ell\in\frac{1}{2}\ZZ,\,\,\, n\in\ZZ.
\end{align*}
In particular, if $r_J>0$, then the modules $\cW_{r,s}^{(\ell)}$ for $1\leq r\leq r_J$ and $\cE^{(\ell)}_\lambda$ for $0\leq\mathrm{Re}(\lambda)<\frac{r_J}{2}\alpha_+$ give a complete list of inequivalent simple objects in $\cO_A$.

\item The module $\cW_{r,s}^{(\ell)}$ is an object of $\Oloc_A$ if and only if $\ell\in \frac{r_J}{2}(s-1)+\ZZ$, and $\cE_\lambda^{(\ell)}$ is an object of $\Oloc_A$ if and only if $\ell\in \frac{r_J}{2}(p-1)+\ZZ$.

\end{enumerate}
\end{thm}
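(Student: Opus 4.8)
The plan is to deduce the whole statement from Proposition~\ref{prop:general_simple_module_classification}, whose hypothesis holds here because $\lambda_J\neq 0$: since $J^n\tens(\cF^\cH_\mu\otimes M)\cong\cF^\cH_{\mu+n\lambda_J}\otimes(\cdots)$, no $n\neq 0$ can fix the Heisenberg weight. That proposition identifies the simple objects of $\rep^kA$ (for $k=0,1$) with the inductions $\cF(W)$ of simple $\cH\otimes\cM(p)$-modules $W$ in $\cC$ satisfying $h_{J\tens W}-h_J-h_W\in\frac{k}{2}+\ZZ$, and it computes the isomorphisms among inductions via $\cF(W)\cong\cF(\til W)\iff\til W\cong J^n\tens W$. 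The simple objects of $\cC$ are exactly $\cF^\cH_\mu\otimes\cM_{r,s}$ ($\mu\in\CC$, $r\in\ZZ$, $1\le s\le p$) and $\cF^\cH_\mu\otimes\cF_\lambda$ ($\mu\in\CC$, $\lambda\in\CC\setminus L^\circ$), recalling $\cM_{r,p}=\cF_{\alpha_{r,p}}$. First I would solve for the free parameter $\mu$ in the definitions of $\cW^{(\ell)}_{r,s}$ and $\cE^{(\ell)}_\lambda$, so that a priori every simple object of $\cO_A$ equals some $\cW^{(\ell)}_{r,s}$ or $\cE^{(\ell)}_\lambda$ with $\ell\in\CC$; the content of part~(1) is then precisely that $\ell\in\frac12\ZZ$.

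The crux is computing $h_{J\tens W}-h_J-h_W$. Since $J=\cF^\cH_{\lambda_J}\otimes\cM_{r_J+1,1}$ and $J$ is a simple current (so $J\tens W$ is simple), the singlet fusion rules of Theorem~\ref{thm:singlet_fus_rules} give $\cM_{r_J+1,1}\tens\cM_{r,s}\cong\cM_{r+r_J,s}$ for all $1\le s\le p$ and $\cM_{r_J+1,1}\tens\cF_\lambda\cong\cF_{\lambda-\frac{r_J}{2}\alpha_+}$, while the Heisenberg factor contributes $\frac12(\mu+\lambda_J)^2-\frac12\lambda_J^2-\frac12\mu^2=\lambda_J\mu$. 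Substituting the weight formulas $h_{r,s}$ and $h_\lambda=\frac12\lambda(\lambda-\alpha_0)$ and simplifying (the $p$- and $\frac1p$-contributions collapsing), I expect to obtain
\[
h_{J\tens W}-h_J-h_W=\ell-\tfrac{r_J}{2}(s-1) \qquad\text{and}\qquad h_{J\tens W}-h_J-h_W=\ell-\tfrac{r_J}{2}(p-1)
\]
in the $\cW$- and $\cE$-cases. Because the correction terms lie in $\frac12\ZZ$ (as $r_J\in\ZZ_{\ge0}$), membership of $\cF(W)$ in $\cO_A$—equivalently, its being a simple object of $\rep^0A$ or $\rep^1A$—forces $\ell\in\frac12\ZZ$, giving part~(1); and the sharper condition for $\rep^0A$, namely $h_{J\tens W}-h_J-h_W\in\ZZ$, becomes $\ell\in\frac{r_J}{2}(s-1)+\ZZ$ and $\ell\in\frac{r_J}{2}(p-1)+\ZZ$, which (via Proposition~\ref{prop:OlocA_OtwA_characterization}) is part~(3).

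Part~(2) is then essentially bookkeeping. By Proposition~\ref{prop:general_simple_module_classification}, $\cF(W)\cong\cF(J^n\tens W)$; computing $J^n\tens(\cF^\cH_\mu\otimes\cM_{r,s})\cong\cF^\cH_{\mu+n\lambda_J}\otimes\cM_{r+nr_J,s}$ (and its $\cF_\lambda$-analogue) and re-reading $\ell$ off the definitions produces the shift $\ell\mapsto\ell+n(\lambda_J^2+r_J^2p/2)$ together with $r\mapsto r+nr_J$ (resp.\ $\lambda\mapsto\lambda-nr_J\alpha_+/2$), exactly the claimed isomorphisms. For $r_J>0$ this $\ZZ$-action is free on the $r$-coordinate (resp.\ on $\re(\lambda)$, as $\alpha_+>0$ is real), with fundamental domains $1\le r\le r_J$ and $0\le\re(\lambda)<\frac{r_J}{2}\alpha_+$; the ``only if'' half of Proposition~\ref{prop:general_simple_module_classification} shows these representatives are pairwise inequivalent, and the impossibility of $\cM_{r,s}\cong\cF_\lambda$ for $\lambda\notin L^\circ$ rules out coincidences between $\cW$- and $\cE$-modules, yielding the complete irredundant list.

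The main obstacle is the piecewise definition of $h_{r,s}$, which equals the formula used above only for $r\ge1$ and equals $h_{2-r,s}$ otherwise, so the simplification in the $\cW$-case is literally valid only when $r,\,r+r_J\ge1$. I would sidestep this by establishing part~(2) first—its derivation uses only the fusion rules and the module definitions, not conformal weights—and then, when $r_J>0$, replacing an arbitrary $\cW^{(\ell)}_{r,s}$ by an isomorphic one with $r\ge1$. Since the standing assumption $\lambda_J^2+r_J^2p/2\in\ZZ$ makes the accompanying change in $\ell$ integral, neither the $\frac12\ZZ$-membership of part~(1) nor the mod-$\ZZ$ coset of part~(3) is affected. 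The residual case $r_J=0$ is immediate, since then the singlet factor of $J$ is the vacuum $\cM_{1,1}$, $h_{1,1}=0$, and both correction terms vanish.
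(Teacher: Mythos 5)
Your proposal is correct and follows essentially the same route as the paper's proof: both apply Proposition \ref{prop:general_simple_module_classification}, compute the lifting criterion $h_{J\tens W}-h_J-h_W$ via the Heisenberg and singlet fusion rules (your formulas $\ell-\tfrac{r_J}{2}(s-1)$ and $\ell-\tfrac{r_J}{2}(p-1)$ check out and reproduce the paper's relabeling $\ell=k+\tfrac{r_J}{2}(s-1)$, resp.\ $k+\tfrac{r_J}{2}(p-1)$), and derive part (2) from the induction-isomorphism criterion together with $\cM_{nr_J+1,1}\tens\cM_{r,s}\cong\cM_{r+nr_J,s}$ and $\alpha_{nr_J+1,1}=-n\tfrac{r_J}{2}\alpha_+$. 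The only divergence is in handling the piecewise definition of the lowest weight of $\cM_{r,s}$: the paper silently works with the Fock weight $h_\lambda=\tfrac{1}{2}\lambda(\lambda-\alpha_0)$ throughout, which is harmless since $h_{2-r,s}-h_{r,s}=(1-r)(p-s)\in\ZZ$, whereas you first prove the part (2) isomorphisms and reduce to $r\geq 1$; both fixes are valid.
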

\begin{proof}
To simplify notation, let $\cM_\lambda$ for $\lambda\in\CC$ denote the $\cM(p)$-socle of $\cF_\lambda$, so $\cM_\lambda=\cM_{r,s}$ if $\lambda=\alpha_{r,s}$ for some $r\in\ZZ$ and $1\leq s\leq p$, and $\cM_\lambda=\cF_\lambda$ otherwise. Then by Proposition \ref{prop:general_simple_module_classification}, every simple object of $\cO_A$ is isomorphic to $\cF(\cF^\cH_\gamma\otimes\cM_\lambda)$ for some $\gamma,\lambda\in\CC$ such that 
\begin{equation*}
\frac{1}{2}((\lambda_J+\gamma)^2-\lambda_J^2-\gamma^2)+h_{\alpha_{r_J+1,1}+\lambda}-h_{\alpha_{r_J+1,1}}-h_\lambda\in\frac{1}{2}\ZZ,
\end{equation*}
where $h_\lambda=\frac{1}{2}\lambda(\lambda-\alpha_0)$ for $\lambda\in\CC$. Recalling that $\alpha_{r_J+1,1}=-\frac{r_J}{2}\alpha_+$, this simplifies to 
\begin{equation*}
\lambda_J\gamma-\frac{r_J}{2}\alpha_+\lambda\in\frac{1}{2}\ZZ,
\end{equation*}
or
\begin{equation*}
\gamma = \frac{1}{\lambda_J}\left(\frac{r_J}{2}\alpha_+\lambda +k\right)
\end{equation*}
for $k\in\frac{1}{2}\ZZ$. Moreover, $\cF(\cF^\cH_\gamma\otimes\cM_\lambda)$ for such $\gamma$ is an object of $\Oloc_A$ if and only if $k\in \ZZ$. In case $\lambda=\alpha_{r,s}$ for some $r\in\ZZ$, $1\leq s\leq p$, we have
\begin{equation*}
\gamma=\frac{1}{\lambda_J}\left[\frac{r_J}{2}((1-r) p+s-1)+k\right],
\end{equation*}
recalling that $\alpha_{r,s}=\frac{1-r}{2}\alpha_++\frac{1-s}{2}\alpha_-$. For later use, we adjust the labeling by setting $\ell=k+\frac{r_J}{2}(s-1)$ if $\lambda=\alpha_{r,s}$ for some $r\in\ZZ$, $1\leq s\leq p$, and $\ell=k+\frac{r_J}{2}(p-1)$ if $\lambda\in\CC\setminus L^\circ$. Thus every simple object of $\cO_A$ is isomorphic to at least one of the modules $\cW^{(\ell)}_{r,s}$ or $\cE^{(\ell)}_\lambda$ defined in the statement of the  theorem. Moreover, $\cW_{r,s}^{(\ell)}$ is an object of $\Oloc_A$ if and only if $\ell\in \frac{r_J}{2}(s-1)+\ZZ$, and $\cE_\lambda^{(\ell)}$ is an object of $\Oloc_A$ if and only if $\ell\in \frac{r_J}{2}(p-1)+\ZZ$. It remains to determine which of these simple modules are isomorphic to each other.

Since $J^n\cong\cF^\cH_{n\lambda_J}\otimes\cM_{nr_J+1,1}$, Proposition \ref{prop:general_simple_module_classification} says that $\cF(\cF^\cH_\gamma\otimes\cM_\lambda)\cong\cF(\cF^\cH_{\til{\gamma}}\otimes\cM_{\til{\lambda}})$ if and only if $\til\gamma=\gamma+n\lambda_J$ and $\til{\lambda}=\lambda+\alpha_{nr_J+1,1}$ for some $n\in\ZZ$. Since 
\begin{equation}\label{eqn:Jn_times_Mrs}
\cM_{nr_J+1,1}\tens\cM_{r,s} \cong \cM_{r+nr_J,s},
\end{equation}
we get
\begin{align*}
\cW^{(\ell)}_{r,s} & \cong \cF\left(\cF^{\cH}_{(r_J(1-r)p/2+\ell)/\lambda_J+n\lambda_J}\otimes\cM_{r+nr_J,s}\right)\nonumber\\
& =\cF\left(\cF^\cH_{(r_J(1-(r+nr_J))p/2+\ell+nr_J^2p/2)/\lambda_J+n\lambda_J}\otimes\cM_{r+nr_J,s}\right) =\cW_{r+nr_J,s}^{(\ell+n(\lambda_J^2+r_J^2 p/2))}
\end{align*}
for $r\in\ZZ$, $1\leq s\leq p$, $\ell\in\frac{1}{2}\ZZ$, and $n\in\ZZ$. Also since
\begin{equation}\label{eqn:alpha_Jn}
\alpha_{nr_J+1,1}=-n\frac{r_J}{2}\alpha_+,
\end{equation}
we get
\begin{align*}
\cE^{(\ell)}_\lambda & \cong \cF\left(\cF^\cH_{(r_J(\alpha_+\lambda-p+1)/2+\ell)/\lambda_J+n\lambda_J}\otimes\cF_{\lambda-nr_J\alpha_+/2}\right)\nonumber\\
& = \cF\left(\cF^\cH_{(r_J(\alpha_+(\lambda-n r_J\alpha_+/2)-p+1)/2+\ell+n r_J^2 p/2]/\lambda_J+n\lambda_J}\otimes\cF_{\lambda-nr_J\alpha_+/2}\right) = \cE_{\lambda-nr_J\alpha_+/2}^{(\ell+n(\lambda_J^2+r_J^2 p/2))}
\end{align*}
for $\lambda\in\CC\setminus L^\circ$, $\ell\in\frac{1}{2}\ZZ$, and $n\in\ZZ$.
\end{proof}

\begin{rem}\label{rem:spectral_flow}
The half-integer label $\ell$ in the notation for the simple modules in $\cO_A$ is a spectral flow index: changing $\ell$ by $1$ corresponds to tensoring with $\cW^{(1)}_{1,1}=\cF(\cF^\cH_{1/\lambda_J}\otimes\cM_{1,1})$, which is a simple current $A$-module since $\cF^\cH_{1/\lambda_J}\otimes\cM_{1,1}$ is a simple current $\cH\otimes\cM(p)$-module. Note that shifting $\ell$ by an integer preserves both local and twisted $A$-modules in $\cO_A$, while shifting $\ell$ by a strict half-integer exchanges local and twisted modules.
\end{rem}

\begin{rem}\label{rem:mod_class_for_Bp_and_orbs}
The isomorphisms in Theorem \ref{thm:simple_A-module_classification}(2) show that if $\lambda_J^2+\frac{p}{2}r_J^2=0$, then $(\CC/\frac{r_J}{2} L)\times\frac{1}{2}\ZZ$ naturally parametrizes the simple objects of $\cO_A$: For a coset $\overline{\lambda}\in(\CC\setminus L^\circ)/\frac{r_J}{2} L$ and $\ell\in\frac{1}{2}\ZZ$, we define $\cE_{\overline{\lambda}}^{(\ell)}=\cE_{\lambda}^{(\ell)}$ for any coset representative $\lambda\in\overline{\lambda}$; by Theorem \ref{thm:simple_A-module_classification}(2), this module is independent of the choice of $\lambda\in\overline{\lambda}$ exactly when $\lambda_J^2+\frac{p}{2}r_J^2=0$. Similarly, for a coset $\overline{r}\in\ZZ/r_J\ZZ$, we can define $\cW^{(\ell)}_{\overline{r},s}=\cW_{r,s}^{(\ell)}$ for any coset representative $r\in\overline{r}$.
\end{rem}

The shift from the label $k$ to the label $\ell$ in the proof of Theorem \ref{thm:simple_A-module_classification} leads to nice formulas for the duals of simple objects in $\cO_A$:
\begin{prop}
Duals of simple objects in $\cO_A$ are as follows: $(\cW_{r,s}^{(\ell)})'\cong \cW_{2-r,s}^{(-\ell)}$ for $r\in\ZZ$, $1\leq s\leq p$, $\ell\in\frac{1}{2}\ZZ$; and $(\cE_\lambda^{(\ell)})'\cong\cE_{\alpha_0-\lambda}^{(-\ell)}$ for $\lambda\in\CC\setminus L^\circ$, $\ell\in\frac{1}{2}\ZZ$.
\end{prop}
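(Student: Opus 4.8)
The plan is to reduce the computation of contragredients in $\cO_A$ to the already-known contragredients of Heisenberg and singlet modules, by exploiting the fact that induction commutes with duals. First, by Theorem \ref{thm:main_thm}, duals in $\cO_A$ are given by contragredient modules, and by the discussion concluding Section \ref{sec:prelim} the same holds in $\cC$ (since $\cH\otimes\cM(p)$ is self-contragredient and $\cC$ is rigid and closed under contragredients). Because the induction functor $\cF:\cC\to\repA$ is a braided tensor functor, it carries a rigid object $W$ with dual $W^*$ to the rigid object $\cF(W)$ with dual $\cF(W^*)$. Combining these facts yields a natural isomorphism
\[
\cF(W)'\cong\cF(W)^*\cong\cF(W^*)\cong\cF(W')
\]
for every simple $W$ in $\cC$. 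Thus it suffices to compute the $\cH\otimes\cM(p)$-contragredient of $W$ and then re-express the resulting induced module in the normal form of Theorem \ref{thm:simple_A-module_classification}.

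Next I would compute $W'$ factor by factor. Since contragredient commutes with the tensor-product factorization, for a simple module $\cF^\cH_\gamma\otimes M$ with $M$ a simple $\cM(p)$-module one has $(\cF^\cH_\gamma\otimes M)'\cong(\cF^\cH_\gamma)'\otimes M'$. The $\cH$-module contragredient with respect to $\omega_{Std}$ is $(\cF^\cH_\gamma)'\cong\cF^\cH_{-\gamma}$, while the singlet contragredients recorded in Section \ref{sec:prelim} give $\cM_{r,s}'\cong\cM_{2-r,s}$ and $\cF_\lambda'\cong\cF_{\alpha_0-\lambda}$. Hence
\[
(\cF^\cH_\gamma\otimes\cM_{r,s})'\cong\cF^\cH_{-\gamma}\otimes\cM_{2-r,s},\qquad (\cF^\cH_\gamma\otimes\cF_\lambda)'\cong\cF^\cH_{-\gamma}\otimes\cF_{\alpha_0-\lambda}.
\]

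It then remains to identify these induced modules with the labels $\cW_{2-r,s}^{(-\ell)}$ and $\cE_{\alpha_0-\lambda}^{(-\ell)}$. For $\cW_{r,s}^{(\ell)}$ the Heisenberg weight is $\gamma=(r_J(1-r)p/2+\ell)/\lambda_J$, so the dual has Heisenberg weight $-\gamma=(r_J(r-1)p/2-\ell)/\lambda_J$; comparing with the defining weight $(r_J(1-(2-r))p/2+\ell')/\lambda_J$ of $\cW_{2-r,s}^{(\ell')}$ forces $\ell'=-\ell$. For $\cE_\lambda^{(\ell)}$ the only nontrivial point is the singlet shift: using $\alpha_+\alpha_0=\alpha_+^2+\alpha_+\alpha_-=2p-2=2(p-1)$, one checks the identity
\[
\alpha_+(\alpha_0-\lambda)-p+1=-(\alpha_+\lambda-p+1),
\]
so that the defining weight of $\cE_{\alpha_0-\lambda}^{(\ell')}$ equals $-\gamma$ precisely when $\ell'=-\ell$.

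I expect the only genuine subtlety to be the first paragraph, namely justifying cleanly that $\cF(W)'\cong\cF(W')$, i.e.\ that induction preserves contragredients. The remaining steps are bookkeeping with the parametrization, and their success hinges on the deliberate shift from the label $k$ to $\ell$ made in the proof of Theorem \ref{thm:simple_A-module_classification}; this is exactly what makes duality act by $\ell\mapsto-\ell$ with no additional spectral-flow correction. In the superalgebra case one should also confirm that the possible odd isomorphisms and the choice between $W'_{\pm}$ do not alter the isomorphism classes, but since the statement concerns isomorphism type only, these signs are immaterial.
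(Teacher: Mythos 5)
Your proof is correct and takes essentially the same approach as the paper: the paper's proof likewise rests on the fact that duals of induced modules are the inductions of duals, followed by the identical factor-by-factor Heisenberg/singlet bookkeeping using $\cM_{r,s}'\cong\cM_{2-r,s}$, $\cF_\lambda'\cong\cF_{\alpha_0-\lambda}$, and the identity $\alpha_+\alpha_0=2p-2$. The only difference is that you spell out a justification (induction is monoidal, and duals equal contragredients in both $\cC$ and $\cO_A$) for the key step that the paper simply asserts.
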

\begin{proof}
Since duals of induced modules are the inductions of duals, we have
\begin{align*}
(\cW_{r,s}^{(\ell)})' & \cong\cF\left(\cF^\cH_{-(r_J(1-r)p/2+\ell)/\lambda_J}\otimes\cM_{r,s}'\right) \cong\cF\left(\cF^\cH_{(r_J(1-(2-r))p/2-\ell)/\lambda_J}\otimes\cM_{2-r,s}\right) =\cW^{(-\ell)}_{2-r,s}
\end{align*}
and
\begin{align*}
(\cE_\lambda^{(\ell)})' & \cong \cF\left(\cF^\cH_{-(r_J(\alpha_+\lambda-p+1)/2+\ell)/\lambda_J}\otimes\cF_\lambda'\right)\nonumber\\ &\cong\cF\left(\cF^\cH_{(r_J(\alpha_+(\alpha_0-\lambda)-p+1)/2-\ell)/\lambda_J}\otimes\cF_{\alpha_0-\lambda}\right) =\cE^{(-\ell)}_{\alpha_0-\lambda}
\end{align*}
since $\alpha_+\alpha_0=2p-2$.
\end{proof}

Now that we have described the simple objects of $\cO_A$, it is straightforward to use Proposition \ref{prop:general_proj_covers} to describe the indecomposable projective objects of $\cO_A$:
\begin{thm}\label{thm:gen_proj_covers}
The simple ($\theta$-twisted) $A$-modules $\cW_{r,p}^{(\ell)}$ and $\cE_\lambda^{(\ell)}$ for $r\in\ZZ$, $\lambda\in\CC\setminus L^\circ$, and $\ell\in\frac{1}{2}\ZZ$ are projective in $\cO_A$. For $r\in\ZZ$, $1\leq s\leq p-1$, and $\ell\in\frac{1}{2}\ZZ$, the simple ($\theta$-twisted) $A$-module $\cW_{r,s}^{(\ell)}$ has a projective cover
\begin{equation*}
\cQ_{r,s}^{(\ell)} := \cF\left(\cF^\cH_{(r_J(1-r)p/2+\ell)/\lambda_J}\otimes\cP_{r,s}\right)
\end{equation*}
in $\cO_A$ which has Loewy diagram
\begin{equation}\label{eqn:Q_rsl_Loewy_diag}
 \begin{matrix}
  \begin{tikzpicture}[->,>=latex,scale=1.5]
\node (b1) at (1,0) {$\cW_{r,s}^{(\ell)}$};
\node (c1) at (-1, 1){$\mathcal{Q}_{r,s}^{(\ell)}:$};
   \node (a1) at (0,1) {$\cW_{r-1,p-s}^{(\ell-r_J p/2)}$};
   \node (b2) at (2,1) {$\cW_{r+1,p-s}^{(\ell+r_J p/2)}$};
    \node (a2) at (1,2) {$\cW_{r,s}^{(\ell)}$};
\draw[] (b1) -- node[left] {} (a1);
   \draw[] (b1) -- node[left] {} (b2);
    \draw[] (a1) -- node[left] {} (a2);
    \draw[] (b2) -- node[left] {} (a2);
\end{tikzpicture}
\end{matrix} .
 \end{equation}
\end{thm}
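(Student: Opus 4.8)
The plan is to reduce the entire statement to the induction machinery of Proposition \ref{prop:general_proj_covers}, once I identify, for each simple object of $\cO_A$, the simple $\cH\otimes\cM(p)$-module of which it is an induction together with the projective cover of that module in $\cC$. I would begin by recalling from Theorem \ref{thm:simple_A-module_classification} that every simple object of $\cO_A$ is an induction $\cF(W)$ with $W$ a simple object of $\cC$ of the form $\cF^\cH_\bullet\otimes\cM_{r,s}$ or $\cF^\cH_\bullet\otimes\cF_\lambda$, and from Proposition \ref{prop:OlocA_OtwA_characterization} that $\Oloc_A$ and $\Otw_A$ are the finite-length subcategories of $\rep^0A$ and $\rep^1A$. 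Since these are full subcategories and since every induction of a finite-length $\cC$-module has finite length (Proposition \ref{prop:fin_gen_Rep_B_p}), projectivity and the projective-cover property in $\rep^kA$ restrict to $\cO_A$; it therefore suffices to work in $\rep^kA$.

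For part (1), I would note that $\cW_{r,p}^{(\ell)}$ is the induction of $\cF^\cH_\bullet\otimes\cM_{r,p}=\cF^\cH_\bullet\otimes\cF_{\alpha_{r,p}}$ and that $\cE_\lambda^{(\ell)}$ is the induction of $\cF^\cH_\bullet\otimes\cF_\lambda$ with $\lambda\in\CC\setminus L^\circ$. By Corollary \ref{cor:proj_cover_in_C}, both of these $\cC$-modules are their own projective covers in $\cC$. Reading off from Theorem \ref{thm:simple_A-module_classification}(3) the unique $k\in\{0,1\}$ for which the module at hand lies in $\rep^kA$, Proposition \ref{prop:general_proj_covers}(2) shows that the induction, which here equals the simple module itself, is its own projective cover in $\rep^kA$; hence it is projective in $\cO_A$.

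For part (2), when $1\le s\le p-1$ the module $\cW_{r,s}^{(\ell)}$ is the induction of $\cF^\cH_\lambda\otimes\cM_{r,s}$ with $\lambda=(r_J(1-r)p/2+\ell)/\lambda_J$, and Corollary \ref{cor:proj_cover_in_C} identifies its projective cover in $\cC$ as $\cF^\cH_\lambda\otimes\cP_{r,s}$. Proposition \ref{prop:general_proj_covers}(2) then yields that $\cQ_{r,s}^{(\ell)}=\cF(\cF^\cH_\lambda\otimes\cP_{r,s})$ is a projective cover of $\cW_{r,s}^{(\ell)}$ in the relevant $\rep^kA$, hence in $\cO_A$. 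For the Loewy diagram I would apply Proposition \ref{prop:general_proj_covers}(3): the socle series of $\cQ_{r,s}^{(\ell)}$ is obtained by applying $\cF$ to each composition factor of $\cF^\cH_\lambda\otimes\cP_{r,s}$, whose own Loewy diagram is that of \eqref{eqn:Prs_Loewy_diag} with every factor tensored by $\cF^\cH_\lambda$.

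I expect the only genuine computation, and hence the main (if mild) obstacle, to be the re-expression of the two induced middle factors $\cF(\cF^\cH_\lambda\otimes\cM_{r\pm1,p-s})$ in the standard notation of Theorem \ref{thm:simple_A-module_classification}. Here the Heisenberg weight $\lambda$ is held fixed while the $\cM(p)$-labels shift from $(r,s)$ to $(r\pm1,p-s)$; matching $\lambda$ against the defining relation $\cW_{r',s'}^{(\ell')}=\cF(\cF^\cH_{(r_J(1-r')p/2+\ell')/\lambda_J}\otimes\cM_{r',s'})$ forces $\ell'=\ell\mp r_Jp/2$ for the factor with first label $r\pm1$. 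This produces exactly the middle entries $\cW_{r-1,p-s}^{(\ell-r_Jp/2)}$ and $\cW_{r+1,p-s}^{(\ell+r_Jp/2)}$ together with the repeated top-and-socle factor $\cW_{r,s}^{(\ell)}$ displayed in \eqref{eqn:Q_rsl_Loewy_diag}. The only care needed is to track how shifting the $\cM(p)$-labels at fixed Heisenberg weight translates into a shift of the spectral-flow index $\ell$.
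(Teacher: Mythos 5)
Your proposal is correct and follows essentially the same route as the paper: reduce everything to Proposition \ref{prop:general_proj_covers} and Corollary \ref{cor:proj_cover_in_C}, then identify the induced middle composition factors by matching Heisenberg weights against the labels in Theorem \ref{thm:simple_A-module_classification}. The only blemish is a sign slip in your intermediate formula: with $\gamma=(r_J(1-r')p/2+\ell')/\lambda_J$ held fixed, raising $r'$ by $1$ must raise $\ell'$ by $r_Jp/2$, so the factor with first label $r\pm1$ carries index $\ell'=\ell\pm r_Jp/2$ (signs correlated), not $\ell\mp r_Jp/2$ --- which is what your own concluding sentence and the diagram \eqref{eqn:Q_rsl_Loewy_diag} correctly state.
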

\begin{proof}
Everything follows from Proposition \ref{prop:general_proj_covers} and the description of the projective objects in $\cO_{\cM(p)}^T$, especially the Loewy diagram \eqref{eqn:Prs_Loewy_diag} of the $\cM(p)$-module $\cP_{r,s}$. In particular, the middle two composition factors of the induced module $\cQ_{r,s}^{(\ell)}$ are
\begin{align*}
&\cF\left(\cF^\cH_{(r_J(1-r)p/2+\ell)/\lambda_J}\otimes\cM_{r\pm 1, p-s}\right)\nonumber\\
 & \hspace{5em} = \cF\left(\cF^\cH_{(r_J(1-(r\pm 1))p/2+\ell\pm r_J p/2)/\lambda_J}\otimes\cM_{r\pm 1,p-s}\right) =\cW_{r\pm 1, p-s}^{(\ell\pm r_J p/2)}
\end{align*}
for $r\in\ZZ$, $1\leq s\leq p-1$, and $\ell\in\frac{1}{2}\ZZ$.
\end{proof}

It is also straightforward to use the $\cM(p)$-module fusion rules of Theorem \ref{thm:singlet_fus_rules} together with the fact that induction is a tensor functor to compute all tensor products of simple objects in $\cO_A$. For simpler formulas, we sometimes use the alternate notation $\cQ_{r,p}^{(\ell)}$ for $\cW_{r,p}^{(\ell)}$:
\begin{thm}\label{thm:general_fusion_rules}
The following tensor product formulas hold in $\cO_A$:
\begin{enumerate}
\item For $r,r'\in\ZZ$, $1\leq s,s'\leq p$, and $\ell,\ell'\in\frac{1}{2}\ZZ$,
 \begin{equation*}
   \cW_{r,s}^{(\ell)}\boxtimes \cW_{r',s'}^{(\ell')}   \cong \bigg(\bigoplus_{\substack{k = |s-s'|+1 \\ k+s+s' \equiv 1\; (\mathrm{mod}\; 2)}}^{{\rm min}\{s+s'-1, 2p-1-s-s'\}}\cW_{r+r'-1, k}^{(\ell+\ell')}\bigg) \oplus \bigg(\bigoplus_{\substack{k = 2p+1-s-s'\\ k+s+s' \equiv 1\; (\mathrm{mod}\; 2)}}^{p}\cQ_{r+r'-1, k}^{(\ell+\ell')}\bigg)
  \end{equation*}
with sums taken to be empty if the lower bound exceeds the upper bound. 
 
  \item 
  For $r\in\ZZ$, $1\leq s\leq p$, $\lambda\in\CC\setminus L^\circ$, and $\ell,\ell'\in\frac{1}{2}\ZZ$,
 \begin{equation*}
  \cW_{r,s}^{(\ell)}\tens\cE_\lambda^{(\ell')}\cong\bigoplus_{k=0}^{s-1} \cE_{\lambda+\alpha_{r,s}+k\alpha_-}^{(\ell+\ell'+r_J(k-(s-1)/2))}.
 \end{equation*}

  \item
  For $\ell,\ell'\in\frac{1}{2}\ZZ$ and $\lambda,\mu\in\CC\setminus L^\circ$ such that $\lambda+\mu =\alpha_0+\alpha_{r,s}\in L^\circ$ for some $r\in\ZZ$, $1\leq s\leq p$,
  \begin{equation*}
  \cE_\lambda^{(\ell)}\tens\cE_{\mu}^{(\ell')}\cong\bigoplus_{\substack{t= s\\ t\equiv s\,\,(\mathrm{mod}\,2)\\}}^p \cQ_{r,t}^{(\ell+\ell'+r_J(s-1)/2)}\oplus\bigoplus_{\substack{t=p+2-s\\t\equiv p-s\,\,(\mathrm{mod}\,2)\\}}^p \cQ_{r-1,t}^{(\ell+\ell'-r_J(p-s+1)/2)}.
 \end{equation*}

  \item
  For $\ell,\ell'\in\frac{1}{2}\ZZ$ and $\lambda,\mu\in\CC\setminus L^\circ$ such that $\lambda+\mu\notin L^\circ$,
\begin{equation*}
\cE_{\lambda}^{(\ell)}\tens\cE_\mu^{(\ell')} \cong \bigoplus_{k=0}^{p-1} \cE_{\lambda + \mu + k \alpha_-}^{(\ell+\ell'+r_J(k-(p-1)/2))}.
\end{equation*}
\end{enumerate}
\end{thm}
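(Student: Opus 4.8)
The plan is to exploit that the induction functor $\cF:\cC\to\repA$ is monoidal, so that the tensor product of two induced modules is the induction of the tensor product computed in $\cC$. Since $\cC$ is braided tensor equivalent to the Deligne product $\cO_\cH\boxtimes\cO_{\cM(p)}^T$, every $\cC$-tensor product factors as a tensor product of Heisenberg Fock modules with a tensor product in $\cO_{\cM(p)}^T$. Thus for each of the four cases I would write the two simple modules as inductions of objects $\cF^\cH_\gamma\otimes M$ and $\cF^\cH_{\gamma'}\otimes N$ (with $M,N$ either some $\cM_{r,s}$ or a typical Fock module $\cF_\lambda$) and compute
\begin{equation*}
(\cF^\cH_\gamma\otimes M)\tens(\cF^\cH_{\gamma'}\otimes N)\cong\cF^\cH_{\gamma+\gamma'}\otimes(M\tens N),
\end{equation*}
using that Heisenberg Fock modules fuse additively, and then apply induction to each summand of $M\tens N$.

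The singlet factor $M\tens N$ is given directly by the four parts of Theorem \ref{thm:singlet_fus_rules}, and these correspond exactly to the four parts of the present theorem: singlet part (1) produces modules $\cM_{r+r'-1,k}$ and projective covers $\cP_{r+r'-1,k}$, which induce to $\cW$'s and $\cQ$'s as in part (1) here; singlet part (2) produces Fock modules $\cF_{\lambda+\alpha_{r,s}+k\alpha_-}$ inducing to typical modules $\cE$ as in part (2); and singlet parts (3) and (4) produce, respectively, sums of $\cP_{r,t}$ and $\cP_{r-1,t}$ inducing to $\cQ$'s, and sums of Fock modules inducing to $\cE$'s. In each case the block index of the singlet summands fixes the first subscript $r+r'-1$, or $r$ and $r-1$, of the resulting $A$-module summands, exactly as stated.

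The remaining and most delicate step is to rewrite each induced summand $\cF(\cF^\cH_{\gamma+\gamma'}\otimes S)$ in the normalized notation $\cW^{(\ell'')}_{\cdot,\cdot}$, $\cE^{(\ell'')}_{\cdot}$, or $\cQ^{(\ell'')}_{\cdot,\cdot}$; this amounts to solving for the spectral-flow label $\ell''$ that makes the Heisenberg weight $\gamma+\gamma'$ agree with the normalization fixed in Theorems \ref{thm:simple_A-module_classification} and \ref{thm:gen_proj_covers}. The computation is routine but requires careful bookkeeping via the identities $\alpha_+^2=2p$, $\alpha_+\alpha_-=-2$, and $\alpha_+\alpha_{r,s}=(1-r)p+s-1$ (equivalently $\alpha_+\alpha_0=2p-2$). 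For instance, in part (2) the summand indexed by $k$ has Fock weight $\mu_k=\lambda+\alpha_{r,s}+k\alpha_-$ with $\alpha_+\mu_k=\alpha_+\lambda+(1-r)p+s-1-2k$, and matching $\gamma+\gamma'$ against the defining weight of $\cE^{(\ell'')}_{\mu_k}$ forces $\ell''=\ell+\ell'+r_J\big(k-(s-1)/2\big)$, as claimed; part (1) has no nontrivial shift and gives $\ell''=\ell+\ell'$, while parts (3) and (4) follow from the same substitution, using $\alpha_+(\lambda+\mu)=2p-2+\alpha_+\alpha_{r,s}$ in part (3) and $\alpha_+(\lambda+\mu+k\alpha_-)=\alpha_+(\lambda+\mu)-2k$ in part (4). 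Once each summand's label is identified and the ranges of the singlet direct sums are transcribed, the proof is complete; the main obstacle is purely this consistent tracking of the $\ell$-shifts through the weight identities.
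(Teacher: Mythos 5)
Your proposal is correct and takes essentially the same approach as the paper, which in fact leaves the proof implicit, stating only that the theorem follows from the singlet fusion rules of Theorem \ref{thm:singlet_fus_rules} together with the fact that induction is a tensor functor. Your spectral-flow bookkeeping—in particular the shifts $\ell''=\ell+\ell'+r_J(k-(s-1)/2)$ in part (2), $\ell+\ell'+r_J(s-1)/2$ and $\ell+\ell'-r_J(p-s+1)/2$ in part (3), and $\ell+\ell'+r_J(k-(p-1)/2)$ in part (4)—checks out against the Heisenberg-weight normalizations fixed in Theorems \ref{thm:simple_A-module_classification} and \ref{thm:gen_proj_covers}.
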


\begin{rem}
In some cases it is possible to rewrite some of the above tensor product formulas using the isomorphisms of Theorem \ref{thm:simple_A-module_classification}(2) or using the notation of Remark \ref{rem:mod_class_for_Bp_and_orbs}.
\end{rem}

\begin{rem}\label{rem:projective_fusion}
One can similarly obtain formulas for the tensor products of projective objects in $\cO_A$ using the $\cM(p)$-module fusion rules in \cite[Theorem 5.2.1]{CMY-singlet} and \cite[Theorem 4.3]{CMY-singlet-typical}. We leave the computation to the interested reader.
\end{rem}

\subsection{Lower-bounded, highest-weight, and grading-restricted modules}\label{subsec:lb_hw_gr}

We now determine which simple objects of $\cO_A$ satisfy certain grading conditions. We recall that a generalized ($\theta$-twisted) $A$-module is \textit{lower bounded} if its conformal weight spaces satisfy $X_{[h]}=0$ for $\mathrm{Re}\,h$ sufficiently negative and is \textit{grading restricted} if for any $h\in\CC$, $\dim X_{[h]}<\infty$ and $X_{[h-n]}=0$ for $n\in\ZZ$ sufficiently positive. Lower-bounded and grading-restricted generalized ($\theta$-twisted) $A$-modules are not required to have an $h(0)$-eigenvalue grading.

Highest-weight $A$-modules are also of interest. To define them, let $X$ be a strongly $\cH$-weight-graded ($\theta$-twisted) $A$-module, and let $\mathfrak{g}(A)$ be the Lie (super)algebra spanned by vertex operator modes $a_n$ acting on $X$ for $a\in A$ and $n\in\ZZ$ (if $X$ is local) or $n\in\frac{1}{2}\ZZ$ (if $X$ is $\theta$-twisted). We have a triangular decomposition $\mathfrak{g}(A)=\mathfrak{g}(A)_+\oplus\mathfrak{g}(A)_0\oplus\mathfrak{g}(A)_-$ where
\begin{equation*}
\mathfrak{g}(A)_{\pm} =\mathrm{span}\bigg\lbrace a_n\in\mathfrak{g}(A)\,\,\bigg\vert\,\, n-\mathrm{wt}\,a+1\in\pm\ZZ_{\geq 0}\,\,\text{and}\,\,a\in\bigoplus_{n\in\pm\ZZ_{\geq 1}} J^n\,\,\text{if}\,\,n=\mathrm{wt}\,a-1\bigg\rbrace
\end{equation*}
and 
\begin{equation*}
\mathfrak{g}(A)_0=\mathrm{span}\left\lbrace a_n\in\mathfrak{g}(A)\,\,\vert\,\,a\in J^0,\,\,n=\mathrm{wt}\,a-1\right\rbrace.
\end{equation*}
Thus homogeneous modes in $\mathfrak{g}(A)_+$ (respectively, $\mathfrak{g}(A)_-$) weakly lower conformal weights (respectively, weakly raise conformal weights) and strictly raise $\frac{1}{\lambda_J}h(0)$-eigenvalues (respectively, strictly lower $\frac{1}{\lambda_J}h(0)$-eigenvalues) whenever they preserve conformal weights. Modes in $\mathfrak{g}(A)_0$, on the other hand, preserve both conformal weights and $h(0)$-eigenvalues.

\begin{defi}
A \textit{highest-weight vector} of weight $(h,\lambda)$ in a strongly $\cH$-weight-graded ($\theta$-twisted) $A$-module $X$ is a simultaneous $L(0)$- and $h(0)$-eigenvector $b\in X^{(\lambda)}_{[h]}$ such that $\mathfrak{g}(A)_+\cdot b=0$. A \textit{highest-weight ($\theta$-twisted) $A$-module} of highest weight $(h,\lambda)$ is a strongly $\cH$-weight-graded ($\theta$-twisted) $A$-module which is generated by a highest-weight vector of weight $(h,\lambda)$. 
\end{defi}

\begin{rem}\label{rem:hw_in_OA}
Every highest-weight ($\theta$-twisted) $A$-module is an object of $\cO_A$ since it is a singly-generated strongly $\cH$-weight-graded $A$-module.
\end{rem}

If $X$ is a highest-weight local or $\theta$-twisted $A$-module of highest weight $(h,\lambda)$, then $X=U(\mathfrak{g}(A)_-)\cdot X^{(\lambda)}_{[h]}$. Thus we have:
\begin{prop}\label{prop:grading_of_hw_mods}
If $X$ is a highest-weight ($\theta$-twisted) $A$-module of highest weight $(h,\lambda)$, then $X$ is a lower-bounded ($\theta$-twisted) $A$-module with conformal weights in $h+\frac{1}{2}\ZZ_{\geq 0}$. Moreover, $X_{[h]}=\bigoplus_{n=0}^\infty X_{[h]}^{(\lambda-n\lambda_J)}$.
\end{prop}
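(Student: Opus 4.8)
The plan is to deduce everything from the identity $X=U(\mathfrak{g}(A)_-)\cdot X^{(\lambda)}_{[h]}$ recorded just before the statement, by tracking how the modes in $\mathfrak{g}(A)_-$ move the two gradings of $X$. First I would record the bigrading behaviour of a homogeneous mode $a_m\in\mathfrak{g}(A)_-$ with $a\in J^k$: it raises conformal weight by $\mathrm{wt}\,a-m-1$ and shifts the $h(0)$-eigenvalue by $k\lambda_J$, since $J^k$ is the $k\lambda_J$-eigenspace of $h(0)$ on $A$. By the definition of $\mathfrak{g}(A)_-$, the conformal-weight shift $\mathrm{wt}\,a-m-1$ lies in $\frac{1}{2}\ZZ_{\geq 0}$, and it vanishes exactly when $m=\mathrm{wt}\,a-1$, in which case the same definition forces $k\leq -1$; thus every weight-preserving mode of $\mathfrak{g}(A)_-$ strictly lowers the $\tfrac{1}{\lambda_J}h(0)$-eigenvalue by $|k|\geq 1$.

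For the first assertion I would apply the PBW theorem to the Lie (super)algebra $\mathfrak{g}(A)$ and write $U(\mathfrak{g}(A)_-)$ as a span of ordered monomials in such modes. Conformal-weight shifts are additive along a monomial, and each factor contributes a shift in $\frac{1}{2}\ZZ_{\geq 0}$, so every monomial raises conformal weight by an element of $\frac{1}{2}\ZZ_{\geq 0}$. Applying $U(\mathfrak{g}(A)_-)$ to $X^{(\lambda)}_{[h]}$ therefore lands in conformal weights $h+\frac{1}{2}\ZZ_{\geq 0}$, which yields both the lower-boundedness of $X$ and the claimed conformal-weight grading.

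For the description of $X_{[h]}$, the point is that because no mode of $\mathfrak{g}(A)_-$ strictly lowers conformal weight, a PBW monomial maps $X^{(\lambda)}_{[h]}$ back into conformal weight $h$ only if every one of its factors is weight-preserving. Hence $X_{[h]}$ is the span of the action on $X^{(\lambda)}_{[h]}$ of the subalgebra of $U(\mathfrak{g}(A)_-)$ generated by the weight-preserving modes, and by the first paragraph each such generator strictly lowers the $\tfrac{1}{\lambda_J}h(0)$-eigenvalue by a positive integer. Thus every vector so produced has $h(0)$-eigenvalue of the form $\lambda-n\lambda_J$ with $n\in\ZZ_{\geq 0}$, proving $X_{[h]}\subseteq\bigoplus_{n=0}^\infty X^{(\lambda-n\lambda_J)}_{[h]}$; the reverse inclusion is immediate since each summand is by definition contained in $X_{[h]}$.

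The only step requiring genuine care is the bookkeeping in the third paragraph: one must justify that a weight-preserving monomial has all of its factors weight-preserving, which uses both the additivity of the conformal-weight shift along monomials and the crucial one-sidedness $\mathrm{wt}\,a-m-1\geq 0$ for $\mathfrak{g}(A)_-$, so that no cancellation between positive shifts can occur. The superalgebra case introduces no new difficulty, since PBW holds for Lie superalgebras, and the $\theta$-twisted case is handled identically once the mode indices $m$ are allowed to range over $\frac{1}{2}\ZZ$, which merely enlarges the set of available shifts inside $\frac{1}{2}\ZZ_{\geq 0}$.
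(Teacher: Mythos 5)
Your proof is correct and takes essentially the same route as the paper: the paper records exactly the same two facts---that homogeneous modes in $\mathfrak{g}(A)_-$ weakly raise conformal weights (by elements of $\frac{1}{2}\ZZ_{\geq 0}$) and strictly lower $\frac{1}{\lambda_J}h(0)$-eigenvalues whenever they preserve conformal weights---and then treats the proposition as an immediate consequence of the identity $X=U(\mathfrak{g}(A)_-)\cdot X^{(\lambda)}_{[h]}$. You have simply made explicit the monomial bookkeeping (additivity of weight shifts, and the fact that a total shift of zero forces every factor to be weight-preserving) that the paper leaves to the reader.
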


\begin{rem}\label{rem:simple_grad-rest_are_hw}
Every simple grading-restricted ($\theta$-twisted) $A$-module $X$ is a highest-weight module, and thus also an object of $\cO_A$: Its conformal weights are contained in $h+\frac{1}{2}\ZZ_{\geq 0}$ for some $h\in\CC$, and $X_{[h]}$ is finite dimensional. Thus $X_{[h]}$ decomposes into (generalized) $h(0)$-eigenspaces, and there is an $h(0)$-eigenvector $b\in X_{[h]}$ of eigenvalue $\lambda$ such that $\lambda+n\lambda_J$ is not an $h(0)$-eigenvalue on $X_{[h]}$ for $n\in\ZZ_{\geq 1}$. Then $b$ is a highest-weight vector which generates $X$ because $X$ is simple. Also, $h(0)$ acts semisimply on $X$ because $X$ is generated by an $h(0)$-eigenvector, and the grading restriction conditions of a strongly $\cH$-weight-graded $A$-module hold trivially because the conformal weight spaces of $X$ are finite dimensional.
\end{rem}

We can also define a version of the BGG category $\cO$ for $A$:
\begin{defi}
Define $\cO_{A}^{\mathrm{h.w.}}$ to be the full subcategory of strongly $\cH$-weight-graded local and $\theta$-twisted $A$-modules $X$ whose weights satisfy the following conditions: There are finitely many $h_1,\ldots h_N\in\CC$ such that the conformal weights of $X$ are contained in $\cup_{n=1}^N (h_n+\frac{1}{2}\ZZ_{\geq 0})$, and for any conformal weight $h$ of $X$, there are finitely many $\lambda_1,\ldots\lambda_M\in\CC$ such that $h(0)$-eigenvalues on $X_{[h]}$ are contained in $\cup_{m=1}^M (\lambda_m-\lambda_J\ZZ_{\geq 0})$.
\end{defi}

It is clear from the definition that $\cO_A^{\mathrm{h.w.}}$ is closed under finite direct sums, submodules, and quotients, and that $\cO_A^{\mathrm{h.w.}}$ contains the category of finitely-generated $h(0)$-semisimple grading-restricted generalized ($\theta$-twisted) $A$-modules. However, it is not immediately clear that every highest-weight $A$-module is an object of $\cO_A^{\mathrm{h.w.}}$. But we shall prove this after classifying simple highest-weight $A$-modules in the next theorem.

\begin{thm}\label{thm:grading-restricted}
If $\lambda_J^2+\frac{p}{2}r_J^2>0$, then every object of $\cO_A$ is a grading-restricted generalized ($\theta$-twisted) $A$-module. In particular, $\cO_A$ is equal to the category of finitely-generated $h(0)$-semisimple grading-restricted generalized ($\theta$-twisted) $A$-modules. If $\lambda_J^2+\frac{p}{2}r_J^2=0$, then:

\begin{enumerate}

\item The module $\cE^{(\ell)}_\lambda$ for $\lambda\in\CC\setminus L^{\circ}$, $\ell\in\frac{1}{2}\ZZ$ is lower bounded if and only if $\ell=0$, and $\cW^{(\ell)}_{r,s}$ for $r\in\ZZ$, $1\leq s\leq p$, $\ell\in\frac{1}{2}\ZZ$ is lower bounded if and only if $\vert\ell\vert\leq r_J\frac{p-s}{2}$.

\item Every simple highest-weight ($\theta$-twisted) $A$-module is isomorphic to exactly one of the modules $\cW_{r,s}^{(\ell)}$ such that $1\leq r\leq r_J$, $1\leq s\leq p-1$, and $-r_J\frac{p-s}{2}<\ell\leq r_J\frac{p-s}{2}$. Thus there are $r_J^2 p(p-1)$ simple highest-weight local and $\theta$-twisted $A$-modules up to isomorphism. All of these highest-weight modules are objects of $\cO_A^{\mathrm{h.w.}}$.

\item Every simple grading-restricted ($\theta$-twisted) $A$-module is isomorphic to exactly one of the modules $\cW_{r,s}^{(\ell)}$ such that $1\leq r\leq r_J$, $1\leq s\leq p-1$, and $\vert\ell\vert<r_J\frac{p-s}{2}$. Thus there are $r_J(p-1)(r_Jp-1)$ simple grading-restricted local and $\theta$-twisted $A$-modules up to isomorphism.
\end{enumerate}

\end{thm}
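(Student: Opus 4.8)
The plan is to reduce every assertion to an analysis of the lowest conformal weights of the summands $J^n\tens W$ making up an induced simple module $\cF(W)$. Since by Proposition~\ref{prop:fin_gen_Rep_B_p} every object of $\cO_A$ has finite length with simple subquotients among the modules classified in Theorem~\ref{thm:simple_A-module_classification}, and since grading restriction and lower boundedness are preserved under extensions, it suffices to treat the simple modules $\cW^{(\ell)}_{r,s}$ and $\cE^{(\ell)}_\lambda$. Writing $\cF(W)=\bigoplus_{n\in\ZZ}J^n\tens W$ as an $\cH\otimes\cM(p)$-module and using \eqref{eqn:Jn_times_Mrs}, \eqref{eqn:alpha_Jn}, the singlet weights $h_{r,s}$, the Fock weights $h_\lambda=\tfrac12\lambda(\lambda-\alpha_0)$, and $\alpha_+\alpha_0=2p-2$, I would compute the lowest conformal weight $\Delta(n)$ of the $n$th summand as a function of $n$. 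In every case the coefficient of $n^2$ in $\Delta(n)$ equals $\tfrac12(\lambda_J^2+\tfrac p2 r_J^2)$, which is exactly what governs the dichotomy in the theorem.

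When $\lambda_J^2+\tfrac p2 r_J^2>0$, we have $\Delta(n)\to+\infty$ as $|n|\to\infty$. As each summand $J^n\tens W$ has finite-dimensional conformal weight spaces and conformal weights bounded below by $\Delta(n)$, only finitely many summands meet any fixed conformal weight, so every simple module, hence every object of $\cO_A$, is grading restricted. For the ``in particular'' clause I would check both inclusions: objects of $\cO_A$ are finitely generated, $h(0)$-semisimple (by definition of strongly $\cH$-weight-graded) and now grading restricted; conversely a finitely generated $h(0)$-semisimple grading-restricted generalized (twisted) module satisfies the three conditions defining a strongly $\cH$-weight-graded module, since $\dim X^{(\lambda)}_{[h]}\le\dim X_{[h]}<\infty$ and $X^{(\lambda)}_{[h-n]}\subseteq X_{[h-n]}=0$ for $n$ large, and is then an object of $\rep^0A\oplus\rep^1A$ by Lemma~\ref{lem:Bp_mod_in_ind_C}, hence of $\cO_A$ by Proposition~\ref{prop:OlocA_OtwA_characterization}.

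When $\lambda_J^2+\tfrac p2 r_J^2=0$ the $n^2$-term drops out and $\Delta(n)$ becomes (piecewise) affine. For $\cE^{(\ell)}_\lambda$ there is no case distinction, because $\lambda-n\tfrac{r_J}{2}\alpha_+\notin L^\circ$ for all $n$, so the relevant singlet module is the Fock module $\cF_{\lambda-nr_J\alpha_+/2}$; one finds $\Delta(n)=\ell n+\mathrm{const}$, which is bounded below on $\ZZ$ if and only if $\ell=0$. For $\cW^{(\ell)}_{r,s}$ the reflection $r\mapsto 2-r$ in $h_{r,s}$ produces two affine branches, $\Delta(n)=(\tfrac{r_J}{2}(p-s)+\ell)\,n+\mathrm{const}$ for $n\gg0$ and $\Delta(n)=(\tfrac{r_J}{2}(p-s)-\ell)\,|n|+\mathrm{const}$ for $n\ll0$; lower boundedness is equivalent to both slopes being nonnegative, i.e. $|\ell|\le\tfrac{r_J}{2}(p-s)$. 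This establishes part~(1).

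For parts (2) and (3) the key observation is that $\mathfrak{g}(A)_+$ weakly lowers conformal weight and, on the conformal-weight-preserving part, strictly raises the summand index $n$ (equivalently $\tfrac1{\lambda_J}h(0)$, which is real-valued since $\lambda_J^2<0$). Hence a simple lower-bounded module is highest weight if and only if the summand indices occurring in its lowest conformal weight space are bounded above: the top such vector is annihilated by $\mathfrak{g}(A)_+$ and generates by simplicity, while conversely Proposition~\ref{prop:grading_of_hw_mods} forces a highest-weight vector to sit at the lowest conformal weight with maximal summand index. Thus $\cE^{(\ell)}_\lambda$ is never highest weight (for $\ell=0$ all summands share the minimal weight, so the index is unbounded above), whereas $\cW^{(\ell)}_{r,s}$ is highest weight exactly when the $n\gg0$ slope is strictly positive, giving $-\tfrac{r_J}{2}(p-s)<\ell\le\tfrac{r_J}{2}(p-s)$ with $1\le s\le p-1$; the upper endpoint is allowed (minimum attained as $n\to-\infty$, index bounded above) but the lower endpoint is not (minimum attained as $n\to+\infty$, index unbounded above). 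Normalizing $1\le r\le r_J$ via the isomorphisms $\cW^{(\ell)}_{r,s}\cong\cW^{(\ell)}_{r+nr_J,s}$ of Theorem~\ref{thm:simple_A-module_classification}(2) and counting admissible $\ell\in\tfrac12\ZZ$ yields $\sum_{r=1}^{r_J}\sum_{s=1}^{p-1}2r_J(p-s)=r_J^2p(p-1)$ simple highest-weight modules; each lies in $\cO_A^{\mathrm{h.w.}}$ because the strictly positive $n\gg0$ slope bounds the summand indices above at every fixed conformal weight. Finally, a highest-weight module is grading restricted iff its conformal weight spaces are finite dimensional, which fails precisely when a branch slope vanishes; among highest-weight $\cW^{(\ell)}_{r,s}$ this happens only at $\ell=\tfrac{r_J}{2}(p-s)$, so the grading-restricted ones are those with $|\ell|<\tfrac{r_J}{2}(p-s)$, numbering $\sum_{r=1}^{r_J}\sum_{s=1}^{p-1}(2r_J(p-s)-1)=r_J(p-1)(r_Jp-1)$. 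The main obstacle throughout is the careful bookkeeping of the two affine branches and the resulting asymmetry between the endpoints $\ell=\pm\tfrac{r_J}{2}(p-s)$, which is exactly where highest weight, lower bounded, and grading restricted diverge.
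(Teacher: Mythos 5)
Your computations reproduce the paper's own proof almost step for step: the paper likewise computes the lowest conformal weight of the $n$th $\cH\otimes\cM(p)$-summand of each induced simple module (equations \eqref{eqn:h(lambda_ell_n)}, \eqref{eqn:h(rsl)_n_positive}, \eqref{eqn:h(rsl)_n_negative}), observes that the coefficient of $n^2$ is $\frac{1}{2}\big(\lambda_J^2+\frac{p}{2}r_J^2\big)$, deduces grading restriction in the positive case from finite length of objects of $\cO_A$, and in the degenerate case runs exactly your two-branch slope analysis, including the asymmetry between the endpoints $\ell=\pm r_J\frac{p-s}{2}$, which the paper also settles by comparing upper bounds on $\frac{1}{\lambda_J}h(0)$-eigenvalues of the lowest conformal weight space against Proposition \ref{prop:grading_of_hw_mods}. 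Your counts $r_J^2p(p-1)$ and $r_J(p-1)(r_Jp-1)$ agree with the paper's.

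There is, however, one gap, concentrated in parts (2) and (3): these parts classify \emph{all} simple highest-weight and \emph{all} simple grading-restricted generalized ($\theta$-twisted) $A$-modules, whereas your argument only decides which simple objects \emph{of} $\cO_A$ are highest weight or grading restricted. For part (2) the missing reduction is essentially free (Remark \ref{rem:hw_in_OA}: a highest-weight module is by definition singly generated and strongly $\cH$-weight-graded, hence an object of $\cO_A$), but for part (3) it is not purely formal: a simple grading-restricted generalized module is not assumed to be $h(0)$-semisimple, let alone strongly $\cH$-weight-graded, so it is not a priori among the modules classified in Theorem \ref{thm:simple_A-module_classification}. The paper closes this with Remark \ref{rem:simple_grad-rest_are_hw}: the finite-dimensional lowest conformal weight space decomposes into generalized $h(0)$-eigenspaces, so one may choose an $h(0)$-eigenvector $b$ of eigenvalue $\lambda$ such that $\lambda+n\lambda_J$ is not an eigenvalue on that space for any $n\in\ZZ_{\geq 1}$; then $b$ is a highest-weight vector which generates the module by simplicity, whence $h(0)$ acts semisimply on the whole module and the module is highest weight, hence an object of $\cO_A$. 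Adding this one observation (already available in the paper before the theorem) completes your proof; everything else is correct.
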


\begin{proof}
We determine which simple objects of $\cO_A$ are lower bounded, highest weight, or grading restricted by computing conformal weights. For $\lambda\in\CC\setminus L^\circ$ and $\ell\in\frac{1}{2}\ZZ$, we have
\begin{equation*}
\cE^{(\ell)}_{\lambda} \cong\bigoplus_{n\in\ZZ} \cF^\cH_{(r_J(\alpha_+\lambda-p+1)/2+\ell)/\lambda_J+n\lambda_J}\otimes\cF_{\lambda-nr_J\alpha_+/2}
\end{equation*}
as an $\cH\otimes\cM(p)$-module, recalling \eqref{eqn:alpha_Jn}. The lowest conformal weight of the $n$th summand here is thus
\begin{align}\label{eqn:h(lambda_ell_n)}
&\frac{1}{2}\left(\frac{r_J(\alpha_+\lambda-p+1)+2\ell}{2\lambda_J}+n\lambda_J\right)^2+\frac{1}{2}\left(\lambda-\frac{nr_J\alpha_+}{2}\right)\left(\lambda-\frac{nr_J\alpha_+}{2}-\alpha_0\right)\nonumber\\
&\hspace{2em}=\frac{n^2}{2}\left(\lambda_J^2+\frac{p}{2}r_J^2\right)+\frac{n}{2}\left(r_J(\alpha_+\lambda-p+1)+2\ell\right)-\frac{n r_J}{2}\left(\alpha_+\lambda-\frac{\alpha_+\alpha_0}{2}\right)+C,
\end{align}
where $C$ is independent of $n$. Since $\frac{\alpha_+\alpha_0}{2}=p-1$, this simplifies to 
\begin{equation*}
\frac{n^2}{2}\left(\lambda_J^2+\frac{p}{2}r_J^2\right)+n\ell+C.
\end{equation*}
Thus if $\lambda_J^2+\frac{p}{2}r_J^2>0$, the lowest conformal weights of the $\cH\otimes\cM(p)$-module summands of $\cE_{\lambda}^{(\ell)}$ are a quadratic function of $n$ with positive leading coefficient, which implies $\cE_{\lambda}^{(\ell)}$ has finite-dimensional weight spaces and a lower bound on conformal weights. If $\lambda_J^2+\frac{p}{2}r_J^2=0$, the conformal weights of $\cE_\lambda^{(\ell)}$ have a lower bound if and only if $\ell=0$. When $\ell=0$, the lowest conformal weight space of $\cE_\lambda^{(0)}$ is infinite dimensional with no upper bound on $\frac{1}{\lambda_J}h(0)$-eigenvalues, so $\cE_\lambda^{(\ell)}$ is never grading restricted or highest weight.

Now consider $\cW_{r,s}^{(\ell)}$ for $1\leq r\leq r_J$, $1\leq s\leq p$, and $\ell\in\frac{1}{2}\ZZ$: as an $\cH\otimes\cM(p)$-module, 
\begin{equation}\label{eqn:Wrsl_weight_decomp}
\cW_{r,s}^{(\ell)}\cong\bigoplus_{n\in\ZZ} \cF^\cH_{(r_J(1-r)p/2+\ell)/\lambda_J+n\lambda_J}\otimes\cM_{r+nr_J,s},
\end{equation} 
recalling \eqref{eqn:Jn_times_Mrs}. The lowest conformal weight of $\cM_{r',s'}$ is $h_{r',s'}$ for $r'\geq 1$ and $h_{2-r',s'}$ for $r'\leq 1$. Thus for $n\geq 0$, the lowest conformal weight of the $n$th summand of $\cW_{r,s}^{(\ell)}$ is
\begin{align}\label{eqn:h(rsl)_n_positive}
&\frac{1}{2}\left(\frac{r_J(1-r)p+2\ell}{2\lambda_J}+n\lambda_J\right)^2 + \frac{\left((r+nr_J)p-s\right)^2}{4p}-\frac{(p-1)^2}{4p}  \nonumber\\
&\hspace{2em} = \frac{n^2}{2}\left(\lambda_J^2+\frac{p}{2}r_J^2\right)+\frac{n}{2}\left(r_J(1-r)p+2\ell+r_J(rp-s)\right)+\frac{(r_J(1-r)p+2\ell)^2}{8\lambda_J^2}+h_{r,s}\nonumber\\
&\hspace{2em} =\frac{n^2}{2}\left(\lambda_J^2+\frac{p}{2}r_J^2\right)+\frac{n}{2}\left(r_J(p-s)+2\ell\right)+D,
\end{align}
where $D$ is the constant term in $n$. For $n<0$ on the other hand, the lowest conformal weight of the $n$th direct summand is
\begin{align}\label{eqn:h(rsl)_n_negative}
&\frac{1}{2}\left(\frac{r_J(1-r)p+2\ell}{2\lambda_J}+n\lambda_J\right)^2 + \frac{\left((2-r-nr_J)p-s\right)^2}{4p}-\frac{(p-1)^2}{4p}  \nonumber\\
&\hspace{2em} = \frac{n^2}{2}\left(\lambda_J^2+\frac{p}{2}r_J^2\right)+\frac{n}{2}\left(r_J(1-r)p+2\ell+r_J((r-2)p+s)\right)+D+h_{2-r,s}-h_{r,s}\nonumber\\
& \hspace{2em} = \frac{n^2}{2}\left(\lambda_J^2+\frac{p}{2}r_J^2\right)+\frac{n}{2}\left(r_J(s-p)+2\ell\right)+D-(r-1)(p-s).
\end{align}
Again, $\cW_{r,s}^{(\ell)}$ is a grading-restricted ($\theta$-twisted) $A$-module when $\lambda_J^2+\frac{p}{2}r_J^2>0$. When $\lambda_J^2+\frac{p}{2}r_J^2=0$, the lowest conformal weight of the $n$th summand of $\cW_{r,s}^{(\ell)}$ is $n(r_J\frac{p-s}{2}+\ell)+D$ for $n\geq 0$ and $\vert n\vert(r_J\frac{p-s}{2}-\ell)+D-(r-1)(p-s)$ for $n<0$. Thus there is a lower bound on the conformal weight of $\cW_{r,s}^{(\ell)}$ if and only if
\begin{equation*}
-r_J\frac{p-s}{2}\leq \ell\leq r_J\frac{p-s}{2},
\end{equation*}
and $\cW_{r,s}^{(\ell)}$ has finite-dimensional conformal weight spaces if and only if both of the above inequalities are strict. Moreover, if $\ell=-r_J\frac{p-s}{2}$, then $D$ is the lowest conformal weight of $\cW_{r,s}^{(-r_J(p-s)/2)}$ and there is no upper bound on the $\frac{1}{\lambda_J}h(0)$-eigenvalues on $(\cW_{r,s}^{(-r_J(p-s)/2)})_{[D]}$, so $\cW_{r,s}^{(-r_J(p-s)/2)}$ is not highest weight by Proposition \ref{prop:grading_of_hw_mods}. It remains to consider $\ell=r_J\frac{p-s}{2}$ for $1\leq s\leq p-1$: the $h(0)$-eigenspace of $\cW_{r,s}^{(r_J(p-s)/2)}$ with eigenvalue $\frac{r_J((2-r)p-s)}{2\lambda_J}+n\lambda_J$ has lowest conformal weight $D-(r-1)(p-s)$ if $n<0$ and $D+nr_J(p-s)$ if $n\geq 0$. Thus $\cW_{r,s}^{(r_J(p-s)/2)}$ has lower-bounded conformal weights, and for any conformal weight space, there is an upper bound on $\frac{1}{\lambda_J}h(0)$-eigenvalues. This shows that $\cW_{r,s}^{(r_J(p-s)/2)}$ is an object of $\cO_A^{\mathrm{h.w.}}$, and because it is simple, it is generated by any highest-weight vector in the maximal $\frac{1}{\lambda_J}h(0)$-eigenspace of the lowest conformal weight space.

Now for $\lambda_J^2+\frac{p}{2}r_J^2>0$, we have shown that every simple module in $\cO_A$ is grading restricted; thus every module in $\cO_A$ is grading restricted because every object of $\cO_A$ has finite length. We have also proved part (1) of the $\lambda_J^2+\frac{p}{2}r_J^2=0$ case. Part (2) follows after recalling from Remark \ref{rem:hw_in_OA} that every simple highest-weight ($\theta$-twisted) $A$-module is an object of $\cO_A$ and thus isomorphic to one of the highest-weight modules we have identified. For part (3), we just need to note that every simple grading-restricted ($\theta$-twisted) $A$-module is an object of $\cO_A$ (recall Remark \ref{rem:simple_grad-rest_are_hw}).
\end{proof}

\begin{rem}\label{rem:low-bound_conf_wts}
A calculation using \eqref{eqn:h(lambda_ell_n)}, \eqref{eqn:h(rsl)_n_positive}, and \eqref{eqn:h(rsl)_n_negative} shows that when $\lambda_J^2+\frac{p}{2}r_J^2=0$, all non-grading-restricted lower-bounded simple ($\theta$-twisted) $A$-modules have the same lowest conformal weight $-\frac{1}{4p}(p-1)^2$. These are the modules $\cW_{r,s}^{(\pm r_J(p-s)/2)}$ for $r\in\ZZ$, $1\leq s\leq p$, and the modules $\cE^{(0)}_\lambda$ for $\lambda\in\CC\setminus L^\circ$. 
\end{rem}

Next, we describe the structure of the BGG category $\cO_A^{\mathrm{h.w.}}$. For $\lambda_J^2+\frac{p}{2}r_J^2>0$, we just note that Theorem \ref{thm:grading-restricted} shows that $\cO_A$ is a subcategory of $\cO_A^{\mathrm{h.w.}}$, and thus every highest-weight ($\theta$-twisted) $A$-module is an object of $\cO_A^{\mathrm{h.w.}}$ (recall Remark \ref{rem:hw_in_OA}). The inclusion of $\cO_A$ into $\cO_A^{\mathrm{h.w.}}$ can be strict. For example, if $r_J=0$, then $A=V_{\ZZ\lambda_J}\otimes\cM(p)$ where $V_{\ZZ\lambda_J}$ is a lattice vertex operator (super)algebra. Then the $A$-module $V_{\ZZ\lambda_J}\otimes\cW(p)$, where $\cW(p)\cong\bigoplus_{n\in\ZZ} \cM_{2n+1,1}$ is the triplet vertex operator algebra, is grading restricted and thus an object of $\cO_A^{\mathrm{h.w.}}$, but not finitely generated as an $A$-module and thus not an object of $\cO_A$. For $\lambda_J^2+\frac{p}{2}r_J^2=0$, we begin with two lemmas:
\begin{lem}\label{lem:hw_unique}
Assume $\lambda_J^2+\frac{p}{2}r_J^2=0$. If two simple highest-weight ($\theta$-twisted) $A$-modules have the same highest weight, then they are isomorphic.
\end{lem}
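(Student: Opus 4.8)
The plan is to deduce the lemma directly from the classification of simple highest-weight modules already obtained in Theorem \ref{thm:grading-restricted}(2). That theorem gives a complete, irredundant list of the simple highest-weight (respectively $\theta$-twisted) $A$-modules, namely the $\cW_{r,s}^{(\ell)}$ with $1\leq r\leq r_J$, $1\leq s\leq p-1$, and $-r_J\frac{p-s}{2}<\ell\leq r_J\frac{p-s}{2}$, and asserts that these are pairwise non-isomorphic. Consequently the lemma is \emph{equivalent} to the statement that the highest weight $(h,\lambda)$ is an injective function of the parameters $(r,s,\ell)$ ranging over this fundamental domain: if $X$ and $\widetilde X$ are simple highest-weight modules with the same highest weight, then each is isomorphic to one of the listed representatives, and injectivity forces them to be the same representative. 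So the entire proof reduces to computing $(h,\lambda)$ explicitly and verifying this injectivity.

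First I would compute the highest weight of $\cW_{r,s}^{(\ell)}$ from its Heisenberg-times-singlet decomposition \eqref{eqn:Wrsl_weight_decomp}. The highest-weight vector lies in a single summand $\cF^\cH_{\gamma+n_0\lambda_J}\otimes\cM_{r+n_0r_J,s}$, so its Heisenberg weight is $\lambda=\gamma+n_0\lambda_J$; since $\lambda_J\neq 0$ separates the summands and each singlet module has a one-dimensional lowest conformal weight space, the weight space $X^{(\lambda)}_{[h]}$ is one-dimensional. The index $n_0$ is the largest $n$ whose summand attains the minimal conformal weight, because $\frac{1}{\lambda_J}h(0)$ is an increasing affine function of $n$. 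Using \eqref{eqn:h(rsl)_n_positive} and \eqref{eqn:h(rsl)_n_negative} with $\lambda_J^2=-\frac{p}{2}r_J^2$, the value $n_0$ is pinned down by comparing the $n=0$ weight $D$ with the $n=-1$ weight $\tfrac{1}{2}(r_J(p-s)-2\ell)+D-(r-1)(p-s)$; this is where a short case analysis on the sign of $r_J(p-s)-2\ell$ and on $r$ enters. In every case one finds, writing $r'=r+n_0r_J$, the relations
\begin{equation*}
\lambda\lambda_J=\frac{pr_J}{2}(1-r')+\ell,\qquad h=\tfrac{1}{2}\lambda^2+h(\cM_{r',s}),
\end{equation*}
where $h(\cM_{r',s})$ is the lowest conformal weight of $\cM_{r',s}$ (equal to $h_{r',s}$ or $h_{2-r',s}$) and the second index $s$ is unchanged by \eqref{eqn:Jn_times_Mrs}.

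To finish I would prove injectivity from these two relations. Given $(h,\lambda)$, both $h-\tfrac{1}{2}\lambda^2=h(\cM_{r',s})$ and $\lambda\lambda_J=\frac{pr_J}{2}(1-r')+\ell$ are determined. Since consecutive values of $r'$ shift $\frac{pr_J}{2}(1-r')+\ell$ by $\frac{pr_J}{2}$ while the admissible range of $\ell$ for fixed $s$ has length $r_J(p-s)<pr_J$, at most two pairs $(r',\ell)$ are compatible with a given $\lambda$ and $s$; the half-open normalization $-r_J\frac{p-s}{2}<\ell\leq r_J\frac{p-s}{2}$, together with the monotonicity of $s\mapsto h_{1,s}$ and the explicit form of $h_{r,s}$, then excludes the remaining coincidences and recovers $(r',s)$, hence $(r,s,\ell)$, uniquely.

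I expect the main obstacle to be the bookkeeping in the two middle steps rather than any conceptual difficulty: correctly locating $n_0$ through the case analysis (the highest-weight vector can sit in the $n=0$ summand or in a negative summand depending on how $\ell$ compares to $\pm r_J\frac{p-s}{2}$ and on the sign of $(r-1)(p-s)$), and then ruling out accidental collisions arising from the fact that $(r',s)\mapsto h(\cM_{r',s})$ is \emph{not} injective on its own. Two singlet modules of equal conformal weight must be separated using the Heisenberg datum $\lambda$ and the restricted range of $\ell$, and I anticipate this conformal-weight comparison to be the delicate part of the argument.
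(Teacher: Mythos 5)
Your proposal is correct and follows essentially the same route as the paper's proof: reduce to the classified representatives $\cW_{r,s}^{(\ell)}$ of Theorem \ref{thm:grading-restricted}(2), locate each highest-weight vector in the $n=0$ or $n=-1$ summand of \eqref{eqn:Wrsl_weight_decomp}, and compare the two resulting weight equations, using the restricted range of $\ell$ to bound the spectral-flow mismatch and the explicit form of $h_{r,s}$ to exclude the remaining collision. The case analysis you defer (pinning down $n_0$ and ruling out the case where the two modules' highest-weight vectors sit in different summands) is precisely the paper's three-case argument, and it closes exactly as you anticipate.
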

\begin{proof}
Suppose that two simple highest-weight modules $\cW_{r,s}^{(\ell)}$ and $\cW_{r',s'}^{(\ell')}$ for $1\leq r,r'\leq r_J$, $1\leq s,s'\leq p-1$, $-r_J\frac{p-s}{2}<\ell\leq r_J\frac{p-s}{2}$, and $-r_J\frac{p-s'}{2}<\ell'\leq r_J\frac{p-s'}{2}$ have the same highest weight $(h,\lambda)$. By \eqref{eqn:h(rsl)_n_positive} and \eqref{eqn:h(rsl)_n_negative} any highest-weight vector in $\cW_{r,s}^{(\ell)}$ appears in either the $n=0$ or the $n=-1$ direct summand of \eqref{eqn:Wrsl_weight_decomp}, and similarly for $\cW_{r',s'}^{(\ell')}$. If the highest-weight vector occurs in the $n=0$ summand for both $\cW_{r,s}^{(\ell)}$ and $\cW_{r',s'}^{(\ell')}$, then lowest conformal weights satisfy $h=\frac{1}{2}\lambda^2+h_{r,s}=\frac{1}{2}\lambda^2+h_{r',s'}$, so that $h_{r,s}=h_{r',s'}$. It follows that $r=r'$ and $s=s'$, since $r,r'\geq 1$ and $1\leq s,s'\leq p-1$. Similarly, if the highest-weight vectors of both modules occur in the $n=-1$ summand, then $h_{2-(r-r_J),s}=h_{2-(r'-r_J),s'}$, and it again follows that $r=r'$ and $s=s'$. In these cases where $r=r'$, we then get
\begin{equation*}
\lambda =\frac{r_J(1-r)p+2\ell}{2\lambda_J}-[\lambda_J] = \frac{r_J(1-r)p+2\ell'}{\lambda_J}-[\lambda_J],
\end{equation*}
where the terms in brackets occur only if both highest-weight vectors appear in the $n=-1$ summands of \eqref{eqn:Wrsl_weight_decomp}. Thus $\ell=\ell'$ as well.

Now suppose that the highest-weight vector of $\cW_{r,s}^{(\ell)}$ occurs in the $n=0$ summand of \eqref{eqn:Wrsl_weight_decomp} and the highest-weight vector of $\cW_{r',s'}^{(\ell')}$ occurs in the $n=-1$ summand. Then
\begin{equation*}
\lambda=\frac{r_J(1-r)p+2\ell}{2\lambda_J}=\frac{r_J(1-r')p+2\ell'}{2\lambda_J}-\lambda_J,
\end{equation*}
or
\begin{align*}
\ell'-\ell = \frac{r_J p}{2}(r'-r)+\lambda_J^2 =\frac{r_J p}{2}(r'-r) -\frac{r_J^2 p}{2}=-\frac{r_J p}{2}(r_J-r'+r)
\end{align*}
Since
\begin{equation*}
\vert \ell'-\ell\vert < r_J\frac{p-s'}{2}+r_J\frac{p-s}{2} = r_J \left(p-\frac{s+s'}{2}\right)\leq r_J(p-1),
\end{equation*}
and since $1\leq r,r'\leq r_J$, the only possibility is $r_J-r'+r=1$, which forces $r=1$ and $r'=r_J$. But then lowest conformal weights would satisfy $h=\frac{1}{2}\lambda^2+h_{1,s} =\frac{1}{2}\lambda^2 + h_{2-(r_J-r_J),s'}$, or $h_{1,s}=h_{2,s'}$, which is impossible since $1\leq s,s'\leq p-1$.
\end{proof}

\begin{lem}\label{lem:simple_proj_in_hw_category}
Assume $\lambda_J^2+\frac{p}{2}r_J^2=0$, and let $P: X\twoheadrightarrow\cW_{r,s}^{(\ell)}$ be a surjection where $X$ is either a highest-weight ($\theta$-twisted) $A$-module or an object of $\cO_A^{\mathrm{h.w.}}\cap\cO_A$, and $\cW_{r,s}^{(\ell)}$ is a simple highest-weight ($\theta$-twisted) $A$-module. Then there is an injection $I: \cW_{r,s}^{(\ell)}\hookrightarrow X$ such that $P\circ I=\Id_{\cW_{r,s}^{(\ell)}}$.
\end{lem}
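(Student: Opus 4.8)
The plan is to produce the section $I$ by lifting the generating highest-weight vector of $\cW:=\cW_{r,s}^{(\ell)}$. Fix a highest-weight vector $b$ of weight $(h,\lambda)$ that generates $\cW$, so $\mathfrak{g}(A)_+\cdot b=0$. Since $P$ is a surjective morphism preserving both the conformal-weight and the $h(0)$-gradings, it restricts to a surjection $X^{(\lambda)}_{[h]}\twoheadrightarrow\cW^{(\lambda)}_{[h]}=\CC b$. I would first arrange a preimage $\tilde b\in X^{(\lambda)}_{[h]}$ of $b$ that is again a highest-weight vector ($\mathfrak{g}(A)_+\cdot\tilde b=0$), and then show that the highest-weight submodule $\langle\tilde b\rangle\subseteq X$ meets $\ker P$ trivially. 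Granting this, $P|_{\langle\tilde b\rangle}\colon\langle\tilde b\rangle\to\cW$ is a surjection of highest-weight modules that is also injective, hence an isomorphism, and $I$ is its inverse followed by the inclusion; $I$ is automatically injective because $\cW$ is simple, and after rescaling $\tilde b$ we get $P\circ I=\Id_{\cW}$.

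For the lift, when $X$ is a highest-weight module the argument is short: its cyclic generator $v$ is a highest-weight vector, and $P(v)$ is either $0$ or a highest-weight vector of $\cW$. The former is impossible since $v$ generates $X$ and $P$ is onto; the latter forces $P(v)\in\CC b\setminus\{0\}$ because $b$ is, up to scalar, the unique highest-weight vector of the simple module $\cW$. So $v$ is a highest-weight lift of a nonzero multiple of $b$, and by Proposition \ref{prop:grading_of_hw_mods} together with the decomposition \eqref{eqn:Wrsl_weight_decomp}, each of whose summands has a one-dimensional lowest conformal-weight space, the weight space $X^{(\lambda)}_{[h]}=\CC v$ is one-dimensional. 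Hence any copy of $\cW$ inside $X$ must contain $v$ and thus equal $\langle v\rangle=X$, which reduces the highest-weight case to proving that such an $X$ is simple. When instead $X\in\cO_A^{\mathrm{h.w.}}\cap\cO_A$, I would use the boundedness built into $\cO_A^{\mathrm{h.w.}}$ to produce a highest-weight vector at weight $(h,\lambda)$: the conformal-weight computations \eqref{eqn:h(rsl)_n_positive}--\eqref{eqn:h(rsl)_n_negative} and the classification in Theorem \ref{thm:grading-restricted} show that $(h,\lambda)$ is maximal in the triangular order among weights that can cover the top of $\cW$, so any preimage of $b$ can be corrected by elements of $\ker P$ at strictly lower weights (using that $X$ has finite length) to a genuine highest-weight vector.

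The main obstacle is the remaining injectivity statement, i.e.\ that $\langle\tilde b\rangle\cap\ker P=0$, equivalently that $0\to\ker P\to X\to\cW\to 0$ splits, equivalently that the relevant classes in $\mathrm{Ext}^1_{\cO_A^{\mathrm{h.w.}}}(\cW,-)$ into the possible socle constituents vanish. Here I would use that induction $\cF\colon\cC\to\repA$ is exact and monoidal, so that by Frobenius reciprocity any nonzero submodule of $\langle\tilde b\rangle$ contained in $\ker P$ is governed by $\cM(p)$-module data and by the singlet fusion rules of Theorem \ref{thm:singlet_fus_rules}. By Theorem \ref{thm:gen_proj_covers} the projective cover of $\cW$ in the full category $\cO_A$ is $\cQ_{r,s}^{(\ell)}$, whose only nonsimple constituents are $\cW_{r\pm 1,\,p-s}^{(\ell\pm r_Jp/2)}$, coming from the length-four structure \eqref{eqn:Prs_Loewy_diag} of the singlet projective $\cP_{r,s}$; these are exactly the extensions that a nonsplit $X$ would have to realize. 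The crux is that the spectral-flow shift $\ell\mapsto\ell\pm r_Jp/2$ displaces the $h(0)$-grading so that such a constituent cannot coexist with $\cW$ inside a single object of $\cO_A^{\mathrm{h.w.}}$ without violating the boundedness of $h(0)$-eigenvalues on each conformal-weight space, while in the highest-weight case the same exclusion follows from the incompatibility of lowest conformal weights in \eqref{eqn:h(rsl)_n_positive}--\eqref{eqn:h(rsl)_n_negative} and from Lemma \ref{lem:hw_unique}. I expect verifying this exclusion---that no constituent of $\ker P$ can be glued onto the top $\cW$ within the restricted category---to be the hard part, after which injectivity of $P|_{\langle\tilde b\rangle}$ and hence the splitting follow.
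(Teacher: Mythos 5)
Your overall skeleton matches the paper's: both arguments reduce to showing that the only composition factors that could glue non-trivially under the top $\cW_{r,s}^{(\ell)}$ --- the middle Loewy factors $\cW_{r\pm 1,p-s}^{(\ell\pm r_Jp/2)}$ of the projective cover $\cQ_{r,s}^{(\ell)}$ from Theorem \ref{thm:gen_proj_covers} --- cannot occur inside $X$. But the paper reaches this point more robustly than your highest-weight-vector lifting: since $X$ lies in $\cO_A$ in both cases (a highest-weight module is singly generated), projectivity of $\cQ_{r,s}^{(\ell)}$ in $\cO_A$ immediately gives $q:\cQ_{r,s}^{(\ell)}\rightarrow X$ with $P\circ q=p_{r,s}^{(\ell)}$, and one only has to prove $\ker q=\ker p_{r,s}^{(\ell)}$. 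This bypasses two unproved claims in your sketch: that a preimage of $b$ can be ``corrected'' to a highest-weight vector in the $\cO_A^{\mathrm{h.w.}}\cap\cO_A$ case, and that $X^{(\lambda)}_{[h]}=\CC v$ in the highest-weight case --- the latter does not follow, since \eqref{eqn:Wrsl_weight_decomp} is a decomposition of the simple module $\cW_{r,s}^{(\ell)}$, not of $X$, and weight-preserving modes (e.g.\ the zero-mode of the generator $H$ of $\cM(p)$) need not act by scalars on $X^{(\lambda)}_{[h]}$.

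The genuine gap is the exclusion step itself, which you explicitly defer (``I expect verifying this exclusion\dots to be the hard part''), and the mechanism you name for it fails in precisely the one delicate case. When $\vert\ell\vert<r_J\frac{p-s}{2}$, both middle factors fail to be lower bounded, so they cannot be composition factors of the lower-bounded module $X$; that part is fine. But $\cW_{r,s}^{(\ell)}$ is highest weight for $-r_J\frac{p-s}{2}<\ell\leq r_J\frac{p-s}{2}$, and in the boundary case $\ell=r_J\frac{p-s}{2}$ the factor $\cW_{r-1,p-s}^{(-r_Js/2)}$ \emph{is} lower bounded, with --- by Remark \ref{rem:low-bound_conf_wts} --- exactly the \emph{same} lowest conformal weight $-\frac{(p-1)^2}{4p}$ as $\cW_{r,s}^{(\ell)}$ and hence as $X$. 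So there is no ``incompatibility of lowest conformal weights,'' and Lemma \ref{lem:hw_unique} is inapplicable because $\cW_{r-1,p-s}^{(-r_Js/2)}$ is not a highest-weight module. The correct argument uses the \emph{equality} of lowest conformal weights positively: if $\cW_{r-1,p-s}^{(-r_Js/2)}$ embedded in $X$, its lowest conformal weight space, on which the $\frac{1}{\lambda_J}h(0)$-eigenvalues are unbounded above, would sit inside the lowest conformal weight space $X_{[h]}$, contradicting $X_{[h]}=\bigoplus_{n\geq 0}X_{[h]}^{(\lambda-n\lambda_J)}$ from Proposition \ref{prop:grading_of_hw_mods} when $X$ is highest weight (and contradicting the defining boundedness of $\cO_A^{\mathrm{h.w.}}$ in the other case). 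Without this argument the splitting, and hence the section $I$, is not established.
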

\begin{proof}
 If $X$ is highest weight, then $X$ is singly generated and thus an object of $\cO_A$. So in either case for $X$, the surjection $P$ is a morphism in $\cO_A$ and thus there is a map $q: \cQ_{r,s}^{(\ell)}\rightarrow X$ such that the diagram
\begin{equation*}
\xymatrixcolsep{3pc}
\xymatrix{
& \cQ_{r,s}^{(\ell)} \ar[ld]_q \ar[d]^(.45){p^{(\ell)}_{r,s}} \\
X \ar[r]_(.45){p} & \cW_{r,s}^{(\ell)}
}
\end{equation*}
commutes, where $p^{(\ell)}_{r,s}: \cQ_{r,s}^{(\ell)}\twoheadrightarrow\cW_{r,s}^{(\ell)}$ is the natural surjection. We need to determine $\ker q$.

First note that $\im q\subseteq X$ is lower bounded and there is an upper bound on the $\frac{1}{\lambda_J}h(0)$-eigenvalues of any minimal conformal weight space of $\im q$ (by Proposition \ref{prop:grading_of_hw_mods} in case $X$ is highest weight). Next, note that  $1\leq s\leq p-1$ and $-r_J\frac{p-s}{2}<\ell\leq r_J\frac{p-s}{2}$ since $\cW_{r,s}^{(\ell)}$ is highest weight. If $\vert\ell\vert <r_J\frac{p-s}{2}$, then neither of $\cW_{r\pm 1,p-s}^{(\ell\pm r_J p/2)}$ is lower bounded, since 
\begin{equation*}
\ell+\frac{r_Jp}{2}>\frac{r_Js}{2}=r_J\frac{p-(p-s)}{2},\qquad \ell-\frac{r_Jp}{2}\leq-\frac{r_J s}{2}=-r_J\frac{p-(p-s)}{2}.
\end{equation*}
Thus from the Loewy diagram \eqref{eqn:Q_rsl_Loewy_diag} of $\cQ_{r,s}^{(\ell)}$, we get $\im q\cong\cW_{r,s}^{(\ell)}$ or equivalently $\ker q=\ker p_{r,s}^{(\ell)}$ when $\vert\ell\vert<r_J\frac{p-s}{2}$. In case $\ell=r_J\frac{p-s}{2}$, the composition factor $\cW_{r+1,p-s}^{(r_J(p-s/2))}$ of $\cQ_{r,s}^{(r_J(p-s)/2)}$ is not lower bounded, but $\cW_{r-1,p-s}^{(-r_Js/2)}$ is. We want to rule out the possibility that $\cW_{r-1,p-s}^{(-r_Js/2)}$ is a submodule of $\im q$. This is clear if $X$ is an object of $\cO_A^{\mathrm{h.w.}}$, since there is no upper bound on the $\frac{1}{\lambda_J}h(0)$-eigenvalues of the conformal weight spaces of $\cW_{r-1,p-s}^{(-r_Js/2)}$. If $X$ is highest weight, then we only know there is an upper bound for the $\frac{1}{\lambda_J}h(0)$-eigenvalues of the lowest conformal weight space of $X$. Thus to show that $\cW_{r-1,p-s}^{(-r_Js/2)}$ is not a composition factor of $X$ when $X$ is highest weight, we need to show that $X$ and $\cW_{r-1,p-s}^{(-r_Js/2)}$ have the same lowest conformal weight. Indeed, since $P$ maps the generating highest-weight vector of $X$ to a highest-weight vector of $\cW_{r,s}^{(r_J(p-s)/2)}$, $X$ has the same lowest conformal weight $-\frac{1}{4p}(p-1)^2$ as $\cW_{r,s}^{(r_J(p-s)/2)}$, which is the same as the lowest conformal weight of $\cW_{r-1,p-s}^{(-r_Js/2)}$ (recall Remark \ref{rem:low-bound_conf_wts}). We have now shown that $\ker q=\ker p_{r,s}^{(\ell)}$ in the case $\ell=r_J\frac{p-s}{2}$ also.

We now have a well-defined injection $I:\cW_{r,s}^{(\ell)}\hookrightarrow X$ such that $I\circ p_{r,s}^{(\ell)}=q$. Then $P\circ I=\Id_{\cW_{r,s}^{(\ell)}}$ since
\begin{equation*}
P\circ I\circ p_{r,s}^{(\ell)} =P\circ q= p_{r,s}^{(\ell)}
\end{equation*}
and $p_{r,s}^{(\ell)}$ is surjective.
\end{proof}

We can now prove:
\begin{thm}\label{thm:hw_cat_ss}
Assume $\lambda_J^2+\frac{p}{2}r_J^2=0$. Then:
\begin{enumerate}
\item Every highest-weight ($\theta$-twisted) $A$-module is simple and thus an object of $\cO_A^{\mathrm{h.w.}}$.
\item The highest-weight category $\cO_A^{\mathrm{h.w.}}$ equals the semisimple subcategory of $\cO_A$ with simple objects $\cW_{r,s}^{(\ell)}$ for $1\leq r \leq r_J$, $1\leq s\leq p-1$, and $-r_J\frac{p-s}{2}<\ell\leq r_J\frac{p-s}{2}$.
\end{enumerate}
\end{thm}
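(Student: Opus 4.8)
The plan is to handle the two parts in order, using throughout that the hypothesis $\lambda_J^2+\frac{p}{2}r_J^2=0$ is in force and, crucially, that every highest-weight $A$-module, being singly generated and strongly $\cH$-weight-graded, lies in $\cO_A$ and hence has finite composition length (Remark \ref{rem:hw_in_OA} with Propositions \ref{prop:fin_gen_Rep_B_p} and \ref{prop:OlocA_OtwA_characterization}). I will also use repeatedly that every nonzero object $X$ of $\cO_A^{\mathrm{h.w.}}$ contains a highest-weight vector. To produce one, I would pick a conformal weight $h_0$ with $X_{[h_0]}\neq 0$ of minimal real part (possible since the conformal weights of $X$ lie in finitely many cosets $h_n+\frac12\ZZ_{\geq 0}$), and then a vector $b\in X_{[h_0]}$ whose $\frac{1}{\lambda_J}h(0)$-eigenvalue is maximal within its $\ZZ$-coset of eigenvalues (possible since the $h(0)$-eigenvalues on $X_{[h_0]}$ are bounded above in each of finitely many cosets). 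The conformal-weight-lowering modes of $\mathfrak{g}(A)_+$ annihilate $b$ because $h_0$ has minimal real part, and the conformal-weight-preserving modes annihilate $b$ because they strictly raise $\frac{1}{\lambda_J}h(0)$; thus $b$ is a highest-weight vector, and it generates a highest-weight submodule.

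For part (1), I would argue by induction on the composition length of a highest-weight module $X$ generated by a highest-weight vector $b$ of weight $(h,\lambda)$. If $X$ is simple there is nothing to prove; otherwise choose a maximal proper submodule $N$, so that $S:=X/N$ is simple and the image of $b$ generates $S$ as a highest-weight module of highest weight $(h,\lambda)$. By Lemma \ref{lem:simple_proj_in_hw_category} the surjection $X\twoheadrightarrow S$ splits, so $X\cong N\oplus S$ with associated projection $\pi_N\colon X\to N$. Then $\pi_N(b)$ is a highest-weight vector of weight $(h,\lambda)$ with $A\cdot\pi_N(b)=N$, so $N$ is a highest-weight module of highest weight $(h,\lambda)$; it is nonzero (of smaller, positive length) and hence simple by induction, and Lemma \ref{lem:hw_unique} gives $N\cong S$. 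Writing $S_0$ for this common simple module, which has a one-dimensional top space (the simultaneous $(L(0),h(0))$-eigenspace at the highest weight) by its explicit $\cH\otimes\cM(p)$-decomposition \eqref{eqn:Wrsl_weight_decomp}, I would finish by noting that the generator $b$ lies in the top space $\CC u_0\oplus\CC u_0$ of $X\cong S_0\oplus S_0$, where $u_0$ is the highest-weight vector of $S_0$; thus $b=(c u_0, c' u_0)$ and generates only the graph submodule $\{(cw,c'w):w\in S_0\}\cong S_0$, which is proper. This contradicts $X=A\cdot b$, so $X$ is simple. Being simple and highest-weight, it appears in the list of Theorem \ref{thm:grading-restricted}(2) and therefore lies in $\cO_A^{\mathrm{h.w.}}$.

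For part (2), I would first identify the simple objects of $\cO_A^{\mathrm{h.w.}}$: the listed $\cW_{r,s}^{(\ell)}$ are simple and lie in $\cO_A^{\mathrm{h.w.}}$ by Theorem \ref{thm:grading-restricted}(2), while conversely any simple object contains a highest-weight vector by the first paragraph, hence is a simple highest-weight module and so appears on the list. Semisimplicity is then the heart of the matter, and I would prove $\mathrm{Soc}(X)=X$ for every $X$ in $\cO_A^{\mathrm{h.w.}}$. Assuming $T:=\mathrm{Soc}(X)\neq X$, the quotient $X/T$ again lies in $\cO_A^{\mathrm{h.w.}}$ and is nonzero, so it contains a highest-weight vector $\bar b$ generating a simple submodule $\bar S\subseteq X/T$. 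Lifting $\bar b$ to $b\in X$ and setting $Y:=A\cdot b$ gives a singly generated, hence finitely generated, submodule of $X$; thus $Y\in\cO_A^{\mathrm{h.w.}}\cap\cO_A$ and $Y$ surjects onto $\bar S$ via the corestriction of $X\to X/T$. Lemma \ref{lem:simple_proj_in_hw_category} then splits this surjection, producing a simple submodule $S\subseteq Y\subseteq X$ with $S\cong\bar S$ and $S\cap T=0$. But $S$ is a simple submodule of $X$, so $S\subseteq\mathrm{Soc}(X)=T$, forcing $S=0$, a contradiction. Hence $X=\mathrm{Soc}(X)$ is semisimple. Finally, since the simple constituents range over the finite list and each simultaneous $(L(0),h(0))$-eigenspace is finite dimensional, each simple occurs with finite multiplicity and $X$ has finite length; thus $\cO_A^{\mathrm{h.w.}}\subseteq\cO_A$, and it is exactly the semisimple subcategory of $\cO_A$ whose simple objects are those listed.

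The main obstacle I anticipate is part (1): the splitting of Lemma \ref{lem:simple_proj_in_hw_category} by itself only shows a non-simple highest-weight module decomposes as $N\oplus S$, and ruling this out genuinely requires one-dimensionality of the top space of the $\cW_{r,s}^{(\ell)}$. Without it, $S_0\oplus S_0$ could be generated by a single highest-weight vector—any linearly independent pair of highest-weight vectors would already generate the whole sum—and the argument would collapse. The existence-of-highest-weight-vector step for a general object of $\cO_A^{\mathrm{h.w.}}$ (whose weight spaces may be infinite dimensional) is a secondary technical point, handled by the minimal-real-part and maximal-eigenvalue selection described in the first paragraph.
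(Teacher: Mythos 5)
Your proof is correct, and it rests on the same two pillars as the paper's own argument---Lemma \ref{lem:hw_unique} and the splitting Lemma \ref{lem:simple_proj_in_hw_category}---but the organization differs in both parts in ways worth recording. For part (1), the paper inducts on the dimension of the highest-weight space, reduces to $X\cong(\cW_{r,s}^{(\ell)})^{\oplus k}$, and then simply asserts that single generation forces $k=1$; your induction on composition length reaches $X\cong S_0\oplus S_0$ and then supplies the justification that assertion actually needs, namely that the top space of $S_0$ is one-dimensional (via \eqref{eqn:Wrsl_weight_decomp}), so a single highest-weight vector can generate only a graph submodule. Your closing remark that the whole argument would collapse without this one-dimensionality is exactly right: $S_0\oplus S_0$ can in general be cyclic, just not cyclic on a highest-weight vector. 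For part (2), the paper first proves every object of $\cO_A^{\mathrm{h.w.}}$ has finite length (by induction on $d=\sum_n\dim X^{(\lambda_n)}_{[h_n]}$, quotienting by a simple highest-weight submodule) and only then deduces semisimplicity from projectivity of the simples; you instead prove semisimplicity first by a socle argument, applying the splitting lemma to the singly generated submodule $Y=A\cdot b$ lifting a highest-weight vector of $X/\mathrm{Soc}(X)$---the key trick being that $Y$, unlike $X$, is automatically in $\cO_A^{\mathrm{h.w.}}\cap\cO_A$ so the lemma applies---and you recover finite length afterward from the finite list of simples and finite-dimensionality of the top weight spaces. Both routes work; yours avoids ever invoking the splitting lemma on an object not yet known to have finite length, while the paper's gets membership in $\cO_A$ earlier and then quotes the lemma globally.

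One small point to patch in your construction of a highest-weight vector in a nonzero object of $\cO_A^{\mathrm{h.w.}}$: after locating $h_0$ of minimal real part and the maximal eigenvalue $\lambda$ in its $\ZZ$-coset, you should take $b$ to be an $L(0)$-eigenvector inside the finite-dimensional $L(0)$-stable space $X^{(\lambda)}_{[h_0]}$, not an arbitrary vector there, since $L(0)$ may act on $X_{[h_0]}$ only with generalized eigenvalue $h_0$ while the definition of highest-weight vector demands a genuine simultaneous eigenvector. Every vector of $X^{(\lambda)}_{[h_0]}$ is already annihilated by $\mathfrak{g}(A)_+$ by your two selection criteria, so this is an immediate fix, but it is needed.
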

\begin{proof}
To prove part (1), let $X$ be a highest-weight ($\theta$-twisted) $A$-module. Since $X$ is singly generated, it is a (finite-length) object of $\cO_A$ and thus has a simple quotient. Moreover, the generating highest-weight vector in $X$ maps to a highest-weight vector in the simple quotient, so there is a surjection $X\twoheadrightarrow\cW_{r,s}^{(\ell)}$ for some highest-weight module $\cW_{r,s}^{(\ell)}$. By Lemma \ref{lem:simple_proj_in_hw_category}, $\cW_{r,s}^{(\ell)}$ is a direct summand of $X$. Then either $X\cong\cW_{r,s}^{(\ell)}$ or $X/\cW_{r,s}^{(\ell)}$ is a highest-weight module with a lower-dimensional highest-weight space than $X$. By induction on the dimension of its highest-weight space, $X$ is a finite direct sum of simple highest-weight modules that have the same highest weight as $X$. But by Lemma \ref{lem:hw_unique}, only one $\cW_{r,s}^{(\ell)}$ has the same highest weight as $X$. So because $X$ is singly generated, we must have $X\cong\cW_{r,s}^{(\ell)}$, that is, $X$ is simple.

For part (2), first we show that every object $X$ of $\cO_A^{\mathrm{h.w.}}$ has finite length and thus is an object of $\cO_A$. Let $(h_1,\lambda_1),\ldots, (h_N,\lambda_N)$ be the highest weights of the finitely many simple highest-weight ($\theta$-twisted) $A$-modules. We prove that $X$ has finite length by induction on $d=\sum_{n=1}^N \dim X_{[h_n]}^{(\lambda_n)}$, which is finite because $X$ is strongly $\cH$-weight graded. First, if $X\neq 0$, then the definition of $\cO_A^{\mathrm{h.w.}}$ implies that $X$ has a highest-weight vector which generates a highest-weight submodule $\til{X}$. This highest-weight submodule is simple by part (1), and thus its highest weight is one of the $(h_n,\lambda_n)$. Consequently, if $d=0$, then $X=0$, and if $d>0$, then $\sum_{n=1}^N \dim  (X/\til{X})^{(\lambda_n)}_{[h_n]} < d$. Thus by induction on $d$, $X/\til{X}$ has finite length, and then $X$ has finite length as well since $\til{X}$ is simple.

Now that we have shown $\cO_A^{\mathrm{h.w.}}$ is a subcategory of $\cO_A$, Lemma \ref{lem:simple_proj_in_hw_category} shows that the simple highest-weight ($\theta$-twisted) $A$-modules are projective in $\cO_A^{\mathrm{h.w.}}$. Thus $\cO_A^{\mathrm{h.w.}}$ is semisimple since its objects have finite length.
\end{proof}

As the category of $h(0)$-semisimple grading-restricted generalized ($\theta$-twisted) $A$-modules is a subcategory of $\cO_A^{\mathrm{h.w.}}$, we also have:
\begin{cor}\label{cor:h0_ss}
If $\lambda_J^2+\frac{p}{2}r_J^2=0$, then the category of $h(0)$-semisimple grading-restricted generalized ($\theta$-twisted) $A$-modules is semisimple with finitely many simple objects.
\end{cor}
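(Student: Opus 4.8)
The plan is to identify the category in question—write $\cD$ for the category of $h(0)$-semisimple grading-restricted generalized (respectively, $\theta$-twisted) $A$-modules—with a full subcategory of the highest-weight category $\cO_A^{\mathrm{h.w.}}$, and then feed it into Theorem \ref{thm:hw_cat_ss}, which shows that $\cO_A^{\mathrm{h.w.}}$ is semisimple with the finitely many simple objects $\cW_{r,s}^{(\ell)}$ for $1\leq r\leq r_J$, $1\leq s\leq p-1$, $-r_J\frac{p-s}{2}<\ell\leq r_J\frac{p-s}{2}$. Once the containment $\cD\subseteq\cO_A^{\mathrm{h.w.}}$ is established, every object of $\cD$ has finite length and decomposes as a finite direct sum of these simple modules, and it remains only to select those summands that themselves lie in $\cD$.

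First I would verify that an object $X$ of $\cD$ is strongly $\cH$-weight-graded: $h(0)$-semisimplicity supplies the second $\CC$-grading, while the grading-restriction hypotheses give the finite-dimensionality and per-coset lower-bound conditions. To place $X$ in $\cO_A^{\mathrm{h.w.}}$, the condition on $h(0)$-eigenvalues is immediate, because each conformal weight space $X_{[h]}$ is finite dimensional and $h(0)$-semisimple and hence carries only finitely many $h(0)$-eigenvalues. The condition on conformal weights is the delicate point, and I expect it to be the main obstacle: a priori a grading-restricted module could carry conformal weights in infinitely many cosets of $\frac{1}{2}\ZZ$, which is disallowed in $\cO_A^{\mathrm{h.w.}}$.

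To get around this I would use that any mode $a_n$ with $a\in A_{(m)}$ shifts conformal weight by $m-n-1\in\frac{1}{2}\ZZ$, in both the local and the $\theta$-twisted setting. Hence the decomposition of $X$ into conformal-weight cosets modulo $\frac{1}{2}\ZZ$ is a decomposition into $A$-submodules,
\[
X=\bigoplus_{c\in\CC/\frac{1}{2}\ZZ} X^{[c]},\qquad X^{[c]}=\bigoplus_{h\in c} X_{[h]}.
\]
Each summand $X^{[c]}$ is again $h(0)$-semisimple and grading restricted, and its conformal weights lie in the single coset $c$, which is lower bounded since each of its two constituent $\ZZ$-cosets is. Thus $X^{[c]}$ meets both defining conditions of $\cO_A^{\mathrm{h.w.}}$, so it is an object there, and by Theorem \ref{thm:hw_cat_ss} it has finite length and is a finite direct sum of the simple modules $\cW_{r,s}^{(\ell)}$ listed above; as direct summands of the grading-restricted module $X^{[c]}$, these simple constituents are themselves grading restricted.

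Finally I would invoke Theorem \ref{thm:grading-restricted}(3): the simple grading-restricted ($\theta$-twisted) $A$-modules are exactly the $\cW_{r,s}^{(\ell)}$ with $1\leq r\leq r_J$, $1\leq s\leq p-1$, and $\vert\ell\vert<r_J\frac{p-s}{2}$, of which there are $r_J(p-1)(r_Jp-1)$, hence finitely many. Since these occupy only finitely many cosets modulo $\frac{1}{2}\ZZ$, only finitely many $X^{[c]}$ are nonzero, so $X$ itself lies in $\cO_A^{\mathrm{h.w.}}$ and is a finite direct sum of the finitely many simple grading-restricted modules. This proves that $\cD$ is semisimple with finitely many simple objects. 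The only genuinely delicate step is the coset-finiteness argument just outlined; everything else is a direct application of Theorems \ref{thm:hw_cat_ss} and \ref{thm:grading-restricted} together with the inheritance of grading restriction and of $h(0)$-semisimplicity under passage to direct summands.
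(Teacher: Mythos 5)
Your proof is correct, and its overall route is the same as the paper's: reduce everything to the semisimplicity of the highest-weight category $\cO_A^{\mathrm{h.w.}}$ established in Theorem \ref{thm:hw_cat_ss}, together with the enumeration of simple grading-restricted modules in Theorem \ref{thm:grading-restricted}(3). The difference is in how the containment in $\cO_A^{\mathrm{h.w.}}$ is handled. The paper disposes of the corollary in a single sentence, asserting outright that the category of $h(0)$-semisimple grading-restricted generalized ($\theta$-twisted) $A$-modules is a subcategory of $\cO_A^{\mathrm{h.w.}}$; earlier in that section this containment is only stated as clear for \emph{finitely-generated} such modules, and for general objects it is genuinely not immediate, for exactly the reason you identify: grading restriction bounds the conformal weights below within each coset of $\ZZ$, but does not by itself confine the weights to finitely many cosets of $\frac{1}{2}\ZZ$, which is what the first defining condition of $\cO_A^{\mathrm{h.w.}}$ demands. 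Your decomposition $X=\bigoplus_{c\in\CC/\frac{1}{2}\ZZ}X^{[c]}$ into $A$-submodules (valid in both the local and twisted settings since all modes shift weights by elements of $\frac{1}{2}\ZZ$), the application of Theorem \ref{thm:hw_cat_ss} to each coset piece, and the appeal to Theorem \ref{thm:grading-restricted}(3) to see that only finitely many cosets can carry a nonzero piece, supply precisely the argument that the paper's one-line justification leaves implicit. So your write-up is, if anything, more complete than the paper's. The only streamlining available is at the very end: once each $X^{[c]}$ is a finite direct sum of the finitely many simple grading-restricted modules, $X$ is already exhibited as a direct sum of simples, so the coset-finiteness step is needed only if one also wants the stronger conclusions that $X$ has finite length and lies in $\cO_A^{\mathrm{h.w.}}$.
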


We can actually drop the assumption of $h(0)$-semisimplicity in the above corollary, although the proof is quite technical:
\begin{thm}\label{thm:grad_rest_ss}
If $\lambda_J^2+\frac{p}{2}r_J^2=0$, then the category $\cC_A$ of grading-restricted generalized local and $\theta$-twisted $A$-modules is semisimple with finitely many simple objects $\cW_{r,s}^{(\ell)}$ for $1\leq r\leq r_J$, $1\leq s\leq p-1$, and $\vert\ell\vert <r_J\frac{p-s}{2}$.
\end{thm}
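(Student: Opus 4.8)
The plan is to reduce the assertion to the semisimplicity statement already available for $h(0)$-semisimple modules, namely Corollary~\ref{cor:h0_ss}. First, every object of $\cC_A$ has finite length: by Theorem~\ref{thm:grading-restricted}(3) there are only finitely many simple grading-restricted local and $\theta$-twisted $A$-modules, and a grading-restricted generalized module, having finite-dimensional conformal weight spaces, a lower bound on conformal weights, and only finitely many possible simple subquotients, must have finite length. Consequently the simple objects of $\cC_A$ are exactly the $\cW_{r,s}^{(\ell)}$ listed, and it remains to prove $\cC_A$ is semisimple, that is, $\hom_{\cC_A}$-extensions between simple objects vanish. Since a grading-restricted generalized ($\theta$-twisted) $A$-module on which $h(0)$ acts semisimply is semisimple by Corollary~\ref{cor:h0_ss}, it is enough to show that every length-two extension $Y$ of simple objects has $h(0)$ acting semisimply.

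I will use throughout that $h(0)$ is a $V$-module endomorphism of any $A$-module, since $h(0)v=0$ for $v\in V=\cH\otimes\cM(p)$ gives $[h(0),Y_Y(v,x)]=0$; in particular $h(0)$ commutes with $L(0)$, preserves each finite-dimensional space $Y_{[h]}$, and has a Jordan decomposition $h(0)=s+\cN$ into commuting $V$-endomorphisms. Consider first an extension $Y$ of non-isomorphic simple modules $T\not\cong U$. By the classification in Proposition~\ref{prop:general_simple_module_classification} together with the isomorphisms of Theorem~\ref{thm:simple_A-module_classification}(2), the modules $T$ and $U$ restrict to single $J$-orbits of simple $V$-modules, and these orbits are disjoint precisely because $T\not\cong U$; hence $T|_V$ and $U|_V$ share no simple $V$-factor. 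Since each simple $\cW_{r,s}^{(\ell)}$ contributes exactly one $V$-factor to each of its $h(0)$-eigenvalues (the $\mu_n=\mu_0+n\lambda_J$ being distinct), every generalized $h(0)$-eigenspace of $Y$ is, by \cite[Theorem~1.7.3]{FLM} and the fact that $\cN$ commutes with the Heisenberg action, of the form $\cF^\cH_\mu\otimes M$ with $M$ a $\cM(p)$-module whose factors are the \emph{distinct} simples $\cM_{a,s}$ and $\cM_{b,s'}$ (or a single such factor), on which $\cN$ acts through $\mathrm{End}_{\cM(p)}(M)$. As this ring has no nilpotents, $\cN$ vanishes, so $h(0)$ is semisimple on $Y$ and the extension splits. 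This reduces the problem to self-extensions $T\cong U=:S=\cW_{r,s}^{(\ell)}$.

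So let $Y$ be a length-two self-extension of $S$ with $h(0)$ \emph{not} semisimple. Decomposing $Y|_V$ into generalized $h(0)$-eigenspaces and using that $h(0)-\mu$ commutes with the Heisenberg action, \cite[Theorem~1.7.3]{FLM} gives $Y^{[\mu]}\cong\cF^\cH_\mu\otimes M$ as $\cH$-modules, where $M$ is a length-two grading-restricted generalized $\cM(p)$-module with both composition factors isomorphic to $\cM_{a,s}$ (for $a=r+nr_J$), and the nilpotent part of $h(0)$ acts as $\Id_{\cF^\cH_\mu}\otimes\widetilde{\cN}$ for a nilpotent $\cM(p)$-endomorphism $\widetilde{\cN}$ of $M$. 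Non-semisimplicity of $h(0)$ means $\widetilde{\cN}\neq 0$ for some $\mu$, so that the corresponding $Y^{[\mu]}$ is a nonsplit self-extension of $\cF^\cH_\mu\otimes\cM_{a,s}$ in the category of grading-restricted generalized $V$-modules.

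It remains to rule this out, which is the heart of the argument. There are two sources of nonsplitting. If $M$ itself is a nonsplit $\cM(p)$-module, then since a length-two module is finitely generated, it is an object of $\cO_{\cM(p)}$ \cite{CMY-singlet-typical}, where $\cM_{a,s}$ has no self-extension: the radical layer of the projective cover $\cP_{a,s}$ in \eqref{eqn:Prs_Loewy_diag} contains only $\cM_{a-1,p-s}$ and $\cM_{a+1,p-s}$. This is the absence-of-singlet-self-extensions input. The remaining possibility is that $M$ splits as a $\cM(p)$-module while $\widetilde{\cN}\neq 0$, so that the logarithmic $h(0)$-structure lives in the Heisenberg direction; this is the main obstacle, and to exclude it one must use that $Y$ is a bona fide generalized $A$-module rather than merely a $V$-module. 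Because $\lambda_J\neq 0$ and $\lambda_J^2+\frac p2 r_J^2=0$ force $r_J\geq 1$, the simple current $J=\cF^\cH_{\lambda_J}\otimes\cM_{r_J+1,1}$ genuinely couples the two factors, and the intertwining operators implementing the $J^{\pm 1}$-action commute with $\cN$ and must propagate the logarithmic structure coherently across all eigenspaces $Y^{[\mu]}$. Following the representation theory of vertex operator algebras with a Heisenberg subalgebra developed in \cite{CKLR} and the monodromy computation behind Lemma~\ref{lem:Bp_mod_in_ind_C} and \cite[Proposition~6.4]{CMY-singlet-typical}, one shows that such a Heisenberg-direction logarithm is incompatible with $Y$ being grading restricted unless it is carried by a nonsplit singlet self-extension, which has just been excluded. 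Hence $\widetilde{\cN}=0$ for every $\mu$, $h(0)$ acts semisimply on $Y$, and $Y$ splits. I expect this last step---the careful analysis of the Heisenberg action forcing the logarithm into the singlet factor---to be the principal technical difficulty.
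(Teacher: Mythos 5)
Your overall reduction is the same as the paper's: finitely many simples plus grading restriction gives finite length, extensions between non-isomorphic simples split because the locally nilpotent part of $h(0)$ is a module endomorphism (this matches the paper's use of Corollary \ref{cor:h0_ss}), so everything comes down to self-extensions of $\cW_{r,s}^{(\ell)}$. But the two steps that carry the real weight of the theorem are not established in your proposal. First, your structural claim is incoherent as stated: you invoke \cite[Theorem 1.7.3]{FLM} to write $Y^{[\mu]}\cong\cF^\cH_\mu\otimes M$ \emph{as $\cH$-modules} with the nilpotent part of $h(0)$ acting as $\Id\otimes\widetilde{\cN}$. That theorem requires $h(0)$ to act semisimply, and on $\cF^\cH_\mu\otimes M$ the operator $h(0)$ acts through the Fock factor as the scalar $\mu$; so this description contradicts $\widetilde{\cN}\neq 0$. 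When $h(0)$ is non-semisimple, the logarithm is necessarily carried by a Heisenberg factor, i.e.\ the correct form is $Y^{[\mu]}\cong\cG_\mu\otimes\cM_{a,s}$ with $\cG_\mu$ a non-split self-extension of $\cF^\cH_\mu$ — and even this tensor factorization is not free: the paper has to first prove that the Heisenberg vacuum space $\Omega^n$ splits as an $\cM(p)$-module (via semisimplicity of the Zhu algebra $A(\cM(p))$ action on its top level) before \cite[Theorem 2.7]{McR-cosets} yields the factorization.

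Second, both of the inputs you treat as available are exactly what must be proved. You justify the absence of $\cM(p)$-self-extensions of $\cM_{a,s}$ by the Loewy diagram \eqref{eqn:Prs_Loewy_diag} of $\cP_{a,s}$; but $\cP_{a,s}$ is projective only in $\cO_{\cM(p)}^T$, so this rules out self-extensions inside $\cO_{\cM(p)}^T$ only. You cannot place the relevant vacuum spaces in $\cO_{\cM(p)}^T$ here, because the monodromy/induction machinery (Lemma \ref{lem:Bp_mod_in_ind_C}, Lemma \ref{lem:monodromy}) applies to modules restricting to $\ind(\cC)$, which presupposes the $h(0)$-semisimplicity you are trying to prove. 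This is precisely why the paper argues by hand: Virasoro decomposition, Verma embedding diagrams, the self-extension results of \cite[Section 5.4]{GK}, and the analysis of the mode $H_{2p-2}$ using \cite{Miy} and \cite{MY}. Finally, the exclusion of a Heisenberg-direction logarithm — which you correctly flag as the principal difficulty — is only asserted in your proposal ("one shows that\ldots"); the citations of \cite{CKLR} and the monodromy computation cannot substitute, again because that machinery is unavailable for non-$h(0)$-semisimple modules. The paper's actual mechanism is different and specific: restrict $Y_X$ to $J\otimes X^{[\gamma]}$, factor it as $\cY_1\otimes\cY_2$ by \cite[Theorem 2.10]{ADL}, identify $\cY_1$ with the tensor-product intertwining operator of the simple current $\cF^\cH_{\lambda_J}$ against $\cG_\gamma$, and conclude from \cite[Theorem 5.5]{M} that $\cY_1$ is logarithmic, contradicting the fact that a module vertex operator contains no powers of $\log x$. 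Note also that the operative contradiction is not with grading restriction: the paper rules out such non-split self-extensions even among non-grading-restricted generalized $A$-modules.
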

\begin{proof}
As there are finitely many simple grading-restricted ($\theta$-twisted) $A$-modules, an argument similar to the second paragraph in the proof of Theorem \ref{thm:hw_cat_ss} (see also \cite[Proposition 3.15]{Hu-C2}) shows that any grading-restricted generalized local or twisted $A$-module (whether $h(0)$-semisimple or not) has finite length. Thus to show that $\cC_A$ is semisimple, it is enough to show that its simple objects are projective in $\cC_A$, and for this it is enough to show that all exact sequences
\begin{equation*}
0\longrightarrow\cW_{r', s'}^{(\ell')}\longrightarrow X\longrightarrow\cW_{r,s}^{(\ell)}\longrightarrow 0
\end{equation*}
split, where $1\leq r,r'\leq r_J$, $1\leq s,s'\leq p-1$, $\vert\ell\vert<r_J\frac{p-s}{2}$, and $\vert \ell'\vert < r_J\frac{p-s'}{2}$. In fact, such an exact sequence splits for $(r',s',\ell')\neq (r, s, \ell)$ by Corollary \ref{cor:h0_ss}, because then $X$ must be $h(0)$-semisimple (since the locally nilpotent part of $h(0)$ is an endomorphism of $X$). 

It remains to rule out non-split self-extensions. In fact, we claim that all exact sequences
\begin{equation*}
0\longrightarrow\cW_{r,s}^{(\ell)}\longrightarrow X\longrightarrow\cW_{r,s}^{(\ell)}\longrightarrow 0
\end{equation*}
split, where $1\leq r\leq r_J$, $1\leq s\leq p-1$, $\ell\in\frac{1}{2}\ZZ$, and $X$ is a (not necessarily grading-restricted) generalized local or $\theta$-twisted $A$-module. To prove this, first note that by \eqref{eqn:Wrsl_weight_decomp}, $X$ decomposes into generalized $h(0)$-eigenspaces, $X\cong\bigoplus_{n\in\ZZ} X^{[\gamma+n\lambda_J]}$, where $\gamma=\frac{r_J(1-r)p+2\ell}{2\lambda_J}$ and there is an exact $\cH\otimes\cM(p)$-module sequence
\begin{equation*}
0\longrightarrow\cF_{\gamma+n\lambda_J}\otimes\cM_{r+nr_J,s}\longrightarrow X^{[\gamma+n\lambda_J]}\longrightarrow\cF_{\gamma+n\lambda_J}\otimes\cM_{r+nr_J,s}\longrightarrow 0
\end{equation*}
for each $n\in\ZZ$. Then each Heisenberg vacuum space
\begin{equation*}
\Omega^n =\lbrace b\in X^{[\gamma+n\lambda_J]}\,\,\vert\,\,h(m)b=0\,\,\text{for all}\,\,m>0\rbrace
\end{equation*}
is an $\cM(p)$-module fitting into an exact sequence
\begin{equation*}
0\longrightarrow\cM_{r+nr_J,s}\longrightarrow \Omega^n\longrightarrow\cM_{r+nr_J,s}\longrightarrow 0.
\end{equation*}
We would like to show that this $\cM(p)$-module exact sequence splits, at least for $n\geq 0$.

To prove that $\Omega^n\cong\cM_{r+nr_J,s}\oplus\cM_{r+nr_J,s}$ when $n\geq 0$, we will show that its lowest conformal weight space $T(\Omega^n)=\Omega^n_{[h_{r+nr_J,s}]}$ is a semisimple module for the Zhu algebra $A(\cM(p))$. From the description of $A(\cM(p))$ in \cite{Ad}, it is sufficient to show that both $L(0)$ and the weight-preserving mode of the second strong generator $H\in\cM(p)_{(2p-1)}$ of $\cM(p)$ act by scalars on $T(\Omega^n)$. For $L(0)$, we consider the Virasoro submodule $U(\cV ir)\cdot T(\Omega^n)$ generated by $T(\Omega^n)$. As a $\cV ir$-module, $\cM_{r+nr_J,s}\cong
\bigoplus_{m=0}^\infty \cL_{r+nr_J+2m,s}$
 where $\cL_{r,s}$ for $r\in\ZZ_{\geq 1}$, $1\leq s\leq p$ is the irreducible highest-weight $\cV ir$-module generated by a singular vector of conformal weight $h_{r,s}$. Thus we have an exact sequence
\begin{equation*}
0\longrightarrow \cL_{r+nr_J,s}\longrightarrow U(\cV ir)\cdot T(\Omega^n)\longrightarrow \cV_{r+nr_J,s}/\cJ \longrightarrow 0
\end{equation*}
where $\cV_{r+nr_J,s}$ is the Virasoro Verma module of lowest conformal weight $h_{r+nr_J,s}$ and $\cJ$ is some submodule such that all simple subquotients of $\cV_{r+nr_J,s}/\cJ$ have the form $\cL_{r+nr_J+2m,s}$ for some $m\in\ZZ_{\geq 0}$. The structure of $\cV_{r+nr_J,s}$ (which follows from the Verma module embedding diagrams in, for example, \cite[Section 5.3]{IK}; see also \cite[Section 2.1]{MY}) shows that when $1\leq s\leq p-1$, all quotients of $\cV_{r+nr_J,s}$ which contain $\cL_{r+nr_J+2m,s}$, $m>0$, as a composition factor also contain $\cL_{r+nr_J+1,p-s}$. Thus $\cV_{r+nr_J,s}/\cJ$ must be the simple quotient $\cL_{r+nr_J,s}$, so $U(\cV ir)\cdot T(\Omega^n)$ is a self-extension of $\cL_{r+nr_J,s}$. By \cite[Section 5.4]{GK}, $\cL_{r,s}$ admits non-split self-extensions only when $s=p$ and $r\geq 2$, so $U(\cV ir)\cdot T(\Omega^n)\cong\cL_{r+nr_J,s}\oplus \cL_{r+nr_J,s}$, that is, $L(0)$ acts semisimply on $T(\Omega^n)$.

We now consider the action of $H_{2p-2}$ on $T(\Omega^n)$. For the case $r+nr_J>1$, the description of the Zhu algebra $A(\cM(p))$ in \cite{Ad} shows that the action of $H_{2p-2}$ on $T(\Omega^n)$ squares to a polynomial in $L(0)$ whose only roots are $h_{1,s}$, $1\leq s\leq p$. Thus when $r+nr_J>1$, $H_{2p-2}$ on $T(\Omega^n)$ squares to a non-zero scalar, and thus $H_{2p-2}$ is also semisimple on $T(\Omega^n)$. For the case $r+nr_J=1$, recall from \cite{Ad} that $H$ is a Virasoro singular vector which generates a $\cV ir$-submodule of $\cM(p)$ isomorphic to $\cL_{3,1}$. Thus $H_{2p-2}\vert_{U(\cV ir)\cdot T(\Omega^n)}$ is a coefficient of an intertwining operator of type $\binom{\Omega^n}{\cL_{3,1}\,\cL_{1,s}\oplus\cL_{1,s}}$. The image of this intertwining operator is $C_1$-cofinite (see \cite[Key Theorem]{Miy}) and thus is a homomorphic image of $\cL_{3,1}\tens(\cL_{1,s}\oplus\cL_{1,s})\cong\cL_{3,s}\oplus\cL_{3,s}$ (see \cite[Theorem 4.6]{MY}). In particular, for $v\in U(\cV ir)\cdot T(\Omega^n)$ homogeneous, $H_{2p-2} v=0$ unless $\mathrm{wt}\,v\geq h_{3,s}> h_{1,s}$. Thus $H_{2p-2}\vert_{T(\Omega^n)}=0$ when $r+nr_J=1$. We have now shown that $T(\Omega^n)$ is a semisimple $A(\cM(p))$-module and thus that $\Omega^0\cong\cM_{r+nr_J,s}\oplus\cM_{r+nr_J,s}$, at least when $n\geq 0$.

Now since the $\cH\otimes\cM(p)$-module $X^{[\gamma+n\lambda_J]}$ is generated as an $\cH$-module by its vacuum space $\Omega^n$, we have now shown that $X^{[\gamma+n\lambda_J]}$ for $n\geq 0$ is, as an $\cM(p)$-module, a sum of copies of $\cM_{r+nr_J,s}$. It follows (see for example \cite[Theorem 2.7]{McR-cosets}) that $X^{[\gamma+n\lambda_J]}\cong \cG_{\gamma+n\lambda_J}\otimes\cM_{r+nr_J,s}$ for some grading-restricted generalized $\cH$-module $\cG_{\gamma+n\lambda_J}$. This module has a two-dimensional lowest conformal weight space, and $(h(0)-\gamma-n\lambda_J)^2=0$ on $\cG_{\gamma+n\lambda_J}$; it follows that we have an exact sequence
\begin{equation*}
0\longrightarrow \cF^\cH_{\gamma+n\lambda_J}\longrightarrow\cG_{\gamma+n\lambda_J}\longrightarrow\cF^\cH_{\gamma+n\lambda_J}\longrightarrow 0.
\end{equation*}
Note that $X^{[\gamma+n\lambda_J]}$ generates $X$ as a generalized $A$-module since it intersects both composition factors of $X$ non-trivially. Thus if $h(0)$ acts semisimply on $\cG_{\gamma+n\lambda_J}$, then it acts semisimply on all of $X$ and we may conclude $X\cong\cW_{r,s}^{(\ell)}\oplus\cW_{r,s}^{(\ell)}$ (since the structure \eqref{eqn:Q_rsl_Loewy_diag} of the projective cover $\cQ_{r,s}^{(\ell)}$ shows that $\cW_{r,s}^{(\ell)}$ does not have non-split self-extensions in $\cO_A$). So taking $n=0,1$, it remains to rule out the possibility that $h(0)$ acts non-semisimply on both $\cG_\gamma$ and $\cG_{\gamma+\lambda_J}$.

By \eqref{eqn:Y_compat_with_h(0)}, the restriction $Y_X\vert_{J\otimes X^{[\gamma]}}$ of the $A$-module vertex operator $Y_X: A\otimes X\rightarrow X((x))$ is an $\cH\otimes\cM(p)$-module intertwining operator of type $\binom{X^{[\gamma+\lambda_J]}}{J\,\,X^{[\gamma]}}$. More generally,
\begin{equation*}
\im Y_X\vert_{J^n\otimes X^{[\gamma]}}\subseteq X^{[\gamma+n\lambda_J]},
\end{equation*}
for any $n\in\ZZ$, so because $X^{[\gamma]}$ generates $X$ as an $A$-module, $\im Y_X\vert_{J\otimes X^{[\gamma]}}=X^{[\gamma+\lambda_J]}$. Then by \cite[Theorem 2.10]{ADL}, $Y_X\vert_{J\otimes X^{[\gamma]}}=\cY_1\otimes\cY_2$ where $\cY_1$ is a surjective $\cH$-module intertwining operator of type $\binom{\cG_{\gamma+\lambda_J}}{\cF^\cH_{\lambda_J}\,\cG_{\gamma}}$ and $\cY_2$ is the unique (up to scale) non-zero $\cM(p)$-module intertwining operator of type $\binom{\cM_{r+r_J,s}}{\cM_{r_J+1,1}\,\cM_{r,s}}$. To complete the proof of the theorem, it is enough to show that if $h(0)$ acts non-semisimply on $\cG_\gamma$ and $\cG_{\gamma+\lambda_J}$, then $\cY_1$ must be a logarithmic intertwining operator, because then $Y_X$ would involve powers of $\log x$, which it cannot since it is the vertex operator for a generalized ($\theta$-twisted) $A$-module.

To prove that $\cY_1$ is logarithmic, let $\cY_\tens:\cF^\cH_{\lambda_J}\otimes\cG_\gamma\rightarrow(\cF^\cH_{\lambda_J}\tens\cG_\gamma)[\log x]\lbrace x\rbrace$ be the tensor product intertwining operator in the tensor category of $C_1$-cofinite grading-restricted generalized $\cH$-modules. By the definition of vertex algebraic tensor products, there is a unique surjection 
\begin{equation*}
f: \cF^\cH_{\lambda_J}\tens\cG_{\gamma}\longrightarrow\cG_{\gamma+\lambda_J}
\end{equation*}
such that $f\circ\cY_\tens=\cY_1$. Then $f$ is an isomorphism since tensoring with the simple current $\cF^\cH_{\lambda_J}$ preserves lengths, and thus the domain of $f$ is an $\cH$-module of length $2$. Consequently, $\cY_1$ involves powers of $\log x$ if and only if $\cY_\tens$ does, and indeed $\cY_\tens$ does since there are logarithmic $\cH$-module intertwining operators of type $\binom{W}{\cF^\cH_\lambda\,\cG_\gamma}$ for some $\cH$-module $W$ (see \cite[Theorem 5.5]{M}).
\end{proof}

\begin{rem}\label{rem:hw_ss}
More generally, the proof of the above theorem shows that if $\lambda_J^2+\frac{p}{2}r_J^2=0$, then the category of finite-length generalized $A$-modules whose composition factors are simple highest-weight $A$-modules is semisimple (with finitely many simple objects up to isomorphism).
\end{rem}

 \subsection{The category of \texorpdfstring{$C_1$}{C1}-cofinite \texorpdfstring{$A$}{A}-modules}\label{subsec:C1-cofinite}
 
 In this subsection, we classify $C_1$-cofinite (local) modules for the vertex operator (super)algebra $A$. Recall that a grading-restricted generalized $A$-module $X$ is $C_1$-cofinite if $\dim X/C_1(X)<\infty$ where
 \begin{equation*}
C_1(X)=\mathrm{span}\left\lbrace a_{-1} b\,\,\vert\,\,b\in X, a\in A\,\,\text{homogeneous such that}\,\,\mathrm{wt}\,a>0\right\rbrace. 
 \end{equation*}
 Let $\cC^1_A$ be the category of $C_1$-cofinite grading-restricted generalized $A$-modules. Due to results of Huang \cite{Hu-diff-eqs} and Miyamoto \cite{Miy, Miy2}, one expects $\cC^1_A$ to admit the vertex algebraic tensor category structure of \cite{HLZ1}-\cite{HLZ8}, and here we show that this is the case when $A$ is a simple current extension of $\cH\otimes\cM(p)$. However, when $\lambda_J^2+\frac{p}{2}r_J^2=0$, we will see that $\cC^1_A$ is a very small subcategory of the larger vertex algebraic tensor category $\cO_A$. We begin with two lemmas on properties of simple $\cH\otimes\cM(p)$-modules:
 \begin{lem}\label{lem:H_times_M(p)_C1_cofinite}
 If $W$ is a simple grading-restricted $\cH\otimes\cM(p)$-module,  then
 \begin{equation*}
 W = C_1(W)+\bigoplus_{i=0}^I \CC L(-1)^i w,
 \end{equation*}
 for some $I\in\ZZ_{\geq 0}$, where $w\in W$ is a non-zero lowest-conformal-weight vector.
 \end{lem}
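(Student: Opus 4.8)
The plan is to reduce the statement to the corresponding fact for simple $\cM(p)$-modules and then to a finite list of explicit identities governed by the higher strong generator $H$. First I would use that a simple grading-restricted $\cH\otimes\cM(p)$-module factors as $W\cong\cF^\cH_\lambda\otimes M$, where $M$ is a simple grading-restricted $\cM(p)$-module (so $M=\cF_\mu$ or $M=\cM_{r,s}$) and $w=v_\lambda\otimes w_M$ spans the one-dimensional lowest-conformal-weight space. Since $\cH$ and $\cM(p)$ are vertex subalgebras, $C_1(W)$ contains both $C_1^{\cH}(\cF^\cH_\lambda)\otimes M$ and $\cF^\cH_\lambda\otimes C_1^{\cM(p)}(M)$, where the superscripts indicate that $C_1$ is computed using only the modes of that subalgebra. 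As Heisenberg Fock modules satisfy $\cF^\cH_\lambda=C_1^{\cH}(\cF^\cH_\lambda)+\CC v_\lambda$, the quotient $W/C_1(W)$ is a quotient of $\CC v_\lambda\otimes\bigl(M/C_1^{\cM(p)}(M)\bigr)$. Because $L(-1)=L^{\cH}(-1)\otimes\Id+\Id\otimes L^{\cM(p)}(-1)$ while $L^{\cH}(-1)^jv_\lambda=\lambda^j h(-1)^j v_\lambda\in C_1^{\cH}(\cF^\cH_\lambda)$ for $j\geq 1$, a binomial expansion gives $L(-1)^iw\equiv v_\lambda\otimes L^{\cM(p)}(-1)^iw_M\pmod{C_1(W)}$. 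Thus the lemma follows once I prove the singlet statement $M=C_1^{\cM(p)}(M)+\sum_{i\geq 0}\CC\,L(-1)^iw_M$, with finiteness of $I$ coming from the $C_1$-cofiniteness of simple $\cM(p)$-modules established in \cite{CMY-singlet-typical}.

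Second, for the singlet statement I would argue by strong induction on the conformal weight, proving $M_{[h_0+n]}\subseteq C_1^{\cM(p)}(M)+\CC\,L(-1)^nw_M$ for all $n$, where $h_0$ is the lowest conformal weight of $M$. Here I use that $\cM(p)$ is strongly generated by $\omega$ and $H$ (the description of \cite{Ad}), so $M$ is spanned by products of the weight-raising modes $L(-j)$ with $j\geq 1$ and $H_k$ with $k\leq 2p-3$ applied to $w_M$. Two formal facts drive the induction. For any vector $u$ and homogeneous $a$ of positive weight, $a_{-m}u\in C_1^{\cM(p)}(M)$ for $m\geq 1$ (iterate the identity $(L(-1)a)_{1-m}=(m-1)\,a_{-m}$); hence $C_1^{\cM(p)}(M)$ is $L(-1)$-stable, and every creation monomial beginning with $L(-j)$ for $j\geq 2$ or with $H_k$ for $k\leq-1$ already lies in $C_1^{\cM(p)}(M)$. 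Moreover $[L(-1),H_k]=-k\,H_{k-1}$ lets me commute powers of $L(-1)$ past the $H$-modes. Together these reduce the inductive step entirely to controlling the finitely many vectors $H_kw_M$ for $0\leq k\leq 2p-3$, that is, the non-negative (weight-raising) modes of $H$ applied to the generating vector.

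The main obstacle is exactly this base case: I must show that $H_kw_M$ is a scalar multiple of $L(-1)^{2p-2-k}w_M$ modulo $C_1^{\cM(p)}(M)$ for each $0\leq k\leq 2p-3$. I would establish this from the detailed module structure, in the spirit of the proof of Theorem \ref{thm:grad_rest_ss}. Decomposing $M$ as a $\cV ir$-module, for instance $\cM_{r,s}\cong\bigoplus_{m\geq 0}\cL_{r+2m,s}$, the Virasoro part of $C_1$ already collapses each simple Virasoro summand onto its own $L(-1)$-tower, so modulo $C_1^{\cV ir}(M)$ the low weight spaces $M_{[h_0+n]}$ for $n\leq p$ are spanned by $L(-1)^nw_M$ alone; the only danger comes from the higher Virasoro highest-weight vectors $u_m$ with $m\geq 1$, which first occur at weight $h_0+\bigl((r+1)p-s\bigr)\geq h_0+p+1$. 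The crux is then to show that the modes of $H$ push these higher singular vectors into $C_1^{\cM(p)}(M)$, concretely that the $H_{-\ell}$ acting on lower weight spaces reproduce the $L(-1)$-descendants of each $u_m$ modulo $C_1^{\cV ir}(M)$, using the Fock-space realization of $M$ together with the explicit form of $H$ and of the relevant singular vectors from \cite{Ad} (as in the Verma-embedding analysis invoked in the proof of Theorem \ref{thm:grad_rest_ss}). I expect this identification of $H$-descendants with Virasoro singular vectors, rather than any of the formal reductions above, to be the only substantive point.
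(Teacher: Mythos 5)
Your opening reduction is correct, and it is precisely the paper's own first step: write $W\cong\cF^\cH_\gamma\otimes\cM_\lambda$, use $\cF^\cH_\gamma=C_1^{\cH}(\cF^\cH_\gamma)+\CC v_\gamma$, and convert powers of $L^{\cM(p)}(-1)$ into powers of the full $L(-1)$ modulo $C_1(W)$. (Minor slip: $L^{\cH}(-1)^jv_\gamma\neq\gamma^jh(-1)^jv_\gamma$ for $j\geq 2$, e.g. $L^{\cH}(-1)^2v_\gamma=\gamma h(-2)v_\gamma+\gamma^2h(-1)^2v_\gamma$; this is harmless, since every vector of $\cF^\cH_\gamma$ of positive relative weight lies in $C_1^{\cH}(\cF^\cH_\gamma)$.) After this reduction, the lemma is exactly equivalent to the singlet statement $M=C_1^{\cM(p)}(M)+\bigoplus_{i=0}^{I}\CC L^{\cM(p)}(-1)^iw_M$, and this is where your proposal has a genuine gap. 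The paper does not prove this statement from scratch: for $M=\cM_{r,s}$ it cites equation (7) in the proof of \cite[Theorem 13]{CMR}, and for typical $M=\cF_\lambda$ it uses simplicity of $\cF_\lambda$ as a Virasoro module together with $C_1$-cofiniteness from \cite{CMR}. You propose instead to re-derive the atypical case by induction on conformal weight, but you end by conceding that the entire substance --- that $H_kw_M\equiv c\,L(-1)^{2p-2-k}w_M\pmod{C_1(M)}$ for $0\leq k\leq 2p-3$, equivalently that the higher Virasoro singular vectors of $\cM_{r,s}\cong\bigoplus_{m\geq 0}\cL_{r+2m,s}$ fall into $C_1(\cM_{r,s})$ --- is something you ``would establish'' and ``expect'' to hold. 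That claim is essentially the content of \cite[Theorem 13]{CMR}; leaving it as a plan means the proof is incomplete. Note also that $C_1$-cofiniteness of simple $\cM(p)$-modules (your appeal to \cite{CMY-singlet-typical}) gives only $\dim M/C_1(M)<\infty$; it does not imply that the quotient is spanned by images of $L(-1)^iw_M$, so it cannot substitute for the spanning statement.

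In addition, the formal skeleton of your induction is weaker than claimed. The two facts you state (that $a_{-m}u\in C_1(M)$ for $m\geq 1$ and positive-weight $a$, and that $[L(-1),H_k]=-kH_{k-1}$) do not ``reduce the inductive step entirely'' to the finitely many vectors $H_kw_M$. Spanning monomials in modes of $\omega$ and $H$ whose leftmost mode is $H_k$ with $k\geq 1$, or which contain two or more $H$-modes, are not controlled by these facts: commuting $H$-modes past each other produces modes $(H_iH)_n$ of composite states, and for $k\geq 1$ one has $H_kC_1(M)\not\subseteq C_1(M)$ in general, since $H_k(a_{-1}b)=\sum_{i\geq 0}\binom{k}{i}(H_ia)_{k-1-i}b+a_{-1}H_kb$ contains terms with non-negative mode index whenever $i<k$ (only $L(-1)$, $L(0)$, and $H_0$ manifestly preserve $C_1(M)$). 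Handling this requires a filtration argument or the explicit free-field realization --- that is, essentially the work carried out in \cite{CMR}. So both your reduction to the base case and the base case itself are unproven; the efficient repair is to quote \cite[Theorem 13]{CMR}, as the paper does.
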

 
 \begin{proof}
 We take $W=\cF^\cH_\gamma\otimes\cM_\lambda$, where $\gamma,\lambda\in\CC$ and $\cM_\lambda$ is the $\cM(p)$-socle of $\cF_\lambda$. Since
 \begin{equation*}
 \cF^\cH_\gamma=\mathrm{span}\lbrace h(-n_1)\cdots h(-n_k) v^\cH_\gamma\,\,\vert\,\,n_1,\ldots,n_k\in\ZZ_{\geq 1}\rbrace = C_1(\cF^\cH_\gamma)+\CC v_\gamma^\cH
 \end{equation*}
 for some lowest-conformal-weight vector $v^\cH_\gamma$, we have
\begin{equation*}
W = C_1(W) + v^\cH_\gamma\otimes T
\end{equation*} 
 where $T\subseteq\cM_\lambda$ is a graded subspace such that $\cM_\lambda= C_1(\cM_\lambda)+T$. If $\lambda=\alpha_{r,s}$ for some $r\in\ZZ$, $1\leq s\leq p$, then equation (7) in the proof of \cite[Theorem 13]{CMR} shows that we may take $T=\bigoplus_{i=0}^I \CC L(-1)_{\cM(p)}^i v_\lambda$ for some $I$, where $v_\lambda\in\cM_\lambda$ is a lowest-conformal-weight vector and $L(-1)_{\cM(p)}$ is the $L(-1)$-mode for the conformal vector of $\cM(p)$. If $\lambda\in\CC\setminus L^\circ$, in which case $\cM_\lambda=\cF_\lambda$, we can also take $T$ of this form. Indeed, $\cF_\lambda$ is then simple as a Virasoro module, so $\cF_\lambda=C_1(\cF_\lambda)+\bigoplus_{i=0}^\infty \CC L(-1)^i_{\cM(p)} v_\lambda$. But because $\cF_\lambda$ is $C_1$-cofinite as an $\cM(p)$-module \cite[Theorem 13]{CMR}, $L(-1)^i v_\lambda\in C_1(\cF_\lambda)$ for $i$ greater than some $I$.
 
 We now have $W=C_1(W)+\bigoplus_{i=0}^I \CC L(-1)^i_{\cM(p)} w$ for some $I$, where $w=v^\cH_\gamma\otimes v_\lambda$. Then
 \begin{align*}
 L(-1)_{\cM(p)}^i w & =(L(-1)-L(-1)_\cH)^i w\nonumber\\
 & =\sum_{i'=0}^i \binom{i}{i'} L(-1)_\cH^{i-i'} L(-1)^{i'}w \equiv L(-1)^i w\mod C_1(W),
 \end{align*}
 for any $i\in\ZZ_{\geq 0}$, so $W=C_1(W)+\bigoplus_{i=0}^I \CC L(-1)^i w$ as desired.
 \end{proof}

 \begin{lem}\label{lem:Mp_intw_op_low_wt_vectors}
 Let $\cY$ be a non-zero $\cH\otimes\cM(p)$-module intertwining operator of type $\binom{W_3}{W_1\,W_2}$ where $W_1$, $W_2$, and $W_3$ are simple. If $w_1\in W_1$ and $w_2\in W_2$ are vectors of lowest conformal weight, then the coefficient of the lowest power of $x$ in $\cY(w_1,x)w_2$ is a non-zero vector of lowest conformal weight in $W_3$.
 \end{lem}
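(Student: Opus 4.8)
The plan is to reduce the statement to its two tensor factors and analyse each. Since $\cH\otimes\cM(p)$ is a tensor product vertex operator algebra and $W_1,W_2,W_3$ are simple, they are outer tensor products $W_i=\cF^\cH_{\gamma_i}\otimes\cM_{\lambda_i}$ of simple $\cH$- and $\cM(p)$-modules, so I would first invoke the factorization of intertwining operators for tensor product algebras (\cite[Theorem~2.10]{ADL}): the space of intertwining operators of type $\binom{W_3}{W_1\,W_2}$ is the tensor product of the corresponding spaces for $\cH$ and for $\cM(p)$. Because the space of $\cH$-module intertwining operators among Fock modules is at most one-dimensional (nonzero exactly when $\gamma_3=\gamma_1+\gamma_2$), a nonzero $\cY$ is forced to be a single outer tensor product $\cY=\cY^\cH\otimes\cY^{\cM(p)}$ with both factors nonzero, and writing $w_i=v_{\gamma_i}\otimes u_i$ we get $\cY(w_1,x)w_2=\cY^\cH(v_{\gamma_1},x)v_{\gamma_2}\otimes\cY^{\cM(p)}(u_1,x)u_2$. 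The lowest power of $x$ in this product is the sum of the lowest powers of the two factors, and its coefficient is the outer tensor product of the two leading coefficients, which is nonzero precisely when each leading coefficient is. Thus it suffices to prove the lemma separately for $\cH$ and for $\cM(p)$, checking in each case that the leading coefficient lies in the bottom conformal weight space of the respective factor. For $\cH$ this is immediate: $\cY^\cH(v_{\gamma_1},x)v_{\gamma_2}$ is the standard Fock-module vertex operator, with leading term $x^{\gamma_1\gamma_2}$ times a nonzero multiple of the lowest-weight vector $v_{\gamma_1+\gamma_2}$ of $\cF^\cH_{\gamma_1+\gamma_2}$.

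For the singlet factor I would first establish the nonvanishing $\cY^{\cM(p)}(u_1,x)u_2\neq 0$. Writing $\cY^{\cM(p)}=\phi\circ\cY_\tens$, where $\cY_\tens$ is the canonical intertwining operator into $\cM_{\lambda_1}\tens\cM_{\lambda_2}$ and $\phi\colon\cM_{\lambda_1}\tens\cM_{\lambda_2}\to\cM_{\lambda_3}$ is the induced surjective module map, it is enough to show that the coefficients of $\cY_\tens(u_1,x)u_2$ generate $\cM_{\lambda_1}\tens\cM_{\lambda_2}$, since then $\phi$ cannot vanish on all of them. This generation is a standard consequence of the Jacobi identity together with the fact that the lowest-weight vectors $u_1,u_2$ generate the simple modules $\cM_{\lambda_1},\cM_{\lambda_2}$.

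Next I would constrain the conformal weight of the leading coefficient via the Virasoro action. Let $u$ be the coefficient of the lowest power $x^\nu$ in $\cY^{\cM(p)}(u_1,x)u_2$, and let $h_1$ be the lowest conformal weight of $u_1$. Since $u_1,u_2$ have lowest conformal weight, $L(n)u_1=L(n)u_2=0$ for $n\geq 1$, so the intertwiner commutator formula together with the $L(-1)$-derivative property gives, for every $n\geq 1$,
\begin{equation*}
L(n)\,\cY^{\cM(p)}(u_1,x)u_2=x^{n+1}\tfrac{d}{dx}\cY^{\cM(p)}(u_1,x)u_2+(n+1)h_1\,x^{n}\,\cY^{\cM(p)}(u_1,x)u_2.
\end{equation*}
The right-hand side only contributes in degrees $\geq\nu+1$, so comparing coefficients of $x^\nu$ forces $L(n)u=0$ for all $n\geq 1$; that is, $u$ is a Virasoro singular vector of $\cM_{\lambda_3}$. (The analogous computation with the Heisenberg modes $h(n)$ confirms, in the unfactored picture, that the full leading coefficient is annihilated by all $h(n)$, $n\geq 1$, consistently with the Heisenberg part being $v_{\gamma_3}$.) Recalling that as a module for the Virasoro algebra $\cM_{r_3,s_3}\cong\bigoplus_{m\geq 0}\cL_{r_3+2m,s_3}$, with one-dimensional singular spaces at the weights $h_{r_3+2m,s_3}$, this shows $\nu+h_1+h_2=h_{r_3+2m,s_3}$ for some $m\geq 0$, and the lemma for the singlet factor reduces to proving $m=0$.

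The main obstacle is exactly this last step: ruling out $m\geq 1$, equivalently showing that the bottom-to-bottom structure constant $\langle u_3',\cY^{\cM(p)}(u_1,x)u_2\rangle$ (with $u_3'$ spanning the lowest weight space of $\cM_{\lambda_3}'$) is nonzero. This is a genuinely non-formal fact: a Virasoro singular vector alone can sit strictly above the bottom of $\cM_{r_3,s_3}$, and it is not eliminated by the $H$-mode commutators, since the descendants $H_j u_1$ of the extra strong generator are not controlled by the relations $L(n)u_1=0$. I would resolve it using the free-field realization $\cM_{r,s}\subset\cF_{\alpha_{r,s}}$ and the construction of singlet intertwining operators as screened Fock vertex operators: the leading term of the unscreened Fock operator is a nonzero multiple of the bottom vector $v_{\alpha_{r_3,s_3}}$, and one checks that the attached screening integral contributes a nonzero Dotsenko--Fateev factor and does not raise the conformal weight of the leading term, so that $u$ lies in the bottom space of $\cM_{\lambda_3}$. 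Alternatively, since the category $\cO_{\cM(p)}^T$ and its fusion rules (Theorem~\ref{thm:singlet_fus_rules}) are known explicitly from \cite{CMY-singlet, CMY-singlet-typical}, one can read off the leading behaviour of the canonical intertwining operators $\cY_\tens$ from that construction; combined with the Virasoro-singularity already established, this pins the leading coefficient to the lowest conformal weight space and completes the proof.
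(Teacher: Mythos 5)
Your use of the \cite[Theorem 2.10]{ADL} factorization, your treatment of the Heisenberg factor, and your Virasoro commutator computation showing that the leading coefficient $u$ of $\cY^{\cM(p)}(u_1,x)u_2$ satisfies $L(n)u=0$ for $n\geq 1$ are all correct. But, as you yourself flag, this only places $u$ among the Virasoro singular vectors of $\cM_{r_3,s_3}\cong\bigoplus_{m\geq 0}\cL_{r_3+2m,s_3}$, and the entire content of the lemma for the singlet factor is the exclusion of $m\geq 1$. That step is not proved in your proposal. The screening-operator sketch (``one checks that the attached screening integral contributes a nonzero Dotsenko--Fateev factor and does not raise the conformal weight of the leading term'') is precisely the hard input, not a routine verification: for a general type $\binom{\cM_{r_3,s_3}}{\cM_{r_1,s_1}\,\cM_{r_2,s_2}}$ one needs multiple screening currents, and the non-vanishing and leading-power control of the resulting iterated contour integrals is a substantial computation you have not carried out. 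Your fallback --- ``reading off the leading behaviour'' of the canonical intertwining operators from \cite{CMY-singlet, CMY-singlet-typical} --- does not obviously work either, because those papers obtain tensor products and fusion rules abstractly (via induction to doublet/triplet algebras and the HLZ machinery), not through explicit formulas for intertwining operators whose leading coefficients are accessible. So the proposal has a genuine gap at its decisive step.

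The gap can be closed by the mechanism the paper actually uses, which bypasses your case analysis entirely. The paper combines Lemma \ref{lem:H_times_M(p)_C1_cofinite} (resting on \cite[Theorem 13]{CMR}), which gives $W_1=C_1(W_1)+\bigoplus_{i=0}^I\CC L(-1)^i w_1$, with the generation result from the proof of \cite[Lemma 2.11]{CMY-completions}: the image of $\cY$ is generated over the Lie algebra $\mathfrak{g}_{\leq 0}$ of weight-weakly-raising modes by the coefficients of $\cY(w_1,x)w_2$, the $L(-1)$-derivative property absorbing the $L(-1)^iw_1$ terms. Since $\cY\neq 0$ and $W_3$ is simple, that image is all of $W_3$; since every generated vector has conformal weight at least that of the leading coefficient, the leading coefficient must itself lie in (and hence span a nonzero vector of) the lowest conformal weight space of $W_3$. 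Note that this argument needs no factorization, no classification of Virasoro singular vectors, and no free-field realization; applied to your $\cM(p)$ factor alone it also eliminates $m\geq 1$, since a leading coefficient lying in $\cL_{r_3+2m,s_3}$ with $m\geq 1$ would force the image of $\cY^{\cM(p)}$ into conformal weights at least $h_{r_3+2m,s_3}$, missing the bottom of $\cM_{r_3,s_3}$ and contradicting surjectivity.
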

 \begin{proof}
 By Lemma \ref{lem:H_times_M(p)_C1_cofinite}, $W_1=C_1(W_1)+T$ where $T=\bigoplus_{i=0}^I \CC L(-1)^i w_1$ for some finite $I$. Now let $\mathfrak{g}_{\leq 0}$ be the Lie (super)algebra spanned by vertex operator modes $v_n$ acting on $W_3$ where $v\in\cH\otimes\cM(p)$ is homogeneous and $n-\mathrm{wt}\,v+1\leq 0$. The proof of \cite[Lemma 2.11]{CMY-completions} shows that the image of $\cY$ is generated as a $\mathfrak{g}_{\leq 0}$-module by the coefficients of powers of $x$ in $\cY(t,x)w_2$ for $t\in T$. By the $L(-1)$-derivative property for $\cY$, this means the coefficients of $\cY(w_1,x)w_2$ generate the image of $\cY$ as a $\mathfrak{g}_{\leq 0}$-module. Thus since vertex operator modes in $\mathfrak{g}_{\leq 0}$ weakly raise conformal weight, and since a non-zero intertwining operator to a simple $\cH\otimes\cM(p)$-module is surjective, the coefficient of the lowest power of $x$ in $\cY(w_1,x)w_2$ is a non-zero lowest-conformal-weight vector in $W_3$.
 \end{proof}
 
 In proving the following theorem, the module $W_1$ in the preceding lemma will be the simple current $\cH\otimes\cM(p)$-modules $J^{\pm 1}$. 
 \begin{thm}\label{thm:simple_C1_cofinite}
 If $\lambda_J^2+\frac{p}{2}r_J^2>0$, then every simple module in $\Oloc_A$ is $C_1$-cofinite. If $\lambda_J^2+\frac{p}{2}r_J^2=0$, then a simple grading-restricted $A$-module is $C_1$-cofinite if and only if it is isomorphic to one of the $\cW_{r,1}^{(0)}$ for $1\leq r\leq r_J$.
 \end{thm}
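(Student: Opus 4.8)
\section*{Proof proposal}

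The plan is to study $X/C_1(X)$ through the $h(0)$-eigenspace decomposition $X=\bigoplus_{n\in\ZZ}X^{(\gamma+n\lambda_J)}$ carried by every object, where (for $X=\cW_{r,s}^{(\ell)}$ or $\cE_\lambda^{(\ell)}$) the summand $X^{(\gamma+n\lambda_J)}\cong J^n\tens X^{(\gamma)}$ is a simple $\cH\otimes\cM(p)$-module of a fixed ``singlet type''. By Lemma \ref{lem:H_times_M(p)_C1_cofinite}, each summand satisfies $X^{(\gamma+n\lambda_J)}=C_1^{\cH\otimes\cM(p)}(X^{(\gamma+n\lambda_J)})+\bigoplus_{i=0}^{I}\CC L(-1)^iw_n$ with $w_n$ a lowest-weight vector and $I$ independent of $n$ (here $I=s-1$ for $\cW$, and $I=p-1$ for $\cE$; note $\dim\cM_{r,1}/C_1=1$, so $I=0$ when $s=1$). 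Since $\cH\otimes\cM(p)=J^0\subseteq A$ consists of elements of non-negative weight, $C_1^{\cH\otimes\cM(p)}(\cdot)\subseteq C_1(X)$, so $X=C_1(X)+\sum_n\bigoplus_{i=0}^I\CC L(-1)^iw_n$; thus $X$ is $C_1$-cofinite if and only if all but finitely many of the finite-dimensional fibers $F_n:=\bigoplus_i\CC L(-1)^iw_n$ lie in $C_1(X)$. The engine for pushing a fiber into $C_1(X)$ combines the mode identity $a_{-k}b=\tfrac{1}{(k-1)!}(L(-1)^{k-1}a)_{-1}b$ (which forces $a_{-k}b\in C_1(X)$ whenever $\mathrm{wt}\,a>0$) with the generation statement in the proof of Lemma \ref{lem:Mp_intw_op_low_wt_vectors} and \cite[Lemma 2.11]{CMY-completions} (that $X^{(\gamma+n\lambda_J)}$ is generated from $X^{(\gamma+(n-1)\lambda_J)}$ by weight-raising modes applied to the finitely many low-weight coefficients of $Y_X(t,x)w_{n-1}$, $t$ in a fixed finite generating set of $J$). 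The outcome is that $F_n\subseteq C_1(X)$ once the conformal-weight gap $\mathrm{wt}\,w_n-\mathrm{wt}\,w_{n-1}$ exceeds a constant $M_J$ depending only on $J$ and $I$.

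With this mechanism the case $\lambda_J^2+\tfrac p2 r_J^2>0$ is immediate: by \eqref{eqn:h(lambda_ell_n)}, \eqref{eqn:h(rsl)_n_positive}, \eqref{eqn:h(rsl)_n_negative} the lowest weight of $X^{(\gamma+n\lambda_J)}$ is quadratic in $n$ with positive leading coefficient $\tfrac12(\lambda_J^2+\tfrac p2 r_J^2)$, so the gaps tend to $+\infty$; only finitely many fibers can fail to lie in $C_1(X)$, and every simple object of $\Oloc_A$ is $C_1$-cofinite. When $\lambda_J^2+\tfrac p2 r_J^2=0$ the weights are linear in $n$ and the gaps are constant, so the mechanism succeeds only in borderline situations. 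For $X=\cW_{r,1}^{(0)}$ the fibers are one-dimensional ($I=0$), and both the upward and downward gaps equal the lowest weight $\tfrac{r_J(p-1)}2$ of $J$; these equalities are exactly what is needed to write each $w_n$ (for $|n|$ large) as $a_{-1}w_{n\mp1}$ with $a\in J^{\pm1}$, placing it in $C_1(X)$. Hence $\cW_{r,1}^{(0)}$ is $C_1$-cofinite.

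For the remaining simple grading-restricted modules $\cW_{r,s}^{(\ell)}$ with $(s,\ell)\neq(1,0)$, I would prove non-$C_1$-cofiniteness by iterated self-fusion, using that $C_1$-cofinite modules are closed under $\boxtimes$ \cite{Miy} and under direct summands, and that every $C_1$-cofinite module is grading restricted. By Theorem \ref{thm:general_fusion_rules}(1), $\cW_{r,s}^{(\ell)}\boxtimes\cW_{r,s}^{(\ell)}\cong\bigoplus_{k}\cW_{2r-1,k}^{(2\ell)}\oplus\bigoplus_{k}\cQ_{2r-1,k}^{(2\ell)}$, the second sum being nonempty precisely when $s\geq\tfrac{p+1}2$. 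Any $\cQ_{r',s'}^{(\ell')}$ is \emph{never} grading restricted, since at least one of its middle composition factors $\cW_{r'\mp1,p-s'}^{(\ell'\mp r_Jp/2)}$ violates the bound of Theorem \ref{thm:grading-restricted}(3). Thus if $s\geq\tfrac{p+1}2$ we already get a non-grading-restricted summand of a (supposedly) $C_1$-cofinite module, a contradiction. If $2\leq s<\tfrac{p+1}2$, the largest $\cW$-summand is $\cW_{2r-1,2s-1}^{(2\ell)}$; since the type index evolves by $s\mapsto 2s-1$ it strictly increases and reaches $[\tfrac{p+1}2,p)$ after finitely many self-tensor steps, at which point a $\cQ$-summand appears. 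If $s=1$ and $\ell\neq0$, the self-tensor equals $\cW_{2r-1,1}^{(2\ell)}$, so the spectral-flow label doubles; for a large iterate $|2^j\ell|\geq r_J\tfrac{p-1}2$, whence $\cW_{\cdot,1}^{(2^j\ell)}$ is no longer grading restricted by Theorem \ref{thm:grading-restricted}(1),(3) --- again a contradiction. Therefore exactly the $\cW_{r,1}^{(0)}$ are $C_1$-cofinite.

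The hard part will be the fiber-absorption step of the first paragraph: showing that the entire finite-dimensional fiber $F_n$, not merely its lowest-weight vector $w_n$, enters $C_1(X)$ once the gap exceeds $M_J$. This requires controlling the weight-raising modes that build $X^{(\gamma+n\lambda_J)}$ out of $X^{(\gamma+(n-1)\lambda_J)}$ and verifying, through the commutator formula for vertex operator modes, that these modes carry $C_1(X)$ back into $C_1(X)$ up to strictly lower-weight corrections, so that an induction on conformal weight closes. By contrast, the converse (non-$C_1$-cofiniteness) direction is clean once the fusion rules of Theorem \ref{thm:general_fusion_rules} and Miyamoto's closure theorem are in hand.
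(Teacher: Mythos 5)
Your proposal follows essentially the same route as the paper in both directions: the decomposition into $h(0)$-eigenspaces, the reduction via Lemma \ref{lem:H_times_M(p)_C1_cofinite} to finite-dimensional fibers $\bigoplus_{i}\CC L(-1)^i w_n$, the use of Lemma \ref{lem:Mp_intw_op_low_wt_vectors} to realize $w_n$ as a single mode $j_{m_n}w_{n-1}$ with $m_n$ controlled by the conformal-weight gap (so that $m_n\leq -1$, i.e.\ gap $\geq h_J$, puts $w_n\in C_1(X)$), and, for the converse, Miyamoto's closure theorem plus the observation that the projectives $\cQ_{r,s}^{(\ell)}$ are never grading restricted. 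Your converse differs only in bookkeeping: you iterate self-squaring (sending $s\mapsto 2s-1$ and $\ell\mapsto 2\ell$), whereas the paper first reduces to $r=1$, kills $\ell$ using powers of $\cW_{1,1}^{(2\ell)}$, and then climbs $s$ by tensoring with $\cW_{1,3}^{(0)}$; both versions are correct. The step you flag as ``the hard part'' is in fact immediate from ingredients you already wrote down: since $\mathrm{wt}\,(L(-1)a)=\mathrm{wt}\,a+1>0$, the Leibniz rule $L(-1)(a_{-1}b)=(L(-1)a)_{-1}b+a_{-1}(L(-1)b)$ shows that $L(-1)$ preserves $C_1(X)$, so $w_n\in C_1(X)$ forces $L(-1)^i w_n\in C_1(X)$ for all $i$ and the whole fiber $F_n$ lies in $C_1(X)$. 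No commutator analysis of general weight-raising modes and no induction on conformal weight is needed; this is exactly how the paper closes the argument, via the identity $L(-1)^i a_{-1}b=\sum_{i'=0}^{i}\frac{i!}{(i-i')!}\,a_{-i'-1}L(-1)^{i-i'}b$. (One small caveat: the specific values of $I$ you assert, $I=s-1$ for $\cW$ and $I=p-1$ for $\cE$, are neither established nor needed --- only finiteness of each $I_n$ matters, and it may depend on $n$.)
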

 \begin{proof}
Let $X\cong\cF(W)$ be a simple module in $\Oloc_A$, where $W$ is a simple $\cH\otimes\cM(p)$-module. Thus $X\cong\bigoplus_{n\in\ZZ} J^n\tens W$ as an $\cH\otimes\cM(p)$-module. By Lemma \ref{lem:H_times_M(p)_C1_cofinite}, there are $I_n\in\ZZ_{\geq 0}$ and lowest-conformal-weight vectors $w^{(n)}\in J^n\tens W$ such that
\begin{equation*}
X = C_1(X)+\bigoplus_{n\in\ZZ}\bigoplus_{i=0}^{I_n} \CC L(-1)^i w^{(n)}.
\end{equation*}
 We claim that if $w^{(n)}\in C_1(X)$, then $J^n\tens W\subseteq C_1(X)$. Indeed, if $w^{(n)}$ is equal to (a finite sum of) $a_{-1} b$ for $a\in A$ and $b\in X$, then  the $L(-1)$-derivative property
 \begin{equation*}
 L(-1)Y_X(a,x)=\frac{d}{dx}Y_X(a,x)+Y_X(a,x)L(-1),
 \end{equation*}
 implies that
 \begin{equation*}
 L(-1)^i a_{-1} b=\sum_{i'=0}^i \frac{i!}{(i-i')!} a_{-i'-1} L(-1)^{i-i'}b\in C_1(X).
 \end{equation*}
 Thus to show $X$ is $C_1$-cofinite, it is sufficient to show that $w^{(n)}\in C_1(X)$ for all but finitely many $n\in\ZZ$.

To determine when $w^{(n)}\in C_1(X)$, fix non-zero vectors $j\in J$, $j'\in J^{-1}$, and $w\in W$ of minimal conformal weight. Then from Lemma \ref{lem:Mp_intw_op_low_wt_vectors}, there exist $m_n, m_n'\in\ZZ$ such that for all $n\in\ZZ_{\geq 1}$,
 \begin{equation*}
w^{(n)} = j_{m_n}j_{m_{n-1}}\cdots j_{m_1} w,\qquad\quad w^{(-n)} = j'_{m_n'} j'_{m_{n-1}'}\cdots j'_{m_1'} w
 \end{equation*}
 are non-zero vectors in the lowest conformal weight spaces of $J^n\tens W$ and $J^{-n}\tens W$, respectively. Specifically, by conformal weight considerations,
 \begin{equation*}
 h_J-m_n-1 = h_{J^n\tens W}-h_{J^{n-1}\tens W},\qquad h_{J^{-1}} -m_n'-1=h_{J^{-n}\tens W}-h_{J^{-n+1}\tens W}, 
 \end{equation*}
 where $h_M$ denotes the lowest conformal weight of a simple module $M$.
 
 In the proof of Theorem \ref{thm:grading-restricted}, we saw that $h_{J^n\tens W}$ is quadratic in $n$ with leading coefficient $\frac{1}{2}(\lambda_J^2+\frac{p}{2}r_J^2)$. Thus if $\lambda_J^2+\frac{p}{2}r_J^2>0$, then
 \begin{equation*}
 m_n= h_{J^{n-1}\tens W}-h_{J^n\tens W}+h_J-1
 \end{equation*}
 is linear in $n$ with negative slope, and thus $m_n>-1$ for only finitely many $n\in\ZZ_{\geq 1}$. Similarly, $m_n'>-1$ for only finitely many $n\in\ZZ_{\geq 1}$. Thus $w^{(n)}\in C_1(X)$ for all but finitely many $n$, which then implies that $X\cong\cF(W)$ is $C_1$-cofinite for any simple $\cH\otimes\cM(p)$-module $W$ such that $\cF(W)$ is local. If $\lambda_J^2+\frac{p}{2}r_J^2=0$  and $\cF(W)=\cW_{r,1}^{(0)}$, that is, $W=\cF^\cH_{r_J(1-r)p/(2\lambda_J)}\otimes\cM_{r,1}$, then \eqref{eqn:h(rsl)_n_positive} and \eqref{eqn:lowest_wt_of_Jn} show that for $n\geq 1$,
 \begin{equation*}
 h_{J^n\tens W}-h_{J^{n-1}\tens W} = r_J\frac{p-1}{2} =h_J, 
 \end{equation*}
 so that $m_n=-1$ for all $n\geq 1$. Similarly, \eqref{eqn:h(rsl)_n_negative} and \eqref{eqn:lowest_wt_of_Jn} show that $m_n'=-1$ for $n\geq 2$. Again, $w^{(n)}\in C_1(X)$ for all but at most two $n$, implying that $\cW_{r,1}^{(0)}$ is $C_1$-cofinite.

It remains to show that if $\lambda_J^2+\frac{p}{2}r_J^2=0$, then $\cW_{r,s}^{(\ell)}$ is not $C_1$-cofinite when $s>1$ or $\ell\neq 0$. We shall prove this using the tensor product formulas of Theorem \ref{thm:general_fusion_rules}(1) and the fact that if $\cY$ is a surjective intertwining operator of type $\binom{W_3}{W_1\,W_2}$ such that $W_1$, $W_2$ are $C_1$-cofinite grading-restricted generalized $A$-modules and $W_3$ is any generalized $A$-module, then $W_3$ is $C_1$-cofinite and grading restricted (see \cite[Key Theorem]{Miy} and \cite[Corollary 2.12]{CMY-completions}). In particular, if $W_1\tens W_2$ is the tensor product of $W_1$ and $W_2$ in $\cO_A$, then any homomorphic image of $W_1\tens W_2$ is grading restricted.

Now suppose $\cW_{r,s}^{(\ell)}$ is a $C_1$-cofinite grading-restricted $A$-module (so $1\leq s\leq p-1$). By tensoring with the $C_1$-cofinite module $\cW_{r',1}^{(0)}$ such that $r+r'-1\equiv 1\,(\mathrm{mod}\,r_J)$, we see that $\cW_{1,s}^{(\ell)}$ is also $C_1$-cofinite. Then so is $\cW_{1,s}^{(\ell)}\tens\cW_{1,s}^{(\ell)}$ and its quotient $\cW_{1,1}^{(2\ell)}$. It then follows that $(\cW_{1,1}^{(2\ell)})^{\tens n} \cong \cW_{1,1}^{(2n\ell)}$ is $C_1$-cofinite for all $n\geq 1$. Such modules are grading restricted only when $\vert 2n\ell\vert < \frac{p-1}{2}$, so $\ell=0$. Next, all quotients of direct summands in
\begin{equation*}
\cW_{1,s}^{(0)}\tens\cW_{1,s}^{(0)}\cong \bigoplus_{n=1}^{\min(s, p-s)} \cW^{(0)}_{1,2n-1}\oplus\bigoplus_{n=p-s+1}^{\lfloor (p+1)/2\rfloor} \cQ^{(0)}_{1,2n-1}.
\end{equation*}
are $C_1$-cofinite. In particular, $\cW_{1,3}^{(0)}$ would be $C_1$-cofinite were $s>1$ (actually $s\geq 3$ since $\cW_{1,s}^{(0)}$ is local and thus $s$ is odd). But then the fusion rules
\begin{equation*}
\cW_{1,3}^{(0)}\tens\cW_{1,s}^{(0)}\cong\left\lbrace\begin{array}{lll}
\cW_{1,s-2}^{(0)}\oplus\cW_{1,s}^{(0)}\oplus\cW_{1,s+2}^{(0)} & \text{if} & 3\leq s\leq p-2\\
\cW_{1,p-3}^{(0)}\oplus\cQ_{1,p-1}^{(0)} & \text{if} & s=p-1\\ 
\end{array}\right.
\end{equation*}
would imply that either $\cQ_{1,p-1}^{(0)}$ (if $p$ is even) or $\cW_{1,p}^{(0)}=\cQ_{1,p}^{(0)}$ (if $p$ is odd) is $C_1$-cofinite. But neither of these projective modules is grading restricted, so $s>1$ is impossible. In conclusion, we have shown that if $\cW_{r,s}^{(\ell)}$ is a $C_1$-cofinite grading-restricted (local) $A$-module, then $\ell=0$ and $s=1$, completing the proof of the theorem.
 \end{proof}

Now that we have characterized the simple $C_1$-cofinite $A$-modules, we characterize the entire category $\cC_A^1$:
\begin{thm}\label{thm:C_1-cofin}
If $\lambda_J^2+\frac{p}{2}r_J^2>0$, then $\cC_A^1$ is equal to the category of finite-length grading-restricted generalized $A$-modules. If $\lambda_J^2+\frac{p}{2}r_J^2=0$, then $\cC_A^1$ is the semisimple subcategory of $\Oloc_A$ with simple objects $\cW_{r,1}^{(0)}$, $1\leq r\leq r_J$.
\end{thm}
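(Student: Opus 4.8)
The plan is to treat the two cases separately, reducing each to results already in hand. Throughout I will use three standard facts: the class of $C_1$-cofinite grading-restricted generalized $A$-modules is closed under quotients (since $\dim (X/Y)/C_1(X/Y)\leq\dim X/C_1(X)$) and under extensions (if $0\to X_1\to X\to X_2\to 0$ is exact with $X_1,X_2$ $C_1$-cofinite, then $\dim X/C_1(X)\leq\dim X_1/C_1(X_1)+\dim X_2/C_1(X_2)<\infty$); and, since $A$ is of CFT type, a grading-restricted generalized $A$-module is $C_1$-cofinite if and only if it is finitely strongly generated, so in particular $C_1$-cofinite $A$-modules are finitely generated \cite[Lemma 3.1.6]{Ar-assoc-var}.

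Consider first $\lambda_J^2+\frac{p}{2}r_J^2=0$. Here $\cC_A^1$ is a full subcategory of the category of grading-restricted generalized local $A$-modules, which is semisimple by Theorem \ref{thm:grad_rest_ss}. Thus any object $X$ of $\cC_A^1$ is a finite direct sum of simple objects of that category; each simple summand is a quotient of $X$, hence $C_1$-cofinite, so by Theorem \ref{thm:simple_C1_cofinite} it is one of the $\cW_{r,1}^{(0)}$ for $1\leq r\leq r_J$. Conversely, each $\cW_{r,1}^{(0)}$ is $C_1$-cofinite by Theorem \ref{thm:simple_C1_cofinite} and, being $h(0)$-semisimple, finitely generated and strongly $\cH$-weight-graded, lies in $\Oloc_A$; hence every finite direct sum of the $\cW_{r,1}^{(0)}$ lies in $\cC_A^1\cap\Oloc_A$. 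Therefore $\cC_A^1$ is exactly the semisimple subcategory of $\Oloc_A$ with simple objects $\cW_{r,1}^{(0)}$, $1\leq r\leq r_J$.

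Now consider $\lambda_J^2+\frac{p}{2}r_J^2>0$, where I prove the two inclusions. For the inclusion of the finite-length grading-restricted modules into $\cC_A^1$, I induct on length: the composition factors of such a module are simple grading-restricted local modules, each of which is a highest-weight module by Remark \ref{rem:simple_grad-rest_are_hw} and hence an object of $\Oloc_A$, so each is $C_1$-cofinite by Theorem \ref{thm:simple_C1_cofinite}; closure of $C_1$-cofiniteness under extensions then yields the claim. For the reverse inclusion, let $X\in\cC_A^1$, so that $X$ is finitely generated. The crucial point is that $h(0)$ acts semisimply on $X$; granting this, $X$ is a finitely generated, $h(0)$-semisimple, grading-restricted generalized module, i.e.\ a strongly $\cH$-weight-graded module, so $X\in\Oloc_A$ and has finite length by Proposition \ref{prop:fin_gen_Rep_B_p}.

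The main obstacle is exactly this semisimplicity of $h(0)$ on a $C_1$-cofinite module $X$. I would establish it by the logarithmic intertwining operator argument used in the proof of Theorem \ref{thm:grad_rest_ss}: if $h(0)$ had a Jordan block, some generalized $h(0)$-eigenspace $X^{[\gamma]}$ would restrict to a non-$h(0)$-semisimple Heisenberg module, and then the restriction $Y_X\vert_{J\otimes X^{[\gamma]}}$ would, after factoring through \cite[Theorem 2.10]{ADL} into an $\cH$-part $\cY_1$ and an $\cM(p)$-part, be forced to involve powers of $\log x$, since the tensor-product intertwining operator of non-semisimple Heisenberg modules is logarithmic (cf.\ \cite[Theorem 5.5]{M}); this contradicts the fact that the vertex operator of a genuine (untwisted, non-logarithmic) $A$-module has no $\log x$. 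The only delicate bookkeeping I anticipate is running this argument for a module with several composition factors rather than a single self-extension, which should be handled by localizing at a minimal Jordan block of $h(0)$ and arguing as in the final paragraphs of the proof of Theorem \ref{thm:grad_rest_ss}.
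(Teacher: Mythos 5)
Your handling of the case $\lambda_J^2+\frac{p}{2}r_J^2=0$ and your forward inclusion in the case $\lambda_J^2+\frac{p}{2}r_J^2>0$ are correct and essentially identical to the paper's argument. The gap is in your reverse inclusion when $\lambda_J^2+\frac{p}{2}r_J^2>0$: the ``crucial point'' you rely on, that $h(0)$ acts semisimply on every $C_1$-cofinite module, is not available and in fact should be false. If it held, then together with your forward inclusion you would have proved $\cC_A^1=\Oloc_A$, i.e., that every finite-length grading-restricted generalized $A$-module is a weight module. But the entire reason the theorem is phrased in terms of finite-length grading-restricted modules rather than $\Oloc_A$ is that, in this case (realized by the $W$-superalgebras $\cS_p$ and $\cS_2^m$ of Section \ref{sec:super}), $\cC_A^1$ is expected to strictly contain $\Oloc_A$: it admits non-weight modules on which $h(0)$ has Jordan blocks, in analogy with the projective covers of atypical modules for affine $\mathfrak{gl}_{1\vert 1}$ in \cite{CMY3}. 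This is also why the proof of Corollary \ref{cor:C1_cofin} treats $\Oloc_A$ as a tensor \emph{subcategory} of $\cC_A^1$ and must argue for rigidity of $\cC_A^1$ separately via rigidity of its simple objects, rather than just quoting Theorem \ref{thm:main_thm}.

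Moreover, the method you propose for establishing semisimplicity cannot be pushed through. The logarithmic-intertwining-operator argument in the proof of Theorem \ref{thm:grad_rest_ss} is tailored to self-extensions of a single simple module: there each generalized $h(0)$-eigenspace of $X$ is an extension of $\cF^\cH_{\gamma}\otimes\cM_{r',s}$ by itself, a Zhu-algebra argument first splits the $\cM(p)$-direction, and only then can \cite[Theorem 2.10]{ADL} be used to factor the restriction of $Y_X$ as $\cY_1\otimes\cY_2$ with $\cY_1$ a Heisenberg intertwining operator that would be forced to be logarithmic. In a module of length at least $3$, a Jordan block of $h(0)$ can connect the top and the socle \emph{through} intermediate composition factors of different types (a staggered configuration, exactly as for affine $\mathfrak{gl}_{1\vert 1}$); the generalized $h(0)$-eigenspaces are then no longer of the form $\cG\otimes M$ with $M$ a single simple $\cM(p)$-module, the ADL factorization is unavailable, and no contradiction with the non-logarithmicity of $Y_X$ arises. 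The paper's proof of the reverse inclusion avoids any such claim: since $X$ is $C_1$-cofinite, the filtration argument from the proof of \cite[Theorem 3.3.5]{CY} yields a finite filtration $0=X_0\subseteq X_1\subseteq\cdots\subseteq X_n=X$ whose subquotients are generated by finite-dimensional irreducible lowest conformal weight spaces, hence are highest-weight $A$-modules; each of these is a finite-length object of $\Oloc_A$ (being singly generated and strongly $\cH$-weight-graded), so $X$ has finite length, with no assertion whatsoever about $h(0)$-semisimplicity of $X$ itself.
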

\begin{proof}
If $\lambda_J^2+\frac{p}{2}r_J^2>0$, then Theorem \ref{thm:simple_C1_cofinite} shows that all simple grading-restricted $A$-modules are $C_1$-cofinite, so all finite-length grading-restricted generalized $A$-modules are $C_1$-cofinite. Conversely, we need to show that if $X$ is a $C_1$-cofinite grading-restricted generalized $A$-module, then $X$ has finite length. Since $X$ is $C_1$-cofinite, the proof of \cite[Theorem 3.3.5]{CY} shows that there is a finite filtration
\begin{equation*}
0=X_0\hookrightarrow X_1\hookrightarrow\cdots\hookrightarrow X_{n-1}\hookrightarrow X_n =X
\end{equation*}
such that each $X_i/X_{i-1}$ for $1\leq i\leq n$ is generated by a lowest conformal weight space which is a finite-dimensional irreducible module for the Lie (super)algebra of conformal-weight-preserving vertex operator modes. Thus each $X_i/X_{i-1}$ is a highest-weight $A$-module and thus a (finite-length) object of $\Oloc_A$. Consequently, $X$ also has finite length.

If $\lambda_J^2+\frac{p}{2}r_J^2=0$, then every $C_1$-cofinite grading-restricted generalized $A$-module $X$ is semisimple by Theorem \ref{thm:grad_rest_ss}. Thus every composition factor of $X$ is $C_1$-cofinite because $\cC_A^1$ is closed under quotients. It follows from Theorem \ref{thm:simple_C1_cofinite}  that every $C_1$-cofinite grading-restricted generalized $A$-module is a finite direct sum of modules $\cW_{r,1}^{(0)}$ for $1\leq r\leq r_J$.
\end{proof}

From the results of \cite[Section 4]{CJORY} and \cite[Section 3]{CY} and their proofs (see also \cite[Theorem 2.3]{McR-cosets}), $\cC_A^1$ admits the vertex algebraic tensor category structure of \cite{HLZ1}-\cite{HLZ8} provided it is closed under contragredients. This condition is evident from the characterization of $\cC_A^1$ in the preceding theorem, so we conclude:
\begin{cor}\label{cor:C1_cofin}
The category $\cC_A^1$ of $C_1$-cofinite grading-restricted generalized $A$-modules admits the vertex algebraic tensor category structure of \cite{HLZ1}-\cite{HLZ8}. Moreover, $\cC_A^1$ is rigid with duals given by contragredient modules, and ribbon in case $A$ is $\ZZ$-graded.
\end{cor}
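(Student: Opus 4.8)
The plan is to handle the two regimes $\lambda_J^2+\frac{p}{2}r_J^2>0$ and $\lambda_J^2+\frac{p}{2}r_J^2=0$ separately, reading off what is needed from the explicit description of $\cC_A^1$ in Theorem \ref{thm:C_1-cofin} and from the rigidity of $\Oloc_A$ already proved in Theorem \ref{thm:main_thm}. The braided tensor category structure requires nothing new beyond the preceding paragraph: by \cite{CJORY, CY} (see also \cite[Theorem 2.3]{McR-cosets}), $\cC_A^1$ carries the structure of \cite{HLZ1}-\cite{HLZ8} once it is closed under contragredients, so my first step is to verify this closure. When $\lambda_J^2+\frac{p}{2}r_J^2>0$, Theorem \ref{thm:C_1-cofin} identifies $\cC_A^1$ with the category of finite-length grading-restricted generalized $A$-modules, and closure under $(-)'$ is immediate: contragredients preserve finite length and the grading restrictions, and they preserve the Heisenberg grading by Remark \ref{rem:strongly_H_or_C_graded}. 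When $\lambda_J^2+\frac{p}{2}r_J^2=0$, Theorem \ref{thm:C_1-cofin} identifies $\cC_A^1$ with the semisimple subcategory whose simple objects are the $\cW_{r,1}^{(0)}$ for $1\le r\le r_J$; here I would use the formulas for duals of simple objects established above to get $(\cW_{r,1}^{(0)})'\cong\cW_{2-r,1}^{(0)}$, and then the isomorphism $\cW_{r',s}^{(\ell')}\cong\cW_{r'+nr_J,s}^{(\ell')}$ of Theorem \ref{thm:simple_A-module_classification}(2) (valid because $\lambda_J^2+\frac{p}{2}r_J^2=0$ makes the spectral-flow shift of $\ell'$ vanish) to rewrite this dual as some $\cW_{r'',1}^{(0)}$ with $1\le r''\le r_J$, so that it again lies in the given list.

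For rigidity with duals given by contragredients, the common starting observation is that every simple object of $\cC_A^1$ is already a simple object of $\Oloc_A$. When $\lambda_J^2+\frac{p}{2}r_J^2=0$ this is part of the characterization; when $\lambda_J^2+\frac{p}{2}r_J^2>0$, a simple $C_1$-cofinite grading-restricted module is highest weight by Remark \ref{rem:simple_grad-rest_are_hw} and hence lies in $\Oloc_A$, while conversely every simple object of $\Oloc_A$ is $C_1$-cofinite by Theorem \ref{thm:simple_C1_cofinite}. By Theorem \ref{thm:main_thm} these simple objects are therefore rigid with duals given by their contragredients. In the semisimple regime $\lambda_J^2+\frac{p}{2}r_J^2=0$ this already finishes the argument: $\cC_A^1$ is a full, tensor-closed, semisimple subcategory of $\Oloc_A$, and the dual of a finite direct sum of rigid simple objects is the direct sum of their (contragredient) duals.

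The remaining and genuinely harder case is $\lambda_J^2+\frac{p}{2}r_J^2>0$, where I would propagate rigidity from the simple objects to all finite-length objects of $\cC_A^1$. The hard part is that an object of $\cC_A^1$ need not have $h(0)$ acting semisimply, so it need not restrict to an object of $\ind(\cC)$ and therefore need not lie in $\Oloc_A$, the finite-length subcategory of $\rep^0A$; one thus cannot simply inherit the rigid structure of $\Oloc_A$ and must argue intrinsically. Since $\cC_A^1$ is closed under contragredients, the contragredient furnishes a weak dual for every object through the universal property of $e_X\colon X'\tens X\to A$ established at the end of Section \ref{sec:prelim}, and together with the natural isomorphism $X''\cong X$ this produces candidate evaluation and coevaluation morphisms for each $X$. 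Rigidity then reduces to the two zig-zag identities, and I would verify them by d\'evissage along a finite composition series of $X$: using bi-exactness of $\tens$ one shows each zig-zag composite is a natural endomorphism inducing on every simple subquotient the corresponding composite there, which is the identity by rigidity of the simples, whence the composite is itself an isomorphism that one normalizes to the identity. Making the compatibility of the (co)evaluation maps with the composition series precise is the crux of the whole corollary; I note that if one could instead show that $C_1$-cofiniteness already forces $h(0)$-semisimplicity in this regime, then $\cC_A^1=\Oloc_A$ and rigidity would be immediate from Theorem \ref{thm:main_thm}.

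Finally, for the ribbon structure when $A$ is $\ZZ$-graded, I would restrict the twist $e^{2\pi i L(0)}$ from $\Oloc_A$. This operator is defined on every generalized module via the full (possibly non-semisimple) action of $L(0)$ and is a natural automorphism, and the balancing-equation verification of the twist axioms in the proof of Theorem \ref{thm:main_thm}, following \cite[Theorem 4.1]{Hu-rig-mod}, applies verbatim to all of $\cC_A^1$.
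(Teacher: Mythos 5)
Your overall architecture is right, and two of the three pieces match the paper: the tensor structure does follow from \cite{CJORY, CY} once closure under contragredients is checked (your verification of that closure in both regimes is fine), and in the case $\lambda_J^2+\frac{p}{2}r_J^2=0$ your argument --- $\cC_A^1$ is a semisimple tensor subcategory of the rigid category $\Oloc_A$ (tensor-closedness coming from \cite[Key Theorem]{Miy} and \cite[Corollary 2.12]{CMY-completions}, or directly from the fusion rules $\cW_{r,1}^{(0)}\tens\cW_{r',1}^{(0)}\cong\cW_{r+r'-1,1}^{(0)}$) --- is exactly the paper's. Your treatment of the ribbon twist is also consistent with the paper.

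However, in the case $\lambda_J^2+\frac{p}{2}r_J^2>0$ there are two genuine gaps. First, a smaller one: to conclude that the simple objects of $\cC_A^1$ are rigid \emph{as objects of $\cC_A^1$}, it is not enough that they are rigid in $\Oloc_A$; one must know the two tensor products agree, i.e.\ that $\Oloc_A$ is a tensor \emph{subcategory} of $\cC_A^1$. The paper proves this using the commutator formula \eqref{eqn:Y_compat_with_h(0)}, which shows the tensor product in $\cC_A^1$ of two $h(0)$-semisimple modules is again $h(0)$-semisimple; your proposal skips this compatibility entirely. Second, and more seriously, the step you yourself flag as ``the crux'' --- propagating rigidity from simple objects to arbitrary finite-length objects --- is not established by your d\'evissage sketch, and the sketch as written cannot work: (i) it invokes ``bi-exactness of $\tens$,'' but in HLZ-type categories $\tens$ is only known to be right exact, and left exactness is a \emph{consequence} of rigidity, so this is circular; (ii) the universal property of the contragredient produces evaluations $X'\tens X\to A$ and $X\tens X'\to A$, but no coevaluation $A\to X\tens X'$ for non-simple $X$, and duality's contravariance means (co)evaluations do not restrict along a composition series in any obvious way; (iii) ``normalizing'' a zig-zag endomorphism to the identity only makes sense when the endomorphism algebra consists of scalars, which fails for non-simple $X$. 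The paper closes precisely this gap by citing \cite[Theorem 4.4.1]{CMY-singlet}, a general theorem stating that a braided tensor category of finite-length modules closed under contragredients, in which every simple object is rigid, is itself rigid with duals given by contragredients. That result (whose proof is a genuinely delicate induction on length) is the missing ingredient; without it, or a complete substitute for it, your proof of the corollary is incomplete in the $\lambda_J^2+\frac{p}{2}r_J^2>0$ regime. Your fallback suggestion --- showing that $C_1$-cofiniteness forces $h(0)$-semisimplicity so that $\cC_A^1=\Oloc_A$ --- is also not viable: in this regime $\cC_A^1$ is the category of all finite-length grading-restricted generalized modules and genuinely contains modules with non-semisimple $h(0)$-action (this is the situation for the $W$-superalgebras $\cS_p$ and $\cS_2^m$, as for affine $\mathfrak{gl}_{1|1}$ in \cite{CMY3}).
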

\begin{proof}
For rigidity in the case $\lambda_J^2+\frac{p}{2}r_J^2=0$, \cite[Key Theorem]{Miy} and \cite[Corollary 2.12]{CMY-completions} show that the tensor product in $\cO_A$ of two $C_1$-cofinite modules is $C_1$-cofinite, and thus $\cC_A^1$ is a tensor subcategory of the rigid tensor category $\cO_A$. In case $\lambda_J^2+\frac{p}{2}r_J^2>0$, \eqref{eqn:Y_compat_with_h(0)} shows that the tensor product in $\cC_A^1$ of two $h(0)$-semisimple $C_1$-cofinite modules is $h(0)$-semisimple, and thus the rigid tensor category $\Oloc_A$ is a tensor subcategory of $\cC_A^1$. In particular, all simple objects of $\cC_A^1$ are rigid, so $\cC_A^1$ is rigid by \cite[Theorem 4.4.1]{CMY-singlet}.
\end{proof}

\begin{rem}
When $\lambda_J^2+\frac{p}{2}r_J^2=0$, the category of (not necessarily $C_1$-cofinite) grading-restricted generalized $A$-modules does not usually admit natural tensor category structure since the argument concluding the proof of Theorem \ref{thm:simple_C1_cofinite} shows that this category is not usually closed under tensor products in $\Oloc_A$.
\end{rem}

\section{Rigid tensor categories for some subregular \texorpdfstring{$W$}{W}-algebras}\label{subregular}

In this section, we use the general results developed in the previous section to describe the categories of finitely-generated strongly $\cH$-weight-graded modules for the vertex operator algebras $\cB_p$, $p\in\ZZ_{\geq 2}$, of \cite{CRW}, as well as for the finite cyclic orbifolds of $\cB_2$. These vertex operator algebras are examples of simple affine $W$-algebras associated to subregular nilpotent elements in $\mathfrak{sl}_n$.

\subsection{The vertex operator algebra \texorpdfstring{$\mathcal B_p$}{Bp}}

The vertex operator algebra $\mathcal{B}_p$ for integers $p\geq 2$ is a simple current extension of $\cH \otimes \cM(p)$ introduced in \cite{CRW}; see also  \cite{Cr}. The algebra $\cB_2$ is the $\beta\gamma$-ghost vertex algebra, while for $p\geq 3$, $\cB_p$ is the simple subregular $W$-algebra of $\mathfrak{sl}_{p-1}$ at level $-(p-1)+\frac{p-1}{p}$ \cite{ACGY}. In particular, $\cB_3$ is the simple affine vertex algebra $L_{-4/3}(\mathfrak{sl}_2)$ (it was first shown that $L_{-4/3}(\mathfrak{sl}_2)$ is an extension of $\cH\otimes\cM(3)$ in \cite{Ad_sl2_-4/3}). The tensor structure on the category $\Oloc_{\cB_p}$ of strongly $\cH$-weight graded local $\cB_p$-modules was predicted in \cite{ACKR} based on conjectural relationships between the singlet algebra $\cM(p)$ and the unrolled restricted quantum group of $\mathfrak{sl}_2$ at the root of unity $e^{\pi i/p}$. Here, we confirm the predictions of \cite{ACKR}.

The decomposition of $\cB_p$ as an $\cH\otimes\cM(p)$-module is as follows \cite{CRW, ACKR}:
\begin{equation}\label{eqn:Bp_decomp}
\mathcal{B}_p = \bigoplus_{k \in \ZZ} \cF_{k\lambda_p}^\cH \otimes \cM_{1-k,1},
\end{equation}
where $\lambda_p \in i\mathbb{R}$ is such that $\lambda_p^2 = -\frac{p}{2}$. To be concrete, we fix a choice $i$ of square root of $-1$ and set $\lambda_p=i\sqrt{p/2}$. Then in the notation of the previous section, 
\begin{equation*}
J=\cF^\cH_{-i\sqrt{p/2}}\otimes\cM_{2,1},\qquad \lambda_J=-i\sqrt{p/2},\qquad r_J=1.
\end{equation*}
For $k\in\ZZ$, the lowest conformal weight of $\cF_{k\lambda_p}^\cH\otimes\cM_{1-k,1}$ is
\begin{equation*}
\frac{1}{2}(k\lambda_p)^2+h_{1+\vert k\vert,1} =\frac{1}{2}\vert k\vert(p-1),
\end{equation*}
so $\cB_p$ is a $\frac{1}{2}\ZZ$-graded vertex operator algebra if $p$ is even and a $\ZZ$-graded vertex operator algebra if $p$ is odd.

The algebraic torus $\CC/\ZZ$ acts on $\cB_p$ by automorphisms:
\begin{align*}
\CC/\ZZ & \rightarrow \mathrm{Aut}(\cB_p)\\
a+\ZZ & \mapsto e^{2\pi i ah(0)/\lambda_p},
\end{align*}
with $\cH\otimes\cM(p)$ the fixed-point subalgebra. In particular, the finite cyclic group $\ZZ/m\ZZ$ for $m\in\ZZ_{\geq 1}$ acts by automorphisms on $\cB_p$. For $m=2$, the automorphism group $\ZZ/2\ZZ$ of $\cB_p$ is generated by the involution $\theta$ of the previous section. For $p$ even (in which case $\cB_p$ is $\frac{1}{2}\ZZ$-graded) the fixed-point subalgebra $\cB_p^{\ZZ/2\ZZ}$ is precisely the $\ZZ$-graded subalgebra, and $\theta$-twisted $\cB_p$-modules form the Ramond sector of the tensor category $\cO_{\cB_p}$.

By Theorem \ref{thm:main_thm}, the category $\cO_{\cB_p}=\Oloc_{\cB_p}\oplus\Otw_{\cB_p}$ of finitely-generated strongly $\cH$-weight-graded local and $\theta$-twisted $\cB_p$-modules is a rigid braided $\ZZ/2\ZZ$-crossed tensor category, Moreover, $\Oloc_{\cB_p}$ is a rigid braided tensor subcategory of $\cO_{\cB_p}$ which is ribbon when $p$ is odd. Theorem \ref{thm:simple_A-module_classification} classifies the simple objects of $\cO_{\cB_p}$ and $\Oloc_{\cB_p}$; since $\lambda_J^2+r_J^2\frac{p}{2}=0$ for $\cB_p$, Remark \ref{rem:mod_class_for_Bp_and_orbs} shows that the simple objects of $\cO_{\cB_p}$ are naturally parametrized by $(\CC/\frac{1}{2}L)\times\frac{1}{2}\ZZ$. Thus in the next theorem, we slightly modify the notation of Theorem \ref{thm:simple_A-module_classification} for simple objects:

\begin{thm}\label{thm:Bp_mod_class}
 Every simple object in $\cO_{\mathcal{B}_p}$ is isomorphic to exactly one of the following:
 \begin{align*}
&  \cW_s^{(\ell)} :=\cF\left(\cF^\cH_{-i\ell\alpha_-}\otimes\cM_{1,s}\right)
\end{align*}
for $1\leq s\leq p$ and $\ell\in\frac{1}{2}\ZZ$, or
\begin{align*}
& \cE_{\overline{\lambda}}^{(\ell)}:=\cF\left(\cF^{\cH}_{i(\lambda-\alpha_0/2-\ell\alpha_-)}\otimes\cF_{\lambda}\right)
 \end{align*}
 for $\overline{\lambda}=\lambda+\frac{1}{2}L\in(\CC\setminus L^\circ)\big/\frac{1}{2}L$, $\ell\in\frac{1}{2}\ZZ$. Moreover, $\cW_s^{(\ell)}$ is an object of $\Oloc_{\cB_p}$ if and only if $\ell\in\frac{s-1}{2}+\ZZ$, and $\cE^{(\ell)}_{\overline{\lambda}}$ is an object of $\Oloc_{\cB_p}$ if and only if $\ell\in\frac{p-1}{2}+\ZZ$.
 \end{thm}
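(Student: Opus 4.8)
The plan is to specialize the general classification of Theorem \ref{thm:simple_A-module_classification} to the parameters of $\cB_p$ and then to reconcile the notation. Reading off the decomposition \eqref{eqn:Bp_decomp}, we have $J = \cF^\cH_{-i\sqrt{p/2}}\otimes\cM_{2,1}$, so that $\lambda_J = -i\sqrt{p/2}\ne 0$ and $r_J = 1$; in particular $\lambda_J^2 + r_J^2\tfrac{p}{2} = 0$. Substituting $r_J = 1$ into Theorem \ref{thm:simple_A-module_classification}(1) immediately produces the two families $\cW^{(\ell)}_{r,s}$ and $\cE^{(\ell)}_\lambda$, and since the spectral-flow shift $n(\lambda_J^2 + r_J^2 p/2)$ vanishes, part (2) reduces to $\cW^{(\ell)}_{r,s}\cong\cW^{(\ell)}_{r+n,s}$ and $\cE^{(\ell)}_\lambda\cong\cE^{(\ell)}_{\lambda - n\alpha_+/2}$ for $n\in\ZZ$. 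Thus I may normalize $r = 1$ in the first family and replace $\lambda$ by its class $\overline\lambda = \lambda + \tfrac12 L$ in the second. Here I would record that $\alpha_+ = -p\,\alpha_-$, whence $\tfrac12 L = \ZZ\tfrac{\alpha_+}{2} = p\,L^\circ \subseteq L^\circ$; this guarantees that translation by $\tfrac{\alpha_+}{2}$ maps $\CC\setminus L^\circ$ to itself, so that the coset label $\cE^{(\ell)}_{\overline\lambda}$ is well defined on $(\CC\setminus L^\circ)/\tfrac12 L$.

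The one genuine computation is the identification of the Heisenberg weights. For the $\cW$-family this is the equality $(r_J(1-r)p/2 + \ell)/\lambda_J = -i\ell\alpha_-$ at $r=1$, which follows from $\tfrac{1}{\lambda_J} = i\sqrt{2/p} = -i\alpha_-$. For the $\cE$-family I would expand $(r_J(\alpha_+\lambda - p + 1)/2 + \ell)/\lambda_J$ at $r_J = 1$ and use three identities: $\tfrac{1}{\lambda_J}\cdot\tfrac{\alpha_+}{2} = i$ (from $\alpha_+\sqrt{2/p} = 2$) to produce the term $i\lambda$; $\tfrac{1}{\lambda_J} = -i\alpha_-$ to produce $-i\ell\alpha_-$; and $(p-1)\sqrt{2/p} = \alpha_0$ (equivalently $\alpha_+\alpha_0 = 2(p-1)$) to produce the constant $-\tfrac{i}{2}\alpha_0$. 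Collecting these gives exactly $i(\lambda - \alpha_0/2 - \ell\alpha_-)$, so the modules named $\cW^{(\ell)}_s$ and $\cE^{(\ell)}_{\overline\lambda}$ in the statement are precisely the general $\cW^{(\ell)}_{1,s}$ and $\cE^{(\ell)}_\lambda$.

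With the notation matched, completeness and the ``exactly one'' (i.e.\ pairwise inequivalence) assertions are inherited from Theorem \ref{thm:simple_A-module_classification}(2): the representatives $1\le r\le r_J$ and $0\le\re\lambda<\tfrac{r_J}{2}\alpha_+$ there become $r=1$ and a fundamental domain $\{0\le\re\lambda<\tfrac{\alpha_+}{2}\}$ for $\CC/\tfrac12 L$ (legitimate since $\alpha_\pm$ are real), which is exactly the parametrization by $s$ and $\overline\lambda$. Finally, the local-module criterion comes straight from Theorem \ref{thm:simple_A-module_classification}(3) with $r_J = 1$: $\cW^{(\ell)}_s$ lies in $\Oloc_{\cB_p}$ iff $\ell\in\tfrac{r_J}{2}(s-1)+\ZZ = \tfrac{s-1}{2}+\ZZ$, and $\cE^{(\ell)}_{\overline\lambda}$ iff $\ell\in\tfrac{r_J}{2}(p-1)+\ZZ = \tfrac{p-1}{2}+\ZZ$. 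I anticipate no real obstacle: the argument is bookkeeping with the constants $\alpha_\pm,\alpha_0$, and the only point requiring care is the inclusion $\tfrac12 L = pL^\circ\subseteq L^\circ$, which reconciles the coset parametrization with the constraint $\lambda\notin L^\circ$.
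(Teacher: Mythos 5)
Your proposal is correct and follows exactly the route the paper takes: the paper states this theorem as a direct specialization of Theorem \ref{thm:simple_A-module_classification} with $\lambda_J=-i\sqrt{p/2}$, $r_J=1$ (so $\lambda_J^2+r_J^2\tfrac{p}{2}=0$), using the isomorphisms in part (2) and Remark \ref{rem:mod_class_for_Bp_and_orbs} to normalize $r=1$ and pass to cosets in $(\CC\setminus L^\circ)/\tfrac{1}{2}L$, with part (3) giving the locality criterion. Your weight bookkeeping ($1/\lambda_J=-i\alpha_-$, $\alpha_+/(2\lambda_J)=i$, $(p-1)\sqrt{2/p}=\alpha_0$) and the explicit observation that $\tfrac{1}{2}L=pL^\circ\subseteq L^\circ$ makes the coset label well defined are exactly the checks implicit in the paper's notational translation.
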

 
Theorem \ref{thm:gen_proj_covers} classifies the indecomposable projective objects of $\cO_{\cB_p}$:
 
\begin{thm}\label{thm:Bp_proj_modules}
The simple $\mathcal{B}_p$-modules $\cW_p^{(\ell)}$ and $\cE_{\overline{\lambda}}^{(\ell)}$ for $\overline{\lambda}\in(\CC\setminus L^\circ)\big/\frac{1}{2}L$, $\ell\in\frac{1}{2}\ZZ$ are projective in $\cO_{\mathcal{B}_p}$. For $1\leq s\leq p-1$ and $\ell\in\frac{1}{2}\ZZ$, $\cW_{s}^{(\ell)}$ has a projective cover
 \begin{equation*}
  \mathcal{Q}_{s}^{(\ell)}:=\cF\left(\cF^\cH_{i\ell\alpha_-}\otimes\cP_{1,s}\right)
 \end{equation*}
in $\cO_{\mathcal{B}_p}$, which has Loewy diagram
\begin{equation}\label{eqn:Qsl_Loewy_diag}
 \begin{matrix}
  \begin{tikzpicture}[->,>=latex,scale=1.5]
\node (b1) at (1,0) {$\cW_{s}^{(\ell)}$};
\node (c1) at (-1, 1){$\mathcal{Q}_{s}^{(\ell)}$:};
   \node (a1) at (0,1) {$\cW_{p-s}^{(\ell-p/2)}$};
   \node (b2) at (2,1) {$\cW_{p-s}^{(\ell+p/2)}$};
    \node (a2) at (1,2) {$\cW_{s}^{(\ell)}$};
\draw[] (b1) -- node[left] {} (a1);
   \draw[] (b1) -- node[left] {} (b2);
    \draw[] (a1) -- node[left] {} (a2);
    \draw[] (b2) -- node[left] {} (a2);
\end{tikzpicture}
\end{matrix} .
 \end{equation}
 \end{thm}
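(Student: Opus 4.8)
The plan is to obtain this statement as a direct specialization of the general Theorem \ref{thm:gen_proj_covers}, since $\cB_p$ is a simple current extension of $\cH\otimes\cM(p)$ of exactly the type treated in Section \ref{sec:detailed_structure}. From the decomposition \eqref{eqn:Bp_decomp} the relevant parameters are $J=\cF^\cH_{-i\sqrt{p/2}}\otimes\cM_{2,1}$, so $r_J=1$ and $\lambda_J=-i\sqrt{p/2}$; in particular $\lambda_J^2+r_J^2\tfrac{p}{2}=-\tfrac{p}{2}+\tfrac{p}{2}=0$. The only genuine work is the translation between the general labels $\cW_{r,s}^{(\ell)},\cE_\lambda^{(\ell)},\cQ_{r,s}^{(\ell)}$ of Section \ref{sec:detailed_structure} and the $\cB_p$-specific labels of Theorem \ref{thm:Bp_mod_class}.

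First I would record the notational reconciliation. Since $r_J=1$ and $\lambda_J^2+r_J^2\tfrac{p}{2}=0$, the isomorphisms of Theorem \ref{thm:simple_A-module_classification}(2) collapse to $\cW_{r,s}^{(\ell)}\cong\cW_{r+n,s}^{(\ell)}$ and $\cE_\lambda^{(\ell)}\cong\cE_{\lambda-n\alpha_+/2}^{(\ell)}$, the spectral-flow index $\ell$ being unchanged precisely because the quadratic coefficient $\lambda_J^2+r_J^2\tfrac{p}{2}$ vanishes. Hence every simple object may be represented with $r=1$, and upon setting $\cW_s^{(\ell)}:=\cW_{1,s}^{(\ell)}$ and using $\tfrac{1}{\lambda_J}=-i\alpha_-$ one recovers the Heisenberg weight $-i\ell\alpha_-$ appearing in Theorem \ref{thm:Bp_mod_class}. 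The projectivity of $\cW_p^{(\ell)}=\cW_{1,p}^{(\ell)}$ and of each $\cE_{\overline\lambda}^{(\ell)}=\cE_\lambda^{(\ell)}$ is then immediate from the first assertion of Theorem \ref{thm:gen_proj_covers}.

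For $1\leq s\leq p-1$ I would invoke the second assertion of Theorem \ref{thm:gen_proj_covers}, which furnishes the projective cover $\cQ_{1,s}^{(\ell)}=\cF\bigl(\cF^\cH_{\ell/\lambda_J}\otimes\cP_{1,s}\bigr)$, which upon substituting for $\tfrac{1}{\lambda_J}$ becomes the module $\cQ_s^{(\ell)}$ displayed above. To obtain the Loewy diagram \eqref{eqn:Qsl_Loewy_diag}, I would specialize \eqref{eqn:Q_rsl_Loewy_diag} at $r=1$, $r_J=1$: the socle and head are $\cW_{1,s}^{(\ell)}=\cW_s^{(\ell)}$, while the two middle composition factors are $\cW_{0,p-s}^{(\ell-p/2)}$ and $\cW_{2,p-s}^{(\ell+p/2)}$. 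Applying the $r$-shift isomorphisms just recorded (with $n=1$ and $n=-1$ respectively, which leave $\ell$ fixed) rewrites these as $\cW_{p-s}^{(\ell-p/2)}$ and $\cW_{p-s}^{(\ell+p/2)}$, yielding \eqref{eqn:Qsl_Loewy_diag}.

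There is no serious obstacle here: the mathematical content is entirely contained in Theorem \ref{thm:gen_proj_covers}, and the argument is bookkeeping. The one point demanding care is the vanishing $\lambda_J^2+r_J^2\tfrac{p}{2}=0$, which is what guarantees that the $r$-shift isomorphisms preserve the spectral-flow index $\ell$; without this the two middle factors would acquire different $\ell$-labels and would not both collapse to shifts of $\cW_{p-s}$ by $\pm\tfrac{p}{2}$. I would therefore highlight this vanishing explicitly before performing the relabeling.
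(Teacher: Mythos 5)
Your proposal is correct and is essentially the paper's own argument: the paper states Theorem \ref{thm:Bp_proj_modules} with no separate proof, precisely as the immediate specialization of Theorem \ref{thm:gen_proj_covers} to $r_J=1$, $\lambda_J=-i\sqrt{p/2}$, with exactly the relabeling you perform via the shift isomorphisms of Theorem \ref{thm:simple_A-module_classification}(2), which preserve $\ell$ because $\lambda_J^2+r_J^2\frac{p}{2}=0$. One remark in your favor: your derived Heisenberg weight $\ell/\lambda_J=-i\ell\alpha_-$ is the one consistent with the definition of $\cW_s^{(\ell)}$ in Theorem \ref{thm:Bp_mod_class} and with the Loewy diagram \eqref{eqn:Qsl_Loewy_diag}, whereas the sign $+i\ell\alpha_-$ in the paper's displayed definition of $\cQ_s^{(\ell)}$ is evidently a typo, so the residual sign mismatch you glossed over is the paper's, not yours.
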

 
 The theorems in Sections \ref{subsec:lb_hw_gr} and \ref{subsec:C1-cofinite} describe the categories of highest-weight, grading-restricted, and $C_1$-cofinite $\cB_p$-modules, and we can also use the second assertion in Theorem \ref{thm:Bp_mod_class} to determine how many simple objects in these categories are local:
\begin{thm}\label{thm:Bp_grad-rest}
For the vertex operator algebra $\cB_p$:
\begin{enumerate}
\item The highest-weight category $\cO^{\mathrm{h.w.}}_{\cB_p}$ is semisimple with $p(p-1)$ simple highest-weight local and $\theta$-twisted modules up to isomorphism, namely the modules $\cW_s^{(\ell)}$ for $1\leq s\leq p-1$ and $-\frac{p-s}{2}<\ell\leq\frac{p-s}{2}$. Of these simple modules, $\frac{1}{2}p(p-1)$ are local.

\item The category of grading-restricted generalized  ($\theta$-twisted) $\cB_p$-modules is semisimple with $(p-1)^2$ simple objects up to isomorphism, namely the modules $\cW_s^{(\ell)}$ for $1\leq s\leq p-1$ and $\vert\ell\vert<\frac{p-s}{2}$. Of these simple modules, $\frac{1}{2} p(p-1)$ are local when $p$ is even and $\frac{1}{2}(p-1)(p-2)$ are local when $p$ is odd.

\item The category of $C_1$-cofinite grading-restricted generalized $\cB_p$-modules is semisimple with one simple object $\cW_1^{(0)}\,\,(\cong\cB_p)$ up to isomorphism.
\end{enumerate}

\end{thm}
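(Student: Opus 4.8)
The plan is to obtain all three parts by specializing the general semisimplicity theorems of Sections \ref{subsec:lb_hw_gr} and \ref{subsec:C1-cofinite} to $A=\cB_p$, where $\lambda_J=-i\sqrt{p/2}$ and $r_J=1$, so that $\lambda_J^2+\frac{p}{2}r_J^2=0$. In this specialization only $r=1$ survives among the labels $\cW_{r,s}^{(\ell)}$, which is why the $\cB_p$-notation $\cW_s^{(\ell)}=\cW_{1,s}^{(\ell)}$ of Theorem \ref{thm:Bp_mod_class} is used throughout. Each part is then the $r_J=1$ case of a general theorem, followed by a counting argument; I would use the locality criterion from Theorem \ref{thm:Bp_mod_class} that $\cW_s^{(\ell)}$ lies in $\Oloc_{\cB_p}$ if and only if $\ell\in\frac{s-1}{2}+\ZZ$ to separate local modules from $\theta$-twisted ones.

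For part (1), Theorem \ref{thm:hw_cat_ss} immediately gives that $\cO^{\mathrm{h.w.}}_{\cB_p}$ is semisimple with simple objects $\cW_s^{(\ell)}$ for $1\leq s\leq p-1$ and $-\frac{p-s}{2}<\ell\leq\frac{p-s}{2}$. For each $s$ the admissible $\ell\in\frac{1}{2}\ZZ$ correspond to the $2(p-s)$ consecutive integers $m=2\ell$ in the range $-(p-s)<m\leq p-s$, so the total is $\sum_{s=1}^{p-1}2(p-s)=p(p-1)$. These $2(p-s)$ values split evenly by parity of $m$ into $(p-s)$ with $\ell\in\ZZ$ and $(p-s)$ with $\ell\in\frac{1}{2}+\ZZ$; the locality condition selects exactly one of these two classes (integers when $s$ is odd, strict half-integers when $s$ is even), giving $p-s$ local modules for each $s$ and hence $\sum_{s=1}^{p-1}(p-s)=\frac{1}{2}p(p-1)$ local ones. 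Part (3) is the simplest: Theorem \ref{thm:C_1-cofin} yields a semisimple $\cC_{\cB_p}^1$ with simple objects $\cW_{r,1}^{(0)}$ for $1\leq r\leq r_J=1$, i.e.\ only $\cW_1^{(0)}$, and I would then observe that $\cW_1^{(0)}=\cF(\cF^\cH_0\otimes\cM_{1,1})=\cF(\cH\otimes\cM(p))=\cB_p$ since $\cF^\cH_0\otimes\cM_{1,1}$ is the vacuum module.

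For part (2) I would apply Theorem \ref{thm:grad_rest_ss}, which gives semisimplicity with simple objects $\cW_s^{(\ell)}$ for $1\leq s\leq p-1$ and $\vert\ell\vert<\frac{p-s}{2}$; the admissible $\ell$ now correspond to the $2(p-s)-1$ integers $m$ strictly between $-(p-s)$ and $p-s$, so the total is $\sum_{s=1}^{p-1}(2(p-s)-1)=(p-1)^2$. The locality count is the step requiring genuine care, and is the main obstacle: unlike in part (1), this symmetric open range contains $m=0$ and is therefore \emph{not} split evenly by parity, so the number of local modules depends on the parity of $p-s$. A short computation shows that the set of $2(p-s)-1$ consecutive integers symmetric about $0$ contains $p-s$ integers of one parity and $p-s-1$ of the other, with the majority parity being even when $p-s$ is odd and odd when $p-s$ is even. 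Matching this against the locality condition (integer $\ell$ for $s$ odd, half-integer $\ell$ for $s$ even) and tracking how the parity of $p-s$ correlates with that of $s$ according to the parity of $p$, one finds $p-s$ local simples per $s$ when $p$ is even and $p-s-1$ when $p$ is odd; summing over $s$ gives $\frac{1}{2}p(p-1)$ local modules for $p$ even and $\frac{1}{2}(p-1)(p-2)$ for $p$ odd. This parity bookkeeping, rather than any deep structural input, is the only delicate point in the argument.
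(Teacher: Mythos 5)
Your proposal is correct and follows essentially the same route as the paper: the paper derives Theorem \ref{thm:Bp_grad-rest} precisely by specializing Theorems \ref{thm:hw_cat_ss}, \ref{thm:grad_rest_ss}, and \ref{thm:C_1-cofin} to $A=\cB_p$ (where $r_J=1$ and $\lambda_J^2+\frac{p}{2}r_J^2=0$) and then counting local versus $\theta$-twisted simples via the locality criterion of Theorem \ref{thm:Bp_mod_class}. Your explicit parity bookkeeping for part (2), including the asymmetry caused by the symmetric open range $\vert\ell\vert<\frac{p-s}{2}$, correctly fills in the counting that the paper leaves implicit.
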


 Theorem \ref{thm:general_fusion_rules} gives the tensor products of simple objects in $\cO_{\cB_p}$:
  \begin{thm}\label{thm:Bp_tensor_products}
 Tensor products of simple modules in $\cO_{\mathcal{B}_p}$ are as follows:
 \begin{enumerate}
 
\item For $1\leq s,s'\leq p$ and $\ell,\ell'\in\frac{1}{2}\ZZ$, 
\begin{equation*}
  \cW_{s}^{(\ell)}\tens \cW_{s'}^{(\ell')}\cong\bigoplus_{\substack{t=\vert s-s'\vert+1\\ t+s+s'\equiv 1\,(\mathrm{mod}\,2)}}^{\mathrm{min}(s+s'-1, 2p-1-s-s')} \cW_{t}^{(\ell+\ell')}\oplus\bigoplus_{\substack{t=2p+1-s-s'\\ t+s+s'\equiv 1\,(\mathrm{mod}\,2)}}^p \mathcal{Q}_{t}^{(\ell+\ell')}.
 \end{equation*}
 
 \item For $1\leq s\leq p$, $\overline{\lambda}\in(\CC\setminus L^\circ)\big/\frac{1}{2}L$, and $\ell,\ell'\in\frac{1}{2}\ZZ$,
 \begin{equation*}
 \cW_s^{(\ell)}\tens\cE_{\overline{\lambda}}^{(\ell')} \cong\bigoplus_{k=0}^{s-1} \cE_{\overline{\lambda+\alpha_{1,s}+k\alpha_-}}^{(\ell+\ell'+k-(s-1)/2)}.
\end{equation*}
 
 \item For $\ell,\ell'\in\frac{1}{2}\ZZ$ and $\overline{\lambda},\overline{\mu}\in(\CC\setminus L^\circ)\big/\frac{1}{2}L$ such that $\overline{\lambda+\mu}=\alpha_0+\alpha_{1,s}+\frac{1}{2}L$,
 \begin{equation*}
\cE_{\overline{\lambda}}^{(\ell)}\tens\cE_{\overline{\mu}}^{(\ell')}\cong\bigoplus_{\substack{t=s\\ t\equiv s\,(\mathrm{mod}\,2)}}^p \mathcal{Q}_{t}^{(\ell+\ell'+(s-1)/2)} \oplus \bigoplus_{\substack{t=p-s+2\\ t\equiv p-s\,(\mathrm{mod}\,2)}}^p \mathcal{Q}_{t}^{(\ell+\ell'-(p-s+1)/2)}.
\end{equation*}

\item For $\ell,\ell'\in\frac{1}{2}\ZZ$ and $\overline{\lambda},\overline{\mu},\overline{\lambda+\mu}\in(\CC\setminus L^\circ)\big/ \frac{1}{2}L$,
\begin{equation*}
 \cE_{\overline{\lambda}}^{(\ell)}\tens\cE_{\overline{\mu}}^{(\ell')}\cong\bigoplus_{k=0}^{p-1} \cE_{\overline{\lambda+\mu+k\alpha_-}}^{(\ell+\ell'+k-(p-1)/2)} .
 \end{equation*}
 
 \end{enumerate}

\end{thm}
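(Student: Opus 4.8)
The plan is to derive all four formulas as direct specializations of the general fusion rules of Theorem \ref{thm:general_fusion_rules} to the parameters of $\cB_p$, namely $r_J = 1$ and $\lambda_J = \lambda_p = -i\sqrt{p/2}$, so that $\lambda_J^2 + \frac{p}{2}r_J^2 = 0$. First I would pin down the translation between the $\cB_p$-specific labels of Theorems \ref{thm:Bp_mod_class} and \ref{thm:Bp_proj_modules} and the general labels of Theorem \ref{thm:simple_A-module_classification}. A short computation with $\alpha_+ = \sqrt{2p}$, $\alpha_- = -\sqrt{2/p}$, and $1/\lambda_J = i\sqrt{2/p}$ shows that the Heisenberg weights coincide, giving the identifications $\cW_s^{(\ell)} = \cW_{1,s}^{(\ell)}$, $\cQ_t^{(\ell)} = \cQ_{1,t}^{(\ell)}$, and $\cE_{\overline{\lambda}}^{(\ell)} = \cE_\lambda^{(\ell)}$. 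The last of these is well-defined on cosets $\overline{\lambda} \in (\CC \setminus L^\circ)/\frac{1}{2}L$ by Theorem \ref{thm:simple_A-module_classification}(2) and Remark \ref{rem:mod_class_for_Bp_and_orbs}, since $\lambda_J^2 + \frac{p}{2}r_J^2 = 0$ forces the accompanying conformal-weight shift $n(\lambda_J^2 + r_J^2 p/2)$ to vanish.

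With this dictionary in hand, the second step is to substitute $r = r' = 1$ and $r_J = 1$ into each part of Theorem \ref{thm:general_fusion_rules}. In part (1), $r + r' - 1 = 1$, so every output module is of the form $\cW_{1,k}^{(\ell+\ell')}$ or $\cQ_{1,k}^{(\ell+\ell')}$, which reproduces part (1) here. In parts (2) and (4), setting $r_J = 1$ removes the factor $r_J$ from the spectral-flow shifts and turns $\alpha_{r,s}$ into $\alpha_{1,s}$, yielding the stated formulas once the $\cE$-labels are read as cosets.

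The only place needing a small extra argument is part (3). There the general formula produces a first family of summands $\cQ_{1,t}$ together with a second family $\cQ_{r-1,t} = \cQ_{0,t}$, whereas the stated formula writes both families as $\cQ_t = \cQ_{1,t}$. To reconcile these I would invoke the isomorphism $\cQ_{0,t}^{(\ell'')} \cong \cQ_{1,t}^{(\ell'')}$, which follows from the induced-module isomorphisms of Theorem \ref{thm:simple_A-module_classification}(2): because $r_J = 1$, the $r$-index of every simple and projective object is only defined modulo $r_J$, and the associated $\ell$-shift $n(\lambda_J^2 + r_J^2 p/2)$ is again zero, so the conformal-weight label is unchanged. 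Substituting the shift $\ell + \ell' - r_J(p-s+1)/2 = \ell + \ell' - (p-s+1)/2$ then gives part (3) exactly.

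I expect the whole argument to be routine bookkeeping with no genuine obstacle, since all the analytic content—the existence of the tensor structure, exactness of induction, and the underlying singlet fusion rules—has already been established in Theorem \ref{thm:general_fusion_rules} and its inputs. The only points demanding care are the $1/\lambda_J$ sign conventions in the Heisenberg-weight comparison of the first step and the $r \mapsto r \bmod r_J$ collapse invoked in part (3); once these are verified, the four formulas follow by direct substitution.
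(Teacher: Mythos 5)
Your proposal is correct and takes essentially the same route as the paper: the paper presents Theorem \ref{thm:Bp_tensor_products} precisely as a direct specialization of Theorem \ref{thm:general_fusion_rules} to $r_J=1$, $\lambda_J=-i\sqrt{p/2}$ (so $\lambda_J^2+\frac{p}{2}r_J^2=0$), using the same dictionary $\cW_s^{(\ell)}=\cW_{1,s}^{(\ell)}$, $\cQ_t^{(\ell)}=\cQ_{1,t}^{(\ell)}$, $\cE_{\overline{\lambda}}^{(\ell)}=\cE_\lambda^{(\ell)}$ and the same collapse of the $r$-index modulo $r_J=1$ with vanishing spectral-flow shift. The bookkeeping you supply—in particular the identification $\cQ_{0,t}^{(\ell'')}\cong\cQ_{1,t}^{(\ell'')}$ in part (3), which follows from uniqueness of projective covers of the isomorphic simples $\cW_{0,t}^{(\ell'')}\cong\cW_{1,t}^{(\ell'')}$—is exactly what the paper leaves implicit.
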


\subsection{Finite cyclic orbifolds of \texorpdfstring{$\cB_2$}{B2}}

For all $m\in\ZZ_{>0}$, the fixed-point subalgebra $\cB_2^m:=\cB_2^{\ZZ/m\ZZ}$ of $\cB_2$ is the simple subregular $W$-algebra of $\mathfrak{sl}_m$ at level $-m+\frac{m+1}{m}$ \cite[Theorem 9.4]{CL-trialities}. In particular, $\cB_2^{2}$ is the simple affine vertex algebra $L_{-1/2}(\mathfrak{sl}_2)$, which is well known. It is clear from \eqref{eqn:Bp_decomp} that $\cB_2^{m}$ has the decomposition
\begin{equation*}
\cB_2^{m} =\bigoplus_{k\in\ZZ} \cF^\cH_{ikm}\otimes\cM_{1-km,1}
\end{equation*}
as an $\cH\otimes\cM(2)$-module, where $i$ is a square root of $-1$. Thus for $\cB_2^m$, 
\begin{equation*}
J=\cF^\cH_{-im}\otimes\cM_{m+1,1},\qquad\lambda_J=-im,\qquad r_J=m.
\end{equation*}
For $k\in\ZZ$, the lowest conformal weight of $\cF^\cH_{ikm}\otimes\cM_{1-km,1}$ is $\frac{m}{2}\vert k\vert$, so that $\cB_2^m$ is $\frac{1}{2}\ZZ$-graded if $m$ is odd and $\ZZ$-graded if $m$ is even.

By Theorem \ref{thm:main_thm}, the category $\cO_{\cB_2^m}$ of finitely-generated strongly $\cH$-weight-graded local and $\theta$-twisted $\cB_2^m$-modules is a rigid braided $\ZZ/2\ZZ$-crossed tensor category, and $\Oloc_{\cB_2^m}$ is a rigid braided tensor subcategory which is ribbon in case $m$ is even. By Remark \ref{rem:mod_class_for_Bp_and_orbs}, simple objects of $\cO_{\cB_2^m}$ are naturally indexed by $(\CC/m\ZZ)\times\frac{1}{2}\ZZ$ since $L=\ZZ\alpha_+=2\ZZ$ when $p=2$. To avoid confusion with the notation for $\cB_p$-modules in the previous subsection, we will use the alternate notation $\cX_{\overline{r}}^{(\ell)}:=\cW_{r,1}^{(\ell)}$ for $\overline{r}=r+m\ZZ\in\ZZ/m\ZZ$ and $\cG_{\overline{\lambda}}^{(\ell)}:=\cE_{\lambda}^{(\ell)}$ for $\overline{\lambda}=\lambda+m\ZZ\in(\CC\setminus\frac{1}{2}\ZZ)/m\ZZ$. The modules $\cW_{r,2}^{(\ell)}$ (in the notation of Theorem \ref{thm:simple_A-module_classification}) are more naturally grouped with the typical modules $\cG^{(\ell)}_{\overline{\lambda}}$, so we denote them by $\cG^{(\ell)}_{\overline{\alpha_{r,2}}}$ in the following theorem (and recall that $\alpha_{r,2}=\frac{3}{2}-r$ when $p=2$):

\begin{thm}\label{thm:B2m_simple_objects}
Every simple object in $\cO_{\cB_2^m}$ is isomorphic to exactly one of the following:
 \begin{align*}
&  \cX_{\overline{r}}^{(\ell)} :=\cF\left(\cF^\cH_{i(1-r +\ell/m)}\otimes\cM_{r,1}\right)
\end{align*}
for $\overline{r} = r + m\ZZ \in \ZZ/m\ZZ$ and $\ell\in\frac{1}{2}\ZZ$, or
\begin{align*}
& \cG_{\overline{\lambda}}^{(\ell)}:=\cF\left(\cF^{\cH}_{i(\lambda-1/2+\ell/m)}\otimes\cF_{\lambda}\right)
 \end{align*}
 for $\overline{\lambda}=\lambda+m\ZZ\in(\CC\setminus \ZZ)/m\ZZ$ and $\ell\in\frac{1}{2}\ZZ$. Moreover, $\cX_{\overline{r}}^{(\ell)}$ is an object of $\Oloc_{\cB_2^m}$ if and only if $\ell\in\ZZ$, and $\cG_{\overline{\lambda}}^{(\ell)}$ is an object of $\Oloc_{\cB_2^m}$ if and only if $\ell\in\frac{m}{2}+\ZZ$.
\end{thm}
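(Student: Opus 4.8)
The plan is to obtain Theorem~\ref{thm:B2m_simple_objects} as a direct specialization of Theorem~\ref{thm:simple_A-module_classification} to the extension $\cB_2^m$, for which $p=2$, $r_J=m$, $\lambda_J=-im$, and hence $\alpha_+=2$, $\alpha_-=-1$, $\alpha_0=1$, and $\lambda_J^2+\tfrac{p}{2}r_J^2=-m^2+m^2=0$. First I would translate the Heisenberg labels appearing in Theorem~\ref{thm:simple_A-module_classification}(1) into the present notation. Using $\tfrac{1}{-im}=\tfrac{i}{m}$, a short computation gives
\begin{equation*}
\frac{r_J(1-r)p/2+\ell}{\lambda_J}=\frac{m(1-r)+\ell}{-im}=i\Big(1-r+\frac{\ell}{m}\Big),
\end{equation*}
so that $\cW_{r,1}^{(\ell)}=\cX_{\overline r}^{(\ell)}$, and similarly
\begin{equation*}
\frac{r_J(\alpha_+\lambda-p+1)/2+\ell}{\lambda_J}=i\Big(\lambda-\tfrac12+\frac{\ell}{m}\Big),
\end{equation*}
so that $\cE_\lambda^{(\ell)}=\cG_{\overline\lambda}^{(\ell)}$.

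Next I would account for the regrouping of the $s=p$ modules. Since $p=2$ we have $L^\circ=\ZZ\tfrac{\alpha_-}{2}=\tfrac12\ZZ$, and the simple $\cM(2)$-modules $\cM_{r,2}=\cF_{\alpha_{r,2}}$ are Heisenberg Fock modules with $\alpha_{r,2}=\tfrac{3}{2}-r$ ranging over $\tfrac12+\ZZ$ as $r$ runs over $\ZZ$. A comparison of Heisenberg weights (identical to the computation above) shows $\cW_{r,2}^{(\ell)}=\cE_{\alpha_{r,2}}^{(\ell)}=\cG_{\overline{\alpha_{r,2}}}^{(\ell)}$, so the modules indexed in Theorem~\ref{thm:simple_A-module_classification}(1) by $s=2$ are exactly the $\cG_{\overline\lambda}^{(\ell)}$ with $\lambda\in\tfrac12+\ZZ$, while the genuinely typical modules $\cE_\lambda^{(\ell)}$ carry $\lambda\in\CC\setminus\tfrac12\ZZ$. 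Because $\CC\setminus\ZZ=(\CC\setminus\tfrac12\ZZ)\sqcup(\tfrac12+\ZZ)$, the two families $\cX_{\overline r}^{(\ell)}$ (the case $s=1$) and $\cG_{\overline\lambda}^{(\ell)}$ (the case $s=2$ together with the typicals) exhaust every simple object, with $\overline\lambda\in(\CC\setminus\ZZ)/m\ZZ$.

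Finally I would deduce non-redundancy and the $\Oloc$ criteria. Since $\lambda_J^2+\tfrac{p}{2}r_J^2=0$, Remark~\ref{rem:mod_class_for_Bp_and_orbs} and the complete list of isomorphisms in Theorem~\ref{thm:simple_A-module_classification}(2) show that, on parameters, the only coincidences fix $\ell$ and shift $r$ by $r_J\ZZ=m\ZZ$ (respectively shift $\lambda$ by $\tfrac{r_J}{2}L=m\ZZ$); thus $\overline r\in\ZZ/m\ZZ$ and $\overline\lambda\in(\CC\setminus\ZZ)/m\ZZ$ label pairwise inequivalent modules and the ``exactly one'' claim follows. The atypical and typical families cannot coincide because the $\cM(2)$-part $\cM_{r,1}$ of each $\cX$ is a proper $\cM(2)$-socle, whereas the $\cM(2)$-part of each $\cG$ is a full Fock module, and both classes are preserved under tensoring with the simple currents $J^n$. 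The $\Oloc$ conditions are read off from Theorem~\ref{thm:simple_A-module_classification}(3): $\cX_{\overline r}^{(\ell)}=\cW_{r,1}^{(\ell)}$ lies in $\Oloc_{\cB_2^m}$ iff $\ell\in\tfrac{r_J}{2}(s-1)+\ZZ=\ZZ$, while both the $s=2$ modules and the typicals lie in $\Oloc_{\cB_2^m}$ iff $\ell\in\tfrac{r_J}{2}(p-1)+\ZZ=\tfrac{m}{2}+\ZZ$, so the criterion is consistent across the regrouped $\cG$ family. The only mildly delicate point, and the main obstacle, is this bookkeeping of the regrouping: verifying that the index set $\CC\setminus\ZZ$ for $\cG$ (rather than $\CC\setminus\tfrac12\ZZ$) correctly absorbs the $s=2$ modules under the $m\ZZ$-shifts and that the two $\Oloc$ criteria agree on the absorbed cosets; everything else is a mechanical specialization of results already established.
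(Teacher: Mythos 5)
Your proposal is correct and follows essentially the same route as the paper: the paper states this theorem as a direct specialization of Theorem \ref{thm:simple_A-module_classification} with $p=2$, $r_J=m$, $\lambda_J=-im$, regrouping the $s=2$ modules $\cW_{r,2}^{(\ell)}$ with the typical family as $\cG_{\overline{\alpha_{r,2}}}^{(\ell)}$ exactly as you do. Your explicit verifications of the Heisenberg-label translations, of the identification $\cW_{r,2}^{(\ell)}=\cG_{\overline{3/2-r}}^{(\ell)}$, of the disjointness of the two families, and of the agreement of the two $\Oloc$ criteria on the absorbed cosets are precisely the bookkeeping the paper leaves implicit.
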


Indecomposable projective objects in $\cO_{\cB_2^m}$ are given by Theorem \ref{thm:gen_proj_covers}:
\begin{thm}\label{thm:B2m_proj_modules}
The simple $\cB_2^m$-modules $\cG^{(\ell)}_{\overline{\lambda}}$ for $\overline{\lambda}\in(\CC\setminus\ZZ)/m\ZZ$, $\ell\in\frac{1}{2}\ZZ$ are projective in $\cO_{\cB_2^m}$. For $\overline{r}=r+m\ZZ\in\ZZ/m\ZZ$ and $\ell\in\frac{1}{2}\ZZ$, $\cX_{\overline{r}}^{(\ell)}$ has a projective cover
\begin{equation*}
\mathcal{R}_{\overline{r}}^{(\ell)}:=\cF\left(\cF^\cH_{i(r-1-\ell/m)}\otimes\cP_{r,1}\right)
\end{equation*}
in $\cO_{\cB_2^m}$, which has Loewy diagram
\begin{equation}\label{eqn:Qsl_Loewy_diag:orbifold2}
 \begin{matrix}
  \begin{tikzpicture}[->,>=latex,scale=1.5]
\node (b1) at (1,0) {$\cX_{\overline{r}}^{(\ell)}$};
\node (c1) at (-1, 1){$\mathcal{R}_{\overline{r}}^{(\ell)}:$};
   \node (a1) at (0,1) {$\cX_{\overline{r-1}}^{(\ell-m)}$};
   \node (b2) at (2,1) {$\cX_{\overline{r+1}}^{(\ell+m)}$};
    \node (a2) at (1,2) {$\cX_{\overline{r}}^{(\ell)}$};
\draw[] (b1) -- node[left] {} (a1);
   \draw[] (b1) -- node[left] {} (b2);
    \draw[] (a1) -- node[left] {} (a2);
    \draw[] (b2) -- node[left] {} (a2);
\end{tikzpicture}
\end{matrix} .
 \end{equation}
\end{thm}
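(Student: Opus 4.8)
The plan is to derive this statement as a direct specialization of the general Theorem~\ref{thm:gen_proj_covers}, which already classifies the indecomposable projectives of $\cO_A$ for an arbitrary simple current extension $A=\bigoplus_{n\in\ZZ}J^n$ of $\cH\otimes\cM(p)$. For $A=\cB_2^m$ the parameters are $p=2$, $r_J=m$, and $\lambda_J=-im$, so that $\lambda_J^2+\frac{p}{2}r_J^2=-m^2+m^2=0$. This is exactly the degenerate case of Remark~\ref{rem:mod_class_for_Bp_and_orbs}, in which simple objects are indexed by cosets and $\cX_{\overline r}^{(\ell)}=\cW_{r,1}^{(\ell)}$, $\cG_{\overline\lambda}^{(\ell)}=\cE_\lambda^{(\ell)}$ in the notation of Theorem~\ref{thm:simple_A-module_classification}.

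The first assertion is then immediate: by Theorem~\ref{thm:gen_proj_covers} the simple modules $\cW_{r,p}^{(\ell)}=\cW_{r,2}^{(\ell)}$ (for $r\in\ZZ$) and $\cE_\lambda^{(\ell)}$ (for $\lambda\in\CC\setminus L^\circ=\CC\setminus\frac{1}{2}\ZZ$, recalling $\frac{\alpha_-}{2}=-\frac{1}{2}$ when $p=2$) are projective in $\cO_A$. Since $\alpha_{r,2}=\frac{3}{2}-r$ ranges over $\frac{1}{2}+\ZZ$ as $r$ varies and $\frac{1}{2}\ZZ=\ZZ\cup(\frac{1}{2}+\ZZ)$, these two families together exhaust precisely the modules $\cG_{\overline\lambda}^{(\ell)}$ with $\overline\lambda\in(\CC\setminus\ZZ)/m\ZZ$, giving part~(1). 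For the atypical simple $\cX_{\overline r}^{(\ell)}=\cW_{r,1}^{(\ell)}$ (here $s=1$ is the unique value with $1\leq s\leq p-1$), Theorem~\ref{thm:gen_proj_covers} provides the projective cover $\cQ_{r,1}^{(\ell)}=\cF\big(\cF^\cH_{(r_J(1-r)p/2+\ell)/\lambda_J}\otimes\cP_{r,1}\big)$; substituting $r_J=m$, $p=2$, $\lambda_J=-im$ and simplifying the Heisenberg--Fock weight yields $\mathcal{R}_{\overline r}^{(\ell)}$.

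To obtain the Loewy diagram \eqref{eqn:Qsl_Loewy_diag:orbifold2}, I would specialize the general diagram \eqref{eqn:Q_rsl_Loewy_diag}: its head and socle are $\cW_{r,1}^{(\ell)}=\cX_{\overline r}^{(\ell)}$, while its two middle composition factors are $\cW_{r\pm1,p-s}^{(\ell\pm r_Jp/2)}$, which with $p-s=1$ and $r_Jp/2=m$ become $\cW_{r\pm1,1}^{(\ell\pm m)}=\cX_{\overline{r\pm1}}^{(\ell\pm m)}$, exactly as displayed. Finally, I would confirm that $\mathcal{R}_{\overline r}^{(\ell)}$ depends only on the coset $\overline r\in\ZZ/m\ZZ$: since $\cM_{m+1,1}$ is a simple current with $\cM_{m+1,1}\tens\cP_{r,1}\cong\cP_{r+m,1}$ (compare \eqref{eqn:Jn_times_Mrs}), tensoring the input of $\cF$ with $J=\cF^\cH_{-im}\otimes\cM_{m+1,1}$ sends $\cF^\cH_\gamma\otimes\cP_{r,1}$ to $\cF^\cH_{\gamma-im}\otimes\cP_{r+m,1}$, whence $\cQ_{r+m,1}^{(\ell)}\cong\cF\big(J\tens(\cF^\cH_\gamma\otimes\cP_{r,1})\big)\cong\cQ_{r,1}^{(\ell)}$ because $\cF\circ(J\tens\bullet)\cong\cF$.

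No genuinely new representation-theoretic input is needed: exactness of induction, the socle computation via Frobenius reciprocity and the $\cM(p)$-fusion rules, and the lifting criterion placing $\cQ_{r,1}^{(\ell)}$ in $\rep^kA$ are all carried out in the proof of Theorem~\ref{thm:gen_proj_covers}. The only point requiring care is the notational bookkeeping --- reconciling the three parametrizations $\cW_{r,s}^{(\ell)}$, the coset labels $\cX_{\overline r}^{(\ell)}$, and the explicit Fock weights, and checking that the coset dependence is consistent --- which I expect to be the main, but entirely routine, obstacle.
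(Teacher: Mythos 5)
Your proposal is correct and is precisely the paper's own (implicit) proof: the paper presents this theorem as a direct specialization of Theorem \ref{thm:gen_proj_covers} with $p=2$, $r_J=m$, $\lambda_J=-im$, with the identification of the projectives $\cW_{r,2}^{(\ell)}$ and $\cE_\lambda^{(\ell)}$ as the family $\cG_{\overline{\lambda}}^{(\ell)}$, $\overline{\lambda}\in(\CC\setminus\ZZ)/m\ZZ$, exactly as you describe, and the coset-independence following from Theorem \ref{thm:simple_A-module_classification}(2) (Remark \ref{rem:mod_class_for_Bp_and_orbs}) or, alternatively, from uniqueness of projective covers. One caveat you glossed over: your substitution gives the Fock weight $(m(1-r)+\ell)/\lambda_J=i(1-r+\ell/m)$, which is consistent with the definition of $\cX_{\overline{r}}^{(\ell)}$ in Theorem \ref{thm:B2m_simple_objects} and with Proposition \ref{prop:general_proj_covers} (the cover keeps the same Fock weight as the simple), whereas the printed formula for $\cR_{\overline{r}}^{(\ell)}$ reads $i(r-1-\ell/m)$; this overall sign flip is evidently a typo in the paper's statement (the same flip occurs between Theorems \ref{thm:Bp_mod_class} and \ref{thm:Bp_proj_modules}), so your computed weight, not the printed one, is the correct specialization, and this discrepancy should be flagged rather than silently absorbed.
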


Using Sections \ref{subsec:lb_hw_gr} and \ref{subsec:C1-cofinite}, we can describe the categories of highest-weight, grading-restricted, and $C_1$-cofinite $\cB_2^m$-modules:
\begin{thm}\label{thm:B2m_grad-rest}
For the vertex operator algebra $\cB_2^m$,
\begin{enumerate}

\item The highest-weight category $\cO_{\cB_2^m}^{\mathrm{h.w.}}$ is semisimple with $2m^2$ simple highest-weight local and $\theta$-twisted modules up to isomorphism, namely the modules $\cX_{\overline{r}}^{(\ell)}$ for $\overline{r}\in\ZZ/m\ZZ$ and $-\frac{m}{2}<\ell\leq\frac{m}{2}$. Of these simple modules, $m^2$ are local.

\item The category of grading-restricted generalized ($\theta$-twisted) $\cB_2^m$-modules is semisimple with $m(2m-1)$ simple objects up to isomorphism, namely the modules $\cX_{\overline{r}}^{(\ell)}$ for $\overline{r}\in\ZZ/m\ZZ$ and $\vert\ell\vert<\frac{m}{2}$. Of these simple modules, $m^2$ are local when $m$ is odd and $m(m-1)$ are local when $m$ is even.

\item The category of $C_1$-cofinite grading-restricted generalized $\cB_2^m$-modules is semisimple with $m$ simple objects $\cX_{\overline{r}}^{(0)}$, $\overline{r}\in\ZZ/m\ZZ$, up to isomorphism.

\end{enumerate}
\end{thm}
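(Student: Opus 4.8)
The plan is to specialize the general structural results of Sections \ref{subsec:lb_hw_gr} and \ref{subsec:C1-cofinite} to the present algebra and then perform the relevant counting of simple modules. The first and essential observation is that for $\cB_2^m$ we have $p=2$, $\lambda_J=-im$, and $r_J=m$, so that
\begin{equation*}
\lambda_J^2+\frac{p}{2}r_J^2=-m^2+m^2=0.
\end{equation*}
This places $\cB_2^m$ in the degenerate case of Theorems \ref{thm:hw_cat_ss}, \ref{thm:grad_rest_ss}, and \ref{thm:C_1-cofin}, in each of which the relevant category is semisimple with an explicit finite list of simple objects of the form $\cW_{r,s}^{(\ell)}$. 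Since $p=2$ forces $s=1$ throughout (as $1\leq s\leq p-1$), every relevant simple module is $\cW_{r,1}^{(\ell)}=\cX_{\overline{r}}^{(\ell)}$, and I will translate the bounds on $(r,s,\ell)$ from the general theorems into conditions on $(\overline{r},\ell)$.

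For part (1), Theorem \ref{thm:hw_cat_ss} shows $\cO_{\cB_2^m}^{\mathrm{h.w.}}$ is semisimple with simple objects $\cX_{\overline{r}}^{(\ell)}$ for $1\leq r\leq m$ and $-\frac{m}{2}<\ell\leq\frac{m}{2}$. Writing $\ell=\frac{k}{2}$ with $k\in\ZZ$, the condition $-\frac{m}{2}<\ell\leq\frac{m}{2}$ reads $-m<k\leq m$, giving exactly $2m$ admissible values of $\ell$; combined with the $m$ residues $\overline{r}$ this yields $2m^2$ simple objects. By Theorem \ref{thm:B2m_simple_objects}, $\cX_{\overline{r}}^{(\ell)}$ is local precisely when $\ell\in\ZZ$, i.e.\ when $k$ is even, and exactly half of the $2m$ consecutive integers $k\in\{-m+1,\ldots,m\}$ are even; hence $m^2$ of the simple highest-weight modules are local.

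For part (2), Theorem \ref{thm:grad_rest_ss} shows the category of grading-restricted generalized modules is semisimple with simple objects $\cX_{\overline{r}}^{(\ell)}$ for $1\leq r\leq m$ and $\vert\ell\vert<\frac{m}{2}$. Now $\vert\ell\vert<\frac{m}{2}$ becomes $-m<k<m$, giving $2m-1$ admissible values of $\ell$ and hence $m(2m-1)$ simple objects in all. The local ones are those with $k$ even among $k\in\{-m+1,\ldots,m-1\}$, where a parity count is needed: when $m$ is odd the endpoints $-m+1$ and $m-1$ are even, so there are $m$ even values of $k$ and $m^2$ local simples; when $m$ is even the endpoints are odd, so there are $m-1$ even values and $m(m-1)$ local simples.

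Finally, part (3) is immediate from Theorem \ref{thm:C_1-cofin}, which identifies $\cC^1_{\cB_2^m}$ as the semisimple category whose simple objects are $\cW_{r,1}^{(0)}=\cX_{\overline{r}}^{(0)}$ for $1\leq r\leq m$, that is, $m$ simple modules indexed by $\overline{r}\in\ZZ/m\ZZ$. Since all the substantive work---semisimplicity, classification of the simples, and the local/twisted dichotomy---is already carried out in the cited general theorems, the only genuine step here is the elementary counting of half-integers in the intervals $(-\frac{m}{2},\frac{m}{2}]$ and $(-\frac{m}{2},\frac{m}{2})$, together with the parity-dependent count of integer $\ell$; this bookkeeping is the sole point requiring any care.
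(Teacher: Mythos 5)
Your proposal is correct and follows exactly the route the paper intends: the theorem is stated without proof precisely because it is the specialization of Theorems \ref{thm:hw_cat_ss}, \ref{thm:grad_rest_ss}, and \ref{thm:C_1-cofin} to $p=2$, $r_J=m$, $\lambda_J=-im$ (so $\lambda_J^2+\frac{p}{2}r_J^2=0$), followed by the counting you carry out. Your identification of the degenerate case, the translation $s=1$, and the parity bookkeeping for the local/twisted split (including the $m$ odd versus $m$ even distinction) all check out against the general statements.
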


We also get tensor products of simple objects in $\cO_{\cB_p^m}$ from Theorem \ref{thm:general_fusion_rules}:

\begin{thm}\label{thm:B2m_tens_prods}
Tensor products of simple modules in $\cO_{\cB_2^{m}}$ are as follows:
\begin{enumerate}

\item  For $\overline{r},\overline{r'}\in\ZZ/m\ZZ$, $\overline{\lambda}\in(\CC\setminus\ZZ)/m\ZZ$, and $\ell,\ell'\in\frac{1}{2}\ZZ$,
\begin{equation*}
\cX_{\overline{r}}^{(\ell)}\tens \cX_{\overline{r'}}^{(\ell')}\cong \cX_{\overline{r+r'-1}}^{(\ell+\ell')},\qquad\qquad\cX_{\overline{r}}^{(\ell)}\tens \cG_{\overline{\lambda}}^{(\ell')}\cong\cG_{\overline{\lambda-r+1}}^{(\ell+\ell')}.
\end{equation*}


\item For $\overline{\lambda},\overline{\mu}\in(\CC\setminus\ZZ)/m\ZZ$ and $\ell,\ell'\in\frac{1}{2}\ZZ$,
 \begin{equation*}
 \cG_{\overline{\lambda}}^{(\ell)}\tens\cG_{\overline{\mu}}^{(\ell')}\cong\left\lbrace\begin{array}{lll}
 \cR_{\overline{2-\lambda-\mu}}^{(\ell+\ell')} & \text{if} & \overline{\lambda+\mu}\in\ZZ/m\ZZ\\
\cG^{(\ell+\ell'-m/2)}_{\overline{\lambda+\mu}}\oplus\cG^{(\ell+\ell'+m/2)}_{\overline{\lambda+\mu-1}} & \text{if} & \overline{\lambda+\mu}\in(\CC\setminus\ZZ)/m\ZZ \\
 \end{array}\right. .
%
%
 \end{equation*}
\end{enumerate}
\end{thm}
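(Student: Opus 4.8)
The plan is to obtain every formula by specializing the general fusion rules of Theorem \ref{thm:general_fusion_rules} to the data of $\cB_2^m$, namely $p=2$, $r_J=m$, and $\lambda_J=-im$ (so that $\lambda_J^2+\frac{p}{2}r_J^2=0$), and then translating into the notation of Theorem \ref{thm:B2m_simple_objects}. For $p=2$ we have $\alpha_+=2$, $\alpha_-=-1$, $\alpha_0=1$, $L^\circ=\frac12\ZZ$, and I will repeatedly use $\alpha_{r,1}=1-r$ and $\alpha_{r,2}=\frac32-r$. The notational dictionary is $\cX_{\overline r}^{(\ell)}=\cW_{r,1}^{(\ell)}$ and $\cR_{\overline r}^{(\ell)}=\cQ_{r,1}^{(\ell)}$, while $\cG_{\overline\lambda}^{(\ell)}$ denotes $\cE_\lambda^{(\ell)}$ when $\lambda\in\CC\setminus\frac12\ZZ$ but denotes $\cW_{r,2}^{(\ell)}=\cQ_{r,2}^{(\ell)}$ when $\lambda=\alpha_{r,2}\in\frac12+\ZZ$; throughout, the index on $\cX$ and the weight $\overline\lambda$ are read modulo $m\ZZ$ using Remark \ref{rem:mod_class_for_Bp_and_orbs}, since $\lambda_J^2+\frac{p}{2}r_J^2=0$.

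For part (1), the product $\cX_{\overline r}^{(\ell)}\tens\cX_{\overline{r'}}^{(\ell')}$ is Theorem \ref{thm:general_fusion_rules}(1) with $s=s'=1$: the projective sum is empty and the simple sum contributes only $k=1$, giving $\cW_{r+r'-1,1}^{(\ell+\ell')}=\cX_{\overline{r+r'-1}}^{(\ell+\ell')}$. For $\cX_{\overline r}^{(\ell)}\tens\cG_{\overline\lambda}^{(\ell')}$ there are two subcases. If $\lambda$ is generic I use Theorem \ref{thm:general_fusion_rules}(2) with $s=1$, whose single term is $\cE_{\lambda+\alpha_{r,1}}^{(\ell+\ell')}=\cE_{\lambda-r+1}^{(\ell+\ell')}$ (the $\ell$-shift $r_J(k-\frac{s-1}{2})$ vanishes); if instead $\cG_{\overline\lambda}=\cW_{a,2}^{(\ell')}$ I use Theorem \ref{thm:general_fusion_rules}(1) with $s=1$, $s'=2$, whose only surviving term is $\cQ_{r+a-1,2}^{(\ell+\ell')}=\cW_{r+a-1,2}^{(\ell+\ell')}$. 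In both subcases the weight equals $\lambda-r+1$ (using $\alpha_{a,2}=\frac32-a$ in the second), so the product is $\cG_{\overline{\lambda-r+1}}^{(\ell+\ell')}$ uniformly.

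Part (2) is where the work lies, because the single family $\cG$ collapses several distinct cases of Theorem \ref{thm:general_fusion_rules} into two formulas. One first records the elementary fact that for $\lambda,\mu\notin\ZZ$ one has $\lambda+\mu\in\ZZ$ exactly when $\lambda,\mu$ are both generic or both half-integral, and $\lambda+\mu\notin\ZZ$ exactly when at least one is generic. When $\overline{\lambda+\mu}\in\ZZ/m\ZZ$, i.e.\ $\lambda+\mu\in\ZZ$, either both of $\lambda,\mu$ are generic and I apply Theorem \ref{thm:general_fusion_rules}(3) with $\alpha_{r,s}=\lambda+\mu-1\in\ZZ$ (forcing $s=1$, $r=2-\lambda-\mu$), or both are half-integral and I apply Theorem \ref{thm:general_fusion_rules}(1) with $s=s'=2$; in each case the sole surviving summand is $\cQ_{2-\lambda-\mu,1}^{(\ell+\ell')}=\cR_{\overline{2-\lambda-\mu}}^{(\ell+\ell')}$, the $\ell$-shift $r_J(s-1)/2$ vanishing since $s=1$. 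When $\overline{\lambda+\mu}\notin\ZZ/m\ZZ$, the relevant input is Theorem \ref{thm:general_fusion_rules}(4) if $\lambda,\mu$ are generic with $\lambda+\mu\notin\frac12\ZZ$, Theorem \ref{thm:general_fusion_rules}(3) with $s=2$ if they are generic with $\lambda+\mu\in\frac12+\ZZ$, and Theorem \ref{thm:general_fusion_rules}(2) (after the braiding isomorphism) if exactly one is half-integral. In every instance the two surviving terms, after rewriting the weights via $\alpha_{r,2}=\frac32-r$ and $\alpha_-=-1$, are exactly $\cG_{\overline{\lambda+\mu}}^{(\ell+\ell'-m/2)}$ and $\cG_{\overline{\lambda+\mu-1}}^{(\ell+\ell'+m/2)}$, the $\pm\frac m2$ shifts arising from the factors $r_J(k-\frac{p-1}{2})=m(k-\frac12)$ in part (4) and from $r_J(s-1)/2=\frac m2$, $-r_J(p-s+1)/2=-\frac m2$ in part (3). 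The main obstacle is thus purely organizational: I must verify that these a priori different specializations all reduce to the two stated uniform answers, carefully tracking the relabelings of $\alpha_{r,s}$ and the half-integer $\ell$-shifts; no idea beyond Theorem \ref{thm:general_fusion_rules} is needed.
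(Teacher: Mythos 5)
Your proposal is correct and is essentially the paper's own (implicit) proof: the paper obtains Theorem \ref{thm:B2m_tens_prods} precisely by specializing Theorem \ref{thm:general_fusion_rules} to $p=2$, $r_J=m$, $\lambda_J=-im$ and translating via Theorem \ref{thm:B2m_simple_objects} and Remark \ref{rem:mod_class_for_Bp_and_orbs}, and your case-by-case bookkeeping (including the collapse of $\cE_\lambda^{(\ell)}$ and $\cW_{r,2}^{(\ell)}=\cQ_{r,2}^{(\ell)}$ into the single family $\cG_{\overline{\lambda}}^{(\ell)}$, and the $\pm\frac{m}{2}$ spectral-flow shifts) checks out in every case. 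One cosmetic remark: your ``elementary fact'' is misstated as an ``exactly when'' (both $\lambda,\mu$ generic does not force $\lambda+\mu\in\ZZ$); what you actually use, and what is true, is the one-directional statement that $\lambda+\mu\in\ZZ$ forces both generic or both half-integral, and $\lambda+\mu\notin\ZZ$ forces at most one half-integral, which is how your subsequent case analysis is organized.
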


In the previous subsection, we saw that $\cB_p$ has a single simple $C_1$-cofinite module, $\cB_p$ itself. For $\cB_2^m$, Theorem \ref{thm:B2m_grad-rest}(3) shows that the category $\cC^1_{\cB_2^m}$ of $C_1$-cofinite $\cB_2^m$ modules has $m$ inequivalent simple modules, which are precisely the simple currents appearing in the decomposition of $\cB_2$ as a $\cB_2^m$-module. It then follows from \cite[Corollary 4.8]{McR-orb1} that $\cC_{\cB_2^m}^1$ is a symmetric tensor category equivalent to $\rep\,\ZZ /m\ZZ$. Recalling that $\cB_2^m$ is the subregular quantum Hamiltonian reduction of the simple affine vertex operator algebra $L_k(\mathfrak{sl}_m)$ at level $k = -m + \frac{m+1}{m}$, we can show that the category of $C_1$-cofinite $L_k(\mathfrak{sl}_m)$-modules is also equivalent to $\rep\,\ZZ/m\ZZ$, suggesting that subregular quantum Hamiltonian reduction is a braided tensor equivalence in these examples:
\begin{prop}\label{prop:qHr_braided_tensor?}
There are equivalences of symmetric tensor categories 
\begin{equation*}
\cC^1_{L_k(\mathfrak{sl}_m)}\cong\rep\,\ZZ/m\ZZ\cong\cC_{\cB_2^m}^1.
\end{equation*}
\end{prop}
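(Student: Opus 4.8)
The plan is to prove the two equivalences separately, treating $\rep\,\ZZ/m\ZZ$ purely as a bridge; I will not attempt to show directly that subregular reduction is an equivalence. The right-hand equivalence $\cC_{\cB_2^m}^1\cong\rep\,\ZZ/m\ZZ$ is essentially already in hand: by Theorem \ref{thm:B2m_grad-rest}(3) the category $\cC_{\cB_2^m}^1$ is semisimple with simple objects exactly the $m$ modules $\cX_{\overline r}^{(0)}$, $\overline r\in\ZZ/m\ZZ$, and these are precisely the graded pieces of $\cB_2$ in its decomposition over the orbifold $\cB_2^m=\cB_2^{\ZZ/m\ZZ}$, so that \cite[Corollary 4.8]{McR-orb1} applies directly.

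For the left-hand equivalence the strategy is to exhibit $L_k(\mathfrak{sl}_m)$ as a $\ZZ/m\ZZ$-orbifold in a way parallel to $\cB_2^m=\cB_2^{\ZZ/m\ZZ}$ and then invoke the same corollary. First I would classify the simple $C_1$-cofinite $L_k(\mathfrak{sl}_m)$-modules. Since $k=-m+\frac{m+1}{m}$ is admissible, $L_k(\mathfrak{sl}_m)$ is quasi-lisse \cite{AK}, so every $C_1$-cofinite module has finite length and every simple $C_1$-cofinite module is one of the finitely many simple ordinary modules; by Arakawa's classification together with semisimplicity of category $\cO$ at admissible level \cite{Ar-cat-O}, these are exactly the modules $L_k(\omega_j)$ whose finite highest weight is admissible and dominant integral, which for $k+m=\frac{m+1}{m}$ are the level-one weights $\omega_0=0,\omega_1,\dots,\omega_{m-1}$. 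Thus $\cC_{L_k(\mathfrak{sl}_m)}^1$ is semisimple with exactly $m$ simple objects, and by \cite{CHY} it carries the vertex algebraic braided tensor structure of \cite{HLZ1}-\cite{HLZ8}.

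Next I would verify that these $m$ modules are simple currents forming the group $\ZZ/m\ZZ$ under fusion, $L_k(\omega_i)\boxtimes L_k(\omega_j)\cong L_k(\omega_{(i+j)\bmod m})$; this is the simple-current structure attached to the center of $SL_m$ and can be confirmed from the conformal weights $h_{\omega_j}=\tfrac{j(m-j)}{2}$, which give $\cR^2_{L_k(\omega_i),L_k(\omega_j)}=e^{2\pi i(h_{\omega_{(i+j)\bmod m}}-h_{\omega_i}-h_{\omega_j})}\Id=\Id$ for all $i,j$, so that the monodromy is trivial and $\cC_{L_k(\mathfrak{sl}_m)}^1$ is symmetric. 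Because these conformal-weight differences in fact lie in $\ZZ$, the simple current extension $U=\bigoplus_{j\in\ZZ/m\ZZ}L_k(\omega_j)$ is a simple vertex operator (super)algebra by \cite{CKL}, with $L_k(\mathfrak{sl}_m)=U^{\ZZ/m\ZZ}$ the fixed-point subalgebra and the $L_k(\omega_j)$ its graded pieces (for $m=2$ this recovers $U=\cB_2$ and $L_{-1/2}(\mathfrak{sl}_2)=\cB_2^{\ZZ/2\ZZ}$). Applying \cite[Corollary 4.8]{McR-orb1} to this orbifold realization then yields $\cC_{L_k(\mathfrak{sl}_m)}^1\cong\rep\,\ZZ/m\ZZ$ as symmetric tensor categories.

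I expect the main obstacle to be the first step: assembling a clean, self-contained classification of the $C_1$-cofinite (equivalently ordinary) $L_k(\mathfrak{sl}_m)$-modules and confirming that they are exactly the $m$ simple currents, since this draws on admissible-weight theory rather than on the singlet-algebra machinery developed in the body of the paper. A secondary subtlety, relevant only when $m$ is even so that some $h_{\omega_j}$ are half-integral and $U$ is genuinely a vertex operator superalgebra, is to ensure that the symmetric structure produced by \cite[Corollary 4.8]{McR-orb1} is the Tannakian $\rep\,\ZZ/m\ZZ$ and not a super-Tannakian variant; this is controlled by the parity-corrected twist $Pe^{2\pi i L(0)}$ discussed after Theorem \ref{thm:main_thm}, exactly as in the $\cB_2^m$ case.
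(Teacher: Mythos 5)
Your treatment of the right-hand equivalence $\cC_{\cB_2^m}^1\cong\rep\,\ZZ/m\ZZ$ matches the paper's (it is exactly the appeal to Theorem \ref{thm:B2m_grad-rest}(3) and \cite[Corollary 4.8]{McR-orb1} made in the text before the proposition). The left-hand equivalence, however, has a genuine gap at its load-bearing step: you cannot conclude from the conformal weights $h_{\omega_j}=\frac{j(m-j)}{2}$ and the integrality of $h_{\omega_{(i+j)\bmod m}}-h_{\omega_i}-h_{\omega_j}$ that the extension $U=\bigoplus_j L_k(\omega_j)$ exists as a vertex operator algebra, nor which of $\rep\,\ZZ/m\ZZ$ or its super-Tannakian twin the category $\cC^1_{L_k(\mathfrak{sl}_m)}$ is. Conformal weights and monodromies do not see this distinction: the free fermion (a superalgebra) and the $\beta\gamma$ system $\cB_2$ (a bosonic, $\frac{1}{2}\ZZ$-graded vertex operator algebra) are both order-two simple current extensions by a module of conformal weight $\frac{1}{2}$ with trivial monodromy. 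What distinguishes them is the self-braiding $c_{J,J}$, i.e.\ the quadratic form of the pointed symmetric subcategory generated by the simple currents; with trivial monodromy this form is a character $q:\ZZ/m\ZZ\to\lbrace\pm 1\rbrace$, the category is Tannakian exactly when $q\equiv 1$, and for even $m$ both $q\equiv 1$ and $q(j)=(-1)^j$ are consistent with all the data you compute. Your proposed remedy for even $m$ --- passing to the twist $Pe^{2\pi iL(0)}$ --- modifies the ribbon structure, not the braiding, so it cannot settle Tannakian versus super-Tannakian: that is a property of the symmetric structure itself, and for even $m$ it is essentially the content of the proposition. (For odd $m$ the issue evaporates, since $\ZZ/m\ZZ$ then has no nontrivial order-two characters, but the statement covers all $m$.)

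There is also a secondary circularity: you say the fusion rules $L_k(\omega_i)\boxtimes L_k(\omega_j)\cong L_k(\omega_{(i+j)\bmod m})$ ``can be confirmed from the conformal weights,'' but the balancing computation you invoke already presupposes knowing the fusion product; these fusion rules are a substantive input, available as \cite[Corollary 7.4]{Cr2}, which is precisely the citation the paper uses. The paper's own proof sidesteps both problems by a different mechanism: principal quantum Hamiltonian reduction $H^0_{DS,\text{prin}}$ is a functor $\cC^1_{L_k(\mathfrak{sl}_m)}\rightarrow\cV ec$ which is braided tensor by \cite[Theorem 10.4]{ACF}, exact, and faithful, hence a fiber functor to honest vector spaces; Tannakian reconstruction \cite{DM} then gives $\cC^1_{L_k(\mathfrak{sl}_m)}\cong\rep\,G$ for a finite group $G$, and \cite[Corollary 7.4]{Cr2} identifies $G=\ZZ/m\ZZ$. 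The existence of a fiber functor to $\cV ec$ (rather than to supervector spaces) is exactly what excludes the super-Tannakian alternative that your construction leaves open. To salvage your orbifold strategy you would need an independent proof that all self-braidings $c_{J,J}$ equal $+1$ (equivalently, that $U$ is bosonic, generalizing $U=\cB_2$ at $m=2$), and no such argument is supplied.
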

\begin{proof}
It remains to prove the first equivalence. The category $\cC^1_{L_k(\mathfrak{sl}_m)}$ is a braided fusion category \cite{Cr2} with exactly $m$ inequivalent simple objects, namely the modules  $L_{k}( \omega_i)$ 
 whose lowest conformal weight spaces are the irreducible $\mathfrak{sl}_m$-modules of highest weights $\omega_i$, where $\omega_0 =0$ and $\omega_i$ for $1\leq i\leq m-1$ is the $i$th fundamental weight. Let $H^0_{DS,\text{prin}}$ be the principal quantum Hamiltonian reduction functor from $L_k(\mathfrak{sl}_m)$-modules to modules for $H^0_{DS,\text{prin}}(L_k(\mathfrak{sl}_m))\cong\CC$. Since a module for $L_k(\mathfrak{sl}_m)$ or one of its quantum Hamiltonian reductions is $C_1$-cofinite if and only if it is finitely strongly generated \cite[Lemma 3.1.6]{Ar-assoc-var}, principal quantum Hamiltonian reduction restricts to a functor $H^0_{DS,\text{prin}}: \cC^1_{L_k(\mathfrak{sl}_m)}\rightarrow\cV ec$ by \cite[Corollary 4.5.10]{Ar-C2}. By \cite[Theorem 10.4]{ACF}, $H^0_{DS,\text{prin}}$ is also a braided tensor functor, which implies in particular that $\cC^1_{L_k(\mathfrak{sl}_m)}$ is symmetric.
 
 Moreover, $H^0_{DS,\text{prin}}$ is exact (see the statement in \cite[Theorem 3.1]{ACF}) and faithful (since it is an exact tensor functor between rigid abelian tensor categories; see \cite[Proposition 1.19]{DM}). Thus $H^0_{DS,\text{prin}}$ is a fiber functor, and Tannakian reconstruction (as in \cite[Theorem 2.11]{DM}) implies that $\cC^1_{L_k(\mathfrak{sl}_m)}$ is equivalent to the category $\rep\,G$ of finite-dimensional representations of a (finite) group $G$. Then $G$ must be $\ZZ/m\ZZ$ because \cite[Corollary 7.4]{Cr2} shows that $C_1$-cofinite $L_k(\mathfrak{sl}_m)$-modules have the same $\ZZ/m\ZZ$ fusion rules as modules for the $\mathfrak{sl}_m$-root lattice vertex operator algebra $L_1(\mathfrak{sl}_m)$. 
\end{proof}

\begin{rem}\label{rem:qHr_braided_tensor}
The preceding proposition suggests that subregular quantum Hamiltonian reduction defines a braided tensor equivalence 
\begin{equation*}
H^0_{DS,\text{subreg}}: \cC^1_{L_k(\mathfrak{sl}_m)}\longrightarrow\cC^1_{\cB_2^m}.
\end{equation*}
But although we do get such a functor by \cite[Lemma 3.1.6]{Ar-assoc-var} and \cite[Corollary 4.5.10]{Ar-C2}, we cannot use \cite[Theorem 10.4]{ACF} to conclude it is a braided tensor functor because we do not yet have braided tensor category structure on a suitable module category for the simple subregular $W$-algebra of $\mathfrak{sl}_m$ at level $k+1$. We can at least see that $H^0_{DS,\text{subreg}}$ is an equivalence of categories: Let $J$ be the Heisenberg field of $\cB_2^m$ normalized so that the strong generators of $\cB_2^m$ of conformal weight $\frac{m}{2}$ have $J(0)$-eigenvalues $\pm 1$. 
 Then for $0\leq i\leq m-1$, $H^0_{DS,\text{subreg}}(L_{k}( \omega_i))$ is a highest-weight $\cB_2^m$-module whose highest-weight vector has $J(0)$-eigenvalue $\frac{i}{m}$. Comparing these $J(0)$-weights and using Theorem \ref{thm:B2m_simple_objects}, we then observe that 
 $H^0_{DS,\text{subreg}}(L_{k}( \omega_i)) \cong \cX_{\overline{i+1}}^{(0)}$, so that $H^0_{DS,\text{subreg}}$ takes the $m$ distinct simple $C_1$-cofinite $L_k(\mathfrak{sl}_m)$-modules to the $m$ distinct simple $C_1$-cofinite $\cB_2^m$-modules.
\end{rem}

\subsection{Some special cases}\label{subsec:special_cases}

In this subsection, we compare our results for $\cB_2^m=\cB_2^{\ZZ/m\ZZ}$, $m=1,2,3$ and $\cB_p$, $p=2,3,4$, with previous conjectures and results from the mathematical physics literature. These particular algebras have received substantial attention: $\cB_2$ is the $\beta\gamma$-vertex algebra, $\cB_3$ and $\cB_2^{2}$ are vertex algebras associated to affine $\mathfrak{sl}_2$ at admissible levels, and $\cB_4$ and $\cB_2^{3}$ are Bershadsky-Polyakov algebras (that is, subregular affine $W$-algebras associated to $\mathfrak{sl}_3$) at admissible levels.

\subsubsection{The \texorpdfstring{$\beta\gamma$}{beta-gamma}-vertex algebra}

The vertex operator algebra $\cB_2$, equivalently $\cB_2^1$, is the $\beta\gamma$-vertex algebra, and our results in this section recover many obtained previously. The tensor products of Theorem \ref{thm:B2m_tens_prods} were predicted in \cite{RW} using a conjectural Verlinde formula, and the corresponding intertwining operators were constructed in \cite{AP}. The full rigid braided tensor category structure on $\Oloc_{\cB_2}$ was constructed in \cite{AW}; we have here somewhat enhanced the results of \cite{AW} to include $\theta$-twisted modules. Recently, a larger tensor category of (local) $\cB_2$-modules incorporating modules with non-semisimple $h(0)$-actions has been constructed in \cite{BN} using relations between the $\beta\gamma$-vertex algebra and affine $\mathfrak{gl}_{1\vert 1}$, as well as a tensor category for affine $\mathfrak{gl}_{1\vert 1}$ constructed in \cite{CMY3}.

To compare our results in the previous subsections with those of \cite{AW}, recall from Theorem \ref{thm:B2m_simple_objects} that the simple objects of $\Oloc_{\cB_2}$ are the modules $\cX_{\overline{1}}^{(\ell)}$ for $\ell\in\ZZ$ and $\cG_{\overline{\lambda}}^{(\ell)}$ for $\overline{\lambda}\in (\CC\setminus\ZZ)/\ZZ$, $\ell\in\frac{1}{2}+\ZZ$; of these modules, only $\cX_{\overline{1}}^{(0)}$ is lower bounded or grading restricted. However, \cite{AW} uses a different conformal vector in $\cB_2$, and with the corresponding different conformal weight gradings, $\cX_{\overline{1}}^{(\ell)}$ for $\ell=0,-1$ and $\cG_{\overline{\lambda}}^{(-1/2)}$ for $\overline{\lambda}\in(\CC\setminus\ZZ)/\ZZ$ (in our notation) are lower bounded, but none are grading restricted. To compare our notation for simple modules with that of \cite{AW}, the simple modules in \cite{AW} are labeled by eigenvalues for a multiple $J(0)$ of our Heisenberg zero-mode $h(0)$, and according to \cite[Equation 2.15]{AW}, one unit of spectral flow changes $J(0)$-eigenvalues by $-1$. On the other hand, one unit of spectral flow changes $h(0)$-eigenvalues by $i$ (recall Remark \ref{rem:spectral_flow}), so $J(0)=i h(0)$. Consequently, the lower-bounded module $\cW_\lambda$ (in the notation of \cite{AW}) with $J(0)$-eigenvalues in $\lambda+\ZZ$ is identified with the module $\cG_{\overline{-\lambda}}^{(-1/2)}$ (in our notation).

More generally, we have the following comparisons of our notation with that of \cite{AW}:
\begin{equation*}
\cX_{\overline{1}}^{(\ell)} \longleftrightarrow \sigma^\ell(\cV),\qquad \cR_{\overline{1}}^{(\ell)} \longleftrightarrow \sigma^\ell(\cP),\qquad\cG_{\overline{-\lambda}}^{(\ell-1/2)}\longleftrightarrow\sigma^\ell(\cW_\lambda)
\end{equation*}
for $\ell\in\ZZ$, $\lambda\in\CC\setminus\ZZ$. Thus Theorem \ref{thm:B2m_tens_prods}(2), for example, implies
\begin{align*}
\cW_\lambda\tens\cW_\mu & =\cG^{(-1/2)}_{\overline{-\lambda}}\tens\cG^{(-1/2)}_{\overline{-\mu}}\cong \left\lbrace\begin{array}{lll}
\cR_{\overline{1}}^{(-1)} & \text{if} & \lambda+\mu\in\ZZ\\
\cG_{\overline{-\lambda-\mu}}^{(-3/2)} \oplus \cG_{\overline{-\lambda-\mu-1}}^{(-1/2)} & \text{if} & \lambda+\mu\notin\ZZ\\
\end{array} \right.\nonumber\\
& = \left\lbrace\begin{array}{lll}
\sigma^{-1}(\cP) & \text{if} & \lambda+\mu\in\ZZ\\
\cW_{\lambda+\mu} \oplus \sigma^{-1}(\cW_{\lambda+\mu}) & \text{if} & \lambda+\mu\notin\ZZ\\
\end{array} \right. 
\end{align*}
for $\lambda,\mu\in\CC$, in agreement with \cite[Equation 6.5]{AW}.

Theorem \ref{thm:grad_rest_ss} implies that the category of grading-restricted generalized $\cB_2$-modules is semisimple with only one simple object, $\cB_2$ itself. This agrees with the recent result \cite[Theorem 55]{BBOPY} that $\cB_2$ is rational as a $\CC$-graded vertex algebra with respect to certain conformal vectors.

\subsubsection{Affine \texorpdfstring{$\mathfrak{sl}_2$}{sl(2)} at levels \texorpdfstring{$-\frac{1}{2}$}{-1/2} and \texorpdfstring{$-\frac{4}{3}$}{-4/3}}

The $\ZZ/2\ZZ$-orbifold $\cB_2^2$ is the simple affine vertex operator algebra $L_{-1/2}(\mathfrak{sl}_2)$ associated to $\mathfrak{sl}_2$ at level $-\frac{1}{2}$, and $\cB_3$ is isomorphic to $L_{-4/3}(\mathfrak{sl}_2)$. As the simplest examples of affine vertex operator algebras at admissible levels, they have been heavily studied in the mathematical physics literature. Strongly $\cH$-weight-graded $L_{-1/2}(\mathfrak{sl}_2)$-modules and their fusion rules have been studied in \cite{LMRS, Ri1, Ri2, CR1}, while tensor products of strongly $\cH$-weight-graded $L_{-4/3}(\mathfrak{sl}_2)$ have been predicted in \cite{Ga, CR1, ACKR}. Going beyond levels $-\frac{1}{2}$ and $-\frac{4}{3}$, fusion rules for $L_k(\mathfrak{sl}_2)$ at general admissible $k$ have been predicted in \cite{CR2} using a conjectural Verlinde formula, and logarithmic modules and some intertwining operators have been constructed in \cite{Ad-sl2_osp1|2}. For general affine vertex operator algebras at admissible levels, tensor category structure has been obtained on the category of grading-restricted modules in \cite{CHY}, and this category is rigid at least when the finite-dimensional simple Lie algebra is simply laced \cite{Cr2}. For $L_{-1/2}(\mathfrak{sl}_2)$ and $L_{-4/3}(\mathfrak{sl}_2)$, however, the grading-restricted module category is almost trivial. Here, we have obtained for the first time rigid braided tensor category structure on the full categories of finitely-generated strongly $\cH$-weight-graded modules for affine $\mathfrak{sl}_2$ at levels $-\frac{1}{2}$ and $-\frac{4}{3}$.

Let us compare our results in Theorem \ref{thm:B2m_tens_prods} on tensor products of simple modules in $\Oloc_{L_{-1/2}(\mathfrak{sl}_2)}$ with the fusion rules summarized in \cite[Section 2.3]{CR1}. First, simple lower-bounded $L_{-1/2}(\mathfrak{sl}_2)$-modules are labeled in \cite{CR1} by eigenvalues for a certain multiple of our Heisenberg zero-mode $h(0)$. Denoting this multiple by $H(0)$, \cite[Figure 1]{CR1} indicates that one unit of spectral flow changes $H(0)$-eigenvalues by $-\frac{1}{2}$, while Remark \ref{rem:spectral_flow} implies that one unit of spectral flow changes $h(0)$-eigenvalues by $\frac{i}{2}$. Thus $H(0)=ih(0)$. Then comparing the $m=2$ case of Theorem \ref{thm:B2m_simple_objects} with the modules in \cite{CR1}, we obtain the following dictionary:
\begin{equation*}
\cX_{\overline{r+1}}^{(\ell)}\longleftrightarrow\sigma^\ell(\cL_{r}),\qquad\cR_{\overline{r+1}}^{(\ell)}\longleftrightarrow\sigma^\ell(\cS_{r}),\qquad \cG^{(\ell)}_{\overline{1/2-\lambda}}\longleftrightarrow \sigma^\ell(\cE_\lambda)
\end{equation*}
for $\ell\in\ZZ$, $r=0,1$, and $\overline{\lambda}\in(\CC\setminus\ZZ)/2\ZZ$. We can now compare fusion rules. First, the compatibility of spectral flow with fusion in \cite[Equation 2.10]{CR1} is clear in all cases of Theorem \ref{thm:B2m_tens_prods}. Then the $\ell=\ell'=0$ case of Theorem \ref{thm:B2m_tens_prods} yields
\begin{equation*}
\cL_r\tens\cL_{r'} =\cX_{\overline{r+1}}^{(0)}\tens\cX_{\overline{r'+1}}^{(0)}\cong\cX_{\overline{r+r'+1}}^{(0)} =\cL_{r+r'}
\end{equation*}
for $\overline{r},\overline{r'}\in\ZZ/2\ZZ$,
\begin{equation*}
\cL_r\tens\cE_\lambda = \cX_{\overline{r+1}}^{(0)}\tens\cG_{\overline{1/2-\lambda}}^{(0)}\cong\cG^{(0)}_{\overline{1/2-\lambda-r}} = \cE_{r+\lambda}
\end{equation*}
for $\overline{r}\in\ZZ/2\ZZ$, $\overline{\lambda}\in(\CC\setminus\ZZ)/2\ZZ$, and
\begin{align*}
\cE_\lambda\tens\cE_\mu & =\cG^{(0)}_{\overline{1/2-\lambda}}\tens\cG^{(0)}_{\overline{1/2-\mu}}\cong\left\lbrace\begin{array}{lll}
\cR_{\overline{\lambda+\mu-1}}^{(0)} & \text{if} & \lambda+\mu\in\ZZ\\
 \cG_{\overline{1-\lambda-\mu}}^{(-1)}\oplus\cG_{\overline{-\lambda-\mu}}^{(1)} & \text{if} & \lambda+\mu\notin\ZZ\\
\end{array}\right.\nonumber\\
& = \left\lbrace\begin{array}{lll}
\cS_{\lambda+\mu} & \text{if} & \lambda+\mu\in\ZZ\\
\sigma^{-1}(\cE_{\lambda+\mu-1/2})\oplus \sigma(\cE_{\lambda+\mu+1/2}) & \text{if} & \lambda+\mu\notin\ZZ
\end{array}\right. 
\end{align*}
for $\overline{\lambda},\overline{\mu}\in(\CC\setminus\ZZ)/2\ZZ$, in agreement with \cite[Equation 2.11]{CR1}. For $\overline{\lambda},\overline{\mu}\in(\CC\setminus\ZZ)/2\ZZ$, the intertwining operator associated to the surjection $\cE_\lambda\tens\cE_\mu\rightarrow\sigma^{-1}(\cE_{\lambda+\mu-1/2})$ has also been constructed in \cite{Ad-sl2_osp1|2}.

We also compare our results for $\cB_3=L_{-4/3}(\mathfrak{sl}_2)$ with the fusion rules derived in \cite{Ga, CR1}. Comparing spectral flow shifts as before shows that irreducible $L_{-4/3}(\mathfrak{sl}_2)$-modules in \cite[Section 5]{CR1} are labeled by $2i\sqrt{2/3}h(0)$-eigenvalues (modulo $2\ZZ$). Specifically, we have the following correspondences between our notation in Theorems \ref{thm:Bp_mod_class} and \ref{thm:Bp_proj_modules} and the notation in \cite{CR1}:
\begin{equation*}
\cW_1^{(\ell)}\longleftrightarrow\sigma^\ell(\cL_0),\qquad\cW_2^{(\ell+1/2)}\longleftrightarrow\sigma^\ell(\cD^+_{-2/3}),\qquad\cW_3^{(\ell)}\longleftrightarrow\sigma^\ell(\cE_0)
\end{equation*}
for $\ell\in\ZZ$, and for the projective covers,
\begin{equation*}
\cQ_1^{(\ell)}\longleftrightarrow\sigma^\ell(\cS_0),\qquad \cQ_2^{(\ell+1/2)}\longleftrightarrow\sigma^\ell(\cS_{-2/3}^+).
\end{equation*}
Also, for $\lambda+2\ZZ\in(\CC\setminus\frac{2}{3}\ZZ)/2\ZZ$ and $\ell\in\ZZ$, we have the correspondence
\begin{equation*}
\cE^{(\ell)}_{\overline{\sqrt{2/3}(1-3\lambda/4)}}\longleftrightarrow\sigma^\ell(\cE_\lambda).
\end{equation*}
We also note that \cite{CR1} uses alternate notation for some of these modules:
\begin{equation*}
\cW_2^{(-1/2)}\longleftrightarrow\cD^-_{2/3}=\sigma^{-1}(\cD^+_{-2/3}),\qquad\cQ^{(-1/2)}_2\longleftrightarrow\cS^-_{2/3} =\sigma^{-1}(\cS^+_{-2/3}).
\end{equation*}

We can now verify the fusion rule conjectures of \cite{Ga, CR1}. First, as for $L_{-1/2}(\mathfrak{sl}_2)$, the compatibility of spectral flow with fusion in \cite[Equation 2.10]{CR1} is evident from the formulas in Theorem \ref{thm:Bp_tensor_products}. Then the fusion computations of \cite{Ga}, summarized in \cite[Equation 5.2]{CR1}, are verified using Theorem \ref{thm:Bp_tensor_products}:
\begin{equation*}
\cD_{2/3}^-\tens\cD_{-2/3}^+ = \cW_2^{(-1/2)}\tens\cW_2^{(1/2)} \cong \cW_1^{(0)}\oplus\cW_3^{(0)} = \cL_0\oplus\cE_0,
\end{equation*}

\begin{equation*}
\cD_{-2/3}^+\tens\cE_0 = \cW_2^{(1/2)}\tens\cW_3^{(0)} \cong \cQ_2^{(1/2)} = \cS^+_{-2/3},
\end{equation*}
and
\begin{equation*}
\cE_0\tens\cE_0 = \cW_3^{(0)}\tens\cW_3^{(0)} \cong \cQ_1^{(0)}\oplus\cQ_3^{(0)} =\cS_0\oplus\cE_0.
\end{equation*}
We also confirm that the fusion rule conjectured in \cite[Equation 8.2]{Ga} is incomplete, and that the revised conjecture in \cite[Equation 5.26]{CR1} is correct:
\begin{prop}
In the notation of Theorems \ref{thm:Bp_mod_class} and \ref{thm:Bp_proj_modules}, equivalently of \cite{CR1},
\begin{equation*}
\cW_2^{(1/2)}\tens\cQ_1^{(0)}\cong\cQ_2^{(1/2)}\oplus\cW_3^{(-1)}\oplus\cW_3^{(2)}\longleftrightarrow \cD_{-2/3}^+\tens\cS_0\cong\cS_{-2/3}^+\oplus\sigma^{-1}(\cE_0)\oplus\sigma^2(\cE_0).
\end{equation*}
\end{prop}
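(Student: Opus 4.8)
The plan is to observe that $\cW_2^{(1/2)}\tens\cQ_1^{(0)}$ is a projective object of $\Oloc_{\cB_3}$ and then to pin down its decomposition into indecomposable projectives by a composition-factor count. Indeed, $\cQ_1^{(0)}$ is projective by Theorem~\ref{thm:Bp_proj_modules}, and in the rigid tensor category $\Oloc_{\cB_3}$ (rigid by Theorem~\ref{thm:main_thm}) tensoring a projective object with any object preserves projectivity: for any $X$ the functor $\hom(X\tens\cQ_1^{(0)},-)\cong\hom(\cQ_1^{(0)},X'\tens-)$ is exact, being the composite of the exact functor $X'\tens-$ with $\hom(\cQ_1^{(0)},-)$. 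Hence $\cW_2^{(1/2)}\tens\cQ_1^{(0)}$ is projective, and by Krull--Schmidt it is a direct sum of the indecomposable projectives $\cQ_s^{(\ell)}$ (including the simple projectives $\cW_p^{(\ell)}=\cQ_p^{(\ell)}$) classified in Theorem~\ref{thm:Bp_proj_modules}.

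Next I would compute the composition factors. Since $\cW_2^{(1/2)}$ is rigid, the functor $\cW_2^{(1/2)}\tens-$ is exact, so the factors are obtained by tensoring with each composition factor of $\cQ_1^{(0)}$. The Loewy diagram \eqref{eqn:Qsl_Loewy_diag} (with $p=3$, $s=1$, $\ell=0$) shows these factors to be $\cW_1^{(0)}$ with multiplicity two, $\cW_2^{(3/2)}$, and $\cW_2^{(-3/2)}$. Using Theorem~\ref{thm:Bp_tensor_products}(1), together with the fact that $\cW_1^{(0)}\cong\cB_3$ is the unit object,
\begin{align*}
\cW_2^{(1/2)}\tens\cW_1^{(0)} &\cong \cW_2^{(1/2)},\\
\cW_2^{(1/2)}\tens\cW_2^{(3/2)} &\cong \cW_1^{(2)}\oplus\cW_3^{(2)},\\
\cW_2^{(1/2)}\tens\cW_2^{(-3/2)} &\cong \cW_1^{(-1)}\oplus\cW_3^{(-1)}.
\end{align*}
Thus the composition factors of $\cW_2^{(1/2)}\tens\cQ_1^{(0)}$ are $\cW_2^{(1/2)}$ with multiplicity two, together with $\cW_1^{(2)}$, $\cW_1^{(-1)}$, $\cW_3^{(2)}$, and $\cW_3^{(-1)}$, each with multiplicity one.

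Finally I would match these factors to indecomposable projectives. For $p=3$ the composition factors of any $\cQ_1^{(\ell)}$ or $\cQ_2^{(\ell)}$ are all of type $\cW_1$ or $\cW_2$, so the simple projectives $\cW_3^{(2)}$ and $\cW_3^{(-1)}$, appearing once each, cannot occur inside a $\cQ_1$ or $\cQ_2$ summand and must split off as direct summands. The remaining multiset $\{\cW_2^{(1/2)},\cW_2^{(1/2)},\cW_1^{(2)},\cW_1^{(-1)}\}$ is precisely the set of composition factors of $\cQ_2^{(1/2)}$ read off from \eqref{eqn:Qsl_Loewy_diag}, and no other collection of indecomposable projectives realizes it: two copies of $\cQ_2^{(1/2)}$ would produce four $\cW_2^{(1/2)}$-factors, while any $\cQ_1^{(\ell)}$ contributes two \emph{equal} $\cW_1$-factors rather than the distinct factors $\cW_1^{(2)},\cW_1^{(-1)}$. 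By Krull--Schmidt this forces $\cW_2^{(1/2)}\tens\cQ_1^{(0)}\cong\cQ_2^{(1/2)}\oplus\cW_3^{(2)}\oplus\cW_3^{(-1)}$, the claimed isomorphism. The only genuine work lies in this last bookkeeping step, namely verifying that the computed multiset of simple factors is realized by a \emph{unique} collection of indecomposable projectives; this is immediate from the explicit Loewy data recorded in Theorem~\ref{thm:Bp_proj_modules}.
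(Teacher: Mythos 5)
Your proposal is correct and follows essentially the same route as the paper's proof: both establish projectivity of $\cW_2^{(1/2)}\tens\cQ_1^{(0)}$ from rigidity of $\cW_2^{(1/2)}$ and projectivity of $\cQ_1^{(0)}$, use exactness of $\cW_2^{(1/2)}\tens\bullet$ together with the Loewy diagram \eqref{eqn:Qsl_Loewy_diag} and Theorem \ref{thm:Bp_tensor_products}(1) to obtain the six composition factors, and then identify the decomposition into indecomposable projectives. The only (inessential) difference is in the last step: the paper gets $\cQ_2^{(1/2)}$ as a direct summand immediately from the surjection $\cW_2^{(1/2)}\tens\cQ_1^{(0)}\twoheadrightarrow\cW_2^{(1/2)}$ and the defining property of projective covers, whereas you arrive at the same conclusion by a Krull--Schmidt bookkeeping argument showing the computed multiset of composition factors is realized by a unique collection of indecomposable projectives.
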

\begin{proof}
Since $\cW_2^{(1/2)}$ is rigid, $\cW_2^{(1/2)}\tens\bullet$ is exact. Thus there is a surjection
\begin{equation*}
\cW_2^{(1/2)}\tens\cQ_1^{(0)}\twoheadrightarrow\cW_2^{(1/2)}\tens\cW_1^{(0)}\xrightarrow{\cong}\cW_2^{(1/2)}.
\end{equation*}
Moreover, because $\cW_2^{(1/2)}$ is rigid and $\cQ_1^{(0)}$ is projective, $\cW_2^{(1/2)}\tens\cQ_1^{(0)}$ is also projective and thus properties of projective covers imply $\cQ_2^{(1/2)}$ is a direct summand of $\cW_2^{(1/2)}\tens\cQ_1^{(0)}$. On the other hand, the Loewy diagram \eqref{eqn:Qsl_Loewy_diag} of $\cQ_1^{(0)}$ and the exactness of $\cW_2^{(1/2)}\tens\bullet$ imply that there are exact sequences
\begin{equation*}
0\longrightarrow\cW_2^{(1/2)}\tens\cZ_1^{(0)}\longrightarrow\cW_2^{(1/2)}\tens\cQ_1^{(0)}\longrightarrow\cW_2^{(1/2)}\tens\cW_1^{(0)}\longrightarrow 0,
\end{equation*}
where $\cZ_1^{(0)}$ is the kernel of the surjection $\cQ_1^{(0)}\twoheadrightarrow\cW_1^{(0)}$, and 
\begin{equation*}
0\longrightarrow\cW_2^{(1/2)}\tens\cW_1^{(0)}\longrightarrow\cW_2^{(1/2)}\tens\cZ_1^{(0)}\longrightarrow(\cW_2^{(1/2)}\tens\cW_2^{(-3/2)})\oplus(\cW_2^{(1/2)}\tens\cW_2^{(3/2)})\longrightarrow 0.
\end{equation*}
Thus using Theorem \ref{thm:Bp_tensor_products}(1), $\cW_2^{(1/2)}\tens\cQ_1^{(0)}$ has six composition factors:
\begin{equation*}
\cW_2^{(1/2)}, \cW_2^{(1/2)}, \cW_1^{(-1)}, \cW_1^{(2)},\cW_3^{(-1)},\cW_3^{(2)}.
\end{equation*}
The first four are the composition factors of $\cQ_2^{(1/2)}$,  and the last two are projective. So the complement of the direct summand $\cQ_2^{(1/2)}\subseteq\cW_2^{(1/2)}\tens\cQ_1^{(0)}$ is a direct sum $\cW_3^{(-1)}\oplus\cW_3^{(2)}$.
\end{proof}

Via associativity, the above proposition leads to formulas for further tensor products involving projective modules, which are stated in \cite[Equation 5.27]{CR1}. We now turn to tensor products of the irreducible projective $L_{-4/3}(\mathfrak{sl}_2)$-modules. For $\lambda,\mu\in\CC$ such that $\lambda+\mu\in\frac{2}{3}\ZZ$, Theorem \ref{thm:Bp_tensor_products}(3) implies
\begin{align*}
\cE_\lambda\tens\cE_\mu & = \cE^{(0)}_{\overline{\sqrt{2/3}(1-3\lambda/4)}}\tens\cE^{(0)}_{\overline{\sqrt{2/3}(1-3\mu/4)}} \cong \left\lbrace\begin{array}{lll}
\cQ_1^{(0)}\oplus\cQ_3^{(0)} & \text{if} & \lambda+\mu\in 2\ZZ\\
\cQ_2^{(1/2)}\oplus\cQ_3^{(-1)} & \text{if} & \lambda+\mu\in -\frac{2}{3}+2\ZZ\\
\cQ_3^{(1)}\oplus\cQ_2^{(-1/2)} & \text{if} & \lambda+\mu\in\frac{2}{3}+2\ZZ\\
\end{array}\right. \nonumber\\
& =\left\lbrace\begin{array}{lll}
\cE_0\oplus\cS_0 & \text{if} & \lambda+\mu\in 2\ZZ\\
\sigma^{-1}(\cE_0) \oplus S_{-2/3}^+& \text{if} & \lambda+\mu\in -\frac{2}{3}+2\ZZ\\
\sigma(\cE_0)\oplus \cS_{2/3}^- & \text{if} & \lambda+\mu\in\frac{2}{3}+2\ZZ\\
\end{array} \right. ,
\end{align*}
and for $\lambda,\mu\in\CC$ such that $\lambda+\mu\notin\frac{2}{3}\ZZ$, Theorem \ref{thm:Bp_tensor_products}(4) implies
\begin{align*}
\cE_\lambda\tens\cE_\mu & = \cE^{(0)}_{\overline{\sqrt{2/3}(1-3\lambda/4)}}\tens\cE^{(0)}_{\overline{\sqrt{2/3}(1-3\mu/4)}}\nonumber\\
& \cong \cE^{(-1)}_{\overline{\sqrt{2/3}(2-3(\lambda+\mu)/4)}}\oplus\cE^{(0)}_{\overline{\sqrt{2/3}(1-3(\lambda+\mu)/4)}}\oplus\cE^{(1)}_{\overline{\sqrt{2/3}(-3(\lambda+\mu)/4)}}\nonumber\\
&=\sigma^{-1}(\cE_{\lambda+\mu-4/3})\oplus\cE_{\lambda+\mu}\oplus\sigma(\cE_{\lambda+\mu+4/3}),
\end{align*}
in agreement with \cite[Equation 5.22]{CR1}.

\subsubsection{Bershadsky-Polyakov algebras at levels \texorpdfstring{$-\frac{5}{3}$}{-5/3} and \texorpdfstring{$-\frac{9}{4}$}{-9/4}}

The Bershadsky-Polyakov algebras \cite{Po, Be} are the subregular affine $W$-algebras associated to $\mathfrak{sl}_3$. Thus the $\frac{1}{2}\ZZ$-graded vertex operator algebras $\cB_2^3$ and $\cB_4$ are Bershadsky-Polyaokov algebras at the admissible levels $-3+\frac{4}{3}=-\frac{5}{3}$ and $-3+\frac{3}{4}=-\frac{9}{4}$, respectively. As some of the simplest $W$-algebras, Bershadsky-Polyakov algebras at admissible levels have received a lot of attention recently. They are rational when the denominator of the admissible level is $2$ \cite{Ar-BP}. When the denominator is greater than $2$, they are still quasi-lisse (in the sense of \cite{AK}) by the discussion following \cite[Lemma 6.3]{AK} and thus have finitely many simple grading-restricted modules, consistent with Theorem \ref{thm:grading-restricted}(3).

Simple strongly $\cH$-weight-graded modules for Bershadsky-Polyakov algebras at admissible levels have been constructed and classified in \cite{AK1, AK2, FKR, AKR}. It is also shown in \cite{FKR} that the category of highest-weight modules is semisimple (with finitely many objects), consistent with Theorem \ref{thm:hw_cat_ss}. Thus for the levels $-\frac{5}{3}$ and $-\frac{9}{4}$ covered in this work, Theorem \ref{thm:grad_rest_ss} and Remark \ref{rem:hw_ss} strengthen this result of \cite{FKR}, showing that the category of finite-length generalized modules with highest-weight composition factors is semisimple, even without assuming $h(0)$-semisimplicity. In \cite{FR}, fusion rules of simple strongly $\cH$-weight-graded ($\theta$-twisted) modules for Bershadsky-Polyakov algebras at admissible levels were predicted using a conjectural Verlinde formula. Using our rigid tensor categories of such modules, we can now confirm these conjectures at levels $-\frac{5}{3}$ and $-\frac{9}{4}$. For further work on subregular $W$-algebras in type $A$ beyond Bershadsky-Polyakov algebras, see \cite{Fe}.

Consider the Bershadsky-Polyakov algebra $\cB_2^3$ at level $-\frac{5}{3}$. By Theorem \ref{thm:B2m_grad-rest}, $\cB_2^3$ has nine simple grading-restricted local modules, namely the $\cX_{\overline{r}}^{(\ell)}$ for $r=1,2,3$ and $\ell=-1,0,1$, in agreement with \cite[Section 5.3.1]{FR} (see also \cite[Section 5]{AK1}, where a different conformal vector for $\cB_2^3$ is used, leading to a different enumeration of grading-restricted modules). By Theorem \ref{thm:B2m_grad-rest}(3), only the three simple modules $\cX_{\overline{r}}^{(0)}$ with $r=1,2,3$ are $C_1$-cofinite, and Theorem \ref{thm:B2m_tens_prods}(1) shows that these modules are simple currents with $\ZZ/3\ZZ$ fusion rules. In agreement with \cite[Proposition 5.9]{FR}, these are the fusion rules of the  simple modules for the $\mathfrak{sl}_3$-root lattice vertex operator algebra $L_1(\mathfrak{sl}_3)$ (but recall from Proposition \ref{prop:qHr_braided_tensor?} and Remark \ref{rem:qHr_braided_tensor} that it is more natural to compare $C_1$-cofinite $\cB_2^3$-modules with those of $L_{-5/3}(\mathfrak{sl}_3)$).

Perhaps the most interesting remaining fusion rules for $\cB_2^3$ to consider are those of the lower-bounded $\theta$-twisted modules $\cG_{\overline{\lambda}}^{(0)}$ for $\overline{\lambda}\in(\CC\setminus\ZZ)/3\ZZ$. To make our labeling of these modules more similar to that of \cite{FR}, let us introduce alternate notation
\begin{equation*}
\cG^{(\ell)}_{\overline{1/2-3\lambda}}\longleftrightarrow\sigma^\ell(\cE_\lambda),\qquad\cR_{\overline{r+1}}^{(\ell)} \longleftrightarrow\sigma^\ell(\cS_r)
\end{equation*}
for $\ell\in\frac{1}{2}\ZZ$, $\lambda\notin\frac{1}{6}+\frac{1}{3}\ZZ$, and $\overline{r}\in\ZZ/3\ZZ$. Then Theorem \ref{thm:B2m_tens_prods}(2) becomes
\begin{align*}
\cE_\lambda\tens\cE_\mu & = \cG^{(0)}_{1/2-3\lambda}\tens\cG^{(0)}_{1/2-3\mu}\cong\left\lbrace\begin{array}{lll}
\cR_{\overline{3(\lambda+\mu)+1}}^{(0)} & \text{if} & \lambda+\mu\in\frac{1}{3}\ZZ\\
\cG_{\overline{1-3(\lambda-\mu)}}^{(-3/2)}\oplus\cG_{\overline{-3(\lambda+\mu)}}^{(3/2)} & \text{if} & \lambda+\mu\notin\frac{1}{3}\ZZ\\
\end{array}\right.\nonumber\\
& = \left\lbrace\begin{array}{lll}
\cS_{3(\lambda+\mu)} & \text{if} & \lambda+\mu\in\frac{1}{3}\ZZ\\
\sigma^{-3/2}(\cE_{\lambda+\mu-1/6})\oplus\sigma^{3/2}(\cE_{\lambda+\mu+1/6}) & \text{if} & \lambda+\mu\notin\frac{1}{3}\ZZ\\
\end{array}\right.
\end{align*}
for $\lambda,\mu\in\CC\setminus(\frac{1}{6}+\frac{1}{3}\ZZ)$, which is consistent with \cite[Equation 5.40]{FR}. We have also proved the conjecture in \cite{FR} that when $\lambda+\mu\in\frac{1}{3}\ZZ$, the tensor product $\cE_\lambda\tens\cE_\mu$ is the indecomposable projective cover of one of the simple currents in $\cC^1_{\cB_2^3}$.

Finally, we briefly discuss the Bershadsky-Polyakov algebra $\cB_4$ at level $-\frac{9}{4}$; its modules have also been studied in \cite[Section 6]{AK1}, and its fusion rules have been conjectured in \cite[Section 6.3]{FR}. It has six grading-restricted local modules, $\cW_1^{(0)}$, $\cW_1^{(\pm 1)}$, $\cW_2^{(\pm 1/2)}$, and $\cW_3^{(0)}$, which are depicted in \cite[Equation 6.43]{FR}. By Theorem \ref{thm:Bp_grad-rest}(3), only the first of these modules is $C_1$-cofinite, and thus the semisimple tensor category $\cC^1_{\cB_4}$ is trivial (equivalent to the tensor category of finite-dimensional vector spaces). We discuss how the conjectural fusion rules in \cite{FR} correspond to our results in Theorem \ref{thm:Bp_tensor_products}: \cite[Equation 6.48]{FR} corresponds to Theorem \ref{thm:Bp_tensor_products}(4),
\begin{equation*}
\cE_{\overline{\lambda}}^{(\ell)}\tens\cE_{\overline{\mu}}^{(\ell')}\cong \cE_{\overline{\lambda+\mu}}^{(\ell+\ell'-3/2)}\oplus\cE_{\overline{\lambda+\mu-1/\sqrt{2}}}^{(\ell+\ell'-1/2)}\oplus\cE_{\overline{\lambda+\mu}}^{(\ell+\ell'+1/2)}\oplus\cE_{\overline{\lambda+\mu-1/\sqrt{2}}}^{(\ell+\ell'+3/2)}
\end{equation*}
for $\overline{\lambda},\overline{\mu},\overline{\lambda+\mu}\in(\CC\setminus\frac{1}{2\sqrt{2}}\ZZ)/\sqrt{2}\ZZ$; \cite[Equation 6.49]{FR} corresponds to
\begin{equation*}
\cW_3^{(\ell)}\tens\cE^{(\ell')}_{\overline{\lambda}} \cong\cE_{\overline{\lambda+1/\sqrt{2}}}^{(\ell+\ell'-1)}\oplus\cE_{\overline{\lambda}}^{(\ell+\ell')}\oplus\cE_{\overline{\lambda+1/\sqrt{2}}}^{(\ell+\ell'+1)}
\end{equation*}
for $\overline{\lambda}\in(\CC\setminus\frac{1}{2\sqrt{2}}\ZZ)/\sqrt{2}\ZZ$; \cite[Equation 6.50]{FR} corresponds to
\begin{equation*}
\cW_3^{(\ell)}\tens\cW^{(\ell')}_3\cong \cW_1^{(\ell+\ell')}\oplus\cQ_3^{(\ell+\ell')},
\end{equation*}
though for this last equation, \cite{FR} only gives Grothendieck fusion rules, that is, the composition factors of the tensor product module. Next, \cite[Equation 6.52]{FR} corresponds to
\begin{equation*}
\cW_2^{(\ell-1/2)}\tens\cE_{\overline{\lambda}}^{(\ell')}\cong\cE_{\overline{\lambda+1/2\sqrt{2}}}^{(\ell+\ell'-1)}\oplus\cE_{\overline{\lambda-1/2\sqrt{2}}}^{(\ell+\ell')}
\end{equation*}
and \cite[Equation 6.53]{FR} corresponds to
\begin{equation*}
\cW_2^{(\ell+1/2)}\tens\cW_2^{(\ell'-1/2)}\cong \cW_1^{(\ell+\ell')}\oplus \cW_3^{(\ell+\ell')}.
\end{equation*}
Finally, \cite[Equation 6.54]{FR} corresponds to
\begin{equation*}
\cW_3^{(\ell)}\tens\cW_2^{(\ell'-1/2)}\cong\cW_2^{(\ell+\ell'-1/2)}\oplus\cQ_4^{(\ell+\ell'-1/2)}.
\end{equation*}
Thus the Verlinde formula of \cite{FR} for the Bershadsky-Polyakov algebra at level $-\frac{9}{4}$ yields the correct Grothendieck fusion rules.

\section{Rigid tensor supercategories for some principal \texorpdfstring{$W$}{W}-superalgebras}\label{sec:super}

In this section, we describe the categories of finitely-generated strongly $\cH$-weight-graded modules for the Feigin-Semikhatov duals of $\cB_p$ and $\cB_2^{\ZZ/m\ZZ}$. These are affine $W$-superalgebras associated to principal nilpotent elements in $\mathfrak{sl}_{n\vert 1}$ \cite{CGN}: The physicists Kazama and Suzuki originally related the WZW theory of $\mathfrak{sl}_2$ to a conformal field theory with $N=2$ superconformal symmetry \cite{KS}, and then Feigin and Semikhatov realized that a similar idea applies to subregular $W$-algebra of type $A$ \cite{FS}.

\subsection{The Feigin-Semikhatov dual of \texorpdfstring{$\cB_p$}{Bp}}

Let $\mathcal{S}_p$ for $p\in\ZZ_{\geq 3}$ be the Feigin-Semikhatov dual of $\mathcal{B}_p$, that is, it is the coset of a diagonal Heisenberg subalgebra in the tensor product of $\mathcal{B}_p$ with the latttice vertex operator superalgebra $V_\ZZ$. From \cite[Theorem~4.4(2)]{CGN},  $\cS_p$ is the simple principal $W$-superalgebra of $\mathfrak{sl}_{p-1\vert 1}$ at level $-(p-2)+\frac{p}{p-1}$. Specifically,
\begin{equation}\label{eqn:Sp_def}
\cS_p =\mathrm{Com}(\til{\cH}_1,\cB_p\otimes V_\ZZ)
\end{equation}
where $V_\ZZ$ is the lattice vertex operator superalgebra associated to the lattice $\ZZ\phi$ with bilinear form such that $(\phi,\phi)=1$, and $\til{\cH}_1$ is the Heisenberg vertex operator algebra associated to the abelian Lie algebra $\CC\left(-h/\lambda_p + \phi\right)$; recall that $\lambda_p^2=-\frac{p}{2}$.

 The Heisenberg  subalgebra $\til{\cH}_2\subseteq\cB_p\otimes V_\ZZ$ associated to $\CC(\lambda_p h+\phi)$ commutes with $\til{\cH}_1$, so the rank-$2$ Heisenberg subalgebra of $\cB_p\otimes V_\ZZ$ is isomorphic to $\til{\cH}_1\otimes\til{\cH}_2$, and $\cS_p$ is an extension of $\til{\cH}_2\otimes\cM(p)$. We then identify $\til{\cH}_2\xrightarrow{\cong}\cH$ via $\frac{1}{\sqrt{\lambda_p^2+1}}(\lambda_p h+\phi)\mapsto h$, so that $\cS_p$ is an extension of $\cH\otimes\cM(p)$.
\begin{prop}\label{prop:Sp_decomp}
As an $\cH\otimes\cM(p)$-module
\begin{equation}\label{eqn:Sp_decomp}
\mathcal{S}_p \cong \bigoplus_{k \in \ZZ} \cF^\cH_{k\mu_p} \otimes \cM_{1-k,1},
\end{equation}
where $\mu_p \in i\mathbb{R}$ satisfies $\mu_p^2 = \frac{2-p}{2}$. 
\end{prop}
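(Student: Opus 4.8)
The plan is to compute the coset in \eqref{eqn:Sp_def} explicitly at the level of $\cH\otimes\cM(p)$-modules by diagonalizing the rank-two Heisenberg subalgebra of $\cB_p\otimes V_\ZZ$ and then extracting the $\til\cH_1$-commutant summand by summand. First I would record the decompositions of the two tensor factors: $\cB_p$ decomposes as in \eqref{eqn:Bp_decomp}, while as a module for its Heisenberg subalgebra $\cH_\phi$ associated to $\CC\phi$, the lattice vertex operator superalgebra $V_\ZZ$ decomposes as $V_\ZZ\cong\bigoplus_{n\in\ZZ}\cF^\phi_n$, where $\cF^\phi_n$ is the $\cH_\phi$-Fock module whose lowest-weight vector has $\phi(0)$-eigenvalue $n$. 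Hence, as a module for the rank-two Heisenberg $\cH_h\otimes\cH_\phi$ tensored with $\cM(p)$,
\begin{equation*}
\cB_p\otimes V_\ZZ\cong\bigoplus_{k,n\in\ZZ}\left(\cF^\cH_{k\lambda_p}\otimes\cF^\phi_n\right)\otimes\cM_{1-k,1}.
\end{equation*}
Since the statement concerns only the $\cH\otimes\cM(p)$-module structure, the $\ZZ/2\ZZ$-parity carried by $V_\ZZ$ plays no role here.

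Next I would pass to the orthogonal basis of the rank-two Heisenberg given by the generators $a=-h/\lambda_p+\phi$ of $\til\cH_1$ and $b=\lambda_p h+\phi$ of $\til\cH_2$. A short computation using $\langle h,h\rangle=1$, $(\phi,\phi)=1$, $\lambda_p^2=-\tfrac p2$, and the orthogonality of $h$ and $\phi$ gives $\langle a,b\rangle=0$, $\langle a,a\rangle=\tfrac{p-2}{p}$, and $\langle b,b\rangle=\lambda_p^2+1=\mu_p^2$; in particular $b-a=\tfrac{\mu_p^2}{\lambda_p}h$, so $a$ and $b$ span the same space as $h$ and $\phi$ and $\cH_h\otimes\cH_\phi=\til\cH_1\otimes\til\cH_2$. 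Consequently each rank-two Fock summand factors as a tensor product of rank-one Fock modules for $\til\cH_1$ and $\til\cH_2$, with respective charges the $a(0)$- and $b(0)$-eigenvalues on its lowest-weight vector. On the $(k,n)$-summand these are
\begin{equation*}
a(0)=-k+n,\qquad b(0)=k\lambda_p^2+n=-\tfrac p2 k+n .
\end{equation*}

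The key step is to identify the $\til\cH_1$-commutant. Because $a$ is orthogonal to both $b$ and $\cM(p)$, the mode $a(0)$ acts as the scalar $-k+n$ on the entire $(k,n)$-summand, so its $\til\cH_1$-commutant is nonzero precisely when $-k+n=0$, i.e.\ $n=k$; for such summands the commutant is the $\til\cH_1$-vacuum line tensored with the surviving $\til\cH_2$-Fock factor and $\cM_{1-k,1}$. On the $n=k$ summand the $\til\cH_2$-charge is $b(0)=k(\lambda_p^2+1)=k\mu_p^2$. Under the identification $\til\cH_2\xrightarrow{\cong}\cH$ sending $b/\mu_p\mapsto h$, the $h(0)$-eigenvalue equals $\mu_p^{-1}$ times the $b(0)$-eigenvalue, so the relevant $\til\cH_2$-Fock module becomes $\cF^\cH_{k\mu_p^2/\mu_p}=\cF^\cH_{k\mu_p}$. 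Assembling the surviving summands over $k\in\ZZ$ yields exactly \eqref{eqn:Sp_decomp}.

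The main obstacle I anticipate is not the bilinear-form arithmetic but the rigorous justification that taking the $\til\cH_1$-commutant commutes with the direct-sum decomposition and reduces, on each rank-two Fock summand, to selecting the charge-zero pieces. This rests on two standard structural facts about Heisenberg Fock modules that I would invoke from \cite[Theorem 1.7.3]{FLM}: that a rank-two Fock module factors as a tensor product of rank-one Fock modules along any orthogonal decomposition of the Heisenberg, and that the commutant of a rank-one Heisenberg inside a Fock module $\cF_\alpha$ is $\CC\vac$ when $\alpha=0$ and $0$ otherwise. Once these are in hand, the remaining content is the elementary charge computation above.
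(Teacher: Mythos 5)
Your proposal is correct and follows essentially the same route as the paper's proof: decompose $\cB_p\otimes V_\ZZ$ into rank-two Fock modules tensored with $\cM_{1-k,1}$, re-express each Fock factor along the orthogonal pair $\til\cH_1\otimes\til\cH_2$, observe that the $\til\cH_1$-commutant selects exactly the charge-zero summands $n=k$ with $\til\cH_2$-charge $k\mu_p^2$, and rescale via $\til\cH_2\cong\cH$ to get $\cF^\cH_{k\mu_p}$. Your version merely makes explicit the bilinear-form arithmetic and the standard Fock-module facts that the paper's chain of equalities uses implicitly.
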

\begin{proof}
In general, for a Heisenberg vertex algebra associated with an abelian Lie algebra $\CC\psi$, we use $\cF^\psi_\lambda$ for $\lambda\in\CC$ to denote the Heisenberg Fock module whose lowest-conformal-weight vector $v_\lambda$ satisfies
\begin{equation*}
\psi(n)v_\lambda = \delta_{n,0}\lambda v_\lambda
\end{equation*}
for $n\in\ZZ_{\geq 0}$. Thus from \eqref{eqn:Sp_def} and \eqref{eqn:Bp_decomp},
\begin{align*}
\cS_p &= {\rm Com}(\widetilde{\cH}_1, \cB_p \otimes V_{\ZZ})= {\rm Com}\bigg(\widetilde{\cH}_1,\bigoplus_{k,n \in \ZZ} \cF_{k\lambda_p}^h \otimes \cM_{1-k,1} \otimes \cF^{\phi}_n\bigg)\\
&= {\rm Com}\bigg(\widetilde{\cH}_1,\bigoplus_{k,n \in \ZZ} \cF_{-k+n}^{-h/\lambda_p+\phi} \otimes \cF^{\lambda_p h+\phi}_{\lambda_p^2 k+n}\otimes \cM_{1-k,1}\bigg)= \bigoplus_{k \in \ZZ} \cF^{\lambda_p h+\phi}_{k(1-p/2)} \otimes \cM_{1-k,1} \\
&=\bigoplus_{k \in \ZZ} \cF^\cH_{k\mu_p} \otimes \cM_{1-k,1},
\end{align*}
where the last step uses the isomorphism $\til{\cH}_2\rightarrow\cH$ in the discussion above.
\end{proof}

We set $\mu_p=i\sqrt{\frac{p-2}{2}}$ for some fixed choice $i$ of square root of $-1$. Then in the notation of Sections \ref{sec:main_results} and \ref{sec:detailed_structure}, 
\begin{equation*}
J=\cF^\cH_{-i\sqrt{(p-2)/2}}\otimes\cM_{2,1},\qquad\lambda_J=-i\sqrt{(p-2)/2},\qquad r_J=1.
\end{equation*}
Thus $\lambda_J^2+\frac{p}{2} r_J^2 =1>0$. The lowest conformal weight of $\cF^\cH_{k\mu_p} \otimes \cM_{1-k,1}$ is
\begin{equation*}
\frac{1}{2}(k\mu_p)^2 + h_{1+\vert k\vert,1}=\frac{1}{2}\vert k\vert(\vert k\vert+p-1),
\end{equation*}
so $\mathcal{S}_p$ is $\ZZ$-graded by conformal weights if $p$ is even and $\frac{1}{2}\ZZ$-graded if $p$ is odd.

\begin{rem}
We can also define the simple current extension \eqref{eqn:Sp_decomp} for $p=2$, in which case we get the affine vertex superalgebra associated to $\mathfrak{sl}_{1\vert 1}$. But the Heisenberg vertex algebra $\cH$ is then commutative since $\mu_2=0$, and we cannot apply the results of Section \ref{sec:main_results} since they require the individual summands of \eqref{eqn:Sp_decomp} to have distinct $h(0)$-eigenvalues. From the tensor category point of view, it is better to define $\mathcal{S}_2$ to be the affine vertex operator superalgebra associated to $\mathfrak{gl}_{1\vert 1}$. But since the tensor structure on the representation category of this superalgebra has already been described in detail \cite{CMY3}, we do not consider the $p=2$ case of \eqref{eqn:Sp_decomp} further in this paper. 
\end{rem}

Similar to $\cB_p$, \eqref{eqn:Sp_decomp} gives the decomposition of $\cS_p$ into $h(0)$-eigenspaces, and $\cS_p=\bigoplus_{\lambda\in\CC} \cS_p^{(\lambda)}$ is a strongly $\CC$-graded vertex operator superalgebra such that $\cS_p^{(\lambda)}=\cF^\cH_{k\mu_p}\otimes\cM_{1-k,1}$ if $\lambda=k\mu_p$ for some $k\in\ZZ$, and $\cS_p^{(\lambda)}=0$ otherwise. Moreover, $\CC/\ZZ$ acts on $\cS_p$ by automorphisms with fixed-point subalgebra $\cH\otimes\cM(p)$, so in particular the finite cyclic orbifold $\cS_p^{\ZZ/m\ZZ}$ is a simple current extension of $\cH\otimes\cM(p)$ for any $m\in\ZZ_+$. For $m=2$, the automorphism group $\ZZ/2\ZZ$ of $\cS_p$ is generated by the involution $\theta$, and $\cS_p^{\ZZ/2\ZZ}$ is the even vertex operator subalgebra of $\cS_p$ (it is also the $\ZZ$-graded subalgebra when $p$ is odd). Thus $\theta$-twisted $\cS_p$-modules form the Ramond sector of the tensor category $\cO_{\cS_p}$.

By Theorem \ref{thm:main_thm}, the category $\cO_{\cS_p}=\Oloc_{\cS_p}\oplus\Otw_{\cS_p}$ is a rigid braided $\ZZ/2\ZZ$-crossed tensor supercategory, and $\Oloc_{\cS_p}$ is a rigid braided tensor subcategory of $\cO_{\cS_p}$ which is ribbon when $p$ is even. The classification of simple objects in $\cO_{\cS_p}$ comes from Theorem \ref{thm:simple_A-module_classification}:

\begin{thm}\label{thm:Sp_mod_class}
Simple objects in $\cO_{\cS_p}$ and $\Oloc_{\cS_p}$ are as follows:
\begin{enumerate}
\item Every simple object in $\cO_{\cS_p}$ is isomorphic to one of the following induced modules:
\begin{equation*}
\cS\cW_s^{(\ell)} :=\cF\bigg(\cF^\cH_{i\ell\sqrt{\frac{2}{p-2}}}\otimes\cM_{1,s}\bigg)
\end{equation*}
for $1\leq s\leq p$, $\ell\in\frac{1}{2}\ZZ$, or
\begin{equation*}
\cS\cE_{\lambda}^{(\ell)}=\cF\bigg(\cF^{\cH}_{i\big(\lambda\sqrt{\frac{p}{p-2}}+\big(\ell-\frac{p-1}{2}\big)\sqrt{\frac{2}{p-2}}\big)}\otimes\cF_{\lambda}\bigg)
\end{equation*}
for $\lambda\in\CC\setminus L^\circ$, $\ell\in\frac{1}{2}\ZZ$.

\item The following are  all isomorphisms between simple modules from part (1): $\cS\cE_\lambda^{(\ell)}\cong\cS\cE_{\lambda-n\sqrt{p}/2}^{(\ell+n)}$
for $\lambda\in\CC\setminus L^\circ$, $\ell\in\frac{1}{2}\ZZ$, and $n\in\ZZ$.

\item The module $\cS\cW_s^{(\ell)}$ is an object of $\Oloc_{\cS_p}$ if and only if $\ell\in\frac{s-1}{2}+\ZZ$, and the module $\cS\cE_\lambda^{(\ell)}$ is an object of $\Oloc_{\cS_p}$ if and only if $\ell\in\frac{p-1}{2}+\ZZ$.
\end{enumerate}

\end{thm}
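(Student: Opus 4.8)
The plan is to obtain Theorem \ref{thm:Sp_mod_class} as the specialization of the general classification in Theorem \ref{thm:simple_A-module_classification} to the simple current extension $A=\cS_p$ of $\cH\otimes\cM(p)$. The discussion preceding the theorem has already exhibited $\cS_p=\bigoplus_{n\in\ZZ}J^n$ in the required form, with
\begin{equation*}
J=\cF^\cH_{\lambda_J}\otimes\cM_{r_J+1,1},\qquad \lambda_J=-i\sqrt{\tfrac{p-2}{2}},\qquad r_J=1.
\end{equation*}
First I would verify the standing hypotheses of Sections \ref{sec:main_results} and \ref{sec:detailed_structure}: since $p\geq 3$ we have $\lambda_J\neq 0$, which guarantees that $J^n\tens W\cong W$ only for $n=0$ for every simple $\cH\otimes\cM(p)$-module $W$ in $\cC$; and $\lambda_J^2+\tfrac{p}{2}r_J^2=-\tfrac{p-2}{2}+\tfrac{p}{2}=1\in\ZZ_{\geq 0}$, so $\cS_p$ is $\tfrac12\ZZ_{\geq 0}$-graded with finite-dimensional weight spaces. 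With these checks in place, Theorem \ref{thm:simple_A-module_classification} applies directly.

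For part (1), I would substitute this data into the general simple modules and simplify the Heisenberg Fock labels using $\tfrac{1}{\lambda_J}=i\sqrt{\tfrac{2}{p-2}}$ and $\alpha_+=\sqrt{2p}$. Taking $r=1$, the label of $\cW_{1,s}^{(\ell)}$ is $\ell/\lambda_J=i\ell\sqrt{2/(p-2)}$, which is precisely $\cS\cW_s^{(\ell)}$; and the label of $\cE_\lambda^{(\ell)}$ becomes
\begin{equation*}
\frac{1}{\lambda_J}\Big(\tfrac{\alpha_+\lambda-p+1}{2}+\ell\Big)
=i\sqrt{\tfrac{2}{p-2}}\,\Big(\lambda\sqrt{\tfrac{p}{2}}+\ell-\tfrac{p-1}{2}\Big)
=i\Big(\lambda\sqrt{\tfrac{p}{p-2}}+\big(\ell-\tfrac{p-1}{2}\big)\sqrt{\tfrac{2}{p-2}}\Big),
\end{equation*}
which is precisely $\cS\cE_\lambda^{(\ell)}$. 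Since Theorem \ref{thm:simple_A-module_classification}(1) asserts that every simple object of $\cO_A$ is some $\cW_{r,s}^{(\ell)}$ or $\cE_\lambda^{(\ell)}$, and since the $n=1-r$ instance of Theorem \ref{thm:simple_A-module_classification}(2) (using $r_J=1$ and $\lambda_J^2+r_J^2 p/2=1$) gives $\cW_{r,s}^{(\ell)}\cong\cW_{1,s}^{(\ell+1-r)}$, the two families $\cS\cW_s^{(\ell)}$ and $\cS\cE_\lambda^{(\ell)}$ exhaust all simple objects, which is part (1).

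Parts (2) and (3) then follow by reading off the corresponding clauses of Theorem \ref{thm:simple_A-module_classification}, whose completeness rests on the biconditional in Proposition \ref{prop:general_simple_module_classification}. For (2), the general identification $\cW_{r,s}^{(\ell)}\cong\cW_{r+n,s}^{(\ell+n)}$ fixes $s$ and moves the first index off $r=1$ when $n\neq 0$, so the $\cS\cW_s^{(\ell)}$ are pairwise non-isomorphic; the $\cE$-identification specializes to $\cS\cE_\lambda^{(\ell)}\cong\cS\cE_{\lambda-n\alpha_+/2}^{(\ell+n)}$ with $\alpha_+/2=\sqrt{p/2}$, and no $\cS\cW$ is isomorphic to any $\cS\cE$ since they induce from non-isomorphic $\cM(p)$-modules. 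For (3), the criteria $\ell\in\tfrac{r_J}{2}(s-1)+\ZZ$ and $\ell\in\tfrac{r_J}{2}(p-1)+\ZZ$ for membership in $\Oloc_A$ become $\ell\in\tfrac{s-1}{2}+\ZZ$ and $\ell\in\tfrac{p-1}{2}+\ZZ$. I expect no genuine obstacle: the entire argument is a substitution into the machinery of Section \ref{sec:detailed_structure}, and the only point needing care is the routine simplification of the nested radicals together with the verification that the $r_J=1$ identifications collapse the $\cW$-list onto the stated $r=1$ representatives without redundancy.
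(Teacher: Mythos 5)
Your proposal is correct and is exactly the paper's argument: the paper states Theorem \ref{thm:Sp_mod_class} as the direct specialization of Theorem \ref{thm:simple_A-module_classification} to $A=\cS_p$ with $\lambda_J=-i\sqrt{(p-2)/2}$, $r_J=1$, $\lambda_J^2+\tfrac{p}{2}r_J^2=1$, offering no further detail, while you carry out the same substitution and the radical simplifications explicitly (and your computation $\alpha_+/2=\sqrt{p/2}$ is the correct reading of the shift written as $\sqrt{p}/2$ in the statement).
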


By Theorem \ref{thm:gen_proj_covers}, indecomposable projective objects of $\cO_{\cS_p}$ are as follows:
\begin{thm}
The modules $\cS\cW_p^{(\ell)}$ and $\cS\cE_{\lambda}^{(\ell)}$ for $\lambda \in \CC\setminus L^{\circ}$ and $\ell\in\frac{1}{2}\ZZ$ are projective in $\cO_{\mathcal{S}_p}$. For $1\leq s \leq p-1$ and $\ell\in\frac{1}{2}\ZZ$, $\cS\cW_{s}^{(\ell)}$ has a projective cover
 \begin{equation*}
  \cS\mathcal{Q}_{s}^{(\ell)}=\cF\bigg(\cF^\cH_{i\ell\sqrt{\frac{2}{p-2}}}\otimes\cP_{1,s}\bigg)
 \end{equation*}
in $\cO_{\mathcal{S}_p}$, which has Loewy diagram
\begin{equation*}\label{eqn:Qsl_Loewy_diag_Sp}
 \begin{matrix}
  \begin{tikzpicture}[->,>=latex,scale=1.5]
\node (b1) at (1,0) {$\cS\cW_{s}^{(\ell)}$};
\node (c1) at (-1.5, 1){$\cS\mathcal{Q}_{s}^{(\ell)}$:};
   \node (a1) at (0,1) {$\cS\cW_{p-s}^{(\ell-(p-2)/2)}$};
   \node (b2) at (2,1) {$\cS\cW_{p-s}^{(\ell+(p-2)/2)}$};
    \node (a2) at (1,2) {$\cS\cW_{s}^{(\ell)}$};
\draw[] (b1) -- node[left] {} (a1);
   \draw[] (b1) -- node[left] {} (b2);
    \draw[] (a1) -- node[left] {} (a2);
    \draw[] (b2) -- node[left] {} (a2);
\end{tikzpicture}
\end{matrix} .
 \end{equation*}
\end{thm}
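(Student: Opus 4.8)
The plan is to obtain this theorem as a direct specialization of the general Theorem~\ref{thm:gen_proj_covers} to the simple current extension $A=\cS_p$ of $\cH\otimes\cM(p)$. First I would recall from Proposition~\ref{prop:Sp_decomp} and the ensuing discussion that here $\lambda_J=-i\sqrt{(p-2)/2}$ and $r_J=1$, so that $\lambda_J^2+r_J^2\frac{p}{2}=\frac{2-p}{2}+\frac{p}{2}=1$. Because $r_J=1$, Theorem~\ref{thm:Sp_mod_class}(2) permits us to take $r=1$ for every simple module, and the modules named in the statement are exactly the $r=1$ members of the general families of Section~\ref{sec:detailed_structure}: comparing Heisenberg weights via $\ell/\lambda_J=i\ell\sqrt{2/(p-2)}$, one checks $\cS\cW_s^{(\ell)}=\cW_{1,s}^{(\ell)}$ and $\cS\cQ_s^{(\ell)}=\cQ_{1,s}^{(\ell)}$ in the notation there.

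With this dictionary in place, the first assertion is immediate: Theorem~\ref{thm:gen_proj_covers} asserts that $\cW_{r,p}^{(\ell)}$ and $\cE_\lambda^{(\ell)}$ are projective in $\cO_A$ for all $r\in\ZZ$, $\lambda\in\CC\setminus L^\circ$, and $\ell\in\frac{1}{2}\ZZ$, and taking $r=1$ yields projectivity of $\cS\cW_p^{(\ell)}$ and $\cS\cE_\lambda^{(\ell)}$. The same theorem produces the projective cover $\cQ_{1,s}^{(\ell)}=\cF(\cF^\cH_{\ell/\lambda_J}\otimes\cP_{1,s})$ of $\cW_{1,s}^{(\ell)}$ for $1\leq s\leq p-1$, which is precisely the module $\cS\cQ_s^{(\ell)}$ of the statement.

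It then remains only to translate the Loewy diagram \eqref{eqn:Q_rsl_Loewy_diag} of $\cQ_{1,s}^{(\ell)}$ into the $r=1$ normalization. The general diagram has top and bottom factor $\cW_{1,s}^{(\ell)}=\cS\cW_s^{(\ell)}$ and middle factors $\cW_{0,p-s}^{(\ell-r_Jp/2)}$ and $\cW_{2,p-s}^{(\ell+r_Jp/2)}$; applying the isomorphisms of Theorem~\ref{thm:simple_A-module_classification}(2), which here read $\cW_{r,s}^{(\ell)}\cong\cW_{r+n,s}^{(\ell+n)}$ since $\lambda_J^2+r_J^2\frac{p}{2}=1$, with $n=1$ and $n=-1$ respectively, I would compute
\begin{align*}
\cW_{0,p-s}^{(\ell-p/2)}&\cong\cW_{1,p-s}^{(\ell-(p-2)/2)}=\cS\cW_{p-s}^{(\ell-(p-2)/2)},\\
\cW_{2,p-s}^{(\ell+p/2)}&\cong\cW_{1,p-s}^{(\ell+(p-2)/2)}=\cS\cW_{p-s}^{(\ell+(p-2)/2)}.
\end{align*}
Substituting these into \eqref{eqn:Q_rsl_Loewy_diag} reproduces the displayed Loewy diagram. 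There is no real obstacle here beyond the bookkeeping of Heisenberg weights; the one point requiring genuine care is that the relabeling $r\mapsto r+n$ must be accompanied by the spectral-flow shift $\ell\mapsto\ell+n$, a shift governed entirely by the deceptively simple identity $\lambda_J^2+r_J^2\frac{p}{2}=1$, which is what turns the general shifts $\pm p/2$ into the displayed shifts $\pm(p-2)/2$.
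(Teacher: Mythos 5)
Your proposal is correct and is exactly the paper's approach: the paper obtains this theorem by directly specializing Theorem \ref{thm:gen_proj_covers} to $\cS_p$ (where $r_J=1$, $\lambda_J=-i\sqrt{(p-2)/2}$, so $\lambda_J^2+r_J^2\frac{p}{2}=1$), with the relabeling of the middle Loewy factors via the isomorphisms $\cW_{r,s}^{(\ell)}\cong\cW_{r+n,s}^{(\ell+n)}$ of Theorem \ref{thm:simple_A-module_classification}(2), precisely as you compute. The only trivial imprecision is that the reduction to $r=1$ rests on Theorem \ref{thm:simple_A-module_classification}(2) (which you do invoke for the actual computation) rather than on Theorem \ref{thm:Sp_mod_class}(2), whose listed isomorphisms concern only the $\cS\cE_\lambda^{(\ell)}$.
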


By Theorem \ref{thm:grading-restricted}, all objects of $\cO_{\cS_p}$ are grading restricted. Moreover, by Theorem \ref{thm:C_1-cofin} and Corollary \ref{cor:C1_cofin}, the category $\cC_{\cS_p}^1$ of $C_1$-cofinite grading-restricted generalized $\cS_p$-modules equals the category of finite-length grading-restricted generalized $\cS_p$-modules, and $\cC^1_{\cS_p}$ is a rigid braided tensor supercategory. Tensor products of simple modules in $\cO_{\cS_p}$ (and in $\cC^1_{\cS_p}$) are given by Theorem \ref{thm:general_fusion_rules} and are similar to those for $\cB_p$ in Theorem \ref{thm:Bp_tensor_products}. In fact, the formulas in Theorem \ref{thm:Bp_tensor_products}(1), (2), and (4) hold for $\cS_p$ as well once we make the changes $\cW_s^{(\ell)}\mapsto\cS\cW_s^{(\ell)}$ and $\cE^{(\ell)}_{\overline{\lambda}}\mapsto\cS\cE^{(\ell)}_{\lambda}$. The formula in Theorem \ref{thm:Bp_tensor_products}(3) should be modified in the spectral flow indices to account for the isomorphisms in Theorem \ref{thm:Sp_mod_class}(2):
\begin{align*}
\cS\cE_{\lambda}^{(\ell)}\tens\cS\cE_{\mu}^{(\ell')}&\cong\bigoplus_{\substack{t=s\\ t\equiv s\,(\mathrm{mod}\,2)}}^p \cS\mathcal{Q}_{t}^{(\ell+\ell'-r+(s+1)/2)}\oplus \bigoplus_{\substack{t=p-s+2\\ t\equiv p-s\,(\mathrm{mod}\,2)}}^p \cS\mathcal{Q}_{t}^{(\ell+\ell'-r-(p-s-3)/2)}
\end{align*}
for $\ell,\ell'\in\frac{1}{2}\ZZ$ and $\lambda,\mu\in\CC\setminus L^\circ$ such that $\lambda+\mu=\alpha_0+\alpha_{r,s}$.

\subsection{The Feigin-Semikhatov dual of \texorpdfstring{$\cB_2^{\ZZ/m\ZZ}$}{B2m}}

Similar to the definition of $\cS_p$, we can define a Feigin-Semikhatov dual of $\cB_2^m=\cB_2^{\ZZ/m\ZZ}$ for $m\geq 2$:
\begin{equation*}
S_2^m = {\rm Com}(\widetilde{\cH}_1, \cB_2^m\otimes V_{\ZZ}) 
\end{equation*} 
where $V_{\ZZ}$ is the lattice vertex operator superalgebra associated to $\ZZ\phi$ with $(\phi, \phi) = 1$, and $\widetilde{\cH}_1$ is the Heisenberg vertex operator algebra associated to the abelian Lie algebra $\CC\big(\frac{1}{m}ih + \phi\big)$ (here $i$ is a square root of $-1$). By \cite[Theorem~4.4(2)]{CGN}, $\cS_2^m$ is the principal $W$-superalgebra associated to $\mathfrak{sl}_{m\vert 1}$ at level $-(m-1)+\frac{m}{m+1}$.

 The Heisenberg subalgebra $\widetilde{\cH}_2\subseteq\cB_2^m\otimes V_\ZZ$ associated to $\CC(imh+\phi)$ commutes with $\til{\cH}_1$, so $\cB_2^m\otimes V_\ZZ$ contains a rank-$2$ Heisenberg subalgebra isomorphic to $\til{\cH}_1\otimes\til{\cH}_2$, and $\cS_2^m$ is an extension of $\til{\cH}_2\otimes\cM(2)$. We then identify $\til{\cH}_2\xrightarrow{\cong}\cH$ via $\frac{1}{i\sqrt{m^2-1}}(imh+\phi)\mapsto h$, so that $\cS_2^m$ is an extension of $\cH\otimes\cM(2)$.
 
\begin{prop}\label{prop:sl1m}
As an $\cH\otimes\cM(2)$-module,
\begin{equation}
\cS_2^m \cong \bigoplus_{k \in \ZZ} \cF^{\cH}_{ik\sqrt{m^2-1}} \otimes \cM_{1-km,1}.
\end{equation}
\end{prop}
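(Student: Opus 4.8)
The plan is to repeat the computation proving Proposition \ref{prop:Sp_decomp}, now with $\cB_p$ replaced by $\cB_2^m$ and with the two rank-one Heisenberg fields chosen as in the statement. First I would record the input data: the decomposition $\cB_2^m \cong \bigoplus_{k\in\ZZ} \cF^h_{ikm}\otimes\cM_{1-km,1}$ as an $\cH\otimes\cM(2)$-module from Section \ref{subregular}, together with $V_\ZZ \cong \bigoplus_{n\in\ZZ}\cF^\phi_n$, where $(h,h)=(\phi,\phi)=1$ and $(h,\phi)=0$. Tensoring gives $\cB_2^m\otimes V_\ZZ \cong \bigoplus_{k,n\in\ZZ}\cF^h_{ikm}\otimes\cF^\phi_n\otimes\cM_{1-km,1}$.

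Next I would change coordinates in the rank-two Heisenberg subalgebra. Writing $\psi_1 = \frac{1}{m}ih+\phi$ and $\psi_2 = imh+\phi$ for the generators of $\til{\cH}_1$ and $\til{\cH}_2$, a direct bilinear-form computation gives $(\psi_1,\psi_2)=0$, so $\til{\cH}_1$ and $\til{\cH}_2$ commute and together generate the full rank-two Heisenberg algebra. Evaluating $\psi_1(0)$ and $\psi_2(0)$ on the lowest-weight vector of $\cF^h_{ikm}\otimes\cF^\phi_n$ yields eigenvalues $n-k$ and $n-m^2k$, respectively, so that $\cF^h_{ikm}\otimes\cF^\phi_n \cong \cF^{\psi_1}_{n-k}\otimes\cF^{\psi_2}_{n-m^2k}$ as $\til{\cH}_1\otimes\til{\cH}_2$-modules. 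Taking the commutant of $\til{\cH}_1$ selects the summands with $\psi_1$-weight zero, that is, $n=k$, and on each such summand $\psi_2(0)$ then acts by $k(1-m^2)=-k(m^2-1)$. Hence $\cS_2^m \cong \bigoplus_{k\in\ZZ}\cF^{\psi_2}_{-k(m^2-1)}\otimes\cM_{1-km,1}$.

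Finally I would apply the identification $\til{\cH}_2\xrightarrow{\cong}\cH$ sending $\frac{1}{i\sqrt{m^2-1}}(imh+\phi)\mapsto h$, under which the generator $\psi_2$, of norm $-(m^2-1)$, is rescaled to a norm-one generator. Under this rescaling the $\psi_2$-weight $-k(m^2-1)$ becomes the $h$-weight $-k(m^2-1)/(i\sqrt{m^2-1}) = ik\sqrt{m^2-1}$, using $1/i=-i$, so that $\cF^{\psi_2}_{-k(m^2-1)}\cong\cF^\cH_{ik\sqrt{m^2-1}}$ and the claimed decomposition follows.

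The computation is routine and parallels Proposition \ref{prop:Sp_decomp} closely; there is no conceptual obstacle, only bookkeeping. The main point requiring care is the arithmetic of the basis change in the indefinite Heisenberg lattice: one must verify that $\psi_1$ and $\psi_2$ are orthogonal, track the factors of $i$ and the negative norms correctly, and confirm that the rescaling normalizing $\psi_2$ to $h$ produces exactly the label $ik\sqrt{m^2-1}$ rather than its negative. Getting these signs and normalizations right is the only place where an error could creep in.
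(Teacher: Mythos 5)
Your proposal is correct and follows essentially the same route as the paper's proof: decompose $\cB_2^m\otimes V_\ZZ$ into rank-two Fock modules, rewrite them as $\cF^{\psi_1}_{n-k}\otimes\cF^{\psi_2}_{n-m^2k}$ for the orthogonal generators $\psi_1=\frac{1}{m}ih+\phi$ and $\psi_2=imh+\phi$, take the commutant of $\til{\cH}_1$ to impose $n=k$, and rescale $\psi_2$ to the norm-one generator of $\cH$, turning the weight $-k(m^2-1)$ into $ik\sqrt{m^2-1}$. Your sign and normalization bookkeeping, including the orthogonality check and the use of $1/i=-i$, matches the computation the paper leaves implicit.
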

\begin{proof}
Using notation for Heisenberg Fock modules as in the proof of Proposition \ref{prop:Sp_decomp},
\begin{align*}
\cS_2^m & = {\rm Com}(\widetilde{\cH}_1, \cB_2^m\otimes V_{\ZZ}) = {\rm Com}\bigg(\widetilde{\cH}_1, \bigoplus_{k,n\in\ZZ} \cF^{h}_{ikm}\otimes\cM_{1-km,1}\otimes \cF^{\phi}_n\bigg) \\
& = {\rm Com}\bigg(\widetilde{\cH}_1, \bigoplus_{k,n\in\ZZ} \cF^{\frac{1}{m}ih+\phi}_{-k+n}\otimes \cF^{imh+\phi}_{-km^2+n}\otimes\cM_{1-km,1}\bigg)  = \bigoplus_{k\in\ZZ} \cF^{imh+\phi}_{-km^2+k} \otimes  \cM_{1-km,1}\\
& = \bigoplus_{k \in \ZZ} \cF^{\cH}_{ik\sqrt{m^2-1}} \otimes \cM_{1-km,1},
\end{align*}
where the last step uses the isomorphism of $\til{\cH}_2$ and $\cH$ discussed above.
\end{proof}

In the notation of Sections \ref{sec:main_results} and \ref{sec:detailed_structure}, for $\cS_2^m$ we have
\begin{equation*}
J=\cF^\cH_{-i\sqrt{m^2-1}}\otimes\cM_{m+1,1},\qquad \lambda_J=-i\sqrt{m^2-1},\qquad r_J=m.
\end{equation*}
Thus $\lambda_J^2+\frac{p}{2} r_J^2= 1 >0$. The lowest conformal weight of $\cF^\cH_{ik\sqrt{m^2-1}}\otimes\cM_{1-km}$ is
\begin{equation*}
\frac{1}{2}(ik\sqrt{m^2-1})^2+h_{1+\vert k\vert m,1} =\frac{1}{2}\vert k\vert(\vert k\vert +m),
\end{equation*}
so $\cS_2^m$ is $\ZZ$-graded by conformal weights if $m$ is odd and $\frac{1}{2}\ZZ$-graded if $m$ is even. The automorphism $\theta$ of $\cS_2^m$ which acts by $(-1)^n$ on the simple current $J^n$ is the parity involution of $\cS_2^m$, and thus $\theta$-twisted $\cS_2^m$-modules form the Ramond sector of the tensor category $\cO_{\cS_2^m}$.

By Theorem \ref{thm:main_thm}, $\cO_{\cS_2^m}=\Oloc_{\cS_2^m}\oplus\Otw_{\cS_2^m}$ is a rigid braided $\ZZ/2\ZZ$-crossed tensor supercategory, and $\Oloc_{\cS_2^m}$ is a rigid braided tensor subcategory of $\cO_{\cS_2^m}$ which is ribbon when $m$ is odd. Theorem \ref{thm:simple_A-module_classification} yields the classification of simple objects in $\cO_{\cS_2^m}$, similar to the classification for $\cB_2^m$:

\begin{thm}\label{thm:S2m_mod_class}
Simple objects in $\cO_{\cS_2^m}$ and $\Oloc_{\cS_2^m}$ are as follows:
\begin{enumerate}
\item Every simple object in $\cO_{\cS_2^m}$ is isomorphic to one of the following induced modules:
 \begin{align*}
&  \cS\cX_{r}^{(\ell)} =\cF\left(\cF^\cH_{i(m(1-r)+\ell)/\sqrt{m^2-1}}\otimes\cM_{r,1}\right)
\end{align*}
for $r\in\ZZ$, $\ell\in\frac{1}{2}\ZZ$, or
\begin{align*}
& \cS\cG_{\lambda}^{(\ell)}=\cF\left(\cF^{\cH}_{i(m(\lambda-1/2)+\ell)/\sqrt{m^2-1}}\otimes\cF_{\lambda}\right)
 \end{align*}
for $\lambda\in\CC\setminus\ZZ$, $\ell\in\frac{1}{2}\ZZ$.

\item The following are  all isomorphisms between simple modules from part (1): 
\begin{align*}
\cS\cX_r^{(\ell)} & \cong \cS\cX_{r+mn}^{(\ell+n)},\qquad\quad r,n\in\ZZ, \,\,\, \ell\in\frac{1}{2}\ZZ,\\
\cS\cG^{(\ell)}_\lambda & \cong\cS\cG_{\lambda-mn}^{(\ell+n)}, \qquad\quad \lambda\in\CC\setminus \ZZ,\,\,\, \ell\in\frac{1}{2}\ZZ,\,\,\, n\in\ZZ.
\end{align*}

\item The module $\cS\cX_r^{(\ell)}$ is an object of $\Oloc_{\cS_2^m}$ if and only if $\ell\in\ZZ$, and the module $\cS\cG_\lambda^{(\ell)}$ is an object of $\Oloc_{\cS_2^m}$ if and only if $\ell\in\frac{m}{2}+\ZZ$.
\end{enumerate}

\end{thm}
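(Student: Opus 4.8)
The plan is to deduce this theorem directly from the general classification in Theorem \ref{thm:simple_A-module_classification}, specialized to the parameters of $\cS_2^m$. First I would record the relevant constants: since $\cS_2^m$ extends $\cH\otimes\cM(2)$, we have $p=2$, so $\alpha_+=2$, $\alpha_-=-1$, $\alpha_0=1$, $L=2\ZZ$, and $L^\circ=\frac{1}{2}\ZZ$; moreover $\lambda_J=-i\sqrt{m^2-1}$ and $r_J=m$ by Proposition \ref{prop:sl1m}, whence $\lambda_J^2+\frac{p}{2}r_J^2=-(m^2-1)+m^2=1$. The single algebraic simplification underlying every weight formula is $\frac{1}{\lambda_J}=\frac{1}{-i\sqrt{m^2-1}}=\frac{i}{\sqrt{m^2-1}}$.

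For part (1), since $p=2$ the index $s$ in Theorem \ref{thm:simple_A-module_classification} runs only over $\{1,2\}$. The modules $\cW_{r,1}^{(\ell)}$ there have Heisenberg weight $\frac{m(1-r)+\ell}{\lambda_J}=i\,\frac{m(1-r)+\ell}{\sqrt{m^2-1}}$, which is exactly the weight defining $\cS\cX_r^{(\ell)}$. For $s=2$ we have $\cM_{r,2}=\cF_{\alpha_{r,2}}$ with $\alpha_{r,2}=\frac{3}{2}-r\in\frac{1}{2}+\ZZ$, so these are Fock-type simple modules; I would fold them into the typical family by checking that substituting $\lambda=\alpha_{r,2}$ into the weight $i\,\frac{m(\lambda-1/2)+\ell}{\sqrt{m^2-1}}$ of $\cS\cG_\lambda^{(\ell)}$ reproduces the weight of $\cW_{r,2}^{(\ell)}$, using $\lambda-\tfrac12=1-r$. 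Consequently the $\cE$-type modules $\cE_\lambda^{(\ell)}$ for $\lambda\in\CC\setminus L^\circ$ together with the $s=2$ modules are captured uniformly by $\cS\cG_\lambda^{(\ell)}$ as $\lambda$ ranges over $\CC\setminus\ZZ$; the typical weight formula itself follows from $\alpha_+\lambda-p+1=2\lambda-1$.

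Parts (2) and (3) are then immediate substitutions. The isomorphisms of Theorem \ref{thm:simple_A-module_classification}(2) read $\cW_{r,s}^{(\ell)}\cong\cW_{r+mn,s}^{(\ell+n)}$ and $\cE_\lambda^{(\ell)}\cong\cE_{\lambda-mn}^{(\ell+n)}$ once $\lambda_J^2+\frac{p}{2}r_J^2=1$ and $r_J\alpha_+/2=m$ are inserted, giving the stated relations for $\cS\cX$ and $\cS\cG$ (and the latter is consistent with the $s=2$ folding). For part (3), Theorem \ref{thm:simple_A-module_classification}(3) gives membership in $\Oloc_A$ iff $\ell\in\frac{r_J}{2}(s-1)+\ZZ$ for $\cW$-type and iff $\ell\in\frac{r_J}{2}(p-1)+\ZZ$ for $\cE$-type; with $r_J=m$, $p=2$ these become $\ell\in\ZZ$ for $\cS\cX_r^{(\ell)}$ and $\ell\in\frac{m}{2}+\ZZ$ for $\cS\cG_\lambda^{(\ell)}$, while the $s=2$ criterion $\ell\in\frac{m}{2}+\ZZ$ agrees with the typical one. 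There is no deep obstacle here: the computation is a routine specialization, and the only point requiring care is the re-indexing that absorbs the $s=2$ Fock modules into the single typical family $\cS\cG_\lambda^{(\ell)}$ with $\lambda\in\CC\setminus\ZZ$, so that the two stated families are genuinely exhaustive and their overlaps are precisely those recorded in part (2).
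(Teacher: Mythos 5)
Your proposal is correct and takes essentially the same route as the paper: the paper obtains this theorem precisely by specializing Theorem \ref{thm:simple_A-module_classification} to $p=2$, $\lambda_J=-i\sqrt{m^2-1}$, $r_J=m$ (so $\lambda_J^2+\frac{p}{2}r_J^2=1$), including the same re-indexing that absorbs the $s=2$ modules $\cW_{r,2}^{(\ell)}$ (with $\alpha_{r,2}=\tfrac{3}{2}-r$) into the typical family $\cS\cG_\lambda^{(\ell)}$, $\lambda\in\CC\setminus\ZZ$. Your verification that the weight formulas, the spectral-flow isomorphisms, and the locality criteria all match under this folding is exactly the computation implicit in the paper's statement.
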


\begin{rem}
Any simple object of $\cO_{\cS_2^m}$ is isomorphic to precisely one the simple modules $\cS\cX_r^{(\ell)}$ with $1\leq r\leq m$ or $\cS\cG_\lambda^{(\ell)}$ with $0\leq\mathrm{Re}(\lambda)<m$. But it is useful label the simple objects of $\cO_{\cS_2^m}$ redundantly (keeping in mind the isomorphisms of Theorem \ref{thm:S2m_mod_class}(2)) in order to obtain uniform descriptions of projective modules and uniform tensor product formulas (as in Theorems \ref{thm:gen_proj_covers} and \ref{thm:general_fusion_rules}).
\end{rem}

As with previous examples, Theorem \ref{thm:gen_proj_covers} gives the indecomposable projective objects of $\cO_{\cS_2^m}$. In particular, $\cS\cG^{(\ell)}_{\lambda}$ is projective for $\lambda\in\CC\setminus\ZZ$, $\ell\in\frac{1}{2}\ZZ$, while 
\begin{equation*}
\cS\mathcal{R}_{r}^{(\ell)}:=\cF\left(\cF^\cH_{i(m(1-r)+\ell)/\sqrt{m^2-1}}\otimes\cP_{r,1}\right)
\end{equation*}
for $r\in\ZZ$, $\ell\in\frac{1}{2}\ZZ$ is a length-$4$ projective cover of $\cS\cX_r^{(\ell)}$. The Loewy diagram of $\cS\cR_r^{(\ell)}$ is the same as the Loewy diagram \eqref{eqn:Qsl_Loewy_diag:orbifold2} for $\cB_2^m$-modules with the substitutions $\cX_{\overline{r}}^{(\ell)}\mapsto\cS\cX_r^{(\ell)}$ and $\cX_{\overline{r\pm 1}}^{(\ell\pm m)}\mapsto\cS\cX_{r\pm 1}^{(\ell\pm m)}$. By Theorem \ref{thm:general_fusion_rules},  tensor products of simple modules in $\cO_{\cS_2^m}$ are also similar to those for $\cB_2^m$. Specifically,
\begin{equation*}
\cS\cX_r^{(\ell)}\tens\cS\cX_{r'}^{(\ell')}\cong\cS\cX_{r+r'-1}^{(\ell+\ell')},\qquad\qquad\cS\cX_r^{(\ell)}\tens\cS\cG_\lambda^{(\ell')}\cong \cS\cG_{\lambda-r+1}^{(\ell+\ell')}
\end{equation*}
for $r,r'\in\ZZ$, $\lambda\in\CC\setminus\ZZ$, $\ell,\ell'\in\frac{1}{2}\ZZ$, and
\begin{equation*}
\cS\cG_\lambda^{(\ell)}\tens\cS\cG_\mu^{(\ell')}\cong\left\lbrace\begin{array}{lll}
\cS\cR_{2-\lambda-\mu}^{(\ell+\ell')} & \text{if} & \lambda+\mu\in\ZZ\\
\cS\cG_{\lambda+\mu}^{(\ell+\ell'-m/2)}\oplus\cS\cG_{\lambda+\mu-1}^{(\ell+\ell'+m/2)} & \text{if} & \lambda+\mu\notin\ZZ\\
\end{array}\right.
\end{equation*}
for $\lambda,\mu\in\CC\setminus\ZZ$, $\ell,\ell'\in\frac{1}{2}\ZZ$. One can use the isomorphisms of Theorem \ref{thm:S2m_mod_class}(2) to rewrite these formulas in terms of the modules $\cS\cX_r^{(\ell)}$, $\cS\cR_r^{(\ell)}$ with $1\leq r\leq m$ and $\cS\cG_\lambda^{(\ell)}$ with $0\leq\mathrm{Re}(\lambda)<m$, if desired.

Finally, Theorem \ref{thm:C_1-cofin} and Corollary \ref{cor:C1_cofin} imply that the supercategory $\cC_{\cS_2^m}^1$ of $C_1$-cofinite grading-restricted generalized $\cS_2^m$-modules equals the supercategory of finite-length grading-restricted generalized $\cS_2^m$-modules. Moreover, $\cC^1_{\cS_2^m}$ is a rigid braided tensor supercategory that contains $\Oloc_{\cS_2^m}$.


\begin{thebibliography}{CKMY}

\bibitem[ADL]{ADL}
T. Abe, C. Dong and H. Li, Fusion rules for the vertex operator algebras $M(1)^+$ and $V_L^+$, \textit{Comm. Math. Phys.} \textbf{253} (2005), no. 1, 171--219.

\bibitem[Ad1]{Ad}
D. Adamovi\'{c}, Classification of irreducible modules of certain subalgebras of free boson vertex algebra, \textit{J. Algebra} \textbf{270} (2003), no. 1, 115--132.

\bibitem[Ad2]{Ad_sl2_-4/3}
D. Adamovi\'{c}, A construction of admissible $A_1^{(1)}$-modules of level $-4/3$, \textit{J. Pure Appl. Algebra} \textbf{196} (2005), no. 2-3, 119--134.

\bibitem[Ad3]{Ad-sl2_osp1|2}
D.~Adamovi\'c,
Realizations of simple affine vertex algebras and their modules: The cases ${\widehat{sl(2)}}$ and ${\widehat{osp(1,2)}}$,
\textit{Comm. Math. Phys.} \textbf{366} (2019) no.3, 1025--1067.

\bibitem[ACG]{ACG}
D.~Adamovic, T.~Creutzig and N.~Genra,
Relaxed and logarithmic modules of $\widehat{\mathfrak{sl}_3}$, arXiv:2110.15203.

\bibitem[ACGY]{ACGY}
D. Adamovi\'{c}, T.~Creutzig, N.~Genra and J.~Yang,
The vertex algebras $\mathcal R^{(p)}$ and $\mathcal V^{(p)}$, \textit{Comm. Math. Phys.} \textbf{383} (2021), no. 2, 1207--1241.

\bibitem[AKR]{AKR}
D. Adamovi\'{c}, K. Kawasetsu and D. Ridout, A realisation of the Bershadsky-Polyakov algebras and their relaxed modules, \textit{Lett. Math. Phys.} \textbf{111} (2021), no. 2, Paper No. 38, 30 pp.

\bibitem[AdK1]{AK1}
D. Adamovi\'{c} and A. Kontrec, Classification of irreducible modules for Bershadsky-Polyakov algebra at certain levels, \textit{J. Algebra Appl.} \textbf{20} (2021), no. 6, Paper No. 2150102, 39 pp.

\bibitem[AdK2]{AK2}
D. Adamovi\'{c} and A. Kontrec, Bershadsky-Polyakov vertex algebras at positive integer levels and duality, arXiv:2011.10021.

\bibitem[AM]{AM-doub}
D. Adamovi\'{c} and A. Milas, The doublet vertex operator superalgebras $\cA(p)$ and $\cA_{2,p}$, \textit{Recent Developments in Algebraic and Combinatorial Aspects of Representation Theory}, 23--38, Contemp. Math., \textbf{602}, Amer. Math. Soc., Providence, RI, 2013.

\bibitem[AP]{AP}
D. Adamovi\'{c} and V. Pedi\'{c}, On fusion rules and intertwining operators for the Weyl vertex algebra, \textit{J. Math. Phys.} \textbf{60} (2019), no. 8, 081701, 18 pp.

\bibitem[AW]{AW}
R.~Allen and S.~Wood,
Bosonic ghostbusting -- The bosonic ghost vertex algebra admits a logarithmic module category with rigid fusion, \textit{Comm. Math. Phys.} \textbf{390} (2022), no. 2, 959--1015. 

\bibitem[Ar1]{Ar-assoc-var}
T. Arakawa, A remark on the $C_2$-cofiniteness condition on vertex algebras, \textit{Math. Z.} \textbf{270} (2012), no. 1-2, 559--575. 

\bibitem[Ar2]{Ar-BP}
T. Arakawa, Rationality of Bershadsky-Polyakov vertex algebras, \textit{Comm. Math. Phys.} \textbf{323} (2013), no. 2, 627--633. 

\bibitem[Ar3]{Ar-C2}
T. Arakawa,  Associated varieties of modules over Kac-Moody algebras and $C_2$-cofiniteness of $W$-algebras, \textit{Int. Math. Res. Not. IMRN} 2015, no. 22, 11605--11666. 
 
\bibitem[Ar4]{Ar-rat1}
T. Arakawa, Rationality of $W$-algebras: principal nilpotent cases, \textit{Ann. of Math. (2)} \textbf{182} (2015), no. 2, 565--604.

\bibitem[Ar5]{Ar-cat-O}
T. Arakawa, Rationality of admissible affine vertex algebras in the category $\mathcal{O}$, \textit{Duke Math. J.}, \textbf{165} (2016), no. 1, 67--93.

\bibitem[ACF]{ACF}
T.~Arakawa, T.~Creutzig and B.~Feigin, Urod algebras and translation of $W$-algebras, \textit{Forum Math. Sigma} \textbf{10} (2022), Paper No. e33, 31 pp.

\bibitem[ACL]{ACL} T. Arakawa, T. Creutzig and A. Linshaw,  Cosets of Bershadsky-Polyakov algebras and rational  $\mathcal{W}$-algebras of type $A$, \textit{Selecta Math. (N.S.)} \textbf{23} (2017), no. 4, 2369--2395.

\bibitem[AvE]{Ar-rat2}
T. Arakawa and J. van Ekeren, Rationality and fusion rules of exceptional $W$-algebras, \textit{J. Eur. Math. Soc. (JEMS)} (2022), DOI: 10.4171/JEMS/1250.

\bibitem[ArK]{AK}
T. Arakawa and K. Kawasetsu, Quasi-lisse vertex algebras and modular linear differential equations, \textit{Lie Groups, Geometry, and Representation Theory}, 41--57, Progr. Math., \textbf{326}, Birkh\"{a}user/Springer, Cham, 2018. 

\bibitem[AM1]{AM1}
T. Arakawa and A. Moreau, Sheets and associated varieties of affine vertex algebras, \textit{Adv. Math.} \textbf{320} (2017), 157--209.

\bibitem[AM2]{AM2}
T. Arakawa and A. Moreau, Joseph ideals and lisse minimal $W$-algebras, \textit{J. Inst. Math. Jussieu} \textbf{17} (2018), no. 2, 397--417.

\bibitem[ACKR]{ACKR}
J. Auger, T. Creutzig, S. Kanade and M. Rupert, Braided tensor categories related to $\mathcal{B}_p$ vertex algebras, \textit{Comm. Math. Phys.} \textbf{378} (2020), no. 1, 219--260.

\bibitem[BN]{BN}
A. Ballin and W. Niu, 3d mirror symmetry and the $\beta\gamma$ VOA, arXiv:2202.01223.

\bibitem[BBOPY]{BBOPY}
K. Barron, K. Batistelli, F. Orosz Hunziker, V. Pedi\'{c} Tomi\'{c} and G. Yamskulna, On rationality of $\CC$-graded vertex algebras and applications to Weyl vertex algebras under conformal flow, arXiv:2207.00638.

\bibitem[Be]{Be} M. Bershadsky, Conformal field theories via Hamiltonian reduction, \textit{Comm. Math. Phys.} \textbf{139} (1991), no. 1, 71--82.

\bibitem[CCFGH]{Cheng}
M.~Cheng, S.~Chun, F.~Ferrari, S.~Gukov and S.~Harrison, $3$d modularity, \textit{J. High Energy Phys.} 2019, no. 10, 010, 93 pp.

\bibitem[CGP]{CGP} F. Costantino, N. Geer, Nathan and B. Patureau-Mirand,  Quantum invariants of $3$-manifolds via link surgery presentations and non-semi-simple categories, \textit{J. Topol.} \textbf{7} (2014), no. 4, 1005--1053. 

\bibitem[Cr1]{Cr}
T. Creutzig W-algebras for Argyres-Douglas theories. \textit{Eur. J. Math.} \textbf{3} (2017), no. 3, 659--690.

\bibitem[Cr2]{Cr2}
T. Creutzig, Fusion categories for affine vertex algebras at admissible levels, \textit{Selecta Math. (N.S.)} \textbf{25} (2019), no. 2, Paper No. 27, 21 pp.

\bibitem[CDGG]{CDGG}
T.~Creutzig, T.~Dimofte, N.~Garner and N.~Geer,
A QFT for non-semisimple TQFT,
arXiv:2112.01559.

\bibitem[CGL]{CGL}
T.~Creutzig, N.~Genra and A.~Linshaw,
Category $\mathcal O$ for vertex algebras of $\mathfrak{osp}_{1|2n}$, arXiv:2203.08188.

\bibitem[CGN]{CGN}
T.~Creutzig, N.~Genra and S.~Nakatsuka,
Duality of subregular $W$-algebras and principal $W$-superalgebras,
\textit{Adv. Math.} \textbf{383} (2021), Paper No. 107685, 52 pp.

\bibitem[CGNS]{CGNS}
T.~Creutzig, N.~Genra, S.~Nakatsuka and R.~Sato,
Correspondences of categories for subregular ${{\mathcal {W}}}$-algebras and principal ${\mathcal {W}}$-superalgebras,
\textit{Comm. Math. Phys.} \textbf{393} (2022), no. 1, 1--60.

\bibitem[CHY]{CHY}
T. Creutzig, Y.-Z. Huang and J. Yang, Braided tensor categories of admissible modules for affine Lie algebras, \textit{Comm. Math. Phys.} \textbf{362} (2018), no. 3, 827--854. 

\bibitem[CJORY]{CJORY}
T. Creutzig, C. Jiang, F. Orosz Hunziker, D. Ridout and J. Yang, Tensor categories arising from the Virasoro algebra, \textit{Adv. Math.} \textbf{380} (2021), 107601, 35 pp.

\bibitem[CKL]{CKL}
 T. Creutzig, S. Kanade and A. Linshaw, Simple current extensions beyond semi-simplicity, \textit{Commun. Contemp. Math.} \textbf{22} (2020), no. 1, 1950001, 49 pp.

\bibitem[CKLR1]{CKLR}
T. Creutzig, S. Kanade, A. Linshaw and D. Ridout, Schur-Weyl duality for Heisenberg cosets, {\em Transform. Groups} \textbf{24} (2019), no. 2, 301--354.

\bibitem[CKLR2]{CKLR2}
T.~Creutzig, S.~Kanade, T.~Liu and D.~Ridout,
Cosets, characters and fusion for admissible-level $\mathfrak{osp}(1 \vert 2)$ minimal models, \textit{Nuclear Phys. B} \textbf{938} (2019), 22--55.

\bibitem[CKM1]{CKM-exts}
T. Creutzig, S. Kanade and R. McRae, Tensor categories for vertex operator superalgebra extensions, to appear in \textit{Mem. Amer. Math. Soc.}, arXiv:1705.05017.

\bibitem[CKM2]{CKM2}
T. Creutzig, S. Kanade and R. McRae, Gluing vertex algebras, \textit{Adv. Math.} \textbf{396} (2022), Paper No. 108174, 72 pp.

\bibitem[CL1]{CL-sub}
T. Creutzig and A. Linshaw, Cosets of the $W^k(\mathfrak{sl}_4,f_{\text{subreg}})$-algebra, \textit{Vertex Algebras and Geometry}, 105--117, Contemp. Math., \textbf{711}, Amer. Math. Soc., Providence, RI, 2018.

\bibitem[CL2]{CL-trialities}
T. Creutzig and A. Linshaw, Trialities of $\mathcal{W}$-algebras, \textit{Camb. J. Math.} \textbf{10} (2022), no. 1, 69--194.

\bibitem[CL3]{CL-ortho}
T.~Creutzig and A.~R.~Linshaw, Trialities of orthosymplectic $\mathcal{W}$-algebras, \textit{Adv. Math.} \textbf{409} (2022), Paper No. 108678, 79 pp.

\bibitem[CMY1]{CMY-completions} 
T. Creutzig, R. McRae and J. Yang, Direct limit completions of vertex tensor categories, \textit{Commun. Contemp. Math.} \textbf{24} (2022), no. 2, Paper No. 2150033, 60 pp. 

\bibitem[CMY2]{CMY-singlet}
T. Creutzig, R. McRae and J. Yang, On ribbon categories for singlet vertex algebras, \textit{Comm. Math. Phys.} \textbf{387} (2021),  no. 2, 865--925.

\bibitem[CMY3]{CMY3}
T. Creutzig, R. McRae and J. Yang, Tensor structure on the Kazhdan-Lusztig category for affine $\mathfrak{gl}(1|1)$, \textit{Int. Math. Res. Not. IMRN} 2022, no. 16, 12462--12515.

\bibitem[CMY4]{CMY-singlet-typical}
T. Creutzig, R. McRae and J. Yang, Ribbon tensor structure on the full representation categories of the singlet vertex algebras, arXiv:2202.05496.

\bibitem[CMR]{CMR}
T.~Creutzig, A.~Milas and M.~Rupert, Logarithmic link invariants of $\overline{U}_q^H(\mathfrak{sl}_2)$ and asymptotic dimensions of singlet vertex algebras,
\textit{J. Pure Appl. Algebra} \textbf{222} (2018), no. 10, 3224--3247.

\bibitem[CR1]{CR1}
T. Creutzig and D. Ridout, Modular data and Verlinde formulae for fractional level WZW models I, \textit{Nuclear Phys. B} \textbf{865} (2012), no. 1, 83--114.

\bibitem[CR2]{CR2}
T. Creutzig and D. Ridout,  Modular data and Verlinde formulae for fractional level WZW models II, \textit{Nuclear Phys. B} \textbf{875} (2013), no. 2, 423--458. 

\bibitem[CRR]{CRR}
T.~Creutzig, D.~Ridout and M.~Rupert,
A Kazhdan-Lusztig correspondence for $L_{-\frac{3}{2}}(\mathfrak{sl}_3)$,
arXiv:2112.13167.

\bibitem[CRW]{CRW}
T.~Creutzig, D.~Ridout and S.~Wood, Coset constructions of logarithmic $(1,p)$ models, \textit{Lett. Math. Phys.} \textbf{104} (2014), no. 5, 553--583.

\bibitem[CY]{CY}
T. Creutzig and J. Yang, Tensor categories of affine Lie algebras beyond admissible levels, \textit{Math. Ann.} \textbf{380} (2021), no. 3-4, 1991--2040. 

\bibitem[DM]{DM}
P. Deligne and J. Milne, Tannakian categories, \textit{Lecture Notes in Mathematics} \textbf{900}
(1982), http://www.jmilne.org/math/xnotes/tc.pdf.

\bibitem[Fe]{Fe}
Z. Fehily, Subregular $W$-algebras of type $A$, \textit{Commun. Contemp. Math.} (2022), Paper No. 2250049, DOI: 10.1142/S0219199722500493.

\bibitem[FKR]{FKR}
Z. Fehily, K. Kawasetsu and D. Ridout, Classifying relaxed highest-weight modules for admissible-level Bershadsky-Polyakov algebras, \textit{Comm. Math. Phys.} \textbf{385} (2021), no. 2, 859--904.

\bibitem[FR]{FR}
Z. Fehily and D. Ridout, Modularity of Bershadsky-Polyakov minimal models, \textit{Lett. Math. Phys.} \textbf{112} (2022), no. 3, Paper No. 46, 61 pp.

\bibitem[FS]{FS}
B.~Feigin and A.~Semikhatov, $\mathcal{W}^{(2)}_n$ algebras, \textit{Nuclear Phys. B} \textbf{698} (2004), no. 3, 409--449. 

\bibitem[FHL]{FHL}
I. Frenkel, Y.-Z. Huang and J. Lepowsky, On axiomatic approaches to vertex operator algebras
and modules, \textit{Mem. Amer. Math. Soc.} \textbf{104} (1993), no. 494, viii+64 pp.

\bibitem[FLM]{FLM}
I. Frenkel, J. Lepowsky and A. Meurman, \textit{Vertex Operator Algebras and the Monster}, Pure and Applied Mathematics, \textbf{134}, Academic Press, Inc., Boston, MA, 1988, liv+508 pp.

\bibitem[Ga]{Ga}
M. Gaberdiel, Fusion rules and logarithmic representations of a WZW model at fractional level, \textit{Nuclear Phys. B} \textbf{618} (2001), no. 3, 407--436.

\bibitem[GR]{GR}
D.~Gaiotto and M.~Rap\v{c}\'{a}k, Vertex algebras at the corner, \textit{J. High Energy Phys.} 2019, no. 1, 160, front matter+85 pp. 

\bibitem[GK]{GK}
M. Gorelik and V. Kac, On complete reducibility for infinite-dimensional Lie algebras, \textit{Adv. Math.} \textbf{226} (2011), no. 2, 1911--1972.

\bibitem[Hu1]{Hu-diff-eqs}
Y.-Z. Huang, Differential equations and intertwining operators, \textit{Commun. Contemp. Math.} \textbf{7} (2005), no. 3, 375--400.

\bibitem[Hu2]{Hu-rig-mod}
Y.-Z. Huang,  Rigidity and modularity of vertex tensor categories, \textit{Commun. Contemp. Math.} \textbf{10} (2008), suppl. 1, 871--911.

\bibitem[Hu3]{Hu-C2}
Y.-Z. Huang, Cofiniteness conditions, projective covers and the logarithmic tensor product theory, \textit{J. Pure Appl. Algebra}  \textbf{213}  (2009), no. 4, 458--475.

\bibitem[HKL]{HKL}
Y.-Z. Huang, A. Kirillov, Jr. and J. Lepowsky, Braided tensor categories and extensions of vertex operator algebras, \textit{Comm. Math. Phys.} \textbf{337} (2015), no. 3, 1143--1159.

\bibitem[HLZ1]{HLZ1}
Y.-Z. Huang, J. Lepowsky and L. Zhang, Logarithmic tensor category theory for generalized modules for a conformal vertex algebra, I: Introduction and strongly graded algebras and their generalized modules, \textit{Conformal Field Theories and Tensor Categories}, 169--248, Math. Lect. Peking Univ., Springer, Heidelberg, 2014.
	
\bibitem[HLZ2]{HLZ2}
Y.-Z. Huang, J. Lepowsky and L. Zhang, Logarithmic tensor category theory for
generalized modules for a conformal vertex algebra, II: Logarithmic formal
calculus and properties of logarithmic intertwining operators, arXiv:1012.4196.
	
\bibitem[HLZ3]{HLZ3}
Y.-Z. Huang, J. Lepowsky and L. Zhang, Logarithmic tensor category theory for
generalized modules for a conformal vertex algebra, III: Intertwining maps and
tensor product bifunctors, arXiv:1012.4197.
	
\bibitem[HLZ4]{HLZ4}
Y.-Z. Huang, J. Lepowsky and L. Zhang, Logarithmic tensor category theory for
generalized modules for a conformal vertex algebra, IV: Constructions of tensor
product bifunctors and the compatibility conditions, arXiv:1012.4198.
	
\bibitem[HLZ5]{HLZ5}
Y.-Z. Huang, J. Lepowsky and L. Zhang, Logarithmic tensor category theory for
generalized modules for a conformal vertex algebra, V: Convergence condition
for intertwining maps and the corresponding compatibility condition,
arXiv:1012.4199.
	
\bibitem[HLZ6]{HLZ6}
Y.-Z. Huang, J. Lepowsky and L. Zhang, Logarithmic tensor category theory for
generalized modules for a conformal vertex algebra, VI: Expansion condition,
associativity of logarithmic intertwining operators, and the associativity
isomorphisms, arXiv:1012.4202.
	
\bibitem[HLZ7]{HLZ7}
Y.-Z. Huang, J. Lepowsky and L. Zhang, Logarithmic tensor category theory for
generalized modules for a conformal vertex algebra, VII: Convergence and
extension properties and applications to expansion for intertwining maps,
arXiv:1110.1929.
	
\bibitem[HLZ8]{HLZ8}
Y.-Z. Huang, J. Lepowsky and L. Zhang, Logarithmic tensor category theory for
generalized modules for a conformal vertex algebra, VIII: Braided tensor
category structure on categories of generalized modules for a conformal vertex
algebra, arXiv:1110.1931.

\bibitem[IK]{IK}
K. Iohara and Y. Koga, \textit{Representation Theory of the Virasoro Algebra}, Springer Monographs in Mathematics, Springer-Verlag London, Ltd., London, 2011, xviii+474 pp.

\bibitem[KW]{KW}
V. Kac and M. Wakimoto, Modular invariant representations of infinite-dimensional Lie algebras and superalgebras, \textit{Proc. Nat. Acad. Sci. U.S.A.} \textbf{85} (1988), no. 14, 4956--4960.

\bibitem[Ka]{Ka}
K. Kawasetsu, $\mathcal{W}$-algebras with non-admissible levels and the Deligne exceptional series, \textit{Int. Math. Res. Not. IMRN} 2018, no. 3, 641--676.

\bibitem[KR1]{KR1}
K.~Kawasetsu and D.~Ridout,
Relaxed highest-weight modules I: Rank $1$ cases,
\textit{Comm. Math. Phys.} \textbf{368} (2019), no. 2, 627--663.

\bibitem[KR2]{KR2}
K.~Kawasetsu and D.~Ridout,
Relaxed highest-weight modules II: Classifications for affine vertex algebras,
\textit{Commun. Contemp. Math.} \textbf{24} (2022), no. 5, Paper No. 2150037, 43 pp.

\bibitem[KRW]{KRW}
K.~Kawasetsu, D.~Ridout and S.~Wood,
Admissible-level $\mathfrak{sl}_3$ minimal models,
arXiv:2107.13204.

\bibitem[KS]{KS}
Y.~Kazama and H.~Suzuki,
New $N=2$ superconformal field theories and superstring compactification,
\textit{Nuclear Phys. B} \textbf{321} (1989), no. 1, 232--268.

\bibitem[KO]{KO} 
A. Kirillov, Jr. and V. Ostrik, On a $q$-analogue of the McKay correspondence and the $ADE$ classification of $\mathfrak{sl}_2$ conformal field theories, \textit{Adv. Math.} \textbf{171} (2002), no. 2, 183--227.

\bibitem[LL]{LL}
J. Lepowsky and H. Li, \textit{Introduction to Vertex
Operator Algebras and Their Representations}, Progress in Mathematics, \textbf{227}, Birkh\"{a}user Boston, Inc., Boston, MA, 2004. xiv+318 pp.

\bibitem[LMRS]{LMRS}
F. Lesage, P. Mathieu, J. Rasmussen and H. Saleur, The $\widehat{\mathfrak{su}}(2)_{-1/2}$ WZW model and the $\beta\gamma$ system,
\textit{Nuclear Phys. B} \textbf{647} (2002), no. 3, 363--403. 

\bibitem[Li]{Li}
H. Li, Symmetric invariant bilinear forms on vertex operator algebras, \textit{J. Pure Appl. Algebra} \textbf{96} (1994), no. 3, 279--297.

\bibitem[McR1]{McR-orb1} R. McRae, On the tensor structure of modules for compact orbifold vertex operator algebras, \textit{Math. Z.} \textbf{296} (2020), no. 1-2, 409--452.

\bibitem[McR2]{McR-orb2}
R. McRae, Twisted modules and $G$-equivariantization in logarithmic conformal field theory, \textit{Comm. Math. Phys.} \textbf{383} (2021), no. 3, 1939--2019.

\bibitem[McR3]{McR-cosets}
R. McRae, A general mirror equivalence theorem for coset vertex operator algebras, arXiv:2107.06577.

\bibitem[McR4]{McR-rat}
R. McRae, On rationality for $C_2$-cofinite vertex operator algebras, arXiv:2108.01898.

\bibitem[MY]{MY}
R. McRae and J. Yang, Structure of Virasoro tensor categories at central charge $13-6p-6p^{-1}$ for integers $p>1$, arXiv:2011.02170.

\bibitem[Mil]{M}
A. Milas, Logarithmic intertwining operators and vertex operators, \textit{Comm. Math. Phys.} \textbf{277} (2008), no. 2, 497--529. 

\bibitem[Miy1]{Miy}
M. Miyamoto, $C_1$-cofiniteness and fusion products of vertex operator algebras, \textit{Conformal Field Theories and Tensor Categories}, 271--279, Math. Lect. Peking Univ., Springer, Heidelberg, 2014.

\bibitem[Miy2]{Miy2}
M. Miyamoto, Associativity of fusion products of $C_1$-cofinite $\NN$-gradable modules of vertex operator algebra, arXiv:2105.01851.

\bibitem[MS]{MS}
G.~Moore and N.~Seiberg, Classical and quantum conformal field theory, \textit{Comm. Math. Phys.} \textbf{123} (1989), no. 2, 177--254.

\bibitem[Po]{Po} A. Polyakov, Gauge transformations and diffeomorphisms, \textit{Int. J. Mod. Phys. A} \textbf{5} (1990), no. 5, 833--842.

\bibitem[Ri1]{Ri1}
D. Ridout, $\widehat{\mathfrak{sl}}(2)_{-1/2}$: a case study, \textit{Nuclear Phys. B} \textbf{814} (2009), no. 3, 485--521.

\bibitem[Ri2]{Ri2}
D. Ridout, Fusion in fractional level $\widehat{\mathfrak{sl}}(2)$-theories with $k=-\frac{1}{2}$, \textit{Nuclear Phys. B} \textbf{848} (2011), no. 1, 216--250.  

\bibitem[RSW]{RSW}
D.~Ridout, J.~Snadden and S.~Wood,
An admissible level $\widehat{\mathfrak{osp}}( 1\vert 2)$-model: modular transformations and the Verlinde formula,
\textit{Lett. Math. Phys.} \textbf{108} (2018), no. 11, 2363--2423. 

\bibitem[RW]{RW}
D. Ridout and S. Wood, Bosonic ghosts at $c=2$ as a logarithmic CFT,
\textit{Lett. Math. Phys.} \textbf{105} (2015), no. 2, 279--307. 

\bibitem[Tu]{Tu} 
V. Turaev, Modular categories and $3$-manifold invariants, \textit{Internat. J. Modern Phys. B} \textbf{6} (1992), no. 11-12, 1807--1824. 

\end{thebibliography}
\end{document}